\theoremstyle{plain} % Italic
\newtheorem{thm}{Theorem}[section]%[section] 
\newtheorem{lem}[thm]{Lemma}%[subsection]
\newtheorem{cor}[thm]{Corollary}%[subsection]
\newtheorem{prop}[thm]{Proposition}%[subsection]
\newtheorem{corr}{Corollary\hspace{-1.5mm}}
\theoremstyle{definition} % Roman
\newtheorem{defi}[thm]{Definition}%[section]
\newtheorem{rem}[thm]{Remark}%[section]
\newcommand{\R}{\mathbb{R}}
\newcommand{\C}{\mathbb{C}}
\newcommand{\N}{\mathbb N}
\newcommand{\ep}{\varepsilon}
\newcommand{\Lim}{\displaystyle \lim}
\let\tilde=\widetilde
\newcommand{\eqsp}[1]{{\begin{equation}\begin{aligned}#1\end{aligned}\end{
equation}}}
\title[Large time behavior for H\'enon Parabolic Equations]{Asymptotically self-similar global solutions for Hardy-H\'enon parabolic equations}
\date\today
\author[N. Chikami, M. Ikeda, K. Taniguchi and S. Tayachi]{Noboru Chikami, Masahiro Ikeda, Koichi Taniguchi and Slim Tayachi}
\address[N. Chikami]
{%Department of Computer Science and Engineering, 
Graduate School of Engineering, 
Nagoya Institute of Technology, 
Gokiso-cho, Showa-ku, Nagoya 
466-8555, Japan.}
\email{chikami.noboru@nitech.ac.jp}
\address[M. Ikeda]
{Graduate School of Information Science and Technology,
       Osaka University, 1-5, Yamadaoka, Suita-shi, Osaka 565-0871, Japan\ / Center for Advanced Intelligence Project
RIKEN, 1-4-1, Nihonbashi, Chuo-ku, Tokyo 103-0027, Japan.}
\email{ikeda@ist.osaka-u.ac.jp/masahiro.ikeda@a.riken.jp}
\address[K. Taniguchi]
{Faculty of Engineering,
Shizuoka University ,
3-5-1 Johoku, Chuo-ku, Hamamatsu, 432-8561, Japan.}
\email{taniguchi.koichi@shizuoka.ac.jp}
\address[S. Tayachi]
{Universit\'e de Tunis El Manar, Facult\'e des Sciences de Tunis, D\'epartement de Math\'ematiques, Laboratoire \'Equations aux D\'eriv\'ees Partielles LR03ES04, 2092 Tunis, Tunisia /  
NYUAD Research Institute, New York University Abu Dhabi, Saadiyat Island, Abu Dhabi, P.O. Box 129188, United Arab Emirates.
}
\email{slim.tayachi@fst.rnu.tn}
\keywords{Hardy-H\'enon parabolic equation, well-posedness, global existence, nonexistence, self-similar solution, 
asymptotically self-similar solutions.}
\begin{document}

\footnote[0]
{2010 {\it Mathematics Subject Classification.}
Primary 35K05; Secondary 35B40;}

% Abstract
\begin{abstract}
%We consider the Hardy-H\'enon parabolic equation  
%\[
%	\partial_t u=\Delta u +a |x|^{\gamma}|u|^{{\alpha-1}}u,
%\]
%where $t>0,$ $x\in \R^d,$ $d\geq 1,$ $a\in \R,$ $\gamma>-\min(2,d)$ 
%and $\alpha>1+(2+\gamma)/d.$
We construct asymptotically self-similar global solutions 
to the Hardy-H\'enon parabolic equation $\partial_t u - \Delta u = \pm |x|^{\gamma} |u|^{\alpha-1} u,\, \alpha>1, \gamma \in \R$, $x\in \mathbb R^d$ or $x\in \R^d\setminus\{0\}$,
for a large class of initial data belonging to weighted Lorentz spaces. 
The solution may be asymptotic to a self-similar solution of the linear heat equation 
or to a self-similar solution to the Hardy-H\'enon parabolic equation depending on the speed of decay of the initial data at infinity. 
The asymptotic results are new for the H\'enon case $\gamma>0$. We also prove the stability of the asymptotic profiles. 
Our approach applies for $\gamma> -\min(2,d)$ and unifies the cases 
$\gamma>0$, $\gamma=0$ and $-\min(2,d)<\gamma<0.$ 
For complex-valued initial data, a more intricate asymptotic behaviors can be shown; 
if either one of the real part or the imaginary part of the initial data has a faster spatial decay, 
then the solution exhibits a combined Nonlinear-``Modified Linear" asymptotic behavior, which is completely new even for the Fujita case 
$\gamma=0$. In Appendix, we show the non-existence of local positive solutions for supercritical initial data. 
\end{abstract} 

\maketitle

%\tableofcontents

%%%%%%%%%%%%
%%% Section 1 Introduction%%%
%%%%%%%%%%%%
\section{Introduction}\label{sec:1}

%%% Subsection 1.1 %%%
\subsection{Background and setting of the problem}
We consider the Cauchy problem of the Hardy-H\'enon parabolic equation
\begin{equation}	\label{HHNLH}
	\begin{cases}
		\partial_t u - \Delta u = a |x|^{\gamma} |u|^{\alpha-1} u,
			&(t,x)\in (0,T)\times D, \\
		u(0) = \varphi \in L^{q,r}_{l}(\R^d),
	\end{cases}
\end{equation}
where $T>0,$ $d\in \mathbb{N}$, $\gamma, a \in \R,$ $\alpha>1$
$D:=\R^d$ if $\gamma\ge0$ and $D:=\R^d\setminus\{0\}$ if $\gamma<0.$ 
Here, $\partial_t:=\frac{\partial}{\partial t}$ is the time derivative, 
$\Delta:=\sum_{j=1}^d \frac{\partial^2}{\partial x_j^2}$ is the Laplace operator on $\R^d$, 
$u=u(t,x)$ is the unknown real- or complex-valued function on 
$(0,T)\times \mathbb R^d$, and $\varphi=\varphi(x)$ is a prescribed real- or complex-valued function on $\mathbb R^d$. 
Hereafter, we denote the time-space-dependent function $u$ as $u(t)$ or $u(t,x)$ depending on circumstances. 
The initial data $\varphi$ belongs to the weighted Lorentz space $L^{q,r}_l(\R^d)$. 
For the precise definition, see Definition \ref{def:WLS}. 
From hereon, we assume $a=-1$ or $a=1$ without losing generality. 

The equation \eqref{HHNLH} with $\gamma<0$ is known as the 
{Hardy parabolic equation} while \eqref{HHNLH} with $\gamma>0$ 
is known as the {H\'enon parabolic equation}. The elliptic part of \eqref{HHNLH} 
$-\Delta \phi=|x|^{\gamma}|\phi|^{\alpha-1}\phi$ 
is proposed by H\'enon as a model to study the rotating stellar systems (see \cite{H-1973}), 
and has been extensively studied in the mathematical context, 
especially in the field of nonlinear analysis and variational methods (see \cite{GhoMor2013} for example). 
The case $\gamma=0$ corresponds to a heat equation with a standard power-type nonlinearity, 
often called the {Fujita equation}, which has been extensively studied in various directions. 
Regarding the well-posedness of the Fujita equation ($\gamma=0$) in Lebesgue spaces, 
we refer to \cites{Wei1979, Wei1980}, among many, and 
\cites{QuiSou2007} for a comprehensive survey on the equation. 

We introduce the exponent $\alpha_F(\gamma)$ defined by
\begin{equation}	\label{d:Fujita.exp}
	\alpha_F(\gamma):=1+\frac{2+\gamma}{d},
\end{equation}
which is often referred as the {Fujita exponent} when $\gamma=0$. For $\gamma=0$, Fujita proves in \cite{Fuj1966} 
that \eqref{HHNLH} admits a global solution for sufficiently small initial data if $\alpha>\alpha_F(0),$ 
while it has no global-in-time solutions for any nonnegative nontrivial initial data if $\alpha<\alpha_F(0)$. 
The borderline case $\alpha = \alpha_F(0)$ is settled by \cite{Hay1973} and \cite{Sug1975}, and it falls under the nonexistence case. 
See also \cites{Wei1980, Wei1981}. 
For $\gamma\neq0$, $\alpha(\gamma)$ is also known to divide the existence and nonexistence of 
positive global solutions (See \cite{Qi1998}*{Theorem 1.6}). 
More precisely, when $\alpha>\alpha_F(\gamma),$ a small initial data generates a global solution. 
On the other hand, when $\alpha\le \alpha_F(\gamma),$ any positive solution to \eqref{HHNLH} 
blows up in finite time regardless of the size of initial data.

Concerning the local well-posedness of the problem \eqref{HHNLH}, 
Wang \cite{Wan1993} has considered the issue for non-negative initial data in the space of continuous bounded functions, 
when $d\ge3$ and $-2<\gamma<0$. 
In \cite{BenTayWei2017}, Ben Slimene, Weissler and the the fourth author prove the local well-posedness in 
$C_0(\R^d)$, the space of continuous functions vanishing at infinity, as well as in $L^q(\R^d)$ for some $q$ and $-\min\{2,d\}<\gamma<0$. 
The existence result in \cite{BenTayWei2017} is extended to the H\'enon case $\gamma>0$ by \cite{CIT2022}, 
where the weighted Lebesgue spaces play a crucial role in handling the potential $|x|^\gamma$. 
In \cites{Tay2020, CITT2022}, unconditional uniqueness is discussed in (weighted) Lorentz spaces, 
which extend classical results such as \cites{BreCaz1996, Ter2002} for $\gamma=0$. 
%In \cite{Tay2020}, unconditional uniqueness has been established for the Hardy case $\gamma<0.$ 
For earlier results concerning conditional uniqueness when $\gamma<0$, see \cites{BenTayWei2017, Ben2019, CIT2022}. 
Articles \cites{HisIsh2018, HisSie2024, HisTak2021} give optimal results regarding  
the local singularity of initial data to obtain the local existence for $\gamma\le 0.$ 

In \cite{CIT2022}, local well-posedness for \eqref{HHNLH} is proved for data 
in weighted Lebesgue spaces $L_{\tilde s}^{q}(\R^d)$ with $\tilde s \le s_c(q) = d(\frac{1}{q_c(\gamma)}-\frac1{q}).$ 
However, the auxiliary spaces used in \cite{CIT2022} do not take into account the smoothing effect of the Lebesgue exponent in the heat kernel estimates. 
Although working solely with the weight exponent $\displaystyle \sup_{0\le t\le T} t^{\frac{\tilde s -s}{2}} \|u(t)\|_{L^q_s}$ 
is sufficient for the purpose of treating the H\'enon case $\gamma>0$, 
this restricts the admissible range of the Lebesgue exponent $q$. 
Thus, the result in \cite{CIT2022} is incomplete in the sense that it does not perfectly 
reproduce various earlier results on the Fujita, Hardy and H\'enon cases. 
In this article, our first objective is to integrate previous results 
\cites{Wei1979, Wei1980, Tay2020, TayWei2022, BenTayWei2017, CIT2022} in a unified theorem, using the more general auxiliary norm 
$\displaystyle
	\sup_{0\le t\le T}  t^{\frac{d}{2}(\frac1{q} - \frac1{p}) + \frac{\tilde s -s}{2}} \|u(t)\|_{L^p_s}.$
Even though this is a marginal improvement, it is essential for the unified local well-posedness theory of \eqref{HHNLH}. 

Our second aim in this paper is to investigate asymptotic behaviors of small global solutions of \eqref{HHNLH} for all cases of $\gamma$. 
There are choices of topology to consider when studying the asymptotic behaviors of solutions to nonlinear PDEs. 
Concerning the global dynamics of large solutions for \eqref{HHNLH} with the Sobolev-critical exponent for $\gamma<0$, we refer to \cite{CIT2021}. 
In the context of the current paper, we only consider the small neighborhood of the trivial solution, which is generally known to be stable. 
This is indeed the case as shown by Theorem \ref{t:globalsol} below, and all of our global solutions dissipate in the critical topology 
$\|\cdot \|_{L^{q,r}_{l}}$ as $t$ tends to infinity, except for the self-similar solutions constructed in Theorem \ref{t:HH.self.sim}. 
On the other hand, the solutions exhibit linear decay in the subcritical topology $\|\cdot \|_{L^{p,1}_{k}}$ for $(k,p)$ satisfying 
$\frac{k}{d}+\frac{1}{p}<\frac1{q_c(\gamma)}$ (See Definition \ref{d:aux.sp} and Theorem \ref{t:globalsol} below), that is, 
all of our solutions including the self-similar solutions asymptotically tend to the trivial solution in the subcritical topology. 
%As such, we may predict that there can be two scenarios for the small global solutions: 
%one where the solution tends the trivial solution and another where the solution tends a self-similar solution. 
Our main interest is to find the intermediate behavior of the global solutions in the subcritical topology as $t\to\infty$, that is, 
we ought to find an asymptotic function $v=v(t,x)$ such that $u-v$ decays faster than $u$ alone as $t\to\infty$. 
The natural candidate for such an asymptotic function is the self-similar solutions, and when the function $v$ above can be taken 
as either the linear or nonlinear self-similar solution, we call the solution $u$ asymptotically self-similar. 
For the Fujita and Hardy cases, \cites{BenTayWei2017, Ben2019, CazWei1998, SnoTayWei2001} have shown that suitable initial data 
generate global solutions that are asymptotically self-similar. 
More precisely, for a certain class of initial data, the asymptotic behaviors of the global solutions constructed in Theorem \ref{t:globalsol} 
are determined by the spatial decay of the data at $|x|=\infty$. 
In Theorem \ref{t:asym.behv}, we show the similar asymptotic behaviors hold for all $\gamma\in \R$ and for larger class of initial data and topology. 
If the initial data $\varphi$ decays faster than $|x|^{-\frac{2+\gamma}{\alpha-1}}$ then the solution is asymptotic to the linear self-similar solution, 
whereas if $\varphi(x)\sim |x|^{-\frac{2+\gamma}{\alpha-1}}$, then the solution is asymptotic to the nonlinear self-similar solution 
constructed in Theorem \ref{t:HH.self.sim}. The behavior is even more complex if the solution is complex-valued, as is shown in 
Theorem \ref{t:asym.behv-complex} later.  

Before we present our results, we prepare some terminologies. 
Recall that the equation \eqref{HHNLH} is invariant under the scale transformation 
\begin{equation}	\label{scale}
	u_{\lambda}(t,x) 
	:= \lambda^{\frac{2+\gamma}{\alpha-1}} u(\lambda^2 t, \lambda x), 
	\quad \lambda>0.
\end{equation}
More precisely,  if $u$ is the solution to \eqref{HHNLH}, 
then $u_{\lambda}$ defined as above also solves the equation 
with the rescaled initial data $ \lambda^{\frac{2+\gamma}{\alpha-1}} \varphi(\lambda x).$ 
Under the scaling transformation \eqref{scale}, the weighted Lorentz norm behaves as follows:
\begin{equation}\nonumber
	\|u_{\lambda}(0,\cdot)\|_{L^{q,r}_{l}} 
	= \lambda^{\frac{2+\gamma}{\alpha-1}-l-\frac{d}{q}} 
		\|u(0,\cdot)\|_{L^{q,r}_{l}}, \quad \lambda>0. 
\end{equation}
Let $q_c(\gamma)$ be defined by 
\begin{equation}	\nonumber
	\frac1{q_c(\gamma)} :=\frac{2+\gamma}{d(\alpha-1)}. 
\end{equation}
We say that the problem \eqref{HHNLH} is \emph{critical} if $q$ and $l$ satisfy
\begin{equation}	\label{d:critical}
	\frac{l}{d}+\frac{1}{q} = \frac1{q_c(\gamma)};
\end{equation}
\emph{subcritical} if $q$ and $l$ satisfy
\begin{equation}	\label{d:subcritical}
	\frac{l}{d}+\frac{1}{q} < \frac1{q_c(\gamma)};
\end{equation}
\emph{supercritical} if $q$ and $l$ satisfy
\begin{equation}	\label{d:supercritical}
	\frac{l}{d}+\frac{1}{q} > \frac1{q_c(\gamma)}. 
\end{equation}
In particular, the exponent $q_c(\gamma)$ is the scale-critical exponent 
of \eqref{HHNLH} for initial data in the Lorentz space without weights $L^{q,r}(\R^d)=L^{q,r}_0(\R^d)$, 
and it is also independent of the index $r$. 

As it is a standard practice, we study the problem \eqref{HHNLH} via the integral formulation: 
\begin{equation}	\label{intHHNLH}
	u(t,x) = e^{t\Delta} \varphi(x) + a \int_0^t e^{(t-\tau)\Delta} 
	\left\{ |\cdot|^{\gamma} |u(\tau,\cdot)|^{\alpha-1}u(\tau, \cdot)\right\}(x) \, d\tau, 
\end{equation}
where $\{e^{t\Delta}\}_{t>0}$ denotes the heat semi-group:
\begin{equation}	\nonumber
	e^{t\Delta}f=G_t\ast f
\end{equation}
with
\begin{equation}	\nonumber
	G_t(x)=(4\pi t)^{-\frac{d}2} e^{ - \frac{|x|^2}{4t} },\quad t>0,\quad x\in \R^d
\end{equation}
and $\ast$ is the convolution operator. 
The convolution is defined on $\mathcal{S}'(\R^d),$ 
the space of tempered distributions, by duality: 
$G_t\ast f=\langle f,G_t(\cdot-y)\rangle,$ 
and with the usual definition on $L^{1}(\R^d)+L^{\infty}(\R^d)$: 
\[
	\left(e^{t\Delta}f\right)(x)=\int_{\R^d}G_t(x-y)f(y)dy.
\]

Now, we are in a position to define our notion of solutions. 
\begin{defi}[Mild solution]
	\label{d:mildsol}
Let $T \in (0,\infty],$ $q, r\in [1,\infty]$ and $l\in\R$. For $\varphi \in L^{q,r}_{l} (\R^d)$, 
a function $u : [0,T] \times \R^d \to \mathbb C\ \text{or}\ \R$ is called an 
$L^{q,r}_{l} (\R^d)$-mild solution to \eqref{HHNLH} with initial data $u(0)=\varphi$ if it satisfies $u\in C([0,T]; L^{q,r}_{l} (\R^d))$ 
and the integral equation \eqref{intHHNLH} for any $t \in [0,T]$ and almost everywhere $x \in \R^d$.
The time $T$ is said to be the maximal existence time, which is denoted by $T_m$, 
if the solution cannot be extended beyond $[0,T).$ More precisely,  $T_m$ is defined as follows: 
\begin{equation}	\label{d:Tm}
	T_m = T_m (\varphi) := \sup \left\{T>0 \,;\, 
		\left.\begin{aligned}&\text{There exists a unique solution $u$ of \eqref{HHNLH}} \\
			&\text{in } C([0,T]; L^{q,r}_{l}(\R^d)) 
			\text{ with initial data $\varphi$}
		\end{aligned}\right. \right\}.
\end{equation}
We say that $u$ is global in time if $T_m = + \infty$. 
\end{defi}

In order to state our results, we introduce the following auxiliary function spaces. 
We denote by $\mathscr{D}'((0,T)\times\R^d)$ the space of distributions on $(0,T)\times\R^d$. 

\begin{defi}[Auxiliary spaces]
	\label{d:aux.sp}
Let $T \in (0,\infty],$ $k,l\in\R$ and $p, q\in [1,\infty]$. For a pair $(l,q)$ such that  
\begin{equation}	\nonumber
	0\le \frac{l}{d} + \frac{1}{q}, 
\end{equation}
let $(k,p)$ be such that 
\begin{equation}	\label{l:lin.est.pk}
	k \le l \quad\text{and}\quad 
	0 \le \frac{k}{d} + \frac{1}{p} \le \frac{l}{d} + \frac{1}{q}. 
\end{equation}
The auxiliary space $\mathcal{K}^{p}_{k}(T)$ is defined by 
	\begin{equation}	\nonumber
	\mathcal{K}^{p}_{k}(T)
		:=\left\{u\in \mathscr{D}'((0,T)\times\R^d) \,;\, 
		\|u\|_{\mathcal{K}^{p}_{k}(T')} <\infty \ \text{for all } T' \in (0,T)\right\}	
	\end{equation}
endowed with a norm
	\begin{equation}	\nonumber
		\|u\|_{\mathcal K^{p}_{k}(T)} 
		:=
		\sup_{0< t< T}
			t^{\frac{d}{2} (\frac{l}{d}+\frac1{q}-\frac{k}{d}-\frac1{p}) } 
			\|u(t)\|_{L^{p,\infty}_{k}},
	\end{equation}
where 
\[
	\|f\|_{L^{p,\infty}_k} := \| |\cdot |^k f\|_{L^{p,\infty}}.
\]
See Definition \ref{def:WLS} for the precise definition of the norm. 
We simply write $\mathcal{K}^{p}_{k}=\mathcal{K}^{p}_{k}(\infty)$ when $T=\infty.$ 
Note that when $(l,q)$ satisfies \eqref{d:critical}, we may write the auxiliary norm as 
\begin{equation}	\label{d:aux.norm}
	\|u\|_{\mathcal K^{p}_{k}(T)} =
	\sup_{0< t < T} 
		t^{\frac{d}{2} (\frac1{q_c(\gamma)}-\frac{k}{d}-\frac1{p}) } 
		\|u(t)\|_{L^{p,\infty}_{k}}. 
\end{equation}
\end{defi}

%%%%%%%%%%%%
% Subsection1.2 Local well-posedness
%%%%%%%%%%%%
\subsection{Local well-posedness}

Our first result is dedicated to the local well-posedness of \eqref{HHNLH} in the critical weighted Lorentz space 
$L^{q,r}_{l}(\R^d)$, where $(l,q)$ satisfies \eqref{d:critical}. 
For the definition of $\mathcal L^{q,r}_s (\mathbb R^d)$ in the following, see Definition \ref{def:WLS}. 

\begin{thm}[Local well-posedness in the critical space]
	\label{t:HH.LWP}
Let $d\in\mathbb{N},$ $\gamma\in\R$ and $\alpha\in\R$ satisfy 
\begin{equation}	\label{t:HH.LWP.c.paramet}
	\gamma> -\min(2,d)
		\quad\text{and}\quad
	\alpha> \alpha_F(\gamma),
\end{equation}
where $\alpha_F(\gamma)$ is defined by \eqref{d:Fujita.exp}. Let $l\in\R$, $q\in [1,\infty]$ and $r \in [1,\infty]$ satisfy 
\eqref{d:critical},  
\begin{equation}	\label{t:HH.LWP.c.ql}
	\frac{\gamma}{\alpha-1}\le l, \quad 
	\quad r=1 \text{ if } q=1 
		\quad\text{and}\quad
	r=\infty \text{ if } q=\infty. 
\end{equation}
Then the Cauchy problem \eqref{HHNLH} is locally well-posed in $L^{q,r}_{l}(\R^d)$ for arbitrary initial data. 
More precisely, the following assertions hold. 

\begin{itemize}
\item[$(\rm{i})$] {\rm (}Existence{\rm )} 
	For any $\varphi \in L^{q,r}_{l}(\R^d)$ with $r <\infty$ 
	(Replace $L^{q,\infty}_{l}(\R^d)$ with 
	$\mathcal{L}^{q,\infty}_{l}(\R^d)$ when $r = \infty$), 
	there exist a positive number $T$ depending on $\varphi$
	and an $L^{q,r}_{l}(\R^d)$-mild solution $u$
	% \in C([0,T] \,;\, L^{q,r}_{l}(\R^d))\cap \mathcal{K}^{p}_{k}(T)$ 
	to \eqref{HHNLH} satisfying 
	\begin{equation}	\label{t:HH.LWP.est}
		\|u\|_{\mathcal{K}^{p}_{k}(T)} 
			\le 2 \|e^{t\Delta} \varphi \|_{\mathcal{K}^{p}_{k}(T)}, 
	\end{equation}
	where $k\in\R$ and $p\in[1,\infty]$ satisfy 
	\begin{equation}	\label{t:HH.LWP.c.pk}
		\begin{aligned}
		& \alpha < p \le \infty, \quad 
			\frac{l+\gamma}{\alpha} \le k \le l, \\
		&0\le \frac{k}{d} + \frac{1}{p}
			\quad\text{and}\quad
		\frac{2+\gamma\alpha}{d\alpha(\alpha-1)} 
			< \frac{k}{d} + \frac1{p}
			<\min\left\{ \frac{1}{q_c(\gamma)}, \, \frac{d+\gamma}{d\alpha}\right\}.
		\end{aligned}
	\end{equation}

\item[$(\rm{ii})$] {\rm (}Uniqueness{\rm )} 
	Let $T>0.$ If $u, v \in \mathcal{K}^{p}_{k}(T)$ satisfy 
	\eqref{intHHNLH} with $u(0) = v(0)=\varphi \in L^{q,r}_{l}(\R^d)$ 
	$($Replace $L^{q,\infty}_{l}(\R^d)$ with $\mathcal{L}^{q,\infty}_{l}(\R^d)$ when $r=\infty$$)$, then $u=v$ on $[0,T].$
	Moreover, the solution can be extended to the maximal interval $[0,T_m),$ 
	where $T_m$ is defined by \eqref{d:Tm}. 	

\item[$(\rm{iii})$] {\rm (}Continuous dependence on initial data{\rm )}  
	Let $u$ and $v$ be the $L^{q,r}_{l}(\R^d)$-mild solutions constructed 
	in $(\rm{i})$ with given initial data $\varphi$ and $\psi$, respectively. 
	Let $T(\varphi)$ and $T(\psi)$ be the corresponding existence times. 
	Then there exists a constant $C$ depending on $\varphi$ and $\psi$ such that 
	the solutions $u$ and $v$ satisfy 
	\begin{equation}	\nonumber
		\|u-v\|_{L^\infty(0,T;L^{q,r}_{l}) \cap \mathcal{K}^{p}_{k}(T)} 
		\le C \|\varphi-\psi\|_{L^{q,r}_{l}}
	\end{equation}
	for $T< \min\{T(\varphi), T(\psi)\}.$ 

\item[$(\rm{iv})$] {\rm (}Blow-up criterion{\rm )} 
	If $u$ is an $L^{q,r}_{l}(\R^d)$-mild solution constructed in the assertion $(\rm{i})$ and 
	$T_m<\infty,$ then $\|u\|_{\mathcal{K}^{p}_{k}(T_m)}=\infty.$

\item[$(\rm{v})$] {\rm (}Additional regularity{\rm )} 
	We have $\displaystyle \|u\|_{\mathcal{K}^{\tilde p}_{\tilde k}(T)} < \infty$ 
	for $(\tilde k, \tilde p) \in \R \times [1,\infty]$ satisfying
	\begin{equation}	\label{t:HH.LWP.reg.c1}
		\tilde k\le l, \quad
		q \le \tilde p
			\quad\text{and}\quad
		0\le \frac{\tilde k}{d} + \frac1{\tilde p}. 
	\end{equation}

%\item {\rm (}Small data global existence and dissipation{\rm )} 
%	There exists $\ep_0>0$ depending only on $d,\gamma,\alpha,q$ and $s$ 
%	such that if $\varphi \in \mathcal{S}'(\R^d)$ satisfies 
%	$\|e^{t\Delta}\varphi\|_{\mathcal{K}^{p}_{k}}<\ep_0,$ then $T_m=\infty$ and 
%	$\|u\|_{\mathcal{K}^{p}_{k}} \le 2\ep_0.$ Moreover, the solution $u$ is dissipative. 
%%	In particular, if $\|\varphi\|_{L^{q,r}_{l}}$ is sufficiently small, then 
%%	$\|e^{t\Delta}\varphi\|_{\mathcal{K}^{p}_{k}}<\ep_0.$ 
\end{itemize}
\end{thm}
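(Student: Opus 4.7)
The strategy is to prove all five assertions through a Banach fixed-point argument for the Duhamel map
\[
\Phi(u)(t) := e^{t\Delta}\varphi + a\int_0^t e^{(t-\tau)\Delta}\bigl\{|\cdot|^\gamma|u(\tau)|^{\alpha-1}u(\tau)\bigr\}\,d\tau
\]
on a closed ball of $\mathcal{K}^{p}_{k}(T)$ of radius $2\|e^{t\Delta}\varphi\|_{\mathcal{K}^{p}_{k}(T)}$. The restrictions on $(k,p)$ in~\eqref{t:HH.LWP.c.pk} are exactly what forces the linear map $\varphi\mapsto e^{t\Delta}\varphi$ to act boundedly from $L^{q,r}_{l}(\R^d)$ into $\mathcal{K}^{p}_{k}(T)$ (with vanishing norm on small time intervals when $r<\infty$, or $\varphi\in\mathcal{L}^{q,\infty}_{l}$) and, simultaneously, to close the nonlinear map on $\mathcal{K}^{p}_{k}(T)$.

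The two core a priori bounds to establish are the linear estimate
\[
\|e^{t\Delta}\varphi\|_{\mathcal{K}^{p}_{k}(T)}\ \lesssim\ \|\varphi\|_{L^{q,r}_{l}},
\]
which follows from a weighted Lorentz heat-kernel inequality $\|e^{t\Delta}f\|_{L^{p,\infty}_{k}}\lesssim t^{-\frac{d}{2}(\frac{l}{d}+\frac{1}{q}-\frac{k}{d}-\frac{1}{p})}\|f\|_{L^{q,r}_{l}}$ valid under~\eqref{l:lin.est.pk} and $\gamma/(\alpha-1)\le l$; and the nonlinear estimate
\[
\Bigl\|\int_0^t e^{(t-\tau)\Delta}\bigl(|\cdot|^\gamma|u|^{\alpha-1}u\bigr)(\tau)\,d\tau\Bigr\|_{\mathcal{K}^{p}_{k}(T)}\ \lesssim\ T^{\sigma}\|u\|_{\mathcal{K}^{p}_{k}(T)}^{\alpha},
\]
for some $\sigma>0$. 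The chain of inclusions I would use is: if $u\in\mathcal{K}^{p}_{k}(T)$, H\"older in weighted Lorentz spaces gives $|u|^{\alpha-1}u(\tau)\in L^{p/\alpha,\infty}_{k\alpha}$ with a prescribed $\tau$-weight, multiplication by $|x|^\gamma$ puts this in $L^{p/\alpha,\infty}_{k\alpha+\gamma}$, and a second application of the weighted heat-kernel estimate, followed by time integration, returns to $L^{p,\infty}_{k}$. The strict lower bound $\frac{k}{d}+\frac{1}{p}>\frac{2+\gamma\alpha}{d\alpha(\alpha-1)}$ provides the positive power $\sigma>0$ so that smallness is gained on short time intervals, while the strict upper bound $\frac{k}{d}+\frac{1}{p}<\frac{d+\gamma}{d\alpha}$ together with $(l+\gamma)/\alpha\le k\le l$ guarantees the intermediate Lorentz exponents lie in the admissible range and the singular/unbounded weight $|x|^\gamma$ is absorbed.

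Given these two estimates, the contraction mapping theorem yields $(\mathrm{i})$ together with~\eqref{t:HH.LWP.est}. The continuity $u\in C([0,T];L^{q,r}_{l})$ comes from the continuity of $t\mapsto e^{t\Delta}\varphi$ in the underlying space (here the assumption $r<\infty$, respectively $\varphi\in\mathcal{L}^{q,\infty}_{l}$, is used) combined with an $L^{q,r}_{l}$-bound on the Duhamel term derived from the same H\"older-plus-heat-kernel scheme. Uniqueness $(\mathrm{ii})$ and continuous dependence $(\mathrm{iii})$ follow by applying the nonlinear estimate to the difference $u-v$ and a standard Gronwall-type argument, with the maximal interval constructed by iterating the local existence time from each reached state. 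The blow-up criterion $(\mathrm{iv})$ is obtained by contraposition: if $\|u\|_{\mathcal{K}^{p}_{k}(T_m)}<\infty$, then reinserting $u(T_m-\epsilon)$ as new initial data allows extension past $T_m$. For $(\mathrm{v})$, the linear estimate applied with $(k_1,p_1)$ in place of $(k,p)$ is admissible thanks to~\eqref{t:HH.LWP.reg.c1}, and the nonlinear term is controlled using only the $\mathcal{K}^{p}_{k}$-norm on the factor $|u|^{\alpha-1}u$; interpolation between the $(k,p)$ and $(k_1,p_1)$ bounds then yields the intermediate $L^{p_2,1}_{k_2}$ and $L^{p_3,1}_{k_3}$ statements.

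The main technical obstacle will be proving the nonlinear estimate uniformly across $\gamma>0$, $\gamma=0$, and $-\min(2,d)<\gamma<0$. This requires a careful weighted-Lorentz H\"older inequality and a weighted heat-kernel bound that together accommodate both the Hardy singularity at the origin and the H\'enon growth at infinity. The strict inequalities in~\eqref{t:HH.LWP.c.pk} are precisely the scaling windows forced by these competing constraints, and a subsidiary but necessary step will be to verify, under the assumptions~\eqref{t:HH.LWP.c.paramet}--\eqref{t:HH.LWP.c.ql}, that this window of admissible $(k,p)$ is non-empty.
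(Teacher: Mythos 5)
Your overall route --- a contraction argument for the Duhamel map on a ball of $\mathcal{K}^{p}_{k}(T)$, built on the weighted Lorentz heat-semigroup estimate (Lemma \ref{l:linear-main}) and the weighted H\"older inequality (Lemma \ref{lem:Holder}), then difference estimates for uniqueness and continuous dependence and a bootstrap-plus-interpolation step for (v) --- is essentially the paper's (Corollaries \ref{c:Kato.est}, \ref{c:crt.stb} and Lemma \ref{l:exist.crt}). However, one step as you state it would fail: in the critical case \eqref{d:critical} the nonlinear estimate cannot gain a factor $T^{\sigma}$ with $\sigma>0$. The norm \eqref{d:aux.norm} and the nonlinearity are invariant under the scaling \eqref{scale}, so the only admissible exponent is $\sigma=0$; this is exactly what Corollary \ref{c:Kato.est} says (the $T$-exponent $\frac{d(\alpha-1)}{2}(\frac{1}{q_c(\gamma)}-\frac{l}{d}-\frac{1}{q})$ of Lemma \ref{l:Kato.est} vanishes under \eqref{d:critical}). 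The strict lower bound $\frac{k}{d}+\frac{1}{p}>\frac{2+\gamma\alpha}{d\alpha(\alpha-1)}$ only ensures convergence of the time integral (the Beta function), not smallness. Hence the contraction must be closed by the other mechanism you mention only in passing: $\|e^{t\Delta}\varphi\|_{\mathcal{K}^{p}_{k}(T)}\to 0$ as $T\to 0$, obtained by density of $C^{\infty}_0$ when $r<\infty$ (and this is precisely why $L^{q,\infty}_{l}$ is replaced by $\mathcal{L}^{q,\infty}_{l}$ when $r=\infty$). This also explains why the existence time depends on $\varphi$ itself and not merely on $\|\varphi\|_{L^{q,r}_{l}}$, and why the blow-up criterion (iv) is phrased through the $\mathcal{K}^{p}_{k}$-norm rather than the $L^{q,r}_{l}$-norm.

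Two smaller points. For (ii), a Gronwall-type argument does not apply directly: in the critical case the difference estimate carries no smallness in $T$, and an arbitrary element of $\mathcal{K}^{p}_{k}(T)$ solving \eqref{intHHNLH} need not have small norm on $(0,T')$ as $T'\to 0$; the paper itself defers this to the method of \cite{CIT2021}. For (v), a single application of the heat-kernel estimate starting from the $\mathcal{K}^{p}_{k}$-bound does not reach every $(k_1,p_1)$ allowed by \eqref{t:HH.LWP.reg.c1}, because each step is constrained by the time-integrability condition $\alpha(\frac{k}{d}+\frac{1}{p})-\frac{2+\gamma}{d}<\frac{k_1}{d}+\frac{1}{p_1}$; as in the paper (Lemma \ref{l:b.strap} and the proof of assertion (iv) of Lemma \ref{l:exist.crt}), one must iterate the upgrade finitely many times before interpolating to get the $L^{p_2,1}_{k_2}$ and $L^{p_3,1}_{k_3}$ statements.
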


For subcritical initial data satisfying \eqref{d:subcritical}, we also have a local theory, 
which we present in Appendix. 

\begin{rem}
A few remarks are in order concerning Theorem \ref{t:HH.LWP}. 
\begin{itemize}

\item[(a)] 
	Thanks to Proposition \ref{p:embd} below, we have the embedding 
	\begin{equation}	\label{embd:id}
		L^{\infty}_{\frac{2+\gamma}{\alpha-1}}(\R^d)
		\hookrightarrow L^{q,r}_{l}(\R^d) 
		\hookrightarrow L^{q_c(0),\infty}_{\frac{\gamma}{\alpha-1}}(\R^d)
	\end{equation}
	for $(l,q)\in \R\times [1,\infty]$ satisfying \eqref{d:critical} and \eqref{t:HH.LWP.c.ql}. 
	Thus, the largest spaces of initial data in our results are 
	$L^{q_c(0),r}_{\frac{\gamma}{\alpha-1}}(\R^d)$ ($r<\infty$) or 
	$\mathcal{L}^{q_c(0),\infty}_{\frac{\gamma}{\alpha-1}}(\R^d)$. 
	In particular, since $L^{q_c(\gamma)}(\R^d) \hookrightarrow L^{q_c(0)}_{\frac{\gamma}{\alpha-1}}(\R^d)$, 
	Theorem \ref{t:HH.LWP} improves the initial data in \cite[Theorem 1.1]{BenTayWei2017} for the critical case 
	when $\gamma<0$. 
	Theorem \ref{t:HH.LWP} also improves \cite[Theorem 1.4]{CIT2022} for $\gamma>0$ 
	by employing the Lorentz norm. 

\item[(b)] It is known that the condition $\alpha>\alpha_F(\gamma)$ in \eqref{t:HH.LWP.c.paramet} is 
	optimal when dealing with global solutions. 
%	This condition seems to be unavoidable in the critical framework for 
%	local existence and uniqueness, similarly to the Fujita case $\gamma=0$, for generic data. 
	When $\alpha=\alpha_F(\gamma)$, we have $q_c(\gamma)=1$ and the critical spaces correspond to
	$L^{q,r}_l(\R^d)$ with $\frac{l}{d} + \frac1{q} = 1$. 
	In this case, we remark that local existence can be shown for a carefully chosen initial data \cites{HisSie2024, HisTak2021}. 
	Similarly to the Fujita case in the Lebesgue space $L^1(\R^d)$, we do not know the local solvability for generic data in this so-called double-critical case. 
	
\item[(c)] The condition $\frac{\gamma}{\alpha-1}\le l$ in \eqref{t:HH.LWP.c.ql} is 
	the optimal polynomial growth rate of initial data in order to assure the local existence of classical solutions. 
%	Note that the weight exponent $l$ of initial data is determined by $\gamma$, and 
%	the weight exponent $k$ of the auxiliary space is determined by $l$. 
	For $d\ge 3$ and $\gamma\ge 0$, it is proved in \cite{Wan1993} that for $\varphi\in C(\R^d)$ satisfying 
	$0\le \varphi(x)\le c |x|^{-\frac{\gamma}{\alpha-1}}$ on $\R^d$, 
	\eqref{HHNLH} admits a local classical solution with the same spatial decay rate. 
	Meanwhile, it is also proved in \cite{Wan1993} that this spatial decay rate is optimal in the following sense: 
	If $\varphi$ has a spatial decay slower than the above rate, namely, if there is some $\ep>0$ such that 
	$c |x|^{-\frac{\gamma}{\alpha-1}+ \ep} \le \varphi(x)$ as $|x|\to\infty$, 
	then the problem \eqref{HHNLH} does not admit a local solution with polynomial spatial growth of any order. 
	We note that the proof of \cite[Theorem 2.5]{Wan1993} also works for $\gamma<0$ and for $q<\infty$. In the case $q<\infty$, 
	we eventually have $L^\infty(\R^d)$-regularity which allows us use the same argument as \cite[Theorem 2.5]{Wan1993}.

\item[(d)] We may define Besov-type spaces based on our weighted Lorentz norm, in which context 
	the norm $\|e^{t\Delta} \varphi \|_{\mathcal{K}^{p}_{k}(T)}$ is equivalent to the Besov-type norm of negative derivative, 
	that has the same scaling as $L^{q,r}_{l}(\R^d)$ under \eqref{d:critical}, or $L^{\infty}_{\frac{2+\gamma}{\alpha-1}}(\R^d)$. 
	This has been observed by many authors; see \cite{SnoTayWei2001} for instance. 
%	Compared to the condition $\frac{\gamma}{\alpha-1}\le l$, 
%	the implications of this condition $e^{t\Delta} \varphi \in \mathcal{K}^{p}_{k}(T)$ are twofolds: 
%	one is that it is the admissible local singularity for generic initial data 
%	to ensure the local solvability \cite{BenTayWei2017, HisSie2024, HisTak2021}; 
%	another is that it tells the admissible spatial decay necessary to generate the global solution, 
%	as shown by \cite{BenTayWei2017} and Theorem \ref{t:globalsol} below. 
%	Readers are encouraged to compare the results in the above references.  

\item[(e)] We remark that if $\gamma\ge0$, then \eqref{t:HH.LWP.reg.c1} assures that 
	$u\in L^\infty((0,T) \times \R^d)$ regardless of the choice of the auxiliary space, while 
	we may not prove $u\in L^\infty((0,T) \times \R^d)$ for $\gamma<0$ depending on the choice of 
	$l$ in \eqref{t:HH.LWP.c.pk}. More precisely, if $\gamma<0$ and $l<0$, 
	we may not show $u(t) \in L^\infty(\R^d)$ due to the conditions in Lemma \ref{l:b.strap}. 
\end{itemize}
\end{rem}

\subsection{Global existence and forward self-similar solution}

As a consequence of the local theory and its proof, we may naturally deduce the following global existence result.

\begin{thm}[Global existence]
	\label{t:globalsol}
Let $d\in\N$, $\gamma\in\R$, $\alpha\in\R$ be such that \eqref{t:HH.LWP.c.paramet}. 
Let $l\in\R$ and $q\in [1,\infty]$ satisfy 
\eqref{d:critical} and \eqref{t:HH.LWP.c.ql} and let $r \in [1,\infty]$. 
Let $\varphi\in \mathcal{S}'(\R^d)$ be such that $\|e^{t\Delta}\varphi\|_{\mathcal K^{p}_{k}}$ is sufficiently small, 
where $(k,p)$ satisfies \eqref{t:HH.LWP.c.pk}. Then the following assertions hold. 
\begin{itemize}
	\item[(i)] There exists a global-in-time solution $u\in \mathcal K^{p}_{k}$ 
		satisfying the integral equation \eqref{intHHNLH} for any $t \in (0,\infty)$ 
		and almost everywhere $x \in \R^d$.
		Moreover $u(t)\to \varphi$ in $\mathcal{S}'(\R^d)$ as $t\to 0.$ 
	\item[(ii)] 
		If furthermore $\varphi \in L^{q,r}_{l}(\R^d)$, 
		then the solution is an $L^{q,r}_{l}(\R^d)$-mild solution as defined in Definition \ref{d:mildsol}, provided $r<\infty$. 
		The solution also dissipates in $L^{q,r}_{l}(\R^d)$, i.e., 
		$\|u(t)\|_{L^{q,r}_{l}}\to0$ as $t\to\infty$. 
\end{itemize}
\end{thm}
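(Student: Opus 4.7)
My plan is to extend the fixed-point construction of Theorem \ref{t:HH.LWP} from finite $T$ to $T=\infty$ by exploiting the global smallness of $\|e^{t\Delta}\varphi\|_{\mathcal K^{p}_k}$, and then to propagate the decay encoded in the auxiliary norm to the $L^{q,r}_l$-topology. For part (i), I run the map
\begin{equation}\nonumber
\Phi(u)(t) := e^{t\Delta}\varphi + a\int_0^t e^{(t-\tau)\Delta}\bigl(|\cdot|^{\gamma}|u|^{\alpha-1}u\bigr)(\tau)\,d\tau
\end{equation}
on the ball $\{u\in\mathcal K^{p}_k:\|u\|_{\mathcal K^{p}_k}\le 2\|e^{t\Delta}\varphi\|_{\mathcal K^{p}_k}\}$ with $T=\infty$. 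The multilinear estimate $\|\Phi(u)-\Phi(v)\|_{\mathcal K^{p}_k}\le C(\|u\|_{\mathcal K^{p}_k}^{\alpha-1}+\|v\|_{\mathcal K^{p}_k}^{\alpha-1})\|u-v\|_{\mathcal K^{p}_k}$ proved inside Theorem \ref{t:HH.LWP} is scale-invariant and carries no finite-$T$ factor, so the smallness hypothesis makes $\Phi$ a contraction on this ball uniformly for all $t>0$. Regularity $u\in C((0,\infty);L^{q,r}_l)$ comes from assertion (v) of Theorem \ref{t:HH.LWP} applied on compact subintervals, and the initial trace $u(t)\to\varphi$ in $\mathcal S'$ is checked by testing against $\psi\in\mathcal S$, using continuity of $e^{t\Delta}$ on $\mathcal S'$ together with the integrability of $\tau\mapsto\|u(\tau)\|^\alpha_{L^{p,\infty}_k}$ near $\tau=0$ built into \eqref{t:HH.LWP.c.pk}.

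The bulk of the work is part (ii). The critical observation is that \eqref{t:HH.LWP.c.pk} forces the exponent $\sigma:=\frac{d}{2}(\frac{1}{q_c(\gamma)}-\frac{k}{d}-\frac{1}{p})$ appearing in \eqref{d:aux.norm} to be strictly positive, so $\|u(t)\|_{L^{p,\infty}_k}\lesssim t^{-\sigma}$ globally. Fixing $t_0:=t/2$ I would restart the integral equation at $t_0$:
\begin{equation}\nonumber
u(t)=e^{(t-t_0)\Delta}u(t_0)+\int_{t_0}^t e^{(t-\tau)\Delta}\bigl(|\cdot|^\gamma|u|^{\alpha-1}u\bigr)(\tau)\,d\tau.
\end{equation}
The Duhamel tail is controlled in $L^{q,r}_l$ by the weighted heat-kernel smoothing applied $\alpha$ times to $u$, combined with the $\tau^{-\alpha\sigma}$ decay of $\|u(\tau)\|_{L^{p,\infty}_k}^{\alpha}$, yielding a bound $\lesssim t^{-\eta}\|u\|_{\mathcal K^{p}_k}^{\alpha}$ for some $\eta>0$. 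The linear contribution $\|e^{(t-t_0)\Delta}u(t_0)\|_{L^{q,r}_l}$ is handled by a density argument: since $r<\infty$, a suitable subspace (essentially $C^\infty_c(\R^d)$ with appropriate behaviour near the origin when $l<0$) is dense in $L^{q,r}_l$, and for such test functions $\phi$ one has $\|e^{s\Delta}\phi\|_{L^{q,r}_l}\to 0$ as $s\to\infty$ by direct Gaussian computation (using precisely $\alpha>\alpha_F(\gamma)$ to obtain a negative exponent); the uniform boundedness of $e^{s\Delta}$ on $L^{q,r}_l$ then extends this decay from the dense subspace to $u(t_0)$ via an $\varepsilon/2$ approximation.

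For part (iii), when $\varphi\in L^{q,r}_l$ with $r<\infty$, strong continuity of $e^{t\Delta}$ on $L^{q,r}_l$ (which holds precisely because $r<\infty$) gives $e^{t\Delta}\varphi\in C([0,\infty);L^{q,r}_l)$, while continuity of the Duhamel integral at $t=0^+$ in $L^{q,r}_l$ follows from the $\alpha$-th-power smallness of the nonlinearity, so $u\in C([0,\infty);L^{q,r}_l)$ and is a genuine $L^{q,r}_l$-mild solution in the sense of Definition \ref{d:mildsol}. The principal obstacle lies in part (ii): because $\varphi$ need only be a tempered distribution, the density argument cannot be applied to $\varphi$ itself and must pivot through $u(t_0)\in L^{q,r}_l$ at some $t_0>0$; balancing the decay of the restarted linear semigroup against the Duhamel tail through a single splitting point $t_0=t/2$ is the delicate quantitative step, and the positivity of $\sigma$ encoded in \eqref{t:HH.LWP.c.pk} is exactly what makes the balance possible.
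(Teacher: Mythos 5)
Your parts (i) and (iii) follow essentially the paper's own route (the critical contraction estimate carries no power of $T$, so the fixed point of Lemma \ref{l:exist.crt} runs directly with $T=\infty$; continuity at $t=0$ by the classical density argument), apart from the minor slip that time-continuity in $L^{q,r}_{l}$ comes from Lemma \ref{l:exist.crt} rather than from assertion (v) of Theorem \ref{t:HH.LWP}. The genuine gap is in part (ii). Your claim that the Duhamel tail $\int_{t/2}^{t}e^{(t-\tau)\Delta}\bigl(|\cdot|^{\gamma}|u|^{\alpha-1}u\bigr)d\tau$ is bounded in $L^{q,r}_{l}$ by $t^{-\eta}\|u\|^{\alpha}_{\mathcal K^{p}_{k}}$ with $\eta>0$ is incompatible with criticality. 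Setting $\sigma=\frac d2\bigl(\frac1{q_c(\gamma)}-\frac kd-\frac1p\bigr)$ and applying Lemma \ref{l:linear-main} as in Corollary \ref{c:crt.stb}, the tail is controlled by $\int_{t/2}^{t}(t-\tau)^{-a}\tau^{-\alpha\sigma}d\tau$ with $a=\frac d2\bigl[\alpha\bigl(\frac kd+\frac1p\bigr)-\frac{\gamma}{d}-\frac1{q_c(\gamma)}\bigr]$, and under \eqref{d:critical} one has $a+\alpha\sigma=1$ exactly, so this integral is a constant independent of $t$: no positive power of decay can be extracted. This is also visible in Corollary \ref{c:crt.stb}, whose bound carries no $T$-factor, and in the self-similar solutions of Theorem \ref{t:HH.self.sim}, for which $u_{\mathcal S}(t)-e^{t\Delta}\Phi$ has $t$-independent critical norm even though $\sigma>0$ there as well. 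The second half of your argument for (ii) also does not close: the density argument gives $\|e^{s\Delta}\phi\|_{L^{q,r}_{l}}\to0$ as $s\to\infty$ for a \emph{fixed} $\phi\in L^{q,r}_{l}$, but you apply it to the moving profile $u(t_0)$ with $t_0=t/2\to\infty$, for which no uniformity over the family $\{u(t/2)\}_{t}$ is available.

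What is actually needed is absorption rather than a power gain. One first shows $\|e^{t\Delta}\varphi\|_{L^{q,r}_{l}}\to0$ as $t\to\infty$ by density, using $r<\infty$ (this is where $\varphi$, not $u(t/2)$, must lie in $L^{q,r}_{l}$); then, as in Lemma \ref{l:crt.est2}, one rescales the Duhamel integral to $\tau\in(0,1)$ and uses dominated convergence to obtain
\[
\limsup_{t\to\infty}\|N(u)(t)\|_{L^{q,r}_{l}}\le C\,M^{\alpha-1}\limsup_{t\to\infty}\|u(t)\|_{L^{q,r}_{l}},
\]
and the smallness of $M$ allows the right-hand side to be absorbed, giving $\limsup_{t\to\infty}\|u(t)\|_{L^{q,r}_{l}}=0$. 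This is precisely the mechanism behind Theorem \ref{t:stab.asym.behv} (applied with $\psi=0$, $v=0$) and is the ``standard argument'' the paper invokes for assertion (ii); the positivity of $\sigma$ alone cannot produce dissipation in the scale-invariant topology.
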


%\begin{rem}
%Note that in Theorem \ref{t:globalsol}, we assume the smallness of $\|e^{t\Delta}\varphi\|_{\mathcal K^{p}_{k}}$, 
%while in Theorem \ref{t:HH.LWP}, we do not. The density of $C_0^{\infty}(\R^d)$ in $L^{q,r}_{l}(\R^d)$ 
%is required in Theorem \ref{t:HH.LWP} so as to shrink $\|e^{t\Delta}\varphi\|_{\mathcal K^{p}_{k}(T)}$ by taking $T$ small. 
%Note also that in Theorem \ref{t:globalsol}, we may take $\varphi\in L^{q,\infty}_{l}(\R^d) \setminus \mathcal{L}^{q,\infty}_{l}(\R^d)$  
%as long as $\|e^{t\Delta}\varphi\|_{\mathcal K^{p}_{k}}$ is small. See Remark \ref{r:globalsol} (2) below. 
%\end{rem}

\begin{rem}
	\label{r:globalsol}
A few remarks are in order concerning Theorem \ref{t:globalsol}. 
\begin{itemize}

\item[(a)] 
	The examples of initial data satisfying the assumptions of Theorem \ref{t:globalsol} include the following:
	\begin{itemize}
	\item[(1)] $\varphi\in L^{q,r}_{l}(\R^d)$ with $\|\varphi\|_{L^{q,r}_{l}}$ sufficiently small;
	\item[(2)] $\varphi \in L^1_{loc}(\R^d)$ satisfying 
		$|\varphi(x)| \leq c\left(1+|x|^2\right)^{- \frac{\sigma}{2} }$ 
		for all $x\in \R^d$ with $c$ sufficiently small and
		\begin{equation}	\nonumber
			  \sigma>\displaystyle\frac{2+\gamma}{\alpha-1};
		\end{equation}
	\item[(3)] $\varphi\in L^1_{loc}(\R^d)$ satisfying $|\varphi(x)|\leq c |x|^{- \frac{2+\gamma} {\alpha-1} }$ 
		for all $x\in \R^d$ with $c$ sufficiently small. 
	\end{itemize}

\item[(b)] One of the results proved in \cite{Wan1993} is the following (See also \cites{GuiNiWan1992, GuiNiWan2001}):
	Let $d\ge 3$ and $\alpha>\frac{d+\gamma}{d-2}$.  Let $U(x)$ be the singular stationary solution of \eqref{HHNLH} with $a=1$, i.e., 
	\begin{equation}\nonumber
		U(x) = \left( \frac{(2+\gamma)(d-2)}{(\alpha-1)^2} \left( \alpha - \frac{d+\gamma}{d-2}\right) \right)^{\frac1{\alpha-1}} 
				|x|^{-\frac{2+\gamma}{\alpha-1}}. 
	\end{equation}
	%Let 
	%\begin{equation}\nonumber
	%	C^{+}_{U}(\R^d) := \{\varphi\in C(\R^d) \,;\, \varphi\ge0, \ \ \|\varphi\|_{C_U} := \|\varphi U^{-1}\|_{L^\infty} < \infty\}
	%\end{equation}
	Then if $\varphi\in C(\R^d)$, $\varphi\ge0$ and $\|\varphi\|_{C_U} := \|\varphi U^{-1}\|_{L^\infty} < \lambda$ for any $\lambda<1$, 
	there exists a global non-negative solution $u$ to \eqref{HHNLH} with $a=1$ 
	such that $\|u(t)\|_{C_U}<\lambda$ for all $t>0$ and $u(t) \to 0$ in $L^\infty(\R^d)$ as $t\to \infty$. 
	This result in particular indicates the existence of an invariant set, in which the trivial solution is the attractor. 
	
	To compare with our results, the norm $\|\varphi\|_{C_U}$ used in \cite{Wan1993} 
	corresponds to $\|\varphi\|_{L^\infty_{\frac{2+\gamma}{\alpha-1}}}$, which is one of the scaling-critical space. 
	In Theorems \ref{t:HH.LWP} and \ref{t:globalsol} the space of initial data is generalized to $L^{q,r}_{l}(\R^d)$ satisfying \eqref{d:critical}, 
	which contain discontinuous functions. 
	%Moreover, we do not require the positivity of the solution. 
	On the other hand, our results require sufficient smallness of the initial data, 
	while in \cite{Wan1993} the smallness of initial data is exactly	determined by 
	the constant appearing in the singular solution $U(x)$. 

\end{itemize}
\end{rem}

We now turn to the existence of forward self-similar solutions to \eqref{HHNLH}, 
which is the foundation of our asymptotic results. 
Due to the homogeneity properties of \eqref{HHNLH}, and hence \eqref{intHHNLH}, 
it is clear that the set of solutions of \eqref{intHHNLH} is invariant under the transformation \eqref{scale}. 
Recall that a self-similar solution is a solution such that $u_\lambda(t,x)=u(t,x)$ for all $\lambda>0$. 
We have the following result. 

\begin{thm}[Existence of forward self-similar solutions]
	\label{t:HH.self.sim}
Let $d\in\mathbb{N},$ $\gamma\in\R$ and $\alpha\in\R$ satisfy \eqref{t:HH.LWP.c.paramet}. 
Let $l\in\R$ and $q\in [1,\infty]$ satisfy \eqref{d:critical} and \eqref{t:HH.LWP.c.ql}. 
Let $\Phi(x) := \omega(x) |x|^{-\frac{2+\gamma}{\alpha-1}},$ 
where $\omega\in L^\infty(\R^d)$ is homogeneous of degree $0$ and $\|\omega\|_{L^\infty}$ is sufficiently small. 
Then there exists a self-similar solution 
$u_\mathcal{S} \in \mathcal K^{p}_{k}$ 
of \eqref{HHNLH} with the initial data $\Phi$, where $(k,p)$ satisfies \eqref{t:HH.LWP.c.pk}, 
such that $u_\mathcal{S}(t) \to \Phi$ in $\mathcal{S}'(\R^d)$ as $t\to0.$ 
\end{thm}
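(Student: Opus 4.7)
The plan is to construct $u_{\mathcal{S}}$ as the small-data global solution furnished by Theorem \ref{t:globalsol} with initial data $\Phi$, and then to invoke uniqueness in the scale-invariant auxiliary class $\mathcal{K}^{p}_{k}$ to identify this solution with each of its parabolic rescalings, thereby establishing self-similarity.

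The first step is to verify the hypothesis of Theorem \ref{t:globalsol}, namely the smallness of $\|e^{t\Delta}\Phi\|_{\mathcal{K}^{p}_{k}}$ for some admissible pair $(k,p)$ satisfying \eqref{t:HH.LWP.c.pk}. Since $\omega$ is bounded and $0$-homogeneous, we have $|\Phi(x)|\le\|\omega\|_{L^\infty}|x|^{-\frac{2+\gamma}{\alpha-1}}$, and in particular $\Phi\in L^{q_c(\gamma),\infty}(\R^d)$. The homogeneity of $\Phi$ combined with the self-similarity of the heat kernel yields
\[
[e^{t\Delta}\Phi](x)=t^{-\frac{2+\gamma}{2(\alpha-1)}}\bigl[e^{\Delta}\Phi\bigr](x/\sqrt{t}),
\]
from which a rescaling of the weighted Lorentz norm gives
\[
\|e^{t\Delta}\Phi\|_{L^{p,\infty}_{k}}=t^{-\frac{d}{2}\left(\frac{1}{q_{c}(\gamma)}-\frac{k}{d}-\frac{1}{p}\right)}\|e^{\Delta}\Phi\|_{L^{p,\infty}_{k}}.
\]
By the definition \eqref{d:aux.norm} of the auxiliary norm, this reduces the bound to $\|e^{t\Delta}\Phi\|_{\mathcal{K}^{p}_{k}}=\|e^{\Delta}\Phi\|_{L^{p,\infty}_{k}}$. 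Splitting the convolution $G_{1}\ast\Phi$ into near-origin and far-field pieces, one verifies that the range of $(k,p)$ dictated by \eqref{t:HH.LWP.c.pk} is exactly what is needed for $e^{\Delta}\Phi\in L^{p,\infty}_{k}$ with norm bounded by $C\|\omega\|_{L^\infty}$. This can be made arbitrarily small, so Theorem \ref{t:globalsol} produces a global solution $u\in C((0,\infty);L^{q,r}_{l}(\R^d))\cap\mathcal{K}^{p}_{k}$ of small $\mathcal{K}^{p}_{k}$-norm with $u(t)\to\Phi$ in $\mathcal{S}'(\R^d)$ as $t\to 0$.

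For the self-similarity, for each $\lambda>0$ define $u_{\lambda}(t,x):=\lambda^{\frac{2+\gamma}{\alpha-1}}u(\lambda^{2}t,\lambda x)$. The homogeneity of $\Phi$ gives $u_{\lambda}(0,\cdot)=\Phi$, and a direct change of variables in \eqref{intHHNLH} shows that $u_{\lambda}$ again solves the integral equation with the same initial data. The time-weight in \eqref{d:aux.norm} is precisely the one that renders $\|\cdot\|_{\mathcal{K}^{p}_{k}}$ invariant under $u\mapsto u_{\lambda}$, so $\|u_{\lambda}\|_{\mathcal{K}^{p}_{k}}=\|u\|_{\mathcal{K}^{p}_{k}}$; both $u$ and $u_{\lambda}$ then lie in the small ball of $\mathcal{K}^{p}_{k}$ on which the Picard iteration underlying Theorem \ref{t:HH.LWP}(ii) is a strict contraction. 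Uniqueness in this ball gives $u_{\lambda}=u$ for every $\lambda>0$, so $u_{\mathcal{S}}:=u$ is self-similar, and the convergence $u_{\mathcal{S}}(t)\to\Phi$ in $\mathcal{S}'(\R^d)$ is inherited from Theorem \ref{t:globalsol}(i).

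The main obstacle I anticipate is verifying $\|e^{\Delta}\Phi\|_{L^{p,\infty}_{k}}\lesssim\|\omega\|_{L^\infty}$ uniformly over the admissible range \eqref{t:HH.LWP.c.pk}: because $\Phi$ simultaneously has a singularity at the origin and slow polynomial decay at infinity, and the weight $|x|^{k}$ can magnify either end, the convolution $G_{1}\ast\Phi$ must be decomposed carefully and each piece estimated separately. The strict subcriticality $\frac{k}{d}+\frac{1}{p}<\frac{1}{q_{c}(\gamma)}$ in \eqref{t:HH.LWP.c.pk} is what controls the far-field contribution, while the lower bound $\frac{2+\gamma\alpha}{d\alpha(\alpha-1)}<\frac{k}{d}+\frac{1}{p}$ governs the local behavior; once this input estimate is secured, the scale-invariance argument completes the proof essentially automatically.
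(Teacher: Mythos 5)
Your proposal is correct and follows essentially the same route as the paper: construct the small-data global solution with initial data $\Phi$ via the fixed-point theory in $\mathcal{K}^{p}_{k}$, use the scale invariance of the auxiliary norm together with $\Phi_{\lambda}=\Phi$ and uniqueness to conclude $u_{\lambda}=u$, and inherit the convergence $u_{\mathcal S}(t)\to\Phi$ in $\mathcal{S}'(\R^d)$ from the abstract existence lemma. The only cosmetic difference is your hands-on decomposition of $G_{1}\ast\Phi$ to verify $\|e^{t\Delta}\Phi\|_{\mathcal{K}^{p}_{k}}\lesssim\|\omega\|_{L^\infty}$, which the paper instead obtains by noting $\Phi\in L^{q,\infty}_{l}(\R^d)$ for suitable $(l,q)$ on the critical line with norm controlled by $\|\omega\|_{L^\infty}$ and applying the weighted Lorentz smoothing estimate of Lemma \ref{l:linear-main}.
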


The existence of positive self-similar solution in the case $\gamma=0$ is studied by many authors. 
We refer to classical results such as \cites{HarWei1982,Wan1993}. 
For \eqref{HHNLH} with $\gamma\neq0$, the existence of radially symmetric self-similar solutions 
in the space $L^\infty_{\frac\gamma{\alpha-1}}(\R^d)$ is established in \cites{Hir2008,Wan1993} 
for the case $d\geq 3,$ $\gamma>-2$ and $\alpha\geq \frac{d+2+2\gamma}{d-2}$, or for the case 
$d=3,\; 0<\gamma<\sqrt{3}-1,$ and $\alpha_F(\gamma)<\alpha<5+2\gamma$, 
{\color{black} or for the case $-2<\gamma<0$, $\alpha_F(\gamma)<\alpha<\frac{d+2+2\gamma}{d-2}$ and $d\geq 3$ is established.}
Here, $L^\infty_{\frac\gamma{\alpha-1}}(\R^d)$ is the space of functions $f$ 
such that $|x|^{\frac\gamma{\alpha-1}} f(x)$ is bounded on $\R^d$
(For the precise definition, see Definition \ref{def:WLS}). 
%It is noted in \cite{Hir2008}*{Remark 1.1 (ii), p. 626} that the problem is open if $\gamma>0$ for $d\geq 4,$ 
%if $\gamma>\sqrt{3}-1$ for $d=3$ and $\alpha_F(\gamma)<\alpha.$ 
For sign-changing initial data, not necessarily radially symmetric self-similar solutions 
for the case $\gamma<0$ have been studied by \cites{Hir2008,Wan1993,CIT2022,BenTayWei2017,Chi2019}. 
The case $\gamma>0$ is studied by \cite{CIT2022}. 
%In \cite{CIT2022}, the existence of self-similar solution 
%not necessarily radial symmetric for any dimension $d\geq 1$ and for 
%$\gamma>0$ and $\alpha_F(\gamma)<\alpha$ is established. 

%%%%%%%%%%%%
% Subsection1.4 Asymptotic behaviors: real-valued case
%%%%%%%%%%%%
\subsection{Asymptotic behaviors: real-valued case}

If $\varphi\in L^{q,r}_{l}(\R^d)$ and $r<\infty$, all solutions are dissipative 
in the neighborhood of the trivial solution by Theorem \ref{t:globalsol} (ii). 
This is not the case for the solutions with the critical spatial decay $|x|^{-\frac{2+\gamma}{\alpha-1}}$, 
or the forward self-similar solutions constructed in Theorem \ref{t:HH.self.sim}, 
since we may only expect $u\in L^{\infty}((0,\infty); L^{q,\infty}_{l}(\R^d))$. 
In the next few theorems, we investigate the intermediate behavior of the dissipative solutions 
as well as solutions with critical spatial decay. 

Firstly, we prove the asymptotic stability of global solutions, which are 
generally known to hold for small solutions. See \cites{FerVil2006, MiaYua2007}. 
We may observe that for small global solutions, 
the asymptotic stability of linear solutions is equivalent to the stability of nonlinear solutions. 
%Inspired by \cite{MiaYua2007}*{Theorem 3.8}, we have the following result. 

\begin{thm}[Asymptotic stability]
	\label{t:stab.asym.behv}
Let $d\in\mathbb{N},$ $\gamma\in\R$ and $\alpha\in\R$ satisfy 
\eqref{t:HH.LWP.c.paramet}. 
Let $r \in [1,\infty]$ and let $(l,q)\in\R\times[1,\infty]$ and satisfy \eqref{t:HH.LWP.c.ql}. 
%Let $\varphi$ and $\psi\in L^{q,r}_{l}(\R^d)$ be sufficiently small. 
Assume that $u$ and $v \in C([0,\infty); L^{q,r}_{l} (\R^d)) \cap \mathcal{K}^{p}_{k}$ 
are the global $L^{q,r}_{l}(\R^d)$-mild solutions to \eqref{intHHNLH} corresponding to $\varphi$ and $\psi$, 
respectively, such that $\max\{\|u\|_{\mathcal{K}^{p}_{k}}, \, \|v\|_{\mathcal{K}^{p}_{k}} \}\le M$
for $(k,p)\in\R\times[1,\infty]$ satisfying \eqref{t:HH.LWP.c.pk}. Suppose that 
\begin{equation}	\label{t:stab.asym.behv:eq0}
	\lim_{t\to\infty} \|e^{t\Delta} (\varphi -\psi)\|_{L^{q,r}_l} = 0
\end{equation}
holds. Then 
\begin{equation}	\label{t:stab.asym.behv:eq1}
	\lim_{t\to\infty} \|u(t) -v(t)\|_{L^{q,r}_l} = 0, 
\end{equation}
provided that $M$ is sufficiently small. Moreover, 
\begin{equation}	\label{t:stab.asym.behv:eq2}
	\lim_{t\to\infty} t^{\frac{d}{2} (\frac1{q_c(\gamma)}-\frac{k}{d}-\frac1{p}) } 
	\|u(t) -v(t)\|_{L^{p,\infty}_{k}} = 0. 
\end{equation}
Conversely, \eqref{t:stab.asym.behv:eq1} for any $M>0$ 
implies \eqref{t:stab.asym.behv:eq0}, provided that $q>1$. 

In particular, under the assumptions of Theorem \ref{t:globalsol}, \eqref{t:stab.asym.behv:eq0} and 
\eqref{t:stab.asym.behv:eq1} are equivalent for $L^{q,r}_{l}(\R^d)$-mild solutions, provided that $M$ is sufficiently small and $q>1$. 
\end{thm}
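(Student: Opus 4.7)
The plan is to set $w := u-v$ and treat it as a solution of the difference equation
\begin{equation*}
w(t) = e^{t\Delta}(\varphi-\psi) + a\int_0^t e^{(t-\tau)\Delta}\bigl[|x|^\gamma\bigl(|u|^{\alpha-1}u - |v|^{\alpha-1}v\bigr)\bigr]\,d\tau,
\end{equation*}
obtained by subtracting the two Duhamel formulas, and then to propagate the assumed vanishing of the linear part to $w$ itself by a fixed-point argument performed in a suitable tail norm.

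First I would establish a baseline bound. Combining the multilinear estimates already used to prove Theorem \ref{t:HH.LWP} with the pointwise inequality $\bigl||u|^{\alpha-1}u - |v|^{\alpha-1}v\bigr| \le \alpha(|u|^{\alpha-1}+|v|^{\alpha-1})|w|$, one obtains, for every $T \le \infty$,
\begin{equation*}
\|w\|_{L^\infty(0,T;L^{q,r}_l)\cap \mathcal{K}^p_k(T)} \le C_0\|e^{t\Delta}(\varphi-\psi)\|_{L^\infty(0,T;L^{q,r}_l)\cap \mathcal{K}^p_k(T)} + C_0 M^{\alpha-1}\|w\|_{L^\infty(0,T;L^{q,r}_l)\cap \mathcal{K}^p_k(T)}.
\end{equation*}
Choosing $M$ small enough that $C_0 M^{\alpha-1}\le 1/2$, the last term is absorbed and a uniform-in-$T$ bound results. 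Next, for $T_0 > 0$, I would introduce the tail seminorm
\begin{equation*}
\|f\|_{X(T_0)} := \sup_{t \ge T_0}\|f(t)\|_{L^{q,r}_l} + \sup_{t \ge T_0} t^{\frac{d}{2}(\frac{1}{q_c(\gamma)}-\frac{k}{d}-\frac{1}{p})}\|f(t)\|_{L^{p,\infty}_k},
\end{equation*}
and prove $\|w\|_{X(T_0)} \to 0$ as $T_0 \to \infty$. For $t \ge T_0$, I would split the Duhamel integral at $\tau = T_0/2$: the late part $\int_{T_0/2}^t$ is estimated by the same multilinear inequalities restricted to the shifted window, contributing $\le C_0 M^{\alpha-1}\|w\|_{X(T_0/2)}$; the early part $\int_0^{T_0/2}$ benefits from the heat-kernel smoothing factor $e^{(t-\tau)\Delta}$ with $t-\tau \ge T_0/2$, yielding a contribution that vanishes as $T_0 \to \infty$ thanks to the baseline bound on $u, v$. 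Together with the hypothesis $\|e^{t\Delta}(\varphi-\psi)\|_{X(T_0)}\to 0$ and the smallness of $M$, this simultaneously yields \eqref{t:stab.asym.behv:eq1} and \eqref{t:stab.asym.behv:eq2}.

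For the converse, I would rearrange the integral equation as $e^{t\Delta}(\varphi-\psi) = w(t) - \mathcal{N}(u,v)(t)$; the first term on the right vanishes by hypothesis, and one needs to show the same for the nonlinear term by splitting $\int_0^t = \int_0^{t/2} + \int_{t/2}^t$. The role of the assumption $q > 1$ is to ensure the strong long-time decay of $e^{t\Delta}$ on $L^{q,r}_l(\R^d)$ that kills the far-past piece $\int_0^{t/2}$, a property which fails at $q = 1$ because of conservation of the linear mass. The principal difficulty I expect is Step 2: checking that the multilinear estimates from the local theory transfer verbatim to the restricted time-window $[T_0/2,\infty)$ with the correct scaling weight, and quantifying the decay-in-$T_0$ of the early-time piece in the weighted Lorentz norm; once those are in place, the absorption step is routine.
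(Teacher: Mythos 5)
Your overall architecture (subtract the two Duhamel formulas, control the difference with the Kato/H\"older estimates, absorb by smallness of $M$, and for the converse rearrange $e^{t\Delta}(\varphi-\psi)=w(t)-\bigl[N(u)(t)-N(v)(t)\bigr]$) is the same as the paper's, but the key vanishing claim in your Step~2 is false, and this is where the critical scaling bites. With $a=\frac d2\bigl((\alpha-1)(\frac kd+\frac1p)-\frac\gamma d\bigr)$ and $b=\frac{d(\alpha-1)}2\bigl(\frac1{q_c(\gamma)}-\frac kd-\frac1p\bigr)$ one has exactly $a+b=1$ in the critical case, so for $t\ge T_0$ the early piece obeys $\bigl\|\int_0^{T_0/2}e^{(t-\tau)\Delta}\bigl[|x|^\gamma(F(u)-F(v))(\tau)\bigr]d\tau\bigr\|_{L^{q,r}_l}\lesssim M^{\alpha-1}\|w\|_{\mathcal{K}^p_k}\int_0^{T_0/2}(t-\tau)^{-a}\tau^{-b}\,d\tau\sim M^{\alpha-1}\|w\|_{\mathcal{K}^p_k}\,(T_0/t)^{a}$, and the analogous bound holds for the weighted $L^{p,\infty}_k$ component (the smoothing gain $(t-\tau)^{-c}$ is exactly eaten by the weight $t^{c}$). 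For $t\sim T_0$ this is of size $M^{\alpha-1}\|w\|_{\mathcal{K}^p_k}$ \emph{uniformly in} $T_0$: it is small because of $M^{\alpha-1}$, but it does not tend to $0$ as $T_0\to\infty$, since on $[0,T_0/2]$ you only have the baseline bound, not decay, and $e^{t\Delta}$ has no extra decay in the critical topology. Hence your tail inequality only closes to $\limsup_{t\to\infty}\|w(t)\|\le C M^{\alpha-1}\cdot(\text{global bound on }w)$, which is not \eqref{t:stab.asym.behv:eq1}--\eqref{t:stab.asym.behv:eq2}. The repair is to split \emph{proportionally} in $t$ (at $\tau=\delta t$, or substitute $\tau=ts$) and pass to the limit, which is exactly the mechanism of the paper's Lemma \ref{l:crt.est2} (change of variables plus dominated convergence), used together with the global absorption estimate \eqref{t:stab.asym.behv:pr1} and the factorization $e^{t\Delta}=e^{\frac t2\Delta}e^{\frac t2\Delta}$ (which you also need, but do not state, to convert the hypothesis \eqref{t:stab.asym.behv:eq0} into decay of the weighted linear term appearing in your norm $X(T_0)$).

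The same borderline issue, not semigroup decay, governs the converse: the far-past part of $N(u)(t)-N(v)(t)$ is again scale-critical, and a fixed split at $t/2$ with ``long-time decay of $e^{t\Delta}$ on $L^{q,r}_l$'' cannot kill it, because no such decay is available (nor used in the paper); the paper's Lemma \ref{l:crt.est2} handles it by $\tau=ts$ and dominated convergence, needing only $\|w\|_{L^\infty_t L^{q,r}_l}<\infty$ and $\|w(ts)\|_{L^{q,r}_l}\to0$ for each fixed $s$, and in particular no smallness of $M$. Your explanation of the hypothesis $q>1$ (conservation of linear mass at $q=1$) is also not the actual mechanism: $q>1$ enters through the exponent bookkeeping of Lemma \ref{l:crt.est2}, where one needs $\frac{\alpha-1}{p}+\frac1q\le1$ so that $|x|^\gamma(|u|^{\alpha-1}+|v|^{\alpha-1})|w|$, with $w$ measured in $L^{q,r}_l$, lands in a Lorentz space of exponent at least $1$ to which Lemma \ref{l:linear-main} applies with target $L^{q,r}_l$; at $q=1$ the condition $0\le\frac1p<\frac1{\alpha-1}(1-\frac1q)$ forces $\frac1p<0$ and the argument is unavailable.
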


In the following result we establish that a certain class of well-behaved initial data give rise to asymptotically self-similar global solutions. 
In particular, we see that the spatial decay at $|x|=\infty$ determines 
whether the solution is asymptotic to nonlinear self-similar solutions or linear self-similar solutions. 
The study in this direction as far as we know has been initiated by \cites{CazWei1998, SnoTayWei2001} 
for the case $\gamma=0$ and extended in \cite{BenTayWei2017} to the case $\gamma<0$. 
Here, we propose a unified approach for all $\gamma$. 
Moreover, we generalize the condition for the initial data compared to the previous works. 

\begin{thm}[Asymptotically self-similar global solutions]
	\label{t:asym.behv}
Let $d\in\N$, $\alpha\in\R$ and $\gamma\in\R$ satisfy \eqref{t:HH.LWP.c.paramet}. 
Let $l\in\R$, $q\in [1,\infty]$  satisfy \eqref{d:critical} and \eqref{t:HH.LWP.c.ql}. 
Let $\sigma\in\R$ be such that 
\begin{equation}	\nonumber
	{2+\gamma\over \alpha-1} \leq \sigma<d, 
\end{equation}
and let 
\begin{equation}	\label{t:asym.behv:c2.phi}
	\Phi_{\sigma}(x)=\omega(x)|x|^{-\sigma}, 
\end{equation}
where $\omega\in L^\infty(\R^d)$ is homogeneous of degree $0$, with $\|\omega\|_{L^{\infty}}$ sufficiently small.
Let $\varphi\in \mathcal{S}'(\R^d)$ be nontrivial and satisfy the assumptions of Theorem \ref{t:globalsol}. 
In addition, assume that there exist some positive constants $C$ and $\delta$ such that 
\begin{equation}	\label{t:asym.behv:c1.phi}
	\|e^{t\Delta} \varphi\|_{L^{p,\infty}_{k}}
	\le C t^{-\frac{d}2(\frac{\sigma}{d} - \frac{k}{d} - \frac1{p})} 
		\quad\text{and}\quad
	\|e^{t\Delta} (\varphi - \Phi_{\sigma} )\|_{L^{p,\infty}_{k}}
	\le C t^{-\frac{d}2(\frac{\sigma}{d} - \frac{k}{d} - \frac1{p})-\delta}
\end{equation}
for $t>0$, and let $u\in \mathcal{K}^{p}_{k}$ be the global solution of \eqref{HHNLH} 
with initial data $\varphi$ constructed in Theorem \ref{t:globalsol}, where 
$k\in\R$ and $p\in[1,\infty]$ satisfy \eqref{t:HH.LWP.c.pk}. 
Then we have the following assertions.
\begin{itemize}
	\item[(i)] $($Nonlinear behavior$)$ 
	If $\sigma={2+\gamma\over \alpha-1},$ then there exists a positive constant $C>0$ such that 
	%then there exists $\delta>0$ (possibly smaller than $\delta_0$) such that
	\begin{equation}	\label{t:asym.behv.NL:est1}
		\|u(t)-u_{\mathcal S}(t)\|_{L^{\tilde p,\infty}_{\tilde k}}
		\leq C t^{-\frac{d}{2} 
				({1 \over q_c(\gamma)}-{\tilde k\over d} - {1\over \tilde p} )-\delta}
	\end{equation}
	and 
	\begin{equation}	\label{t:asym.behv.NL:est2}
		\|{t}^\frac{2+\gamma}{2(\alpha-1)} u(t, \sqrt t \cdot) 
	  	  	-u_{\mathcal S}(1, \cdot )\|_{L^{\tilde p,\infty}_{\tilde k}} 
		\leq C t^{ -\delta}
	\end{equation}
	for all $t>0$ and $(\tilde k, \tilde p) \in\R\times[1,\infty]$ satisfying 
	\begin{equation}	\label{t:asym.behv:c.kp}
		\tilde k\le k, \quad
		p \le \tilde p
			\quad\text{and}\quad
		0\le \frac{\tilde k}{d} + \frac1{\tilde p}. 
	\end{equation}
	where $u_{\mathcal S}$ is the global self-similar solution of
	\eqref{HHNLH} with initial data $\omega(x) |x|^{-\frac{2+\gamma}{{\alpha-1}}}$. 
	In particular, there exists a constant $\tilde C>0$ such that for $t$ large 
	\begin{equation}	\label{t:asym.behv.L:est1}
		{\tilde C}^{-1} t^{-\frac{d}{2} ({1 \over q_c(\gamma)}-{\tilde k\over d}-{1\over \tilde p})}
		\leq \|u(t)\|_{L^{\tilde p,\infty}_{\tilde k}}
		\leq \tilde C t^{-\frac{d}{2} ({1 \over q_c(\gamma)}-{\tilde k\over d}-{1\over \tilde p})}. 
	\end{equation}
	\item[(ii)] $($Linear behavior$)$ 
	If $\sigma>{2+\gamma\over \alpha-1}$ and $\frac{\sigma+\gamma}{d\alpha} < \frac{k}{d} + \frac1{p}$, 
	then there exists a positive constant $C>0$ such that 
	\begin{equation}	\label{t:asym.behv.lin:c1}
		\|u(t)-e^{t\Delta} \Phi_\sigma \|_{L^{\tilde p, \infty}_{\tilde k}}
		\leq C \, t^{-\frac{d}2(\frac{\sigma}{d} - \frac{ \tilde k}{d}-\frac{1}{\tilde p}) -\delta}
	\end{equation}
	and 
	\begin{equation}	\label{t:asym.behv.lin:c2}
		\|t^{\frac{\sigma}{2}} u(t, \sqrt t \cdot)-e^{\Delta} \Phi_\sigma\|_{L^{\tilde p, \infty}_{\tilde k}}
		\leq C t^{-\delta}
	\end{equation}
	for all $t>0$ and all $(\tilde k, \tilde p)\in\R\times[1,\infty]$ satisfying \eqref{t:asym.behv:c.kp}, 
	where $C$ is a positive constant. 
	In particular, there exist a constant $\tilde C>0$ such that for $t$ large
	\[
		{\tilde C}^{-1} t^{-\frac{d}{2} (\frac{\sigma}{d}-\frac{\tilde k}{d}-\frac1{\tilde p}) } 
		\leq \|u(t)\|_{L^{\tilde p,\infty}_{\tilde k}}
		\leq \tilde C t^{-\frac{d}{2} (\frac{\sigma}{d}-\frac{\tilde k}{d}-\frac1{\tilde p}) }. 
	\]
\end{itemize}
\end{thm}

\begin{rem}
A few remarks are in order concerning Theorem \ref{t:asym.behv}. 
\begin{itemize}

\item[(a)] Compared to the previous results \cites{BenTayWei2017, CazWei1998, SnoTayWei2001}, 
	the topology for which the asymptotic behaviors hold is generalized even for the case $\gamma=0$. 
	The self-similar solutions are asymptotically stable with different weighted norms. 

\item[(b)] Let $\varphi$ be a non-trivial function such that
	\begin{equation}	\nonumber%\label{t:asym.behv:c1.phi}
		|\varphi(x)|\leq {c\over (1+|x|^2)^{\frac{\sigma}{2}}} \quad (x\in \R^d)
	\end{equation}
	for $c > 0$ sufficiently small, and
	\begin{equation}	\nonumber%\label{t:asym.behv:c2.phi}
		\varphi(x)=\omega(x)|x|^{-\sigma} \quad (|x|\geq A)
	\end{equation}
	for some constant $A>0$ and some $\omega\in L^\infty(\R^d)$, 
	homogeneous of degree $0$, with $\|\omega\|_{L^{\infty}}$ sufficiently small.
	Then the above $\varphi$ satisfies \eqref{t:asym.behv:c1.phi}. 
	This initial data has been treated in past studies such as \cites{BenTayWei2017, SnoTayWei2001}. 

\item[(c)] The condition \eqref{t:asym.behv:c1.phi} is also more general than 
	the above initial data $\varphi$, which is used in \cites{BenTayWei2017, CazWei1998, SnoTayWei2001}. 
	In fact, \eqref{t:asym.behv:c1.phi} permits any supercritical perturbation of the initial data, 
	as we no longer specify the explicit spatial decay of the initial data.  
	A similar idea can be found in \cite[Theorem 3]{SnoTayWei2001}.
	More specifically, any initial data of the form 
	$\varphi = \omega |\cdot|^{-\sigma} + \tilde\varphi$ with $\tilde\varphi \in L^{\tilde q, \tilde r}_{\tilde l}(\R^d)$, 
	where $\frac{d}{\tilde l} + \frac1{\tilde q} > \frac1{q_c(\gamma)}$, satisfies \eqref{t:asym.behv:c1.phi}. 
	The concrete examples of such $\tilde\varphi$ include:
	\begin{itemize}
		\item $\tilde \varphi (x) = c (1+|x|)^{-\tilde \sigma}$ ($\sigma < \tilde \sigma$);
		\item $\tilde \varphi (x) = e^{-|x|^s}$ ($s>0$). 
	\end{itemize}

\item[(d)] The asymptotic behaviors \eqref{t:asym.behv.NL:est1} and 
	\eqref{t:asym.behv.L:est1} hold in $L^{p,1}_{k}(\R^d)$, but for simplicity, we do not prove this fact in this paper. 
	In particular, they hold in the Lebesgue space $L^p(\R^d)$ as well. 
	It is due to the fact that the solution may be upgraded to any Lorentz spaces. 
	See Lemma \ref{l:b.strap} below. 

\end{itemize}
\end{rem}

%\begin{rem}
%We note that the concrete class of initial data generating the asymptotically self-similar solutions 
%can be improved compared to the previous works. 
%More precisely, instead of the assumption in Theorem \ref{t:asym.behv}, 
%we may take $\varphi\in \mathcal{S}'(\mathbb R^d)$ to be a non-trivial function 
%satisfying the assumption of Theorem \ref{t:globalsol}, \eqref{t:asym.behv:c2.phi} and the following additional conditions:
%\begin{itemize}
%\item[(U1)] $\varphi\in L^{q,r}_{l}(\R^d)$ for $q\ge \alpha$ if $d\ge 3$ and $\alpha>\frac{d+\gamma}{d-2}$;
%\item[(U2)] $\varphi\in L^{q,r}_{l}(\R^d)$ for $q\ge \alpha$ and $r\le \frac{d+\gamma}{d-2}-1$ if $d\ge 3$ and $\alpha=\frac{d+\gamma}{d-2}$;
%\item[(U3)] $\varphi\in \mathcal{L}^{\infty}_{\sigma}(\R^d)$ with \eqref{t:asym.behv:c1.phi} 
%	for $c > 0$ sufficiently small if $d=1,2$ or $d\ge 3$ and $\alpha<\frac{d+\gamma}{d-2}$.
%\end{itemize}
%\end{rem}

The following result is a particular case of the previous theorem.

\begin{corr}
Let $d\in\N$, $\alpha\in\R$ and $\gamma\in\R$ satisfy \eqref{t:HH.LWP.c.paramet}. 
%Let $l\in\R$, $q\in [1,\infty]$ and $r \in [1,\infty]$ satisfy \eqref{d:critical} and \eqref{t:HH.LWP.c.ql}. 
Let $\sigma\in\R$ be such that 
\begin{equation}	\nonumber
	{2+\gamma\over \alpha-1} \leq \sigma<d, 
\end{equation}
and let 
\begin{equation}	\nonumber
	\Phi_{\sigma}(x)=\omega(x)|x|^{-\sigma}, 
\end{equation}
where $\omega\in L^\infty(\R^d)$ is homogeneous of degree $0$, with $\|\omega\|_{L^{\infty}}$ sufficiently small, $\omega\not\equiv 0$.
Let 
$\varphi \in C(\R^d)$ satisfying 
		$|\varphi(x)| \leq c\left(1+|x|^2\right)^{- \frac{\sigma}{2} }$ 
		for all $x\in \R^d$ with $c$ sufficiently small and $$\varphi(x)=\omega(x)|x|^{-\sigma},\; |x|\geq A,$$
for some constant  $A>0.$ Then we have the following assertions.
\begin{itemize}
	\item[(i)] $($Nonlinear behavior$)$ 
	If $\sigma={2+\gamma\over \alpha-1},$ then there exists a positive constant $C>0$ such that 
	%then there exists $\delta>0$ (possibly smaller than $\delta_0$) such that
	\begin{equation}	\nonumber
		\|u(t)-u_{\mathcal S}(t)\|_{L^{\infty}_{\tilde k}}
		\leq C t^{-\frac{d}{2} 
				({1 \over q_c(\gamma)}-{\tilde k\over d}  )-\delta}
	\end{equation}
	for all $t>0$ and $0\leq \tilde k< \min({2+\gamma\over \alpha-1},{d+\gamma\over \alpha}),$
	where $u_{\mathcal S}$ is the global self-similar solution of
	\eqref{HHNLH} with initial data $\omega(x) |x|^{-\frac{2+\gamma}{{\alpha-1}}}$. 
	In particular, there exists a constant $\tilde C>0$ such that for $t$ large 
	\begin{equation}	\nonumber
		{\tilde C}^{-1} t^{-\frac{d}{2} ({1 \over q_c(\gamma)}-{\tilde k\over d})}
		\leq \|u(t)\|_{L^{\infty}_{\tilde k}}
		\leq \tilde C t^{-\frac{d}{2} ({1 \over q_c(\gamma)}-{\tilde k\over d})}. 
	\end{equation}
	\item[(ii)] $($Linear behavior$)$ 
	If $\sigma>{2+\gamma\over \alpha-1}$, 
	then there exists a positive constant $C>0$ such that 
	\begin{equation}	\nonumber
		\|u(t)-e^{t\Delta} \Phi_\sigma \|_{L^{\infty}_{\tilde k}}
		\leq C \, t^{-\frac{d}2(\frac{\sigma}{d} - \frac{ \tilde k}{d}) -\delta}
	\end{equation}
%	\[
%		\left\| 
%			u(t)-e^{t\Delta}\left(\omega(\cdot)|\cdot|^{-\sigma})\right)
%		\right\|_{L^{\tilde p,\infty}_{\tilde k}}
%		\leq C t^{-\frac{d}{2} (\frac{\sigma}{d}-\frac{\tilde k}{d}-\frac1{\tilde p}) -\delta}
%	\]
	for all $t>0$ and all $0\leq \tilde k< \min(\sigma,{d+\gamma\over \alpha}),$ 
	where $C$ is a positive constant. 
	In particular, there exist a constant $\tilde C>0$ such that for $t$ large
	\[
		{\tilde C}^{-1} t^{-\frac{d}{2} (\frac{\sigma}{d}-\frac{\tilde k}{d}) } 
		\leq \|u(t)\|_{L^{\infty}_{\tilde k}}
		\leq \tilde C t^{-\frac{d}{2} (\frac{\sigma}{d}-\frac{\tilde k}{d}) }. 
	\]
\end{itemize}
\end{corr}

In fact, the above corollary follows from Theorem \ref{t:asym.behv} by taking $\tilde p=p=\infty,$ 
$0\leq \tilde k\leq k=l< \min({2+\gamma\over \alpha-1},{d+\gamma\over \alpha})$ 
with $l$ sufficiently close to $\min({2+\gamma\over \alpha-1},{d+\gamma\over \alpha})$ for (i), 
and $\tilde p=p=\infty,$ $0\leq \tilde k\leq k=l<\min(\sigma,{d+\gamma\over \alpha})$ with $l$ sufficiently close to $\min(\sigma,{d+\gamma\over \alpha})$ for (ii).

%%%%%%%%%%%%
% Subsection1.5 Asymptotic behaviors: complex-valued case
%%%%%%%%%%%%
\subsection{Asymptotic behaviors: complex-valued case}

All the results so far hold for real or complex-valued initial data. 
However, more intricate asymptotic behaviors emerge for complex-valued initial data. 
We consider \eqref{HHNLH}, with $a=1$ or $a=-1$, 
and $u(0) = \varphi\in L^{q,r}_{l}(\R^d)$ a complex-valued function. We write 
\[
	\varphi=\varphi_1+i\varphi_2,
\]
where $i^2=-1,$ $\varphi_1$ is the real part of $\varphi$ and $\varphi_2$ is the imaginary part of $\varphi.$ 
Let $u$ be the solution of \eqref{HHNLH}. Writing 
\[
	u=u_1+i u_2,
\]
we deduce that the pair $(u_1,u_2)$ is a solution of the real-valued parabolic system
\begin{equation}	\label{HHNLH-sys}
	\begin{cases}
		\partial_t u_1 - \Delta u_1 = a |x|^{\gamma} (\sqrt{u_1^2+u_2^2})^{\alpha-1} u_1, \\
		\partial_t u_2 - \Delta u_2 = a |x|^{\gamma} (\sqrt{u_1^2+u_2^2})^{\alpha-1} u_2, \\
		\quad(t,x)\in (0,T)\times D,\\	
		u_1(0) = \varphi_1\in L^{q,r}_{l}(\R^d),\; u_2(0) = \varphi_2 \in L^{q,r}_{l}(\R^d).
	\end{cases}
\end{equation}

For $j=1, 2$, let $\sigma_j$ be such that 
\begin{equation}	\label{d:sigmaj}
	{2+\gamma\over \alpha-1} \leq \sigma_j<d 
\end{equation}
and let 
\begin{equation}	\label{t:asym.behv-sys:c.Phi.sig}
	\Phi_{\sigma_j}(x)={\omega_j}(x)|x|^{-\sigma_j}, 
\end{equation}
where ${\omega_j}\in L^\infty(\R^d)$ is homogeneous of degree $0$, with $\|{\omega_j}\|_{L^{\infty}}$ sufficiently small, {$j=1,\; 2$}. 
Let $\varphi = \varphi_1 + i \varphi_2 \in \mathcal{S}'(\R^d)$ be nontrivial and satisfy the assumptions of Theorem \ref{t:globalsol} 
(Thus, $\varphi$ generates a global solution). 
In addition, assume that there exist some positive constants $C_j$ and $\delta_{0j}$ such that 
\begin{equation}	\label{t:asym.behv-sys:c1.phi}
	\|e^{t\Delta} \varphi_j\|_{L^{p,\infty}_{k}}
	\le C_j t^{-\frac{d}2(\frac{\sigma}{d} - \frac{k}{d} - \frac1{p})}
		\quad\text{and}\quad 
	\|e^{t\Delta} (\Phi_{\sigma_j} - \varphi_j)\|_{L^{p,\infty}_{k}}
	\le C_j t^{-\frac{d}2(\frac{\sigma}{d} - \frac{k}{d} - \frac1{p})-\delta_{0j}}. 
\end{equation}
We are interested in the large time behavior of the global solution. We distinguish four cases: 
\begin{align*}
	&\mbox{Case 1:}\quad {2+\gamma\over \alpha-1}=\sigma_1=\sigma_2. 
	&&\mbox{Case 2:}\quad {2+\gamma\over \alpha-1}<\sigma_1,\; \sigma_2. \\	
	&\mbox{Case 3:}\quad {2+\gamma\over \alpha-1}=\sigma_1<\sigma_2.  
	&&\mbox{Case 4:} \quad {2+\gamma\over \alpha-1}=\sigma_2<\sigma_1.
\end{align*}

Case 1 is considered in Theorem \ref{t:asym.behv} (i) for $u_1$ and $u_2,$ 
hence $u$ have nonlinear behavior. More precisely, 
%if $\varphi=\varphi_1 + i\varphi_2$ satisfies the assumptions of Theorem \ref{t:asym.behv} (i), 
there exists a constant $C>0$ such that
	\[
		\|u_j(t)-u_{j\mathcal S}(t)\|_{L^{\tilde p,\infty}_{\tilde k}}
		\leq C t^{-\frac{d}{2} 
				({1 \over q_c(\gamma)}-{\tilde k\over d} - {1\over \tilde p} )-\delta_{0j}}
	\]
for all $t>0$. Here $u_{\mathcal S}=(u_{1\mathcal S},u_{2\mathcal S})$ is the self-similar solution of 
\eqref{HHNLH-sys} with initial data 
$(\omega_1(x)|x|^{-\sigma_1}, \omega_2(x)|x|^{-\sigma_2} )=(\omega_1(x), \omega_2(x))|x|^{-{{2+\gamma\over \alpha-1}}}.$ 
Case 2 is considered in Theorem \ref{t:asym.behv} (ii) for $u_1$ and $u_2,$ 
hence $u$ have linear behavior. More precisely, there exists a constant $C>0$ such that
	\[
		\left\| 
			u_j(t)-e^{t\Delta}\left(\omega_j(\cdot)|\cdot|^{-\sigma_j})\right)
		\right\|_{L^{\tilde p,\infty}_{\tilde k}}
		\leq C t^{-\frac{d}{2} (\frac{\sigma_j}{d}-\frac{\tilde k}{d}-\frac1{\tilde p}) -\delta_{0j}}
	\]
for all $t>0.$

For Cases 3 and 4, we have new asymptotic behaviors for $u=(u_1,u_2)$. 
%In short, one component of $u$ may have nonlinear non-local behavior while the other component have nonlinear behavior. 
%That is, we have combined nonlinear non-local and nonlinear behavior for large time. 
Consider the real-valued parabolic systems:
\begin{itemize}
\item For Case 3.
\begin{equation}	\label{HHNLH-syscase3}
	\begin{cases}
		\partial_t z_1 - \Delta z_1 = a |x|^{\gamma} |z_1|^{\alpha-1} z_1, \\
		\partial_t z_2 - \Delta z_2 = a |x|^{\gamma} |z_1|^{\alpha-1} z_2, \\
%		(t,x)\in (0,T)\times D,\\	
		z_1(0) =\omega_1(x)|x|^{-{2+\gamma\over \alpha-1}},\; z_2(0) =\omega_2(x)|x|^{-\sigma_2}.
	\end{cases}
\end{equation}

\item For Case 4. 
\begin{equation}	\label{HHNLH-syscase4}
	\begin{cases}
		\partial_t w_1 - \Delta w_1 = a |x|^{\gamma} |w_2|^{\alpha-1} w_1, \\
		\partial_t w_2 - \Delta w_2 = a |x|^{\gamma} |w_2|^{\alpha-1} w_2, \\
%		(t,x)\in (0,T)\times D,\\	
		w_1(0) =\omega_1(x)|x|^{-\sigma_1},\; w_2(0) =\omega_2(x)|x|^{-{2+\gamma\over \alpha-1}}.
	\end{cases}
\end{equation}

\end{itemize}

The existence of solutions to Systems \ref{HHNLH-syscase3} and \ref{HHNLH-syscase4} 
are ensured by Proposition \ref{p:HH.sys.self.sim} below. 
Note that we may write $z_1=u_{1\mathcal{S}}$, since it coincides with the 
global self-similar solution constructed in Theorem 
We say that $z_2$ and $w_1$ exhibit a ``modified linear" behavior since 
the equation satisfied by $z_2$ or $w_1$ contains a nonlinear term in the second member 
which is given by an integral as the Duhamel formulation of $z_1$ or $w_2.$ We have obtained the following.

\begin{thm}[Combined Nonlinear-``Modified Linear" Asymptotic Behavior]
	\label{t:asym.behv-complex0}
Let $d\in\N$, $\alpha$ and $\gamma$ satisfy \eqref{t:HH.LWP.c.paramet}. 
Let $l\in\R$, $q\in [1,\infty]$ and $r \in [1,\infty]$ satisfy \eqref{d:critical} and \eqref{t:HH.LWP.c.ql}. 
Let $\sigma_j\in\R$, $j=1,2,$ satisfy \eqref{d:sigmaj}. 
Let $\varphi = \varphi_1 + i \varphi_2 \in \mathcal{S}'(\R^d)$ be nontrivial and satisfy the assumptions of Theorem \ref{t:globalsol} 
and \eqref{t:asym.behv-sys:c1.phi}. 
Let $u\in \mathcal{K}^{p}_{k}$ be the global solution of \eqref{HHNLH} with initial data $\varphi$ 
constructed in Theorem \ref{t:globalsol}, where $k\in\R$ and $p\in[1,\infty]$ satisfy \eqref{t:HH.LWP.c.pk}. 
Let $u_{j\mathcal S}$ be the global self-similar solution of \eqref{HHNLH} with initial data $\omega_j(x) |x|^{-\frac{2+\gamma}{{\alpha-1}}}$. 
Let $(u_{1\mathcal S},z_2)$ be the self-similar solution of \eqref{HHNLH-syscase3} and 
$(w_1,u_{2\mathcal S})$ be the self-similar solution of \eqref{HHNLH-syscase4}. Then we have the following.

\begin{itemize}

	\item[(i)] $($Nonlinear-``Modified linear" behavior$)$
	If ${2+\gamma\over \alpha-1}=\sigma_1<\sigma_2,$ then there exist $\delta_j>0$ (possibly smaller than $\delta_{0j}$), $j=1, 2$, such that
	\begin{equation}\nonumber
	%\label{t:asym.behv.lin:c1complexi11}
		\|u_1(t)-u_{1\mathcal S}(t)\|_{L^{\tilde p,\infty}_{\tilde k}}
		\leq C t^{-\frac{d}{2} 
				({1 \over q_c(\gamma)}-{\tilde k\over d} - {1\over \tilde p} )-\delta_1},
	\end{equation}				
	and
	\begin{equation}\nonumber
	%\label{t:asym.behv.lin:c1complexi12}
		\left\| 
			u_2(t)-z_2(t)		\right\|_{L^{\tilde p,\infty}_{\tilde k}}
		\leq C t^{-\frac{d}{2} (\frac{\sigma_2}{d}-\frac{\tilde k}{d}-\frac1{\tilde p}) -\delta_2},
	\end{equation}	
	for sufficiently large $t>0$, where $C$ is a positive constant. 
	In particular, there exists a constant $\tilde C>0$ such that for $t$ large 
	\[
		{\tilde C}^{-1} t^{-\frac{d}{2} ({1 \over q_c(\gamma)}-{\tilde k\over d}-{1\over \tilde p})}
		\leq \|u_1(t)\|_{L^{\tilde p,\infty}_{\tilde k}}
		\leq \tilde C t^{-\frac{d}{2} ({1 \over q_c(\gamma)}-{\tilde k\over d}-{1\over \tilde p})},
	\]
	and
	\[
		{\tilde C}^{-1} t^{-\frac{d}{2} (\frac{\sigma_2}{d}-\frac{\tilde k}{d}-\frac1{\tilde p}) } 
		\leq \|u_2(t)\|_{L^{\tilde p,\infty}_{\tilde k}}
		\leq \tilde C t^{-\frac{d}{2} (\frac{\sigma_2}{d}-\frac{\tilde k}{d}-\frac1{\tilde p}) },
	\]

	\item[(ii)] $($``Modified linear"-Nonlinear behavior$)$
	If ${2+\gamma\over \alpha-1}=\sigma_2<\sigma_1$ then there exist $\delta_j>0$  (possibly smaller than $\delta_{0j}$), $j=1, 2$, such that 
	\[
		\left\| 
			u_1(t)-w_1(t)
		\right\|_{L^{\tilde p,\infty}_{\tilde k}}
		\leq C t^{-\frac{d}{2} (\frac{\sigma_1}{d}-\frac{\tilde k}{d}-\frac1{\tilde p}) -\delta_1}
	\]
	and
	\[
		\|u_2(t)-u_{2\mathcal S}(t)\|_{L^{\tilde p,\infty}_{\tilde k}}
		\leq C t^{-\frac{d}{2} 
				({1 \over q_c(\gamma)}-{\tilde k\over d} - {1\over \tilde p} )-\delta_2}
	\]
	for sufficiently large $t>0$, where $C$ is a positive constant. 
	In particular, there exists a constant $\tilde C>0$ such that for $t$ large 
	\[
		{\tilde C}^{-1} t^{-\frac{d}{2} (\frac{\sigma_1}{d}-\frac{\tilde k}{d}-\frac1{\tilde p}) } 
		\leq \|u_1(t)\|_{L^{\tilde p,\infty}_{\tilde k}}
		\leq \tilde C t^{-\frac{d}{2} (\frac{\sigma_1}{d}-\frac{\tilde k}{d}-\frac1{\tilde p}) }. 
	\]
	and
	\[
		{\tilde C}^{-1} t^{-\frac{d}{2} ({1 \over q_c(\gamma)}-{\tilde k\over d}-{1\over \tilde p})}
		\leq \|u_2(t)\|_{L^{\tilde p,\infty}_{\tilde k}}
		\leq \tilde C t^{-\frac{d}{2} ({1 \over q_c(\gamma)}-{\tilde k\over d}-{1\over \tilde p})}. 
	\]

\end{itemize}
\end{thm}

Let us now give an heuristic idea about our results. 
It is known that the dilation structure of solutions for the nonlinear heat equation and 
in particular self-similar solutions have a key role in the description of the asymptotic behavior of solutions.  
Note that our proofs do not use the scaling argument techniques.

Let $(u_1,u_2)$ be  a solution of the problem \eqref{HHNLH-sys} with initial data $(u_1(0),u_2(0))$, 
and consider the scaling for  $\lambda>0$ defined by 
\begin{equation*}
	u_{1,\lambda}(t,x)=\lambda^{\sigma_{1}}u_1(\lambda^{2}t, \lambda x),\quad
	u_{2,\lambda}(t,x)=\lambda^{\sigma_{2}}u_2(\lambda^{2}t, \lambda x),
\end{equation*}
where $ t\geq 0, x\in\R^d.$ Then $(u_{1,\lambda},u_{2,\lambda})$  satisfies the following system 
\begin{equation*}
	\left\{
	\begin{array}{ll}
	\partial_t u_{1,\lambda}
		=\Delta u_{1,\lambda}+ a |x|^{\gamma} [\lambda^{-2(\sigma_1-{2+\gamma\over \alpha-1})} u_{1,\lambda}^2
				+\lambda^{-2(\sigma_2-{2+\gamma\over \alpha-1})}u_{2,\lambda}^2]^{{\alpha-1\over 2}} u_{1,\lambda},&\\
	\partial_t u_{2,\lambda}
		=\Delta u_{2,\lambda}+a |x|^{\gamma} [\lambda^{-2(\sigma_1-{2+\gamma\over \alpha-1})}u_{1,\lambda}^2
				+\lambda^{-2(\sigma_2-{2+\gamma\over \alpha-1})}u_{2,\lambda}^2]^{{\alpha-1\over 2}} u_{2,\lambda},&
	\end{array}
	\right.
\end{equation*}
with initial data
 \begin{equation}	 \label{rescaledinitialdata}
	 u_{1,\lambda}(0,x)=\lambda^{\sigma_1} u_{1}(0,\lambda x), \quad 
	 u_{2,\lambda}(0,x)=\lambda^{\sigma_2}u_{2}(0,\lambda x).
 \end{equation}

We now observe that studying the asymptotic behavior of $\big(u_1(t,x),u_2(t,x)\big)$ as $t\to \infty$ is the
same as studying the asymptotic behavior of $(u_{1,\lambda}(t,x),u_{2,\lambda}(t,x))$ as $\lambda\to \infty$ 
with a fixed time $t>0$. Clearly, by \eqref{rescaledinitialdata}, the behavior of $( u_{1}(0), u_{2}(0))$ in space as 
$|x|\rightarrow\infty$ (or of $u_{j,\lambda}(0,x), \; j=1,\; 2,$ as $\lambda\rightarrow\infty,$ 
with fixed $x$) will also have an effect on the behavior of the solutions. 
In fact if $u_{1}(0,x)=\omega_1(x)|x|^{-\sigma_1}\mathbf{1}_{\{|x|>A_1\}},$ 
$u_{2}(0,x)= \omega_2(x)|x|^{-\sigma_2}\mathbf{1}_{\{|x|>A_2\}},$ 
$0<\sigma_j<d;\; A_j>0,\; j=1,\; 2;$ 
$\omega_j,\; j=1,\; 2,$ are homogeneous of degree $0,$ 
and $\mathbf{1}_{\{|x|>A\}}$ is the indicator function of the subset $\{|x|>A\}\subset \R^d$, then    
$$
	u_{1,\lambda}(0,x)=\omega_1(x)|x|^{-\sigma_1}\mathbf{1}_{\{|x|>{A_1\over \lambda}\}}
	\to \omega_1(x)|x|^{-\sigma_1},\; \mbox{as}\;  \lambda\rightarrow\infty,$$ 
and
$$
	u_{2,\lambda}(0,x)=\omega_2(x)|x|^{-\sigma_2}\mathbf{1}_{\{|x|>{A_2\over \lambda}\}}
	\to \omega_2(x)|x|^{-\sigma_2},\; \mbox{as}\;  \lambda\rightarrow\infty,
$$
pointwise for  $x\not= 0,$ and  in the sense of distributions.

Since we hope to have finite limits as $\lambda\rightarrow\infty$ for the coefficients in the system satisfied 
by $(u_{1,\lambda},u_{2,\lambda})$, we restrict ourselves to the case 
$\sigma_1\geq (2+\gamma)/(\alpha-1)$ and $\sigma_2\geq (2+\gamma)/(\alpha-1).$
We then obtain the four cases by letting $\lambda\to \infty.$ 

\begin{rem}
Further remarks are in order concerning Theorem \ref{t:asym.behv}. 
\begin{itemize}

\item[(a)] The estimates in Theorem \ref{t:asym.behv-complex} hold for sufficiently large $t$, 
	in contrast to the real case (Theorem \ref{t:asym.behv}). 
	We believe that the corresponding homogeneous estimates (that is, estimates for all $t>0$) 
	are possible for all range of $\alpha$, but the proofs are more delicate and complex. 

\item[(b)] The study of the asymptotic behavior of the complex-valued parabolic equation has been carried out by some, but not many. 
	To our knowledge, all are with the nonlinearity $u^{\alpha}$ and $\gamma=0$.
	The asymptotic behaviors for a complex-valued initial data with the nonlinearity $u^{\alpha}$ have been shown by \cite{ChoOtsTay2015} for the 
	case $\alpha=2$ and $\gamma=0$. The target equations similar to \eqref{HHNLH-syscase3} and 
	\eqref{HHNLH-syscase4} have also been identified by \cite{ChoOtsTay2015}. 
	In \cite{ChoMajTay2018} the asymptotic results for the nonlinearity $u^{\alpha}$ with $\gamma=0$ are extended to integers $\alpha>1$. 
	In \cite{Ots2021}, the extension to a generalized heat kernel is considered. 
	The full picture of the asymptotic behaviors for the nonlinearity $|\cdot|^{\gamma} |u|^{\alpha-1}u$ and for general $\alpha>1$ is new, 
	even for $\gamma=0$. 

\end{itemize}
\end{rem}

\bigbreak
The rest of the paper is organized as follows: 
In Section 2, we collect some preliminary results for the weighted Lorentz spaces. 
In Section 3, we prove nonlinear estimates in weighted Lorentz spaces that are required to prove 
Theorems \ref{t:HH.LWP}, \ref{t:globalsol}, \ref{t:HH.self.sim} and \ref{t:asym.behv}. 
In Section 4, we prove the local well-posedness theory, that is, Theorem \ref{t:HH.LWP}. 
Section 5 is dedicated to the proof of Theorems \ref{t:globalsol}, \ref{t:HH.self.sim} and Theorem \ref{t:stab.asym.behv}. 
In Section 7, we prove the asymptotic result for the real-valued initial data (Theorem \ref{t:asym.behv}). 
Finally, Section 8 is dedicated to the asymptotic result for the complex-valued initial data (Theorem \ref{t:asym.behv-complex}). 
In Appendix, we prove the non-existence result for supercritical initial data, which extends the result in \cite[Theorem 1.16]{CITT2022}.

%%%%%%%%%%%%
%%% Section 2 Preliminaries%%%
%%%%%%%%%%%%
\section{Preliminaries}
	\label{sec:2}

In this paper we use the symbols $a \lesssim b$ for $a, b \ge0$, 
which mean that there exists a harmless constant $C>0$ such that $a \le C b$. 
The symbol $a \sim b$ means that $a \lesssim b$ and $b \lesssim a$ happen simultaneously. 
Throughout the rest of the paper, we denote by $C$ 
a harmless constant that may change from line to line. 
We denote by $C_0^\infty(\mathbb R^d)$ the set of all $C^\infty$-functions 
having compact support in $\mathbb R^d$, and by 
$L^0(\mathbb R^d)$ the set of all Lebesgue measurable functions on $\mathbb R^d$.
We define the distribution function $d_f$ of a measurable function $f$ by
\[
	d_f (\lambda) := \left| \left\{ x \in \mathbb R^d \,;\, |f(x)| > \lambda \right\} \right|,
\]
where $|A|$ denotes the Lebesgue measure of a measurable set $A$.

%%%%%%%%%%%%
%%% Weighted Lorentz spaces %%%
%%%%%%%%%%%%

%%% Subsection 2.1 %%%
\subsection{Weighted Lorentz space}

% Definition 2.1
\begin{defi}
For $0< q,r \le \infty$, the Lorentz space $L^{q,r}(\mathbb R^d)$ is defined by 
\[
L^{q,r}(\R^d):=\left\{ f \in L^0(\mathbb R^d) \,;\, 
	\|f\|_{L^{q,r}}
	< \infty \right\}
\]
endowed with a quasi-norm 
\[
	\|f\|_{L^{q,r}}
	:=
	\left\{\begin{aligned}
	&\left( 
		\int_0^\infty ( t^{\frac{1}{q}} f^*(t) )^r\frac{dt}{t}
	\right)^{\frac{1}{r}}
	\quad & \text{if } r<\infty, \\
	&\sup_{t>0} t^\frac{1}{q} f^*(t) & \text{if }r=\infty,
	\end{aligned}\right.
\]
where $f^*$ is the radially symmetric decreasing rearrangement of $f$ given by 
\[
	f^*(t) := \inf \{ \lambda > 0 \,; \, d_f(\lambda) \le t \}. 
\]
Note that $L^{\infty,r}_s(\R^d) = \{0\}$ for any $r<\infty$. 
\end{defi}

%% Remark 2.2
%\begin{rem}
%For $1\le q<\infty$ and $1\le r \le \infty$, the norm of $L^{q,r}(\R^d)$ is equivalent to
%\begin{equation}\nonumber
%\begin{aligned}
%	\|f\|_{L^{q,r}} 
%	&\sim q^{\frac1{r}} \left( \int_0^{\infty} (d_f (\lambda)^{\frac1{q}} \lambda )^r 
%						\frac{d\lambda}{\lambda} \right)^{\frac1{r}}, \\
%	\|f\|_{L^{q,\infty}} 
%	&\sim \sup\left\{ \lambda d_f(\lambda)^\frac{1}{q} \,;\, \lambda>0\right\} 
%	=\inf \left\{ C>0 \,;\, \lambda d_f(\lambda)^{\frac1{q}} \le C
%			\quad\text{for all }\lambda>0 \right\}.
%\end{aligned}\end{equation}
%\end{rem}

% Definition 2.3
\begin{defi}
	\label{def:WLS}
Let $s \in \mathbb R$ and $0< q,r \le \infty$. 

\begin{itemize} 

\item[(i)]
The weighted Lebesgue space $L^{q}_s(\mathbb R^d)$ is defined by
\[
L^{q}_s(\mathbb R^d)
:= 
\left\{
f \in L^0(\mathbb R^d) \, ;\, 
\|f\|_{L^{q}_s} <\infty
\right\}
\]
endowed with a quasi-norm
\[
	\|f\|_{L^{q}_s}:= 
	\left(
	\int_{\mathbb R^d} (|x|^s|f(x)|)^q\, dx
	\right)^\frac{1}{q}.
\]
The space $\mathcal L^{q}_s (\mathbb R^d)$ is defined as the completion of $C^\infty_0(\mathbb R^d)$ with respect to $\|\cdot\|_{L^{q}_s}$.

\item[(ii)]
The weighted Lorentz space $L^{q,r}_s(\mathbb R^d)$ is defined by
\[
L^{q,r}_s(\mathbb R^d)
:= 
\left\{
f \in L^0(\mathbb R^d) \, ;\, 
\|f\|_{L^{q,r}_s} <\infty
\right\}
\]
endowed with a quasi-norm
\[
	\|f\|_{L^{q,r}_s}:= \| |\cdot |^s f\|_{L^{q,r}}.
\]
The space $\mathcal L^{q,r}_s (\mathbb R^d)$ is defined as the completion of $C^\infty_0(\mathbb R^d)$ with respect to $\|\cdot\|_{L^{q,r}_s}$.
\end{itemize}
\end{defi}

\begin{rem}\label{rem:w-Lorentz}
Let us give several remarks on $L^{q,r}_s(\R^d)$. 
\begin{itemize}

\item[(a)] 
$L^{q,r}_s(\R^d)=L^{q,r}(\R^d)$ when $s=0$. 

\item[(b)]
$L^{q,q}_s(\mathbb R^d)=L^{q}_s(\mathbb R^d)$ 
and $\mathcal L^{q,q}_s(\mathbb R^d)=\mathcal L^{q}_s(\mathbb R^d)$ for 
any $s\in \mathbb R$ and $1\le q\le \infty$.

\item[(c)] 
$L^{\infty,r}_s(\R^d) = \{0\}$ for any $r<\infty$. 
Hence, in this paper, we always take $r=\infty$ 
when $q=\infty$ in $L^{q,r}_s(\R^d)$ even if it is not explicitly stated.

\item[(d)] $L^{q,r}_s(\R^d)$ is a quasi-Banach space 
for any $s \in \mathbb R$ and $0<q,r \le \infty$, and it is normable 
if $(q,r) \in (1,\infty) \times [1,\infty]$, $(q,r) = (1,1)$ or $(q,r) =(\infty,\infty)$. 
When $q=1$, $L^{1,1}_s(\R^d)=L^1_s(\R^d)$ is Banach while $L^{1,r}_s(\R^d)$ is not normable for any $r>1$. 
Hence, in this paper, we always take $r=1$ 
when $q=1$ in $L^{q,r}_s(\R^d)$ even if it is not explicitly stated.

\item[(e)] $\mathcal L^{q,r}_s (\mathbb R^d) = L^{q,r}_s (\mathbb R^d)$ if $q<\infty$ and $r<\infty$, and $\mathcal L^{q,r}_s (\mathbb R^d) \subsetneq L^{q,r}_s (\mathbb R^d)$ if $q=\infty$ or $r=\infty$.

\item[(f)] $L^{q,r}_s(\R^d) \subset L^1_{\mathrm{loc}}(\R^d)$ 
if and only if either of {\rm (e-1)}--{\rm (e-3)} holds:
	\begin{itemize}
		\item[(e-1)] $q>1$ and $\frac{s}{d} + \frac{1}{q} < 1$;
		\item[(e-2)] $q>1$, $\frac{s}{d} + \frac{1}{q} = 1$ and $r\le 1$;
		\item[(e-3)] $q=1$, $\frac{s}{d} + \frac{1}{q} \le 1$ and $r\le 1$.
	\end{itemize}

\end{itemize}
\end{rem}

The proofs of the above can be found in \cites{BenSha1988, CIT2022, CITT2022, Gr2008} and the references therein. 
%The density result in weighted Sobolev spaces under restrictions on the weight can be found in \cite{NakTomYab2004}. 
The general density result for weighted Lorentz spaces is stated in \cite{CITT2022}. 

The following H\"older inequality in Lorentz spaces is well known. 

% Lemma 2.6
\begin{lem}[Generalized H\"older's inequality \cite{ONe1963}]
	\label{lem:Holder}
Let $1\le q, q_1,q_2 < \infty$ and $1\le r,r_1,r_2 \le \infty$.
Then the following assertions hold:
\begin{itemize}
\item[(i)] If 
\[
	\frac{1}{q} = \frac{1}{q_1} + \frac{1}{q_2}\quad \text{and}\quad \frac{1}{r} \le \frac{1}{r_1} + \frac{1}{r_2},
\]
then 
there exists a constant $C>0$ such that 
\[
\|f g\|_{L^{q,r}}
\le C \|f\|_{L^{q_1,r_1}}\|g\|_{L^{q_2,r_2}}
\]
for any $f \in L^{q_1,r_1}(\mathbb R^d)$ and $g \in L^{q_2,r_2}(\mathbb R^d)$.

\item[(ii)] 
There exists a constant $C>0$ such that 
\[
\|f g\|_{L^{q,r}}
\le C \|f\|_{L^{q,r}}\|g\|_{L^{\infty}}
\]
for any $f \in L^{q,r}(\mathbb R^d)$ and $g \in L^{\infty}(\mathbb R^d)$.
\end{itemize}
\end{lem}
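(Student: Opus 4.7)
The plan is to follow the classical O'Neil approach, reducing the inequality for $fg$ to a pointwise inequality for the decreasing rearrangement and then applying the ordinary Hölder inequality on $(0,\infty)$ with the measure $dt/t$.

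The first step is to establish the rearrangement inequality
\[
	(fg)^*(t) \le f^*(t/2)\, g^*(t/2), \quad t>0.
\]
This follows from the standard distributional bound $d_{fg}(\lambda_1 \lambda_2) \le d_f(\lambda_1) + d_g(\lambda_2)$: picking $\lambda_1 = f^*(t/2)$ and $\lambda_2 = g^*(t/2)$ one obtains $d_{fg}(\lambda_1 \lambda_2) \le t$, hence $(fg)^*(t) \le \lambda_1 \lambda_2$. This inequality is the sole nontrivial ingredient; everything afterwards is one-variable Hölder.

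For part (i), I first handle the sharp case $\frac{1}{r} = \frac{1}{r_1} + \frac{1}{r_2}$. Assuming $r<\infty$, I write
\[
	\|fg\|_{L^{q,r}}^r \le \int_0^\infty \bigl[t^{1/q} f^*(t/2) g^*(t/2)\bigr]^r \frac{dt}{t},
\]
change variables $s=t/2$, split the weight as $s^{1/q} = s^{1/q_1} s^{1/q_2}$ using $\frac{1}{q}=\frac{1}{q_1}+\frac{1}{q_2}$, and apply the ordinary Hölder inequality on $(0,\infty)$ with respect to $ds/s$ and conjugate exponents $r_1/r$, $r_2/r$. This yields exactly $\|fg\|_{L^{q,r}} \lesssim \|f\|_{L^{q_1,r_1}}\|g\|_{L^{q_2,r_2}}$. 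The endpoint cases $r=\infty$ or $r_i=\infty$ are handled the same way, replacing the integral Hölder step by a supremum argument using $t^{1/q} = (t/2)^{1/q_1}(t/2)^{1/q_2} \cdot 2^{1/q}$. For the non-sharp case $\frac{1}{r} < \frac{1}{r_1}+\frac{1}{r_2}$, I set $\frac{1}{r_0}:=\frac{1}{r_1}+\frac{1}{r_2}$ so that $r_0\le r$, apply the sharp case to get the bound in $L^{q,r_0}$, and invoke the standard Lorentz embedding $L^{q,r_0}(\R^d) \hookrightarrow L^{q,r}(\R^d)$. Part (ii) is essentially trivial: from $|fg(x)| \le |f(x)|\,\|g\|_{L^\infty}$ almost everywhere one obtains $(fg)^*(t) \le f^*(t)\,\|g\|_{L^\infty}$, and the $L^{q,r}$ quasi-norm scales out $\|g\|_{L^\infty}$ directly.

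The only real obstacle is the rearrangement inequality itself and making sure the endpoint cases $(q,r)=(1,1)$, $q_i=1$, or $r_i=\infty$ are consistent with the normability conventions recorded in Remark \ref{rem:w-Lorentz}. Since the inequality is invariant under passing from $\|\cdot\|_{L^{q,r}}$ to an equivalent genuine norm, the result for the (possibly quasi-normed) defining quantities is sufficient, and the absolute constant $C$ is harmless. No further subtlety is required.
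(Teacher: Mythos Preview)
Your proof is correct and follows precisely the classical O'Neil argument. The paper itself does not supply a proof of this lemma: it is stated as a known result with a citation to \cite{ONe1963}, so there is no ``paper's own proof'' to compare against. Your write-up is essentially a sketch of the original O'Neil proof and would serve perfectly well as a self-contained justification.
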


The following proposition follows by Lemma \ref{lem:Holder}. 
This proposition gives the embedding \eqref{embd:id}. 

\begin{prop}
	\label{p:embd}
The embedding $L^{q_1,r_1}_{l_1} (\R^d) \hookrightarrow L^{q_2,r_2}_{l_2} (\R^d)$ holds, where
\begin{equation}\nonumber
	l_2\le l_1, \quad 
	1\le q_2 \le q_1 \le \infty, \quad 
	1\le r_1 \le r_2 \le \infty
		\quad\text{and}\quad
	\frac{l_1}{d} + \frac1{q_1} = \frac{l_2}{d} + \frac1{q_2}. 
\end{equation}
\end{prop}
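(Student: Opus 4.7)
The plan is to deduce the embedding from a single application of the generalized Hölder inequality (Lemma \ref{lem:Holder}) in the Lorentz scale, after isolating the homogeneous weight. First I would dispose of the trivial case $q_1=q_2$: the scaling identity $\frac{l_1}{d}+\frac{1}{q_1}=\frac{l_2}{d}+\frac{1}{q_2}$ then forces $l_1=l_2$, so the claim reduces to the classical nesting $L^{q,r_1}(\R^d)\hookrightarrow L^{q,r_2}(\R^d)$ for $r_1\le r_2$, a standard property of the decreasing rearrangement. Similarly, the boundary case $q_1=\infty$ forces $r_1=\infty$ and hence $r_2=\infty$ by Remark \ref{rem:w-Lorentz}\,(c), so it can be handled separately by Lemma \ref{lem:Holder}\,(ii).

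For the main case $1\le q_2<q_1<\infty$, define $q\in(1,\infty)$ by
\[
\frac{1}{q}:=\frac{1}{q_2}-\frac{1}{q_1},
\]
so that the scaling identity rewrites as $l_1-l_2=\tfrac{d}{q}>0$. The key computation is that $|\cdot|^{-d/q}\in L^{q,\infty}(\R^d)$, which follows directly from the distribution function identity $d_{|\cdot|^{-d/q}}(\lambda)=|B(0,\lambda^{-q/d})|=c_d\,\lambda^{-q}$, giving $\bigl(|\cdot|^{-d/q}\bigr)^{*}(t)=(c_d/t)^{1/q}$ and hence a finite $L^{q,\infty}$-quasinorm. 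Then, writing
\[
|x|^{l_2}f(x)=|x|^{l_2-l_1}\cdot\bigl(|x|^{l_1}f(x)\bigr),
\]
Lemma \ref{lem:Holder}\,(i) applied with the exponents $(q,\infty)$ and $(q_1,r_1)$ yields
\[
\|f\|_{L^{q_2,r_2}_{l_2}}=\bigl\||\cdot|^{l_2}f\bigr\|_{L^{q_2,r_2}}
\le C\,\bigl\||\cdot|^{-d/q}\bigr\|_{L^{q,\infty}}\,\bigl\||\cdot|^{l_1}f\bigr\|_{L^{q_1,r_1}}
=C\,\|f\|_{L^{q_1,r_1}_{l_1}},
\]
where the index relations $\frac{1}{q_2}=\frac{1}{q}+\frac{1}{q_1}$ and $\frac{1}{r_2}\le\frac{1}{\infty}+\frac{1}{r_1}$ are, respectively, the definition of $q$ and the hypothesis $r_1\le r_2$.

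There is no serious obstacle here; the statement is essentially a bookkeeping consequence of Hölder. The only point requiring any attention is that Lemma \ref{lem:Holder}\,(i) is stated with finite first exponent, which is why I split off $q_1=\infty$ (and, for completeness, $q_2=\infty$, which collapses to $q_1=q_2=\infty$, $l_1=l_2$, $r_1=r_2=\infty$, and the trivial inclusion). The resulting embedding \eqref{embd:id} stated in Remark (a) after Theorem \ref{t:HH.LWP} then follows by specializing $(q_1,r_1,l_1)$ to the critical exponents fixed by \eqref{d:critical} and $(q_2,r_2,l_2)$ to the target space on the right-hand side of \eqref{embd:id}.
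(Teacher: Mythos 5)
Your proof is correct and follows essentially the route the paper intends: the paper offers no written proof beyond the remark that the proposition ``follows by Lemma \ref{lem:Holder}'', and your argument is exactly that, factoring out the homogeneous weight $|\cdot|^{l_2-l_1}=|\cdot|^{-d/q}\in L^{q,\infty}(\R^d)$ and applying the generalized H\"older inequality, with the equal-exponent and $q_1=\infty$ cases disposed of by the standard Lorentz nesting and Lemma \ref{lem:Holder}\,(ii), respectively. No gaps; the index bookkeeping ($\frac1{q_2}=\frac1q+\frac1{q_1}$, $r_1\le r_2$) matches the hypotheses of the lemma as stated.
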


%\begin{lem}
%Let $s_1, s_2 \in \R$, $q_1, q_2 \in [1,\infty]$ and $0<r\le \infty$.
%Let $s = \theta s_1 + (1-\theta) s_2,$ 
%$\frac1{q} = \frac\theta{q_1} + \frac{(1-\theta)}{q_2}$ and $\theta \in (0,1).$ 
%Then there exists a positive constant $C$ such that 
%	\begin{equation}\nonumber
%		\|f\|_{L^{q,r}_s} 
%		\le C \|f\|_{L^{q_1, \infty}_{s_1}}^{\theta} 
%			\, \|f\|_{L^{q_2, \infty}_{s_2}}^{1-\theta}. 
%	\end{equation}
%for all $f\in L^{q_1, \infty}_{s_1}(\R^d) \cap L^{q_2, \infty}_{s_2}(\R^d)$. 
%\end{lem}

%% Lemma 2.7
%\begin{lem}[Generalized Young's inequality]\label{lem:Young}
%Let $1< q,q_1,q_2<\infty$ and $0 < r,r_1,r_2\le \infty$. Then the following assertions hold:
%\begin{itemize}
%\item[(i)]
%If 
%\[
%\frac{1}{q} = \frac{1}{q_1} + \frac{1}{q_2} -1 \quad \text{and}\quad \frac{1}{r} \le \frac{1}{r_1} + \frac{1}{r_2},
%\]
%then there exists a constant $C>0$ such that
%\[
%\|f * g\|_{L^{q,r}}
%\le C \|f\|_{L^{q_1,r_1}}\|g\|_{L^{q_2,r_2}}
%\]
%for any $f \in L^{q_1,r_1}(\mathbb R^d)$ and $g \in L^{q_2,r_2}(\mathbb R^d)$.
%
%\item[(ii)]
%If 
%\[
%1= \frac{1}{q_1} + \frac{1}{q_2}  \quad \text{and}\quad 1 \le \frac{1}{r_1} + \frac{1}{r_2},
%\]
%then there exists a constant $C>0$ such that
%\[
%\|f * g\|_{L^{\infty}}
%\le C \|f\|_{L^{q_1,r_1}}\|g\|_{L^{q_2,r_2}}
%\]
%for any $f \in L^{q_1,r_1}(\mathbb R^d)$ and $g \in L^{q_2,r_2}(\mathbb R^d)$.
%
%\item[(iii)] 
%There exists a constant $C>0$ such that 
%\[
%\|f * g\|_{L^{q,r}}
%\le C \|f\|_{L^{q,r}}\|g\|_{L^{1}}
%\]
%for any $f \in L^{q,r}(\mathbb R^d)$ and $g \in L^{1}(\mathbb R^d)$.
%\end{itemize}
%\end{lem}

%%% Subsection 2.2 %%%
\subsection{Linear estimate}

The following estimate for the heat semigroup $\{e^{t\Delta}\}_{t\ge0}$ 
in weighted Lorentz space is from \cite{CITT2022}. See also \cite{TayWei2022}. 

% Proposition 3.1 in \cite{CITT2022}
\begin{lem}
	\label{l:linear-main}
Let $d \in \mathbb N$, $l_1,l_2\in \mathbb R$, $1 \le q_1, q_2 \le \infty$ and $1\le r_1,r_2 \le \infty$. 
Then there exists a constant $C>0$ 
%depending on $d$, $q_1$, $q_2$, $r_1$, $r_2$, $s_1$ and $s_2$ 
such that 
\begin{equation}	\nonumber%\label{linear-weight1}
	\|e^{t\Delta}f\|_{L^{q_2,r_2}_{l_2}} 
	\le C t ^{-\frac{d}{2} (\frac{1}{q_1} - \frac{1}{q_2}) - \frac{l_1 - l_2}{2}} \|f\|_{L^{q_1,r_1}_{l_1}}
\end{equation}
for any $f\in L^{q_1,r_1}_{s_1}(\R^d)$ and $t>0$ if $l_1$, $l_2$, $q_1,$ $q_2,$ $r_1$ and $r_2$ satisfy 
\begin{equation}\nonumber
	l_2 \le l_1, \quad
	0 \le \frac{l_2}{d} + \frac{1}{q_2} \le \frac{l_1}{d} + \frac{1}{q_1} \le 1 
\end{equation}
and
\begin{empheq}[left={\empheqlbrace}]{alignat=2}
  & r_1 = 1
 	\quad \text{if } \frac{l_1}{d} + \frac{1}{q_1}=1 
 	\text{ or } q_1=1, 
		\nonumber\\
  & r_2 = \infty 
  	\quad \text{if } \frac{l_2}{d} + \frac{1}{q_2}=0, 
		\nonumber\\
  & r_1 \le r_2 
  	\quad \text{if }\frac{l_2}{d} + \frac{1}{q_2}=\frac{l_1}{d} + \frac{1}{q_1}, 
		\nonumber\\
  & r_i=\infty 
  	\quad \text{if } q_i=\infty \quad (i=1,2). 
		\nonumber
\end{empheq}
\end{lem}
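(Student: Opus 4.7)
The plan is to prove the estimate in three stages: a scaling reduction to $t=1$, a pointwise kernel analysis, and an application of Young's inequality in Lorentz spaces.

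First, I would reduce to $t=1$ via the dilation $f_\lambda(x):=f(\lambda x)$. The identities $e^{t\Delta}f_\lambda(x)=(e^{\lambda^2 t\Delta}f)(\lambda x)$ and $\|f_\lambda\|_{L^{q,r}_s}=\lambda^{-s-d/q}\|f\|_{L^{q,r}_s}$ together show that the claimed inequality is scale-invariant; taking $\lambda=t^{-1/2}$ absorbs the factor $t^{-\frac{d}{2}(\frac{1}{q_1}-\frac{1}{q_2})-\frac{l_1-l_2}{2}}$, so it suffices to establish $\|G_1\ast f\|_{L^{q_2,r_2}_{l_2}}\lesssim \|f\|_{L^{q_1,r_1}_{l_1}}$, where $G_1$ is the heat kernel at time $1$.

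Next I would write $|x|^{l_2}(G_1\ast f)(x)=\int K(x,y)(|y|^{l_1}f(y))\,dy$ with kernel $K(x,y):=|x|^{l_2}|y|^{-l_1}G_1(x-y)$, and dominate $K$ by a sum of convolution-type kernels after splitting the $y$-integral into the three regions $\{|y|\le|x|/2\}$, $\{|x|/2<|y|<2|x|\}$, and $\{|y|\ge 2|x|\}$. In the outer two regions the rapid decay of $G_1$ away from the diagonal, combined with $l_2\le l_1$, dominates the ratio $|x|^{l_2}|y|^{-l_1}$ by a polynomial in $|x-y|$ times a Schwartz factor; in the middle region $|x|^{l_2}|y|^{-l_1}\lesssim |y|^{l_2-l_1}$, with the bulk bounded and the near-origin contribution controlled by the local integrability conditions $\frac{l_2}{d}+\frac{1}{q_2}\ge 0$ and $\frac{l_1}{d}+\frac{1}{q_1}\le 1$. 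I would then invoke the O'Neil form of Young's convolution inequality in Lorentz spaces on each resulting piece: the relation $\frac{l_2}{d}+\frac{1}{q_2}\le\frac{l_1}{d}+\frac{1}{q_1}$ is the standard Young admissibility, while the conditions on $r_1,r_2$ correspond exactly to the sharp endpoints of O'Neil --- $r_1=1$ when the source exponent saturates ($\frac{l_1}{d}+\frac{1}{q_1}=1$ or $q_1=1$), $r_2=\infty$ when the target exponent saturates ($\frac{l_2}{d}+\frac{1}{q_2}=0$), $r_1\le r_2$ in the scale-preserving case (convolution with an $L^1$ kernel), and $r_i=\infty$ when $q_i=\infty$ since otherwise $L^{\infty,r}=\{0\}$ for $r<\infty$.

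The main obstacle I anticipate is the case analysis in the kernel bound when $l_1>0$ or $l_2<0$, where $K(x,y)$ acquires genuine singularities at $y=0$ or $x=0$ and the simple Schwartz domination breaks down; a dyadic splitting near the origins together with Hardy-type weighted estimates is then required to absorb these. Correctly routing each endpoint condition on $r_1,r_2$ to its matching regime in the kernel analysis is the bookkeeping heart of the argument; the remaining content reduces to the well-established O'Neil convolution inequality in Lorentz spaces.
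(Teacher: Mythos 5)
A preliminary remark: this paper does not actually prove Lemma \ref{l:linear-main}; it is quoted from \cite{CITT2022} (see also \cite{TayWei2022}), so your sketch can only be judged on its own terms. Your first step, the scaling reduction to $t=1$ via $f_\lambda(x)=f(\lambda x)$ and $\|f_\lambda\|_{L^{q,r}_s}=\lambda^{-s-d/q}\|f\|_{L^{q,r}_s}$, is correct and is indeed how one disposes of the time factor.

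The gap lies in the kernel analysis, which is where the whole content of the lemma sits. The claimed pointwise domination in the outer regions is false precisely in the weighted cases the lemma is designed for. In $\{|y|\le |x|/2\}$ with $l_1>0$, the factor $|y|^{-l_1}$ blows up as $y\to0$ while $|x-y|\sim|x|$ stays away from the singularity, so $K(x,y)$ cannot be dominated by any function of $x-y$ times a Schwartz factor; symmetrically, in $\{|y|\ge 2|x|\}$ with $l_2<0$ the factor $|x|^{l_2}$ blows up at $x=0$ while $|x-y|\sim|y|$ is large. These are not corner cases: in the applications of this paper the weights $l_1=\alpha k-\gamma$, $l_2=k$ are routinely positive (H\'enon case) or negative (Hardy case), so the regime you defer to ``dyadic splitting near the origins together with Hardy-type weighted estimates'' is the main body of the proof, and those weighted (Stein--Weiss-type) estimates, with the exact Lorentz endpoint bookkeeping in $(r_1,r_2)$, are essentially the statement being proved rather than a known tool one can invoke in passing. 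Relatedly, your reading of the scale-preserving case $\frac{l_2}{d}+\frac1{q_2}=\frac{l_1}{d}+\frac1{q_1}$ as ``convolution with an $L^1$ kernel'' is accurate only when $l_1=l_2$; when $l_2<l_1$ (hence $q_2<q_1$) there is still a genuine exchange of weight decay for integrability, which requires a weak-type H\"older step using $|x|^{-s}\in L^{d/s,\infty}(\R^d)$, and it is this step combined with Young--O'Neil for the Gaussian --- not O'Neil alone --- that generates the constraints $r_1\le r_2$, $r_1=1$ when $\frac{l_1}{d}+\frac1{q_1}=1$ or $q_1=1$, and $r_2=\infty$ when $\frac{l_2}{d}+\frac1{q_2}=0$. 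A cleaner and standard route, consistent with the cited references, is to write $|x|^{l_2}e^{\Delta}f=|x|^{l_2}\,G_1*\bigl(|\cdot|^{-l_1}(|\cdot|^{l_1}f)\bigr)$ and commute the two power weights through by the Lorentz--H\"older inequality (Lemma \ref{lem:Holder}) with the weak-Lebesgue membership of power functions, applying Young--O'Neil to $G_1$ in between; this organizes the case analysis around the weights and makes each endpoint condition on $(r_1,r_2)$ fall out of an identifiable H\"older or convolution endpoint, instead of a pointwise kernel domination that, as written, does not hold.
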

%%% For Calculation
%\begin{equation}\nonumber
%\left\{\begin{aligned}
%& 1\le q_1 \le \infty, \quad 1\le q_2 \le \infty, \\
%& 0 \le \frac{s_2}{d} + \frac{1}{q_2} \le \frac{s_1}{d} + \frac{1}{q_1} \le 1, \\
%& s_2 \le s_1,	\\
%& r_1 \le 1
%	\quad \text{if } \frac{s_1}{d} + \frac{1}{q_1}=1 
% 	\text{ or } q_1=1, \\
%  & r_2 = \infty 
%  	\quad \text{if } \frac{s_2}{d} + \frac{1}{q_2}=0, \\
%  & r_1 \le r_2 
%  	\quad \text{if }\frac{s_2}{d} + \frac{1}{q_2}=\frac{s_1}{d} + \frac{1}{q_1}, \\
%  & r_2 \ge 1 \quad \text{if } q_2=1 \text{ and } s_2=0, \\
%  & r_i=\infty 
%  	\quad \text{if } q_i=\infty \quad (i=1,2). 
%\end{aligned}\right.
%\end{equation}

%%%%%%%%%%%%
%%% Section 3 %%%
%%%%%%%%%%%%
\section{Nonlinear estimates}

%%%%%%
% Nonlinear estimates
\subsection{Nonlinear estimates for local well-posedness}

Here, we study the estimate of the difference of a nonlinear map associated to the integral equation \eqref{intHHNLH}. 
Let $N : u \mapsto N(u)$ be a map defined on $\mathcal{K}^{p}_{k}(T)$ by 
\begin{equation}	\label{mapN}
	N(u)(t) :=  \int_0^t e^{(t-\tau)\Delta} 
	\left\{ |\cdot|^{\gamma} F(u(\tau,\cdot)) \right\} d\tau
	\quad\text{and}\quad
	F(u) := |u|^{\alpha-1}u. 
\end{equation}
Hereafter, we make use of the beta function $B:(0,\infty)^2\rightarrow \R_{>0}$ defined by 
\begin{equation}	\nonumber%\label{d:beta}
	B(s,t):=\int_0^1\tau^{s-1}(1-\tau)^{t-1}d\tau. 
\end{equation}

%%%%%%
% Contraction estimates
\subsubsection{Contraction estimate}
The following are the contraction estimates for both critical and subcritical regimes. 

\begin{lem}
	\label{l:Kato.est}
Let $T \in (0,\infty]$ and $d\in\mathbb{N}.$ Let $\gamma\in\R$ and $\alpha\in\R$ satisfy 
\begin{equation}\nonumber
	\alpha>\max\left\{1, 1+\frac{\gamma}{d} \right\}
		\quad\text{and}\quad
	-\min\{2,d\}<\gamma.
\end{equation}
Let $l\in\R$ and $q\in [1,\infty]$ satisfy 
\begin{equation}	\label{d:sub+critical}
	0\le \frac{l}{d} + \frac{1}{q} \le \min\left\{1, \frac1{q_c(\gamma)} \right\}. 
\end{equation}
Let $k\in\R$ and $p\in [1,\infty]$ satisfy
\begin{equation}	\label{l:Kato.est.c1}
	\begin{aligned}
	&\frac{\gamma}{\alpha-1} \le k, 
		\quad
	\alpha < p \le \infty, 
		\quad
	\max\left\{0, \,\frac{\gamma}{d(\alpha -1)} \right\} \le \frac{k}{d} + \frac{1}{p}\\
		\text{and}\quad
	&\frac{l}{d} + \frac1{q} -\frac{2}{d\alpha} 
		< \frac{k}{d} + \frac1{p}
		< \min\left\{\frac{1}{q_c(\gamma)}, \, \frac{d+\gamma}{d\alpha} \right\}. 
	\end{aligned}
\end{equation}
Then there exists a constant $C_0>0$ 
%depending only on $d,$ $\gamma,$ $\alpha,$ $k$ and $p$
such that the map $N$ defined by \eqref{mapN} satisfies 
\begin{equation}	\nonumber%\label{l:Kato.est:est}
\begin{aligned}
	\|N(u) - N(v)\|_{\mathcal{K}^{p}_{k}(T)} 
	\le C_0 T^{\frac{d(\alpha-1)}{2}(\frac1{q_c(\gamma)} - \frac{l}{d}-\frac{1}{q})}
	\left(\|u\|_{\mathcal{K}^{p}_{k}(T)}^{\alpha-1} 
				+\|v\|_{\mathcal{K}^{p}_{k}(T)}^{\alpha-1}  \right)
		\|u-v\|_{\mathcal{K}^{p}_{k}(T)}
\end{aligned}
\end{equation}
for all $u,v \in \mathcal{K}^{p}_{k}(T)$. 
\end{lem}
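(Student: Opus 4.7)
\begin{prf}{Lemma \ref{l:Kato.est} (sketch of approach)}
The plan is to estimate the $L^{p,\infty}_k$-norm of $N(u)(t)-N(v)(t)$ inside the Duhamel integral by combining a weighted H\"older inequality with the heat semigroup estimate from Lemma \ref{l:linear-main}, and then to reduce the time integral to a Beta function whose finiteness encodes exactly the parameter conditions (l:Kato.est.c1).

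First I would apply the standard pointwise bound $|F(u)-F(v)| \lesssim (|u|^{\alpha-1}+|v|^{\alpha-1})|u-v|$ and estimate
\[
\bigl\| |x|^{\gamma}\bigl(F(u(\tau))-F(v(\tau))\bigr) \bigr\|_{L^{p/\alpha,\infty}_{\alpha k - \gamma}}
= \bigl\| |x|^{\alpha k}\bigl(F(u(\tau))-F(v(\tau))\bigr)\bigr\|_{L^{p/\alpha,\infty}}
\]
by splitting the weight as $|x|^{\alpha k} = |x|^{(\alpha-1)k}\cdot |x|^{k}$ and applying the generalized H\"older inequality (Lemma \ref{lem:Holder}) with exponents $\frac{1}{p/\alpha}=\frac{\alpha-1}{p}+\frac{1}{p}$. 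This yields
\[
\bigl\| |x|^{\gamma}(F(u)-F(v))\bigr\|_{L^{p/\alpha,\infty}_{\alpha k-\gamma}}
\lesssim \bigl(\|u(\tau)\|_{L^{p,\infty}_k}^{\alpha-1}+\|v(\tau)\|_{L^{p,\infty}_k}^{\alpha-1}\bigr)\|u(\tau)-v(\tau)\|_{L^{p,\infty}_k}.
\]
Next I would apply Lemma \ref{l:linear-main} to the heat operator $e^{(t-\tau)\Delta}$ from the source space $L^{p/\alpha,\infty}_{\alpha k-\gamma}$ to the target $L^{p,\infty}_k$; the condition $\frac{\gamma}{\alpha-1}\le k$ is exactly $l_2\le l_1$, while $0\le \frac{k}{d}+\frac{1}{p}$ and $\frac{k}{d}+\frac{1}{p}<\frac{d+\gamma}{d\alpha}$ translate to $0\le \frac{l_2}{d}+\frac{1}{q_2}$ and $\frac{l_1}{d}+\frac{1}{q_1}\le 1$, and finally $\frac{k}{d}+\frac{1}{p}\ge \frac{\gamma}{d(\alpha-1)}$ is the remaining monotonicity $\frac{l_2}{d}+\frac{1}{q_2}\le \frac{l_1}{d}+\frac{1}{q_1}$.

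Then I would insert the definition of $\|\cdot\|_{\mathcal K^{p}_{k}(T)}$ to replace $\|u(\tau)\|_{L^{p,\infty}_k}$ by $\tau^{-\beta}\|u\|_{\mathcal K^{p}_{k}(T)}$ with $\beta=\frac{d}{2}\bigl(\frac{l}{d}+\frac{1}{q}-\frac{k}{d}-\frac{1}{p}\bigr)$, which leads to a time integral
\[
\int_0^t (t-\tau)^{A}\tau^{-\alpha\beta}\,d\tau = t^{A+1-\alpha\beta}\,B(1-\alpha\beta,\,A+1),\qquad A=-\frac{d(\alpha-1)}{2}\Bigl(\frac{k}{d}+\frac{1}{p}\Bigr)+\frac{\gamma}{2},
\]
after the substitution $\tau=ts$. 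The convergence $\alpha\beta<1$ corresponds exactly to the remaining lower bound $\frac{l}{d}+\frac{1}{q}-\frac{2}{d\alpha}<\frac{k}{d}+\frac{1}{p}$, while $A+1>0$ corresponds to the critical upper bound $\frac{k}{d}+\frac{1}{p}<\frac{1}{q_c(\gamma)}$.

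Multiplying by the weight $t^{\beta}$ built into the $\mathcal K^{p}_{k}(T)$-norm and simplifying yields the total power
\[
\beta + A + 1 - \alpha\beta \;=\; \frac{d(\alpha-1)}{2}\Bigl(\frac{1}{q_c(\gamma)}-\frac{l}{d}-\frac{1}{q}\Bigr),
\]
which is non-negative under \eqref{d:sub+critical}, so taking the supremum over $t\in(0,T)$ produces the stated factor $T^{\frac{d(\alpha-1)}{2}(\frac{1}{q_c(\gamma)}-\frac{l}{d}-\frac{1}{q})}$. The main obstacle I anticipate is bookkeeping: one must verify that each of the four independent inequalities in \eqref{l:Kato.est.c1} corresponds to precisely one of (i) the admissibility of Lemma \ref{l:linear-main} in the chosen Lorentz triple, (ii) the finiteness of the two exponents of the Beta function, and (iii) the sign of the final time power. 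The choice to place the entire weight $\gamma$ on the source Lorentz index rather than distributing it between the factors is what makes the conditions decouple cleanly, and it is the key step to unify the Hardy, Fujita and H\'enon regimes within a single estimate.
\end{prf}
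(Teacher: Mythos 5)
Your proposal is correct and follows essentially the same route as the paper's proof: the pointwise Lipschitz bound on $F$, the generalized H\"older inequality in weighted Lorentz spaces with the full weight $|x|^{\gamma}$ absorbed into the source index $\alpha k-\gamma$, the smoothing estimate of Lemma \ref{l:linear-main} from $L^{p/\alpha,\infty}_{\alpha k-\gamma}$ to $L^{p,\infty}_k$, and the reduction of the time integral to the Beta function $B(A+1,1-\alpha\beta)$ whose two convergence conditions and the final time power match exactly the constraints in \eqref{l:Kato.est.c1} and the factor $T^{\frac{d(\alpha-1)}{2}(\frac{1}{q_c(\gamma)}-\frac{l}{d}-\frac{1}{q})}$. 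Your bookkeeping of which condition enforces which admissibility or convergence requirement is accurate, so no gap remains.
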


\begin{proof}
For all $\alpha>1$, there exists a constant $C=C(\alpha)$ such that 
\begin{equation}	\label{diff.pt.est}
	|F(u)-F(v)| \le C (|u|^{\alpha-1} + |v|^{\alpha-1})|u-v| 
	\quad\text{for all} \quad u,v \in \mathbb{C}. 
\end{equation}
We have 
\begin{align*}
	&\|N(u)(t) - N(v)(t)\|_{L^{p,\infty}_{k}} \\
	&\lesssim \int_0^t (t-\tau)^{-\frac{d(\alpha-1)}{2p}
			- \frac12 \{(\alpha-1) k -\gamma\}} 
	\left\| |\cdot|^{\gamma} 
	(|u(\tau)|^{\alpha-1} + |v(\tau)|^{\alpha-1})|u(\tau)-v(\tau)| \right\|_{L^{\frac{p}{\alpha},\infty}_{\alpha k-\gamma}} 
	d\tau \\
	&\lesssim t^{\frac{d(\alpha-1)}{2}(\frac1{q_c(\gamma)} - \frac{l}{d}-\frac{1}{q})} 
	t^{-\frac{d}{2}(\frac{l}{d} + \frac1{q} - \frac{k}{d} - \frac1{p})} 
	 \left( \|u\|_{\mathcal{K}^{p}_{k}(T)}^{\alpha-1} + \|v\|_{\mathcal{K}^{p}_{k}(T)}^{\alpha-1} \right)
		\, \|u-v\|_{\mathcal{K}^{p}_{k}(T)} \\
	&\times
	B\left( \frac{d(\alpha-1)}2 \left( \frac{1}{q_c(\gamma)} - \frac1{p}-\frac{k}{d}\right), 
		\frac{d\alpha}2 \left( \frac{k}{d} + \frac1{p}  
			+ \frac{2}{d\alpha} - \frac{l}{d} - \frac1{q}\right)\right), 
\end{align*}
thanks to Lemma \ref{l:linear-main} 
%with $(s_1, q_1, r_1)\equiv (\alpha k-\gamma, \frac{p}{\alpha}, \infty)$ and $(s_2, q_2, r_2)\equiv (k, p, \infty)$ 
and Lemma \ref{lem:Holder}. This concludes the lemma. 
\end{proof}

When $\frac{l}{d} + \frac{1}{q} = \frac1{q_c(\gamma)}$, the above reads as follows. 

\begin{cor}	\label{c:Kato.est}
Let $T \in (0,\infty]$ and $d\in\mathbb{N}.$ 
Let $\gamma\in\R$ and $\alpha\in\R$ satisfy \eqref{t:HH.LWP.c.paramet}. 
Let $l\in\R$ and $q\in [1,\infty]$ satisfy \eqref{d:critical}. 
Let $k\in\R$ and $p\in [1,\infty]$ satisfy
\begin{equation}	\label{c:Kato.est.c1}
	\begin{aligned}
	&\frac{\gamma}{\alpha-1} \le k, 
		\quad
	\alpha < p \le \infty, \\
	&0\le \frac{k}{d} + \frac{1}{p} 
		\quad\text{and}\quad
	\frac{2+\gamma\alpha}{d\alpha(\alpha-1)} 
		< \frac{k}{d} + \frac1{p}
		< \min\left\{\frac{1}{q_c(\gamma)}, \, \frac{d+\gamma}{d\alpha} \right\}. 
	\end{aligned}
\end{equation}
Then there exists a constant $C_0>0$ 
%depending only on $d,$ $\gamma,$ $\alpha,$ $k$ and $p$
such that the map $N$ defined by \eqref{mapN} satisfies 
\begin{equation}	\nonumber
\begin{aligned}
	\|N(u) - N(v)\|_{\mathcal{K}^{p}_{k}(T)} 
	\le C_0 \left(\|u\|_{\mathcal{K}^{p}_{k}(T)}^{\alpha-1} 
				+\|v\|_{\mathcal{K}^{p}_{k}(T)}^{\alpha-1}  \right)
		\|u-v\|_{\mathcal{K}^{p}_{k}(T)}
\end{aligned}
\end{equation}
for all $u,v \in \mathcal{K}^{p}_{k}(T)$. 
\end{cor}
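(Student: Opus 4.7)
The plan is to obtain Corollary \ref{c:Kato.est} as an immediate specialization of Lemma \ref{l:Kato.est} to the critical regime defined by \eqref{d:critical}, so the task reduces to verifying that the hypotheses of Corollary \ref{c:Kato.est} imply those of Lemma \ref{l:Kato.est} and then observing that the time-factor collapses.

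First, I would check the parameter hypotheses on $(\gamma,\alpha)$. The condition \eqref{t:HH.LWP.c.paramet} gives $\gamma>-\min(2,d)$ directly, and since $\alpha_F(\gamma)=1+\tfrac{2+\gamma}{d}>\max\{1,1+\tfrac{\gamma}{d}\}$, the assumption $\alpha>\alpha_F(\gamma)$ supplies $\alpha>\max\{1,1+\gamma/d\}$. Next, for the pair $(l,q)$, under \eqref{d:critical} we have $\tfrac{l}{d}+\tfrac{1}{q}=\tfrac{1}{q_c(\gamma)}=\tfrac{2+\gamma}{d(\alpha-1)}$; the bound $\alpha>\alpha_F(\gamma)$ forces $\alpha-1>\tfrac{2+\gamma}{d}$, hence $\tfrac{1}{q_c(\gamma)}<1$. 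Therefore \eqref{d:sub+critical} holds with equality on the right.

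Second, I would translate the $(k,p)$-conditions in \eqref{c:Kato.est.c1} into \eqref{l:Kato.est.c1}. The assumptions $\tfrac{\gamma}{\alpha-1}\le k$ and $\alpha<p\le\infty$ are identical. From $\tfrac{\gamma}{\alpha-1}\le k$ and $\tfrac{1}{p}\ge 0$ we get $\tfrac{\gamma}{d(\alpha-1)}\le \tfrac{k}{d}+\tfrac{1}{p}$, which, together with $0\le \tfrac{k}{d}+\tfrac{1}{p}$, yields the lower bound $\max\{0,\tfrac{\gamma}{d(\alpha-1)}\}\le \tfrac{k}{d}+\tfrac{1}{p}$. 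The key algebraic identity is
\begin{equation*}
\frac{l}{d}+\frac{1}{q}-\frac{2}{d\alpha}=\frac{2+\gamma}{d(\alpha-1)}-\frac{2}{d\alpha}=\frac{(2+\gamma)\alpha-2(\alpha-1)}{d\alpha(\alpha-1)}=\frac{2+\gamma\alpha}{d\alpha(\alpha-1)},
\end{equation*}
so the two-sided inequality in \eqref{c:Kato.est.c1} is exactly the two-sided inequality in \eqref{l:Kato.est.c1}.

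Third, having verified the hypotheses, I invoke Lemma \ref{l:Kato.est}. The prefactor becomes
\begin{equation*}
T^{\frac{d(\alpha-1)}{2}\bigl(\frac{1}{q_c(\gamma)}-\frac{l}{d}-\frac{1}{q}\bigr)}=T^{0}=1
\end{equation*}
under \eqref{d:critical}, which gives precisely the stated contraction estimate. There is no genuine obstacle in this proof; the only point that deserves a line of bookkeeping is the identity $\tfrac{l}{d}+\tfrac{1}{q}-\tfrac{2}{d\alpha}=\tfrac{2+\gamma\alpha}{d\alpha(\alpha-1)}$, which is why the explicit lower threshold in \eqref{c:Kato.est.c1} takes the form it does.
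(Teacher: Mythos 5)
Your proposal is correct and is exactly how the paper obtains Corollary \ref{c:Kato.est}: the paper gives no separate proof, introducing the corollary as the specialization of Lemma \ref{l:Kato.est} to the critical case \eqref{d:critical}, where the identity $\frac{l}{d}+\frac{1}{q}-\frac{2}{d\alpha}=\frac{2+\gamma\alpha}{d\alpha(\alpha-1)}$ turns \eqref{l:Kato.est.c1} into \eqref{c:Kato.est.c1} and the $T$-prefactor becomes $T^{0}=1$. Your verification of the hypotheses ($\alpha>\alpha_F(\gamma)$ giving $\frac{1}{q_c(\gamma)}<1$, and the lower bound $\max\{0,\frac{\gamma}{d(\alpha-1)}\}\le\frac{k}{d}+\frac{1}{p}$) is accurate.
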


%%% Subsection 3.2.2 %%%
% Stability estimates
\subsubsection{Stability estimates}

The following are the stability estimates for both subcritical and critical norms. 

\begin{lem}	\label{l:stb.est}
Let $T \in (0,\infty]$ and $d\in\mathbb{N}.$ 
Let $\gamma\in\R$ and $\alpha\in\R$ satisfy $\alpha>1$ and $-\min\{2,d\}<\gamma$.
Let $l\in\R$ and $q\in [1,\infty]$ satisfy \eqref{d:sub+critical} and 
\begin{equation}	\label{l:stb.est.c0}
	0\le \frac{l}{d} + \frac1{q} <1. 
\end{equation}
Let $r\in[1,\infty]$, and let $r=\infty$ if $q=\infty$ or $0= \frac{l}{d} + \frac{1}{q}$. Let $k\in\R$ and $p \in [1,\infty]$ be such that 
\begin{equation}	\label{l:stb.est.c1}
\begin{aligned}
	&\frac{l+\gamma}{\alpha} \le k, 
		\quad
	\alpha < p \le \infty \\
		\quad\text{and}\quad
	&\frac1{\alpha}\left( \frac{l}{d} + \frac1{q} + \frac{\gamma}{d} \right)
		<  \frac{k}{d} +\frac1{p}
		< \min\left\{ \frac{d+\gamma}{d\alpha}, \, 
		\frac1{\alpha} \left(\frac{l}{d} + \frac1{q} + \frac{2+\gamma}{d} \right) \right\}. 
\end{aligned}
\end{equation}
Then there exists a constant $C_1>0$ 
%depending only on $d,$ $\gamma,$ $\alpha,$ $l$, $q,$ $r,$ $k$ and $p$ 
such that the map $N$ defined by \eqref{mapN} satisfies 
\begin{equation}	\label{l:stb.est:a1}
	\|N(u)-N(v)\|_{L^\infty(0,T ; L^{q,r}_{l})} 
		\le C_1 T^{\frac{d(\alpha-1)}{2}(\frac1{q_c(\gamma)} - \frac{l}{d}-\frac{1}{q})}
		\left(\|u\|_{\mathcal{K}^{p}_{k}(T)}^{\alpha-1}
		+ \|v\|_{\mathcal{K}^{p}_{k}(T)}^{\alpha-1}\right)
		\|u-v\|_{\mathcal{K}^{p}_{k}(T)}
\end{equation}
for all $u,v \in \mathcal{K}^{p}_{k}(T)$. 
\end{lem}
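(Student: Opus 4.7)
The proof parallels that of Lemma \ref{l:Kato.est} (the Kato-type contraction estimate), the only real difference being the target norm: we now measure $N(u)-N(v)$ at each time $t\in(0,T)$ in $L^{q,r}_{l}(\R^d)$ instead of in $\mathcal{K}^{p}_{k}(T)$. Starting from the pointwise estimate \eqref{diff.pt.est} together with the Duhamel form of $N(u)-N(v)$, the plan is to apply the heat-kernel bound in Lemma \ref{l:linear-main} to move from the space carrying the nonlinearity to $L^{q,r}_{l}(\R^d)$, and then to invoke the generalized Hölder inequality (Lemma \ref{lem:Holder}) to bound the nonlinear integrand by $\|u\|_{L^{p,\infty}_k}^{\alpha-1}+\|v\|_{L^{p,\infty}_k}^{\alpha-1}$ times $\|u-v\|_{L^{p,\infty}_k}$.

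Concretely, for each fixed $t\in(0,T)$ I would write
\[
\|N(u)(t)-N(v)(t)\|_{L^{q,r}_{l}}\le \int_0^t \bigl\|e^{(t-\tau)\Delta}\bigl[|\cdot|^{\gamma}(|u|^{\alpha-1}+|v|^{\alpha-1})|u-v|(\tau)\bigr]\bigr\|_{L^{q,r}_{l}}\,d\tau,
\]
and apply Lemma \ref{l:linear-main} with source exponents $(l_1,q_1,r_1)=(\alpha k-\gamma,\tfrac{p}{\alpha},\infty)$ and target $(l_2,q_2,r_2)=(l,q,r)$. The admissibility conditions of Lemma \ref{l:linear-main} translate exactly into (\ref{l:stb.est.c1}): the monotonicity $l\le\alpha k-\gamma$ gives $\tfrac{l+\gamma}{\alpha}\le k$; the non-negativity and upper bounds on $\tfrac{l_j}{d}+\tfrac{1}{q_j}$ become the strict lower bound $\tfrac1\alpha(\tfrac{l}{d}+\tfrac1q+\tfrac{\gamma}{d})<\tfrac{k}{d}+\tfrac1p$ and the strict upper bound $\tfrac{k}{d}+\tfrac1p<\tfrac{d+\gamma}{d\alpha}$; and the strictness ensures the Lorentz-index side conditions on $r_1,r_2$ are automatic (with $r=\infty$ being allowed precisely in the borderline cases listed in the statement). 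Hölder's inequality in weighted Lorentz spaces then yields
\[
\bigl\||\cdot|^{\gamma}(|u|^{\alpha-1}+|v|^{\alpha-1})|u-v|\bigr\|_{L^{p/\alpha,\infty}_{\alpha k-\gamma}}\lesssim \bigl(\|u\|_{L^{p,\infty}_k}^{\alpha-1}+\|v\|_{L^{p,\infty}_k}^{\alpha-1}\bigr)\|u-v\|_{L^{p,\infty}_k}.
\]

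Inserting the $\mathcal{K}^{p}_{k}(T)$-control $\|w(\tau)\|_{L^{p,\infty}_k}\le \tau^{-\frac{d}{2}(\frac{l}{d}+\frac1q-\frac{k}{d}-\frac1p)}\|w\|_{\mathcal{K}^{p}_{k}(T)}$ reduces everything to the time integral
\[
\int_0^t(t-\tau)^{-a}\tau^{-b}\,d\tau=t^{1-a-b}B(1-a,1-b),
\]
where $a=\tfrac{d}{2}(\tfrac{\alpha}{p}-\tfrac1q)+\tfrac{\alpha k-\gamma-l}{2}$ and $b=\tfrac{d\alpha}{2}(\tfrac{l}{d}+\tfrac1q-\tfrac{k}{d}-\tfrac1p)$. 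A short algebraic check, using $\tfrac{1}{q_c(\gamma)}=\tfrac{2+\gamma}{d(\alpha-1)}$, gives
\[
1-a-b=\tfrac{d(\alpha-1)}{2}\Bigl(\tfrac{1}{q_c(\gamma)}-\tfrac{l}{d}-\tfrac1q\Bigr),
\]
which is exactly the power of $T$ advertised in \eqref{l:stb.est:a1}. The conditions $a<1$ and $b<1$ needed for the beta function to converge amount to the upper bound $\tfrac{k}{d}+\tfrac1p<\tfrac1\alpha(\tfrac{l}{d}+\tfrac1q+\tfrac{2+\gamma}{d})$ (part of \eqref{l:stb.est.c1}) and to $\tfrac{l}{d}+\tfrac1q\le\tfrac{1}{q_c(\gamma)}$ (which is \eqref{d:sub+critical}), respectively. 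Taking the supremum over $t\in(0,T)$ and using $t\le T$ in the prefactor then yields \eqref{l:stb.est:a1}.

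The step I expect to require the most care is the bookkeeping of the Lorentz secondary indices in Lemma \ref{l:linear-main}: one must verify that the strict inequalities in \eqref{l:stb.est.c1} together with the standing assumption $0\le\tfrac{l}{d}+\tfrac1q<1$ in \eqref{l:stb.est.c0} rule out all the degenerate cases ($r_1=1$, $r_2=\infty$, $r_1\le r_2$) except those already imposed on $r$ in the hypotheses. Once these boundary cases are handled, the rest of the argument is a direct reproduction of the Kato-type estimate, only with different exponents on the target side.
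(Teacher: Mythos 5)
Your proposal is correct and follows essentially the same route as the paper: the paper's proof likewise applies Lemma \ref{lem:Holder} and Lemma \ref{l:linear-main} with $(s_1,q_1,r_1)=(\alpha k-\gamma,\tfrac{p}{\alpha},\infty)$, $(s_2,q_2,r_2)=(l,q,r)$, and reduces to the same beta function $B\bigl(1-a,1-b\bigr)$ with the time prefactor $t^{\frac{d(\alpha-1)}{2}(\frac1{q_c(\gamma)}-\frac{l}{d}-\frac1q)}$. Your exponent bookkeeping, including noting that $b<1$ follows from the lower bound in \eqref{l:stb.est.c1} combined with \eqref{d:sub+critical}, matches the paper's hypotheses.
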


\begin{proof}
Let $T>0$ and $u,v \in \mathcal{K}^{p}_{k}(T).$ From \eqref{diff.pt.est}, we have 
\begin{align*}
\|&N(u)(t)-N(v)(t)\|_{L^{q,r}_{l}} \\
	&\lesssim t^{\frac{d(\alpha-1)}{2}(\frac1{q_c(\gamma)} - \frac{l}{d}-\frac{1}{q})} 
		\left(\|u\|_{\mathcal{K}^{p}_{k}(T)}^{\alpha-1}
				+ \|u\|_{\mathcal{K}^{p}_{k}(T)}^{\alpha-1}\right)
			\|u-v\|_{\mathcal{K}^{p}_{k}(T)} \\
	&\qquad \times B\left(\frac{d\alpha}2 
		\left( \frac1{\alpha} \left(\frac{2+\gamma}{d} + \frac{l}{d} + \frac1{q} \right)
			 - \frac{k}{d} - \frac1{p}\right), \,
			\frac{d\alpha}2 \left( \frac{k}{d} + \frac1{p}  
			+ \frac{2}{d\alpha} - \frac{l}{d} - \frac1{q}\right)
		\right), 
\end{align*}
thanks to Lemma \ref{lem:Holder} and Lemma \ref{l:linear-main}. 
%with $(s_1, q_1, r_1)\equiv (\alpha k-\gamma, \frac{p}{\alpha}, \infty)$ and $(s_2, q_2, r_2)\equiv (l, q, r)$. 
This completes the proof of the lemma. 
\end{proof}

In the critical case, the above lemma reads as follows. 

\begin{cor}	\label{c:crt.stb}
Let $T \in (0,\infty]$ and $d\in\mathbb{N}.$ 
Let $\gamma\in\R$ and $\alpha\in\R$ satisfy \eqref{t:HH.LWP.c.paramet} . 
Let $l\in\R$ and $q\in [1,\infty]$ satisfy \eqref{d:critical} and \eqref{l:stb.est.c0}, 
let $r\in[1,\infty]$ and $l=0$, and let $r=\infty$ if $q=\infty$. 
Let $k\in\R$ and $p \in [1,\infty]$ be such that 
\begin{equation}	\label{c:crt.stb.c1}
	\frac{l+\gamma}{\alpha} \le k 
		\quad
	\alpha < p \le \infty
		\quad\text{and}\quad
	\frac{2+\gamma\alpha}{d\alpha(\alpha-1)}
		<  \frac{k}{d} +\frac1{p}
		< \min\left\{ \frac{1}{q_c(\gamma)}, \, \frac{d+\gamma}{d\alpha}\right\}. 
\end{equation}
Then there exists a constant $C_1>0$ 
%depending only on $d,$ $\gamma,$ $\alpha,$ $l$, $q,$ $r$, $k$ and $p$ 
such that the map $N$ defined by \eqref{mapN} satisfies 
\begin{equation}\nonumber	%\label{l:crt.stb:a1}
	\|N(u)-N(v)\|_{L^\infty(0,T ; L^{q,r}_{l})} 
		\le C_1 \left(\|u\|_{\mathcal{K}^{p}_{k}(T)}^{\alpha-1}
		+ \|v\|_{\mathcal{K}^{p}_{k}(T)}^{\alpha-1}\right)
		\|u-v\|_{\mathcal{K}^{p}_{k}(T)}
\end{equation}
for all $u,v \in \mathcal{K}^{p}_{k}(T)$. 
\end{cor}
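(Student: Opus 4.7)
\textbf{Proof proposal for Corollary \ref{c:crt.stb}.} The plan is to show that Corollary \ref{c:crt.stb} is a direct specialization of Lemma \ref{l:stb.est} once the critical scaling relation \eqref{d:critical} is imposed. First I would verify that the hypotheses on the parameters $(\gamma,\alpha)$ propagate correctly: condition \eqref{t:HH.LWP.c.paramet} requires $\gamma>-\min(2,d)$ and $\alpha>\alpha_F(\gamma)=1+(2+\gamma)/d$, and since $\gamma>-2$ one has $\alpha_F(\gamma)>1$, so both $\alpha>1$ and $-\min\{2,d\}<\gamma$ required by Lemma \ref{l:stb.est} are automatic. The condition \eqref{l:stb.est.c0} is also part of the hypotheses of the corollary.

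Next I would check that the admissibility range \eqref{c:crt.stb.c1} for $(k,p)$ coincides with \eqref{l:stb.est.c1} under \eqref{d:critical}. A direct computation using $\frac{l}{d}+\frac{1}{q}=\frac{1}{q_c(\gamma)}=\frac{2+\gamma}{d(\alpha-1)}$ gives
\begin{equation*}
\frac{1}{\alpha}\left(\frac{l}{d}+\frac{1}{q}+\frac{\gamma}{d}\right)
=\frac{1}{\alpha}\cdot\frac{2+\gamma+\gamma(\alpha-1)}{d(\alpha-1)}
=\frac{2+\gamma\alpha}{d\alpha(\alpha-1)},
\end{equation*}
which identifies the lower bound in \eqref{c:crt.stb.c1} with the one in \eqref{l:stb.est.c1}. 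Similarly,
\begin{equation*}
\frac{1}{\alpha}\left(\frac{l}{d}+\frac{1}{q}+\frac{2+\gamma}{d}\right)
=\frac{2+\gamma}{d\alpha}\cdot\frac{\alpha}{\alpha-1}
=\frac{1}{q_c(\gamma)},
\end{equation*}
so the upper bound in \eqref{l:stb.est.c1} becomes $\min\{(d+\gamma)/(d\alpha),\,1/q_c(\gamma)\}$, matching \eqref{c:crt.stb.c1}.

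Finally, under \eqref{d:critical} the time weight in conclusion \eqref{l:stb.est:a1} reduces to
\begin{equation*}
T^{\frac{d(\alpha-1)}{2}\left(\frac{1}{q_c(\gamma)}-\frac{l}{d}-\frac{1}{q}\right)}=T^{0}=1,
\end{equation*}
so Lemma \ref{l:stb.est} applied to $u,v\in\mathcal{K}^{p}_{k}(T)$ yields exactly the claimed time-independent estimate. I do not expect any genuine obstacle here — the corollary is essentially a bookkeeping specialization — but the one place I would be most careful is the verification that the endpoint assumptions on the secondary index $r$ (namely $r=\infty$ when $q=\infty$) are compatible with the statement of Lemma \ref{l:stb.est}, and that none of the strict inequalities on $(k,p)$ becomes vacuous in the critical regime; the chain of equalities above shows the admissible interval for $\frac{k}{d}+\frac{1}{p}$ is nonempty precisely because $\alpha>\alpha_F(\gamma)$, which is the content of condition \eqref{t:HH.LWP.c.paramet}.
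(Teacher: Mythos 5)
Your proposal is correct and matches the paper's own treatment: the paper gives no separate proof, presenting Corollary \ref{c:crt.stb} precisely as the critical-case reading of Lemma \ref{l:stb.est}, and your substitutions $\frac1\alpha(\frac{l}{d}+\frac1q+\frac{\gamma}{d})=\frac{2+\gamma\alpha}{d\alpha(\alpha-1)}$, $\frac1\alpha(\frac{l}{d}+\frac1q+\frac{2+\gamma}{d})=\frac1{q_c(\gamma)}$ and the vanishing time exponent are exactly the required bookkeeping. The nonemptiness remark (using $\alpha>\alpha_F(\gamma)$, together with $\alpha>1$) and the compatibility of the $r$-endpoint conventions are also as intended.
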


\subsection{Nonlinear estimates for asymptotic behaviors}

The following lemma is used to prove the asymptotic stability result (Theorem \ref{t:stab.asym.behv}). 
Its proof is inspired by \cite{MiaYua2007}. 

\begin{lem}	\label{l:crt.est2}
Let $d\in\mathbb{N}.$ 
Let $\gamma\in\R$ and $\alpha\in\R$ satisfy \eqref{t:HH.LWP.c.paramet}. 
Let $l\in\R$ and $q\in (1,\infty]$ satisfy \eqref{d:critical}. 
Let $r\in[1,\infty]$ and let $r=\infty$ if $q=\infty$. 
Let $k\in\R$ and $p \in [1,\infty]$ be such that 
\begin{equation}	\label{l:crt.est2.c1}
\begin{aligned}
	&0\le \frac1{p} < \frac1{\alpha-1} \left(1-\frac1{q}\right), \quad
	\frac{\gamma}{\alpha-1} \le k \\
		\quad\text{and}\quad
	&\frac{\gamma}{d(\alpha-1)}
		< \frac{k}{d} + \frac1{p}
		< \min\left\{\frac{1}{q_c(\gamma)}, \, 
		\frac{1}{\alpha-1} \left(1+\frac{\gamma}{d}-\frac1{q_c(\gamma)}\right) \right\}. 
\end{aligned}
\end{equation}
Then there exists a constant $C_2>0$ 
%depending only on $d,$ $\gamma,$ $\alpha,$ $l$, $q,$ $r,$ $k$ and $p$ 
such that the map $N$ defined by \eqref{mapN} satisfies 
\begin{equation}	\label{l:crt.est2:a1}
\begin{aligned}
	\limsup_{t\to\infty}&\|N(u)(t)-N(v)(t)\|_{L^{q,r}_{l}} \\
	&\le C_2 \left(\|u\|_{\mathcal{K}^{p}_{k}}^{\alpha-1}
		+ \|v\|_{\mathcal{K}^{p}_{k}}^{\alpha-1}\right)
		\limsup_{t\to\infty}\|u(t)-v(t)\|_{L^{q,r}_{l}}
\end{aligned}
\end{equation}
for all $u,v \in \mathcal{K}^{p}_{k}$. 
\end{lem}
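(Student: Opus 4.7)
The plan is to adapt a Miao--Yuan-type time-splitting argument by separating the Duhamel integral at a threshold $T_\ep$ beyond which $\|u(\tau)-v(\tau)\|_{L^{q,r}_{l}}$ is already close to $M_\infty:=\limsup_{\tau\to\infty}\|u(\tau)-v(\tau)\|_{L^{q,r}_{l}}$ (the statement being vacuous if $M_\infty=+\infty$).

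First I would derive a uniform integrand estimate. Using the pointwise bound $|F(u)-F(v)|\le C(|u|^{\alpha-1}+|v|^{\alpha-1})|u-v|$, the generalized H\"older inequality (Lemma \ref{lem:Holder}) with $l_1:=(\alpha-1)k+l-\gamma$ and $q_1^{-1}:=\frac{\alpha-1}{p}+\frac{1}{q}$ (for a suitable $r_1\le r$), and then Lemma \ref{l:linear-main} applied to $e^{(t-\tau)\Delta}$ from $L^{q_1,r_1}_{l_1}$ to $L^{q,r}_{l}$, one obtains
\[
	\bigl\|e^{(t-\tau)\Delta}\bigl[|\cdot|^{\gamma}(|u(\tau)|^{\alpha-1}+|v(\tau)|^{\alpha-1})|u(\tau)-v(\tau)|\bigr]\bigr\|_{L^{q,r}_{l}}
	\le C(t-\tau)^{-a}\tau^{-c}A\,\|u(\tau)-v(\tau)\|_{L^{q,r}_{l}},
\]
where $A:=\|u\|_{\mathcal{K}^{p}_{k}}^{\alpha-1}+\|v\|_{\mathcal{K}^{p}_{k}}^{\alpha-1}$, $a:=\frac{d(\alpha-1)}{2p}+\frac{(\alpha-1)k-\gamma}{2}$, and $c:=\frac{d(\alpha-1)}{2}\bigl(\frac{1}{q_c(\gamma)}-\frac{k}{d}-\frac{1}{p}\bigr)$. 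The upper bound on $\frac{k}{d}+\frac{1}{p}$ in \eqref{l:crt.est2.c1} is precisely what ensures $\frac{l_1}{d}+\frac{1}{q_1}<1$ so that Lemma \ref{l:linear-main} applies, and $\frac{1}{p}<\frac{1}{\alpha-1}(1-\frac{1}{q})$ forces $q_1>1$. The critical relation \eqref{d:critical} then yields the crucial algebraic identity $a+c=1$, while the remaining bounds in \eqref{l:crt.est2.c1} guarantee $0<a,c<1$.

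Next, for fixed $\ep>0$ pick $T_\ep$ such that $\|u(\tau)-v(\tau)\|_{L^{q,r}_{l}}\le M_\infty+\ep$ for $\tau\ge T_\ep$, and split $N(u)(t)-N(v)(t)=I_1(t)+I_2(t)$ with $I_1=\int_0^{T_\ep}$ and $I_2=\int_{T_\ep}^{t}$. For $t\ge 2T_\ep$ and $\tau\in(0,T_\ep)$ one has $(t-\tau)^{-a}\le(t/2)^{-a}$, which combined with $c<1$ and the local boundedness of $\tau\mapsto\|u(\tau)-v(\tau)\|_{L^{q,r}_{l}}$ (inherited from the context $u,v\in C([0,\infty);L^{q,r}_{l})$ in which the lemma will be applied) gives $\|I_1(t)\|_{L^{q,r}_{l}}\to 0$ as $t\to\infty$. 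For $I_2$, pulling out the uniform bound $M_\infty+\ep$ and rescaling $\tau=ts$ yields
\[
	\int_{T_\ep}^{t}(t-\tau)^{-a}\tau^{-c}\,d\tau=t^{1-a-c}\int_{T_\ep/t}^{1}(1-s)^{-a}s^{-c}\,ds\xrightarrow[t\to\infty]{}B(1-c,1-a),
\]
which is finite since $0<a,c<1$ and has vanishing scaling exponent thanks to criticality. Hence $\limsup_{t\to\infty}\|I_2(t)\|_{L^{q,r}_{l}}\le C A\,(M_\infty+\ep)$, and letting $\ep\to 0$ yields \eqref{l:crt.est2:a1}.

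The hard part will be the exponent bookkeeping in the first step: checking that every parameter constraint in Lemma \ref{l:linear-main} (particularly the strict inequality $\frac{l_1}{d}+\frac{1}{q_1}<1$) follows from the numerology of \eqref{l:crt.est2.c1}, and verifying that the identity $a+c=1$ is exactly the criticality relation \eqref{d:critical}, so that the outer integral contributes only a constant rather than a factor growing in $t$. The subsequent analytic steps are routine.
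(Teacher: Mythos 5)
Your proposal is correct and follows essentially the same route as the paper: the identical pointwise bound plus H\"older plus weighted heat-kernel estimate (Lemma \ref{l:linear-main}) with the same exponents $a$, $c$ and the critical identity $a+c=1$, and the conditions in \eqref{l:crt.est2.c1} are used for exactly the purposes you identify. The only difference is cosmetic — the paper rescales $\tau=t\tau'$ and passes to the limit by dominated convergence, whereas you split the Duhamel integral at a finite threshold $T_\ep$ — and both arguments rely on the same implicit boundedness of $\|u(\tau)-v(\tau)\|_{L^{q,r}_{l}}$ on compact time intervals that the paper also invokes.
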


\begin{proof}
Let $\tilde p$ be defined by $\frac1{\tilde p} := \frac{\alpha-1}{p} + \frac{1}{q}.$ From \eqref{diff.pt.est}, we have 
\begin{align*}
\|&N(u)(t)-N(v)(t)\|_{L^{q,r}_{l}} \\
	&\lesssim \int_0^t 
		(t-\tau)^{-\frac{d}{2} ((\alpha-1) (\frac{k}{d} + \frac{1}{p}) -\frac{\gamma}{d} ) } 
			\left( \| u(\tau) \|_{L^{p,\infty}_{k}}^{\alpha-1} 
				+\| v(\tau) \|_{L^{p,\infty}_{k}}^{\alpha-1} \right) 
			\| u(\tau) -v(\tau)\|_{L^{q,r}_{l}} \,d\tau \\
	&\lesssim \int_0^1
		(1-\tau)^{-\frac{d}{2} ((\alpha-1) (\frac{k}{d} + \frac{1}{p}) -\frac{\gamma}{d} ) } 
		\tau^{-\frac{d(\alpha-1)}{2} (\frac1{q_c(\gamma)} - \frac{k}{d} - \frac1{p} ) } 
		\|u(t \tau)-v(t \tau)\|_{L^{q,r}_{l}} d\tau \\
	&\qquad 
	\times \left(\|u\|_{\mathcal{K}^{p}_{k}}^{\alpha-1}
				+ \|v\|_{\mathcal{K}^{p}_{k}}^{\alpha-1}\right),
\end{align*}
thanks to Lemma \ref{l:linear-main}. 
%with $(s_1, q_1, r_1)\equiv ((\alpha-1) k + l-\gamma, \tilde p, r)$ and $(s_2, q_2, r_2)\equiv (l, q, r)$. 
Then we deduce 
\begin{align*}
 \sup_{t\ge t'}\|&N(u)(t)-N(v)(t)\|_{L^{q,r}_{l}} \\
	&\lesssim \int_0^1
		(1-\tau)^{-\frac{d}{2} ((\alpha-1) (\frac{k}{d} + \frac{1}{p}) -\frac{\gamma}{d} ) } 
		\tau^{-\frac{d(\alpha-1)}{2} (\frac1{q_c(\gamma)} - \frac{k}{d} - \frac1{p} ) } 
		 \sup_{t\tau \ge t'\tau} \|u(t \tau)-v(t \tau)\|_{L^{q,r}_{l}} d\tau \\
	&\qquad 
	\times \left(\|u\|_{\mathcal{K}^{p}_{k}}^{\alpha-1}
				+ \|v\|_{\mathcal{K}^{p}_{k}}^{\alpha-1}\right). 
\end{align*}
Now, by assumption we have 
\begin{equation}\nonumber
	 \sup_{t\tau \ge t'\tau} \|u(t \tau)-v(t \tau)\|_{L^{q,r}_{l}} \le  \|u\|_{L^{\infty}(0,T;L^{q,r}_{l})}+\|v\|_{L^{\infty}(0,T;L^{q,r}_{l})} < +\infty
	 \quad\text{for all } \tau \in [0,1]
\end{equation}
and 
\begin{equation}\nonumber
	\lim_{t'\to\infty} \sup_{t\tau \ge t'\tau} \|u(t \tau)-v(t \tau)\|_{L^{q,r}_{l}} 
%	= \lim_{t'\to\infty} \sup_{t \ge t'} \|u(t)-v(t)\|_{L^{q,r}_{l}}
	= \limsup_{t\to\infty} \|u(t)-v(t)\|_{L^{q,r}_{l}} 
	 \quad\text{for a.e. } \tau \in [0,1]. 
\end{equation}
Therefore, Lebesgue's dominated convergence theorem yields
\begin{align*}
\limsup_{t\to\infty} \|&N(u)(t)-N(v)(t)\|_{L^{q,r}_{l}} \\
%	&\le C \int_0^1
%		(1-\tau)^{-\frac{d}{2} ((\alpha-1) (\frac{k}{d} + \frac{1}{p}) -\frac{\gamma}{d} ) } 
%		\tau^{-\frac{d(\alpha-1)}{2} (\frac1{q_c(\gamma)} - \frac{k}{d} - \frac1{p} ) } 
%		\lim_{t'\to\infty} \sup_{t\tau \ge t'\tau} \|u(t \tau)-v(t \tau)\|_{L^{q,r}_{l}} d\tau \\
%	&\qquad 
%	\times \left(\|u\|_{\mathcal{K}^{p}_{k}}^{\alpha-1}
%				+ \|v\|_{\mathcal{K}^{p}_{k}}^{\alpha-1}\right) \\
	&\lesssim \int_0^1
		(1-\tau)^{-\frac{d}{2} ((\alpha-1) (\frac{k}{d} + \frac{1}{p}) -\frac{\gamma}{d} ) } 
		\tau^{-\frac{d(\alpha-1)}{2} (\frac1{q_c(\gamma)} - \frac{k}{d} - \frac1{p} ) } d\tau \\
	&\qquad 
	\times \left(\|u\|_{\mathcal{K}^{p}_{k}}^{\alpha-1}
				+ \|v\|_{\mathcal{K}^{p}_{k}}^{\alpha-1}\right) 
		\limsup_{t\to\infty} \|u(t \tau)-v(t \tau)\|_{L^{q,r}_{l}}. 
\end{align*}
This completes the proof of the lemma. 
\end{proof}

The following estimate is crucial to prove the asymptotic behavior for the linear case 
(Theorem \ref{t:asym.behv} (ii)). 

\begin{lem}	\label{l:Kato.est2}
Let $T \in (0,\infty]$ and $d\in\mathbb{N}.$ 
Let $\gamma\in\R$ and $\alpha\in\R$ satisfy \eqref{t:HH.LWP.c.paramet}. 
Let $l\in\R$ and $q\in [1,\infty]$ satisfy \eqref{d:critical}. 
Let $\sigma\in\R$ satisfies 
\begin{equation}	\label{l:Kato.est2.c.l0.q0}
	 \frac{d}{q_c(\gamma)} < \sigma <d. 
\end{equation}
Let $k, k_0\in\R$ and $p, p_0 \in [1,\infty]$ be such that 
\begin{equation}	\label{l:Kato.est2.c1}
\begin{aligned}
	&0\le \frac{\alpha-1}{p} + \frac1{p_0}< 1, 
		\quad
	\frac{\gamma}{\alpha-1} \le k, 
		\quad
	\frac{k}{d} + \frac1{p}< \frac{1}{q_c(\gamma)}\\
		\text{and}\quad
	&\frac{\sigma}{d}
	<(\alpha-1) \left(\frac{k}{d} + \frac1{p} \right) + \frac{k_0}{d} + \frac{1}{p_0} - \frac{\gamma}{d}
		 < 1. 
\end{aligned}
\end{equation}
Then there exists a constant $C_3>0$ 
%depending only on $d,$ $\gamma,$ $\alpha,$ $l$, $q,$ $k$ and $p$ 
such that the map $N$ defined by \eqref{mapN} satisfies 
\begin{equation}	\nonumber%\label{l:Kato.est2:a1}
\begin{aligned}
	\sup_{0<t< T} &t^{\frac{d}{2} (\frac{\sigma}{d}-\frac{k_0}{d}-\frac1{p_0}) } 
		\|N(u)(t)-N(v)(t)\|_{L^{p_0,\infty}_{k_0}} \\
	&\le C_3 \left(\|u\|_{\mathcal{K}^{p}_{k}(T)}^{\alpha-1} + \|v\|_{\mathcal{K}^{p}_{k}(T)}^{\alpha-1}\right)
		\sup_{0<t< T} t^{\frac{d}{2} (\frac{\sigma}{d}-\frac{k_0}{d}-\frac1{p_0}) } 
		\|u(t) -v(t)\|_{L^{p_0,\infty}_{k_0}}
\end{aligned}
\end{equation}
for all $u,v \in \mathcal{K}^{p}_{k}(T)$ such that 
\[
	\sup_{0<t< T} t^{\frac{d}{2} (\frac{\sigma}{d}-\frac{k_0}{d}-\frac1{p_0}) } 
		\|u(t)-v(t)\|_{L^{p_0,\infty}_{k_0}} < \infty. 
\] 
\end{lem}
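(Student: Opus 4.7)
The proof follows the same template as Lemmas \ref{l:Kato.est}, \ref{l:stb.est} and \ref{l:crt.est2}: insert the pointwise bound \eqref{diff.pt.est}, estimate the heat semigroup by Lemma \ref{l:linear-main}, factor the integrand with the generalized Hölder inequality of Lemma \ref{lem:Holder}, and reduce the time integral to a beta function. The novelty here is only that $u-v$ is measured in the supercritical Lorentz norm $L^{p_0,\infty}_{k_0}$ (not in the critical $\mathcal{K}^p_k$-norm), so I need to track two different scaling exponents simultaneously and verify that the exponent algebra closes.

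\textbf{Main step.} With $\tilde p$ and $\tilde k$ defined by
\[
\frac{1}{\tilde p} := \frac{\alpha-1}{p} + \frac{1}{p_0}, \qquad \tilde k := (\alpha-1)k + k_0 - \gamma,
\]
Hölder's inequality (Lemma \ref{lem:Holder}) and \eqref{diff.pt.est} yield
\[
\bigl\| |\cdot|^\gamma(|u|^{\alpha-1}+|v|^{\alpha-1})(u-v)\bigr\|_{L^{\tilde p,\infty}_{\tilde k}}
\le C \bigl(\|u\|_{L^{p,\infty}_k}^{\alpha-1}+\|v\|_{L^{p,\infty}_k}^{\alpha-1}\bigr)\|u-v\|_{L^{p_0,\infty}_{k_0}}.
\]
The hypotheses $\frac{\gamma}{\alpha-1}\le k$, $\frac{\alpha-1}{p}+\frac{1}{p_0}<1$ and the upper bound in the second line of \eqref{l:Kato.est2.c1} ensure $k_0\le \tilde k$ and $0\le \frac{k_0}{d}+\frac{1}{p_0}\le \frac{\tilde k}{d}+\frac{1}{\tilde p}\le 1$, so Lemma \ref{l:linear-main} (with suitable $r=\infty$) gives
\[
\|e^{(t-\tau)\Delta}f\|_{L^{p_0,\infty}_{k_0}} \le C (t-\tau)^{-\frac{d(\alpha-1)}{2}(\frac{k}{d}+\frac{1}{p})+\frac{\gamma}{2}}\|f\|_{L^{\tilde p,\infty}_{\tilde k}}.
\]

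\textbf{Time integral.} Combining these with the a priori bounds
$\|u(\tau)\|_{L^{p,\infty}_k}\le \tau^{-\frac{d}{2}(\frac{1}{q_c(\gamma)}-\frac{k}{d}-\frac{1}{p})}\|u\|_{\mathcal K^p_k(T)}$ and the analogous one for $u-v$ with exponent $-\frac{d}{2}(\frac{l_0}{d}+\frac{1}{q_0}-\frac{k_0}{d}-\frac{1}{p_0})$, I obtain
\[
\|N(u)(t)-N(v)(t)\|_{L^{p_0,\infty}_{k_0}} \le C \bigl(\|u\|_{\mathcal K^p_k(T)}^{\alpha-1}+\|v\|_{\mathcal K^p_k(T)}^{\alpha-1}\bigr)\, M \cdot I(t),
\]
where $M:=\sup_{0<\tau<T}\tau^{\frac{d}{2}(\frac{l_0}{d}+\frac{1}{q_0}-\frac{k_0}{d}-\frac{1}{p_0})}\|u(\tau)-v(\tau)\|_{L^{p_0,\infty}_{k_0}}$ and $I(t)$ is the beta-type integral $\int_0^t(t-\tau)^{a}\tau^{b}\,d\tau$ with
\[
a=-\tfrac{d(\alpha-1)}{2}\bigl(\tfrac{k}{d}+\tfrac{1}{p}\bigr)+\tfrac{\gamma}{2},\qquad b=-\tfrac{d(\alpha-1)}{2}\bigl(\tfrac{1}{q_c(\gamma)}-\tfrac{k}{d}-\tfrac{1}{p}\bigr)-\tfrac{d}{2}\bigl(\tfrac{l_0}{d}+\tfrac{1}{q_0}-\tfrac{k_0}{d}-\tfrac{1}{p_0}\bigr).
\]
Using $\frac{d(\alpha-1)}{2 q_c(\gamma)}=\frac{2+\gamma}{2}$, a direct computation gives $a+b+1=-\frac{d}{2}(\frac{l_0}{d}+\frac{1}{q_0}-\frac{k_0}{d}-\frac{1}{p_0})$, which is precisely the weight that appears on the left-hand side of the claimed inequality, so that $t^{\frac{d}{2}(\frac{l_0}{d}+\frac{1}{q_0}-\frac{k_0}{d}-\frac{1}{p_0})}I(t)=B(a+1,b+1)$.

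\textbf{Main obstacle.} The only real work is checking that the beta-function exponents satisfy $a+1>0$ and $b+1>0$. The condition $a+1>0$ amounts to $\frac{k}{d}+\frac{1}{p}<\frac{1}{q_c(\gamma)}$, which is the third inequality of \eqref{l:Kato.est2.c1}. The condition $b+1>0$, after substituting $\frac{d(\alpha-1)}{2q_c(\gamma)}=1+\frac{\gamma}{2}$, reduces exactly to
\[
\tfrac{l_0}{d}+\tfrac{1}{q_0}<(\alpha-1)\bigl(\tfrac{k}{d}+\tfrac{1}{p}\bigr)+\tfrac{k_0}{d}+\tfrac{1}{p_0}-\tfrac{\gamma}{d},
\]
which is the last hypothesis in \eqref{l:Kato.est2.c1}. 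Once both are verified, taking the supremum over $t\in(0,T)$ yields the desired constant $C_3$; the supercriticality $\frac{l_0}{d}+\frac{1}{q_0}>\frac{1}{q_c(\gamma)}$ in \eqref{l:Kato.est2.c.l0.q0} is what guarantees that the exponent window for $(k_0,p_0)$ is nonempty, but it does not explicitly appear in the computation itself.
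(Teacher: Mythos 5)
Your proposal is correct and follows essentially the same route as the paper's proof: the pointwise bound \eqref{diff.pt.est}, H\"older's inequality in weighted Lorentz spaces, Lemma \ref{l:linear-main} with exactly the exponents $(\tilde k,\tilde p)=((\alpha-1)k+k_0-\gamma,\,\tilde p)$ and $(k_0,p_0)$, insertion of the $\mathcal K^p_k$-norms of $u,v$ and the weighted sup-norm of $u-v$, and a change of variables reducing the time integral to a beta function. Your explicit verification that $a+1>0$ and $b+1>0$ correspond precisely to the third inequality and the lower bound in \eqref{l:Kato.est2.c1} is a slightly more detailed bookkeeping of what the paper leaves implicit, but the argument is the same.
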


\begin{proof}
Let $\tilde p$ be defined by $\frac1{\tilde p} := \frac{\alpha-1}{p} + \frac{1}{p_0}$. From \eqref{diff.pt.est}, we have 
\begin{align*}
	\|&N(u)(t)-N(v)(t)\|_{L^{p_0,\infty}_{k_0}} \\
	&\lesssim \int_0^t 
		(t-\tau)^{-\frac{d}{2} ((\alpha-1) (\frac{k}{d} + \frac{1}{p}) -\frac{\gamma}{d} ) } 
			\left( \| u(\tau) \|_{L^{p,\infty}_{k}}^{\alpha-1} 
				+\| v(\tau) \|_{L^{p,\infty}_{k}}^{\alpha-1} \right) 
			\| u(\tau) -v(\tau)\|_{L^{p_0,\infty}_{k_0}} \,d\tau,
\end{align*}
thanks to Lemma \ref{l:linear-main}. 
%with $(s_1, q_1, r_1)\equiv ((\alpha-1) k + k_0-\gamma, \tilde p, r)$ and $(s_2, q_2, r_2)\equiv (k_0, p_0, \infty)$. 
Thus, we have 
\begin{align*}
	&\sup_{0<t<T} t^{\frac{d}{2} (\frac{\sigma}{d}-\frac{k_0}{d}-\frac1{p_0}) } 
	\|N(u)(t)-N(v)(t)\|_{L^{p_0,\infty}_{k_0}} \\
%	&\le C \int_0^1 
%		(1-\tau)^{-\frac{d}{2} ((\alpha-1) (\frac{k}{d} + \frac{1}{p}) -\frac{\gamma}{d} ) } 
%		\tau^{-\frac{d(\alpha-1)}{2} (\frac1{q_c(\gamma)} -\frac{k}{d} -\frac1{p})} 
%		\tau^{-\frac{d}{2} (\frac{\sigma}{d}-\frac{k_0}{d}-\frac1{p_0}) } d\tau \\
%	&\qquad 
%	\times \left(\|u\|_{\mathcal{K}^{p}_{k}(T)}^{\alpha-1}
%				+ \|u\|_{\mathcal{K}^{p}_{k}(T)}^{\alpha-1}\right)
%		\sup_{0<t< T} t^{\frac{d}{2} (\frac{\sigma}{d}-\frac{k_0}{d}-\frac1{p_0}) } 
%		\|u(t) -v(t)\|_{L^{p_0,\infty}_{k_0}} \\
	&\lesssim B\left( -\frac{d}{2} \left((\alpha-1) \left(\frac{k}{d} + \frac{1}{p} \right) -\frac{\gamma}{d} \right),\,   
		\frac{d}2 \left( (\alpha-1) \left(\frac{k}{d} + \frac1{p} \right) + \frac{k_0}{d} + \frac{1}{p_0} \right) - \frac{\sigma+\gamma}{2}\right) \\
	&\qquad 
	\times \left(\|u\|_{\mathcal{K}^{p}_{k}(T)}^{\alpha-1}
				+ \|u\|_{\mathcal{K}^{p}_{k}(T)}^{\alpha-1}\right)
		\sup_{0<t< T} t^{\frac{d}{2} (\frac{\sigma}{d}-\frac{k_0}{d}-\frac1{p_0}) } 
		\|u(t) -v(t)\|_{L^{p_0,\infty}_{k_0}}.
\end{align*}
This completes the proof of the lemma. 
\end{proof}

\subsection{Upgrade of regularity}

The following lemma is used to show the regularity of the $L^{q,r}_{l}(\R^d)$-mild solution. 

\begin{lem}	\label{l:b.strap}
Let $k_1,k_2 \in \R$, $p_1,p_2 \in [1,\infty]$ and $T\in (0,\infty]$. 
Under condition \eqref{t:HH.LWP.c.paramet}, 
let pairs $(k_1, p_1)$ and $(k_2, p_2)$ be such that 
\begin{equation}	\label{l:b.strap:c}
\begin{aligned}
	&k_2\le \min\{k_1,  \alpha k_1 - \gamma\}, 
	\quad \alpha \le p_1 \le \infty,  
	\quad 1\le p_2 \le \infty,  \\
	&0\le \frac{k_2}{d} + \frac1{p_2} 
	\le \min\left\{  \frac{k_1}{d} + \frac1{p_1},\, 
		\alpha \left( \frac{k_1}{d} + \frac1{p_1} \right) - \frac{\gamma}{d} \right\}, \\
	&\max\left\{  \frac{k_1}{d} + \frac1{p_1},\, 
		\alpha \left( \frac{k_1}{d} + \frac1{p_1} \right) - \frac{\gamma}{d} \right\} 
	\le 1
		\quad\text{and}\quad 
	\alpha\left( \frac{k_1}{d} + \frac1{p_1}\right) - \frac{2+\gamma}{d}
	<\frac{k_2}{d} + \frac1{p_2}. 
\end{aligned}
\end{equation}
Let $u$ be the solution to \eqref{intHHNLH}. Then the following assertions hold:
%with initial data $\varphi \in \mathcal{S}'(\R^d)$ on $[0,T]$. 
\begin{itemize}
\item[(i)] $($$r_1=1$$)$ \ If $p_1<\infty$ and 
	\begin{equation}\nonumber
		\sup_{0<t<T} t^{ \frac{d}2(\frac1{q_c(\gamma)} - \frac{k_1}{d} - \frac1{p_1}) } 
		\|u(t)\|_{L^{p_1,1}_{k_1}} < \infty
	\end{equation} 
	then it follows that 
	\begin{equation}\nonumber
		\sup_{0<t<T} 
		t^{ \frac{d}2(\frac1{q_c(\gamma)} - \frac{k_2}{d} - \frac1{p_2}) } 
			\|u(t)\|_{L^{p_2,\infty}_{k_2}} < \infty. 
	\end{equation}
	Moreover, 
	\begin{equation}\nonumber
		\sup_{0<t<T} 
		t^{ \frac{d}2(\frac1{q_c(\gamma)} - \frac{k_2}{d} - \frac1{p_2}) } 
			\|u(t)\|_{L^{p_2,1}_{k_2}} < \infty
	\end{equation}
	provided further that
	\begin{equation}	\label{l:b.strap:c1}
		p_2<\infty 
			\quad\text{and}\quad
		0< \frac{k_2}{d} + \frac1{p_2}. 
	\end{equation}
\item[(ii)] $($$r_1=\infty$$)$ If 
	\begin{equation}\nonumber
		\sup_{0<t<T} 
		t^{ \frac{d}2(\frac1{q_c(\gamma)} - \frac{k_1}{d} - \frac1{p_1} ) } 
		\|u(t)\|_{L^{p_1,\infty}_{k_1}} < \infty
	\end{equation}
	then it follows that 
	\begin{equation}\nonumber
		\sup_{0<t<T} 
		t^{ \frac{d}2(\frac1{q_c(\gamma)} - \frac{k_2}{d} - \frac1{p_2}) } 
			\|u(t)\|_{L^{p_2,\infty}_{k_2}} < \infty,
	\end{equation}
	provided further that 
	\begin{equation}	\label{l:b.strap:c2}
		\alpha<p_1, 
			\quad\text{and}\quad 
		\max\left\{  \frac{k_1}{d} + \frac1{p_1},\, 
			\alpha \left( \frac{k_1}{d} + \frac1{p_1} \right) - \frac{\gamma}{d} \right\} 
		< 1. 
	\end{equation}
	Moreover, 
	\begin{equation}\nonumber
		\sup_{0<t<T} 
		t^{ \frac{d}2(\frac1{q_c(\gamma)} - \frac{k_2}{d} - \frac1{p_2}) } 
			\|u(t)\|_{L^{p_2,1}_{k_2}} < \infty
	\end{equation}
	provided further that \eqref{l:b.strap:c1}, \eqref{l:b.strap:c2} and 
	\begin{equation}		\label{l:b.strap:c3}
		\frac{k_2}{d} + \frac1{p_2} 
		< \min\left\{  \frac{k_1}{d} + \frac1{p_1},\, 
			\alpha \left( \frac{k_1}{d} + \frac1{p_1} \right) - \frac{\gamma}{d} \right\}.
	\end{equation}
\end{itemize}
\end{lem}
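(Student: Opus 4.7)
The plan is to exploit the semigroup representation for the mild solution: for any $0 < s \le t < T$,
\begin{equation*}
u(t) = e^{(t-s)\Delta} u(s) + \int_s^t e^{(t-\tau)\Delta}\bigl[|\cdot|^\gamma |u(\tau)|^{\alpha-1} u(\tau)\bigr]\, d\tau,
\end{equation*}
which follows from the integral equation \eqref{intHHNLH} and the semigroup law. I would specialize to $s=t/2$ so that the initial-data term $e^{t\Delta}\varphi$ never needs to be treated separately; both the linear and nonlinear pieces are then controlled by the assumed a priori scaling of $u$ in $L^{p_1,r_1}_{k_1}$, where $r_1=1$ in part (i) and $r_1=\infty$ in part (ii).

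For the linear piece $e^{(t/2)\Delta}u(t/2)$, a single application of Lemma \ref{l:linear-main} with source $L^{p_1,r_1}_{k_1}$ and target $L^{p_2,r_2}_{k_2}$, combined with the assumed decay $\|u(t/2)\|_{L^{p_1,r_1}_{k_1}}\lesssim t^{-\frac{d}{2}(\frac{1}{q_c(\gamma)}-\frac{k_1}{d}-\frac{1}{p_1})}$, yields precisely the target scaling $t^{-\frac{d}{2}(\frac{1}{q_c(\gamma)}-\frac{k_2}{d}-\frac{1}{p_2})}$. The hypotheses $k_2\le k_1$ and $\frac{k_2}{d}+\frac{1}{p_2}\le \frac{k_1}{d}+\frac{1}{p_1}\le 1$ from \eqref{l:b.strap:c} are exactly what Lemma \ref{l:linear-main} requires.

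For the nonlinear piece I would first combine H\"older (Lemma \ref{lem:Holder}) with the pointwise identity $\||f|^\alpha\|_{L^{p/\alpha,r/\alpha}}=\|f\|_{L^{p,r}}^\alpha$ (together with the embedding $L^{p_1/\alpha,1/\alpha}\hookrightarrow L^{p_1/\alpha,1}$ when $r_1=1$) to obtain
\begin{equation*}
\bigl\||\cdot|^\gamma |u(\tau)|^{\alpha-1}u(\tau)\bigr\|_{L^{p_1/\alpha,r_*}_{\alpha k_1 - \gamma}} \le C\|u(\tau)\|_{L^{p_1,r_1}_{k_1}}^\alpha,
\end{equation*}
then invoke Lemma \ref{l:linear-main} from $L^{p_1/\alpha,r_*}_{\alpha k_1-\gamma}$ to $L^{p_2,r_2}_{k_2}$ and insert the a priori scaling of $u(\tau)$. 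The resulting time integral $\int_{t/2}^t (t-\tau)^{a}\tau^{b}\,d\tau$ converges near $\tau=t$ exactly when the exponent of $(t-\tau)$ exceeds $-1$; a brief algebraic check shows this reduces to $\alpha(\frac{k_1}{d}+\frac{1}{p_1})-\frac{2+\gamma}{d}<\frac{k_2}{d}+\frac{1}{p_2}$, the last inequality in \eqref{l:b.strap:c}. Using $\frac{1}{q_c(\gamma)}=\frac{2+\gamma}{d(\alpha-1)}$, the total power of $t$ simplifies to $-\frac{d}{2}(\frac{1}{q_c(\gamma)}-\frac{k_2}{d}-\frac{1}{p_2})$, matching the desired scaling.

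The ``moreover'' statements upgrading to $r_2=1$ reduce to tracking the Lorentz endpoint constraints of Lemma \ref{l:linear-main}: setting $r_2=1$ is permissible only if $\frac{k_2}{d}+\frac{1}{p_2}>0$ and $p_2<\infty$ (giving \eqref{l:b.strap:c1}); in part (ii), where the source index is $r_1=\infty$, one must additionally keep $\frac{k_2}{d}+\frac{1}{p_2}$ strictly below both $\frac{k_1}{d}+\frac{1}{p_1}$ and $\alpha(\frac{k_1}{d}+\frac{1}{p_1})-\frac{\gamma}{d}$ in order to avoid the forbidden clause ``$r_1\le r_2$'' with $\infty\le 1$, producing \eqref{l:b.strap:c3}, and keep the nonlinear H\"older step away from the endpoints $q_1=1$ and $\frac{l_1}{d}+\frac{1}{q_1}=1$ that would force $r_1=1$, yielding \eqref{l:b.strap:c2}. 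I expect the main technical obstacle to be exactly this bookkeeping: the four sub-cases $(r_1,r_2)\in\{1,\infty\}^2$ each require checking every clause of Lemma \ref{l:linear-main}, but none is conceptually deep -- the real content lives in the scaling algebra of the nonlinear time integral.
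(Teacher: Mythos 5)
Your proposal follows essentially the same route as the paper's proof: the paper likewise writes $u(t)=e^{\frac{t}{2}\Delta}u(t/2)+a\tilde N(u)(t)$, applies Lemma \ref{l:linear-main} once to the linear piece with source $(k_1,p_1,r_1)$ and once to the nonlinear piece with source $(\alpha k_1-\gamma,\frac{p_1}{\alpha},r_1)$ after the H\"older step, and obtains convergence of the time integral from the condition $\alpha(\frac{k_1}{d}+\frac1{p_1})-\frac{2+\gamma}{d}<\frac{k_2}{d}+\frac1{p_2}$, with the extra constraints \eqref{l:b.strap:c1}--\eqref{l:b.strap:c3} arising exactly from the Lorentz endpoint clauses of Lemma \ref{l:linear-main} as you describe. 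The proposal is correct and at a comparable level of detail to the paper, which also leaves the $(r_1,r_2)$ case bookkeeping and assertion (ii) to a "similar argument."
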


\begin{proof}
We use a similar argument as in \cite{SnoTayWei2001} 
(See also \cite{BenTayWei2017}). We first prove assertion (i) for $r_1=r_2=1$. Let $p_1, p_2<\infty$ and 
\begin{equation}\nonumber
	A := \sup_{0<t<T} t^{ \frac{d}2(\frac1{q_c(\gamma)} -\frac{k_1}{d} -\frac1{p_1}) } 
		\|u(t)\|_{L^{p_1,1}_{k_1}} <\infty.
\end{equation}
Let $t\in (0,T)$. We use the integral representation 
\begin{equation}	\label{l:b.strap:int.rep}
	u(t) = e^{\frac{t}2 \Delta} u(t/2) 
	+ a \tilde N(u)(t),
\end{equation}
where 
\[
	 \tilde N(u)(t) 
	 :=\int_{\frac{t}{2}}^t e^{(t-\tau)\Delta} \left\{ |\cdot|^{\gamma} 
		F(u(\tau,\cdot)) \right\} d\tau. 
\]
It follows from Lemma \ref{l:linear-main} that
%with $(s_1, q_1, r_1)\equiv (k_1, p_1, 1)$ and $(s_2, q_2, r_2)\equiv (k_2, p_2, 1)$  
\begin{align*}
	\|e^{\frac{t}2\Delta} u(t/2)\|_{L^{p_2,1}_{k_2}} 
	\lesssim t^{-\frac{d}{2}(\frac1{p_1}-\frac1{p_2}) - \frac{k_1-k_2}2} \|u(t/2)\|_{L^{p_1,1}_{k_1}} 
	\lesssim t^{-\frac{d}{2}(\frac1{q_c(\gamma)}-\frac{k_2}{d}-\frac{1}{p_2})} A, 
\end{align*}
provided that 
\begin{equation}	\label{l:b.strap:pr0}
\begin{aligned}
	&1\le p_1, p_2 < \infty, \quad
	k_2 \le k_1
		\quad\text{and}\quad 
	0<\frac{k_2}{d} + \frac1{p_2} \le \frac{k_1}{d} + \frac1{p_1} \le 1. 
\end{aligned}
\end{equation}
On the other hand, 
\begin{equation}\nonumber%\label{l:b.strap:pr1}
\begin{aligned}
\|&\tilde N(u)(t)\|_{L^{p_2,1}_{k_2}} 
\lesssim \int_{\frac{t}{2}}^t 
	(t-\tau)^{-\frac{d}2(\frac{\alpha}{p_1}-\frac1{p_2}) - \frac{\alpha k_1 -\gamma-k_2}2} 
		 \| u(\tau) \|_{L^{p_1,1}_{k_1}}^{\alpha} d\tau \\
%&\lesssim A^{\alpha} \int_{\frac{t}{2}}^t 
%	(t-\tau)^{-\frac{d}2(\frac{\alpha}{p_1}-\frac1{p_2}) - \frac{\alpha k_1 -k_2-\gamma}2} 
%	\tau^{-\frac{d\alpha}2(\frac1{q_c(\gamma)} - \frac{k_1}{d} - \frac1{p_1}) } d\tau \\
&\lesssim A^{\alpha}  t^{-\frac{d}2(\frac1{q_c(\gamma)} - \frac{k_2}{d} - \frac1{p_2} )} 
	\int_{\frac{1}{2}}^{1} (1-\tau)^{-\frac{d}2(\frac{\alpha}{p_1}-\frac1{p_2}) 
 						-\frac{\alpha k_1 -k_2-\gamma}2} 
	\tau^{-\frac{d\alpha}2(\frac1{q_c(\gamma)} - \frac{k_1}{d} - \frac1{p_1}) } d\tau, 
\end{aligned}
\end{equation}
thanks to Lemma \ref{l:linear-main} 
%with $(s_1, q_1, r_1)\equiv (\alpha k_1-\gamma, \frac{p_1}{\alpha}, 1)$ and $(s_2, q_2, r_2)\equiv (k_2, p_2, 1)$, 
where we require 
\begin{equation}	\label{l:b.strap:pr1}
\begin{aligned}
	&\alpha \le p_1 < \infty,  \quad 1\le p_2 <\infty, \quad
	k_2 \le \alpha k_1 - \gamma, \\
		\quad\text{and}\quad 
	&0<\frac{k_2}{d} + \frac1{p_2} 
		\le \frac{\alpha k_1 - \gamma}{d} + \frac{\alpha}{p_1} \le 1. 
\end{aligned}
\end{equation}
The final integral is convergent if 
\begin{equation}	\label{l:b.strap:pr2}
	\alpha\left( \frac{k_1}{d} + \frac1{p_1}\right) - \frac{2+\gamma}{d} < \frac{k_2}{d} + \frac1{p_2}. 
\end{equation}
Thus, we have 
\begin{equation}\nonumber
	\sup_{0<t<T} 
		t^{ \frac{d}2(\frac1{q_c(\gamma)} - \frac{k_2}{d} - \frac1{p_2}) } 
		\|u(t)\|_{L^{p_2,1}_{k_2}} 
	\le C (A + A^{\alpha})
\end{equation}
under \eqref{l:b.strap:pr0}, \eqref{l:b.strap:pr1} and \eqref{l:b.strap:pr2}. 
By a straightforward computation, 
we see that  \eqref{l:b.strap:pr0}, \eqref{l:b.strap:pr1} and \eqref{l:b.strap:pr2} are 
equivalent to \eqref{l:b.strap:c} with \eqref{l:b.strap:c1}. 
The case $r_1=1$, $r_2 = \infty$ is proved by a similar argument. 
We omit the proof of assertion (ii). 
\end{proof}

\section{Local well-posedness}

In this section, we prove Theorems \ref{t:HH.LWP} and \ref{t:globalsol}. 
Given an initial data $\varphi\in L^{q,r}_{l}(\R^d)$ with $(q,l)$ satisfying \eqref{d:sub+critical} and a time $T>0,$ 
we define a map $\mathscr{F} : u \mapsto \mathscr{F}(u)$ on $\mathcal{K}^{p}_{k}(T)$ for suitable $(k,p)$ by 
\begin{equation}	\label{map}
	\mathscr{F}_{\varphi} (u) (t) = \mathscr{F}(u) (t) := e^{t\Delta} \varphi + a N(u)(t)
\end{equation}
with $N(u)$ defined by \eqref{mapN}. 

In order to prove Theorems \ref{t:HH.LWP} and \ref{t:globalsol}, we prepare the following lemma, 
which is inspired by \cite{CazWei1998}*{Theorem 2.1} (See \cite{BenTayWei2017}*{Theorem 4.1} too). 

\begin{lem}[Abstract well-posedness result]
	\label{l:exist.crt}
Let $d\in\mathbb{N}.$ Let $\gamma\in\R$ and $\alpha\in\R$ satisfy \eqref{t:HH.LWP.c.paramet}. 
Let positive numbers $\rho>0$ and $M>0$ satisfy 
\begin{equation}	\label{l:exist.crt.c0}
	\rho + 2 C_0 M^{\alpha} \le M, 
%	\quad\text{and}\quad 2 C_0 M^{\alpha-1} <1, 
\end{equation}
where $C_0$ is as in Corollary \ref{c:Kato.est}. 
Let $l\in\R$ and $q\in [1,\infty]$ satisfy \eqref{d:critical}, let $r\in[1,\infty]$, and let $r=\infty$ if $q=\infty$. 
Let $k\in\R$ and $p \in [1,\infty]$ be such that 
\begin{equation}	\label{l:exist.crt.c.pk}
	\begin{aligned}
	&\max\left\{\frac{\gamma}{\alpha -1}, \frac{l+\gamma}{\alpha} \right\}\le k, 
		\quad
	\alpha < p \le \infty, \\
	&0\le \frac{k}{d} + \frac{1}{p}
		\quad\text{and}\quad
	\frac{2+\gamma\alpha}{d\alpha(\alpha-1)} 
		< \frac{k}{d} + \frac1{p}
		<\min\left\{ \frac{1}{q_c(\gamma)}, \, \frac{d+\gamma}{d\alpha}\right\}.
	\end{aligned}
\end{equation}
%Let $e^{t\Delta}\varphi \in \mathcal{K}^{p}_{k}(T).$ 
If $\varphi \in \mathcal{S}'(\R^d)$ and $T\in (0,\infty]$ are such that 
\begin{equation}	\label{l:exist.crt:c.small}
	\|e^{t\Delta}\varphi\|_{\mathcal{K}^{p}_{k}(T)} \leq \rho, 
\end{equation}
then a solution $u \in \mathcal{K}^{p}_{k}(T)$ to \eqref{HHNLH} exists such that 
\[
	u -e^{t\Delta} \varphi 
	\in L^\infty(0,T ; L^{q,r}_{l}(\R^d)) \cap C((0,T) ; L^{q,r}_{l}(\R^d)), 
\]
\begin{equation}	\label{l:exist.crt:est}
	\|u\|_{\mathcal{K}^{p}_{k}(T)} \le M 
		\quad\text{and}\quad
	\|u -e^{t\Delta} \varphi\|_{L^\infty(0,T ; L^{q,r}_{l})} \le C_1 \|u\|_{\mathcal{K}^{p}_{k}(T)}^{\alpha}, 
\end{equation}
where $C_1$ is as in Corollary \ref{c:crt.stb}. Moreover, the solution satisfies the following properties: 
\begin{itemize}

\item[(i)] $u(t) -e^{t\Delta} \varphi \in L^{p_1,r_1}_{k_1}(\R^d)$ 
	for all $t>0$ and $(k_1,p_1,r_1) \in \R \times [1,\infty]\times[1,\infty]$ such that 
	\begin{equation}	\label{l:exist.crt.c.pk1}
	\left\{\begin{aligned}
	& k_1 \le \alpha k -\gamma, \quad 
		0\le  \frac{k_1}{d} + \frac1{p_1} 
		\le \alpha \left( \frac{k}{d} + \frac{1}{p} \right) - \frac{\gamma}{d}, \\
	&\alpha\left(\frac{k}{d}+\frac1{p} - \frac{2+\gamma}{d\alpha} \right) < \frac{k_1}{d} + \frac1{p_1}, \\
	& r_1 = \infty 
		\quad \text{if } \frac{k_1}{d} + \frac{1}{p_1}=0 
		\text{ or } \frac{k_1}{d} + \frac{1}{p_1}
			= \alpha\left(\frac{k}{d}+\frac{1}{p}\right)-\frac{\gamma}{d}
		\text{ or } p_1=\infty. 
	\end{aligned}\right.
	\end{equation}
	Moreover, there exists some positive constant $C$ such that 
	\[
		\|u(t) -e^{t\Delta} \varphi\|_{L^{p_1,r_1}_{k_1}} 
		\le Ct^{\frac{d}2(\frac{k_1}{d} + \frac1{p_1} -\frac1{q_c(\gamma)} )}
			 \|u \|_{\mathcal{K}^{p}_{k}(T)}^{\alpha}
	\]
	for any $t\in(0,T)$. 
%	Here, replace $L^{p_1,r_1}_{k_1}(\R^d)$ with $L^{\infty}_{k_1}(\R^d)$ when $r_1 =\infty$. 

\item[(ii)] For $(k_1,p_1) \in \R\times [1,\infty]$ satisfying 
	\begin{equation}	\label{l:exist.crt.c.pk1.2}
		\frac1{q_c(\gamma)} < \frac{k_1}{d} + \frac1{p_1} 
	\end{equation}
	in addition to \eqref{l:exist.crt.c.pk1}, we have 
	$u -e^{t\Delta} \varphi \in C([0,T) ; L^{p_1,r_1}_{k_1}(\R^d))$ and 
	\[
		\Lim_{t\to0} \|u(t) -e^{t\Delta} \varphi\|_{L^{p_1,r_1}_{k_1}} = 0.
	\] 

\item[(iii)]	$\displaystyle \lim_{t\to0} u(t) = \varphi$ in the sense of distributions. 

\item[(iv)] We have $\displaystyle \|u\|_{\mathcal{K}^{\tilde p}_{\tilde k}(T)} < \infty$ 
	for all $(\tilde k,\tilde p) \in \R \times [1,\infty]$ satisfying \eqref{t:asym.behv:c.kp}. 
%	\begin{equation}	\label{t:asym.behv:c.kp}
%		\tilde k \le k, \quad
%		p \le \tilde p  
%			\quad\text{and}\quad
%		0\le \frac{\tilde k}{d} + \frac1{\tilde p} \, .
%	\end{equation}

\item[(v)] 
	Let $M$ be smaller than in \eqref{l:exist.crt.c0} if necessary. 
	Let $\sigma\in\R$ satisfy %$\frac{d}{q_c}<\sigma < d$
	\eqref{l:Kato.est2.c.l0.q0} and let 
	\begin{equation}	\label{l:exist.crt:c.sigma}
		\|e^{t\Delta} \varphi \|_{\tilde{\mathcal{K}}^{p_0}_{k_0}(T)} 
		=\sup_{t>0} t^{\frac{d}{2} (\frac{\sigma}{d}-\frac{k_0}{d}-\frac1{p_0} )} 
		\|e^{t \Delta} \varphi \|_{L^{p_0,\infty}_{k_0}}  <\infty, 
	\end{equation}
	$k_0 \in \R$ and $p_0 \in [1,\infty]$ satisfy \eqref{l:Kato.est2.c1}, that is, 
	\begin{equation}		\label{l:exist.crt.c.p0k0}
	\begin{aligned}
		0\le \frac{\alpha-1}{p} + \frac1{p_0}< 1 
			\quad\text{and}\quad
		\frac{\sigma}{d}
		<(\alpha-1) \left(\frac{k}{d} + \frac1{p} \right) + \frac{k_0}{d} + \frac{1}{p_0} - \frac{\gamma}{d}
		< 1. 
	\end{aligned}
	\end{equation}
	Then 
	\begin{equation}	\label{l:exist.crt':est2}
		\|u\|_{\tilde{\mathcal{K}}^{\tilde p}_{\tilde k}(T)} 
		< \infty
		%\le 2 \|e^{t\Delta} \varphi \|_{\tilde{\mathcal{K}}^{p_0}_{k_0}(T)} 
	\end{equation}
	for all $(\tilde{k},\tilde{p}) \in \R \times [1,\infty]$ such that 
	\begin{equation}	\label{l:exist.crt':c.tild.kp}
		\tilde k \le k_0, \quad
		p_0 \le \tilde p  
			\quad\text{and}\quad
		0\le \frac{\tilde k}{d} + \frac1{\tilde p} \,.
	\end{equation}

\end{itemize}
	\smallbreak
Moreover, let $\varphi$ and $\psi$ satisfy \eqref{l:exist.crt:c.small} 
and let $u$ and $v$ respectively be the solutions of \eqref{intHHNLH}
with initial values $\varphi$ and $\psi$. Then there exists a positive constant $C$ such that  
\begin{equation}	 \label{l:exist.crt:cont.depnd}
	\|u-v\|_{\mathcal{K}^{\tilde p}_{\tilde k}(T)}
	%=\sup_{0<t<T} 
	%	t^{\frac{d}2(\frac1{q_c(\gamma)} - \frac{\tilde k}2 - \frac1{\tilde p} )}
	%	\|u(t)-v(t)\|_{L^{\tilde p,1}_{\tilde k}}
	\leq C \|e^{t\Delta}(\varphi-\psi)\|_{\mathcal{K}^{p}_{k}(T)}
\end{equation}
for all $(\tilde k,\tilde p)\in \R \times [1,\infty]$ satisfying \eqref{t:asym.behv:c.kp}. 
	\smallbreak
If furthermore, given arbitrary $T>0$, $e^{t\Delta}(\varphi-\psi)$ satisfies the estimate 
\begin{equation}	\label{l:exist.crt:strng.dcy.c}
	\sup_{0<t<T} 
		t^{\frac{d}2(\frac1{q_c(\gamma)} - \frac{k}{d} - \frac1{p} )+\delta} 
		\|e^{t\Delta}(\varphi-\psi)\|_{L^{p,\infty}_{k}} 
		<\infty
\end{equation}
for some $\delta>0$ such that $\frac{d\alpha }2(\frac1{q_c(\gamma)} - \frac{k}{d}- \frac1{p} )+\delta<1$, then
\begin{equation}	\label{l:exist.crt:strng.dcy.est}
	\sup_{0<t<T}
		t^{\frac{d}2(\frac1{q_c(\gamma)} - \frac{\tilde k}{d} - \frac1{\tilde p} ) +\delta}
		\|u(t)-v(t)\|_{L^{\tilde p,\infty}_{\tilde k}}
	\leq C \sup_{0<t<T} 
		t^{\frac{d}2(\frac1{q_c(\gamma)} - \frac{k}{d} - \frac1{p} ) +\delta}
		\|e^{t\Delta}(\varphi-\psi)\|_{L^{p,\infty}_{k}}
\end{equation}
holds for $(\tilde k,\tilde p)\in \R \times [1,\infty]$ satisfying \eqref{t:asym.behv:c.kp}, 
where $C>0$ is a positive constant depending on $M$, with $M$ perhaps smaller. 
\end{lem}

\begin{rem}	\label{r:exist.crt}
Note that \eqref{l:exist.crt.c.pk} implies that $\gamma<0$ necessarily if $k<0$. 
In other words, the weight $k$ of the auxiliary norm can be negative only for the Hardy case. 
\end{rem}

\begin{rem}
Lemma \ref{l:exist.crt} (v) is used to prove the linear asymptotic behavior. 
The additional regularity is used to utilize the interpolation argument in the proof for Theorem 
\ref{t:asym.behv.lin}. 
\end{rem}

%{\rd WRONG}
%\begin{rem}
%Condition \eqref{t:asym.behv:c.kp} is equivalent to 
%	\begin{equation}\nonumber
%		\tilde k \in \left[-\frac{d}{p}, k \right] 
%			\quad\text{and}\quad 
%		\left\{	\begin{aligned}
%		&\tilde p\in [p,\infty] 
%			&&\text{if}\quad k\ge 0, \\
%		&\tilde p\in \left[p,-\frac{k}{d} \right] 
%			&&\text{if}\quad k< 0.
%		\end{aligned}\right.
%	\end{equation}
%\end{rem}

\begin{proof}%[Proof of Lemma \ref{l:exist.crt}]
Let 
\[
	X_M := \{ u \in\mathcal{K}^{p}_{k}(T) 
			\,;\, \|u\|_{\mathcal{K}^{p}_{k}(T)} \le M \}
\]
be the closed ball in $\mathcal{K}^{p}_{k}(T)$ centered at the origin with radius $M$.
Setting the metric $d(u,v) := \|u-v\|_{\mathcal{K}^{p}_{k}(T)}$, 
we may show that $(X_M,d)$ is a nonempty complete metric space. 
We prove that the map $\mathscr{F}$ defined by \eqref{map} has a fixed point in $X_M$, provided that $M$ is sufficiently small. 
To this end, let $\varphi$ and $\psi$ satisfy \eqref{l:exist.crt:c.small}. 
By Corollary \ref{c:Kato.est} and \eqref{l:exist.crt.c0}, we have 
\begin{equation}	\label{t:HH.LWP.pr.Lip}
	d\left( \mathscr{F}_{\varphi} (u), \mathscr{F}_{\psi} (v) \right) 
	\le \|e^{t\Delta} (\varphi-\psi) \|_{\mathcal{K}^{p}_{k}(T)} 
		+ 2 C_0 M^{\alpha-1} \|u-v\|_{\mathcal{K}^{p}_{k}(T)}, 
\end{equation}
which establishes \eqref{l:exist.crt:cont.depnd} for $\tilde p = p$. 
Estimate \eqref{t:HH.LWP.pr.Lip} obviously implies
\begin{equation}	\nonumber%\label{t:HH.LWP.pr.stab'}
\begin{aligned}
	\|\mathscr{F}_{\varphi} (u)\|_{\mathcal{K}^{p}_{k}(T)} 
	\le \rho + 2 C_0 M^{\alpha} \le M
\end{aligned}\end{equation} 
and 
\begin{equation}	\nonumber%\label{t:HH.LWP.pr.Lip'}
	d(\mathscr{F}_{\varphi} (u), \mathscr{F}_{\varphi} (v)) \le 2 C_0 M^{\alpha-1} d(u,v).
\end{equation}
for any $u, v\in X_M,$ where $2 C_0 M^{\alpha-1}<1.$ 
These prove that $\mathscr{F}(u) \in X_M$ and that $\mathscr{F}$ is a contraction mapping in $X_M.$ 
Thus, Banach's fixed point theorem ensures the existence of a unique fixed point $u$ for the map $\mathscr{F}_{\varphi}$ in $X_M,$ 
provided that $k$ and $p$ satisfy \eqref{c:Kato.est.c1}. 
The fixed point $u$ also satisfies, by construction, the estimate $\|u\|_{\mathcal{K}^{p}_{k}(T)} \le M.$ 

Having obtained a fixed point in $\mathcal{K}^{p}_{k}(T)$ for some $T,$ 
we may confirm that $u -e^{t\Delta} \varphi$ belongs to $L^\infty(0,T;L^{q,r}_{l}(\R^d))$ and 
\begin{equation}\nonumber	%\label{l:crt.stb:a1}
	\|u -e^{t\Delta} \varphi\|_{L^\infty(0,T ; L^{q,r}_{l})} \le C_1 \|u\|_{\mathcal{K}^{p}_{k}(T)}^{\alpha}, 
\end{equation}
where $C_1$ is that in Corollary \ref{c:crt.stb}, provided further that $k$ and $p$ satisfy \eqref{c:crt.stb.c1}. 
Thus, we gather that the conditions for the existence %$k \in \R$ and $p\in [1,\infty]$ 
are summarized by \eqref{l:exist.crt.c.pk}. 

	\smallbreak
We next prove assertions (i)--(iii) of Lemma \ref{l:exist.crt}. 
Fix a solution $u \in \mathcal{K}^{p}_{k}(T)$ 
with $k$ and $p$ satisfying \eqref{l:exist.crt.c.pk}. By the same calculation as that for Lemma \ref{l:stb.est}, we have 
\begin{equation}	\label{t:HH.LWP:pr2}
\begin{aligned}
\|&N(u)(t)\|_{L^{p_1, r_1}_{k_1}} 
	\le C  t^{\frac{d}2(\frac{k_1}{d} + \frac1{p_1} -\frac1{q_c(\gamma)} )} 
		\|u \|_{\mathcal{K}^{p}_{k}(T)}^{\alpha} \\
	&\quad \times B\left(\frac{d}{2}\left( \frac{k_1}{d}+\frac1{p_1} 
		- \alpha\left(\frac{k}{d}+\frac1{p} - \frac{2+\gamma}{d\alpha} \right) \right), \, 
	\frac{d\alpha}2 \left( \frac{k}{d} + \frac1{p} 
			- \frac{2+\gamma\alpha}{d\alpha(\alpha-1)} \right) \right) \\
\end{aligned}
\end{equation}
thanks to Lemma \ref{l:linear-main}, 
%with $(s_1, q_1, r_1)\equiv (\alpha k-\gamma, \frac{p}{\alpha}, \infty)$ and $(s_2, q_2, r_2)\equiv (k_1, p_1, r_1)$, 
where we require 
\begin{equation}	\label{t:HH.LWP:pr3}
\left\{\begin{aligned}
& 1\le p_1\le \infty, \quad 
	k_1 \le \alpha k-\gamma, \quad
	0 \le \frac{k_1}{d} + \frac{1}{p_1} 
		\le \alpha\left(\frac{k}{d}+\frac{1}{p}\right)-\frac{\gamma}{d}, \\
& r_1 = \infty 
	\quad 
	\text{if } \frac{k_1}{d} + \frac{1}{p_1}=0 
	\text{ or } \frac{k_1}{d} + \frac{1}{p_1}
		= \alpha\left(\frac{k}{d}+\frac{1}{p}\right)-\frac{\gamma}{d}
	\text{ or } p_1=\infty. 
\end{aligned}\right.
\end{equation}
The Beta function in \eqref{t:HH.LWP:pr2} is convergent if 
\begin{equation}	\label{t:HH.LWP:pr4}
	\alpha\left(\frac{k}{d}+\frac1{p} - \frac{2+\gamma}{d\alpha} \right)
		<\frac{k_1}{d}+\frac1{p_1}
	\quad\text{and}\quad
	\frac{2+\gamma\alpha}{d\alpha(\alpha-1)} < \frac{k}{d} + \frac1{p}. 
\end{equation}
%Since 
%\[
%	\frac{k}{d}+\frac1{p} < \frac1{q_c(\gamma)} 
%	\quad\text{if and only if}\quad 
%	\alpha\left(\frac{k}{d}+\frac1{p} - \frac{2+\gamma}{d\alpha} \right) < \frac1{q_c(\gamma)}, 
%\]
%\eqref{t:HH.LWP:pr3} and \eqref{t:HH.LWP:pr4} are satisfied under \eqref{l:exist.crt.c.pk1}. 
If furthermore \eqref{l:exist.crt.c.pk1.2} is satisfied, 
then the right-hand side of \eqref{t:HH.LWP:pr2} goes to zero as $t\to0.$ 
Hence, assertions (i)--(iii) hold. 

	\smallbreak
We next prove assertion (iv). 
Fix a solution $u \in \mathcal{K}^{p}_{k}(T)$ of \eqref{intHHNLH} constructed by the first part of Lemma \ref{l:exist.crt}. 
Then we have 
\begin{equation*}
	\sup_{0<t<T} 
	t^{\frac{d}2(\frac1{q_c(\gamma)} - \frac{k}{d} - \frac1{p} )} 
	\|u(t)\|_{L^{p,\infty}_{k}} < \infty 
\end{equation*}
for $(k,p)$ satisfying \eqref{l:exist.crt.c.pk}. 
We first assume $k\ge 0$ and use Lemma \ref{l:b.strap} with an iterative argument as in \cite{SnoTayWei2001}: 
With $k$ fixed, $(k, p_j)$ will play the role of $(k_1, p_1)$, and $(k, p_{j+1})$ will play the role of $(k_2, p_2)$, for each integer $j\ge0$. 
By this iterative procedure, we can reach $p_{J+1}=\infty$ for some finite $J$ 
while keeping $k_j=k_{j+1}=k$ fixed, provided that $k$ is non-negative. 
Once this is accomplished, then we may interpolate between $(k, p)$ and $(k,\infty)$ to obtain 
\begin{equation}	\nonumber
	\sup_{0<t<T} t^{\frac{d}2(\frac1{q_c(\gamma)} - \frac{k}{d} -\frac{1}{\tilde p})}
		\|u(t)\|_{L^{\tilde p, \infty}_{k}}<\infty
\end{equation}
for $p< \tilde p < \infty$. Next, we fix $\tilde p \in [p,\infty]$ arbitrarily and prove (iv) 
for the case $(\tilde k, \tilde p)=(-\frac{d}{\tilde p}, \tilde p)$. 
We may perform this by a similar iterative procedure as above using Lemma \ref{l:b.strap}. 
By interpolation, we have $\displaystyle \|u\|_{\mathcal{K}^{\tilde p}_{\tilde k}(T)} < \infty$ 
for all $(\tilde k,\tilde p) \in [-\frac{d}{p}, k] \times [p,\infty]$ 
satisfying \eqref{t:asym.behv:c.kp}, provided that $k\ge 0$. 
The proof for the other case $k<0$ is similar so we omit the detail.

	\smallbreak
We next prove assertion (v). 
Define a metric space $(\tilde X_M,d)$ by 
\[
	\tilde X_M := \{ u \in\mathcal{K}^{p}_{k}(T) \,;\, 
			\|u\|_{\mathcal{K}^{p}_{k}(T)} \le M
				\ \text{and} \ 
			\|u\|_{\tilde{\mathcal{K}}^{p_0}_{k_0}(T)} 
			\le 2 \|e^{t\Delta} \varphi \|_{\tilde{\mathcal{K}}^{p_0}_{k_0}(T)}   \}
\]
with the metric $d(u,v) := \|u-v\|_{\mathcal{K}^{p}_{k}(T)}$, 
so that $\tilde X_M$ is a ball contained in $X_M$. 
Consider the same map $\mathscr{F}$ defined by \eqref{map}. 
By Lemma \ref{l:Kato.est2}, there exists a constant $C_3>0$ such that 
\begin{equation}	\nonumber%\label{l:Kato.est2:a1}
\begin{aligned}
	\|u\|_{\tilde{\mathcal{K}}^{p_0}_{k_0}(T)}
	&\le \|e^{t\Delta} \varphi \|_{\tilde{\mathcal{K}}^{p_0}_{k_0}(T)} 
	+ \|N(u)\|_{\tilde{\mathcal{K}}^{p_0}_{k_0}(T)} 
	\le  \|e^{t\Delta} \varphi \|_{\tilde{\mathcal{K}}^{p_0}_{k_0}(T)}  
		+C_3 \|u\|_{\mathcal{K}^{p}_{k}(T)}^{\alpha-1}
		\|u\|_{\tilde{\mathcal{K}}^{p_0}_{k_0}(T)} \\
	&\le  \|e^{t\Delta} \varphi \|_{\tilde{\mathcal{K}}^{p_0}_{k_0}(T)}  
		+C_3 M^{\alpha-1}
		\|u\|_{\tilde{\mathcal{K}}^{p_0}_{k_0}(T)}. 
\end{aligned}
\end{equation}
Thus, if $M$ is smaller so that $2 C_3 M^{\alpha-1}<1$, then 
$\|u\|_{\tilde{\mathcal{K}}^{p_0}_{k_0}(T)}\le 2 \|e^{t\Delta} \varphi \|_{\tilde{\mathcal{K}}^{p_0}_{k_0}(T)}.$ 
Therefore, there exists a fixed point $\tilde u\in \tilde X_M$ for the map $\mathscr{F}$. 
Since $\tilde X_M$ is a subspace of $X_M$, $\tilde u$ coincides with $u$ obtained before 
thanks to the uniqueness of the fixed point. 
Lastly, \eqref{l:exist.crt':est2}, which is a new result on the upgraded regularity, 
follows by iterative argument as in the proof of Lemma \ref{l:exist.crt} (iii), so we omit the detail. 
We note that it is easy to check that an assertion similar to Lemma \ref{l:b.strap} holds 
with $\frac1{q_c(\gamma)}$ replaced by $\frac{\sigma}{d}$.

	\smallbreak
The continuous dependence \eqref{l:exist.crt:cont.depnd} with $(\tilde k,\tilde p)=(k,p)$ 
of the solution on the initial data is deduced by setting in \eqref{t:HH.LWP.pr.Lip} 
$\mathscr{F}_\varphi(u)=u$ and $\mathscr{F}_\varphi(v)=v$. 
Formula \eqref{l:exist.crt:cont.depnd} for all $(\tilde k,\tilde p)$ 
satisfying \eqref{t:asym.behv:c.kp} can be proved 
by using an iterative procedure similar to the proof of (iv). In particular, 
one can prove a version of Lemma \ref{l:b.strap} with $u$ replaced by $u-v$.

	\smallbreak
To prove the stronger decay estimate \eqref{l:exist.crt:strng.dcy.est}, we observe that the previous calculations yield 
\begin{align*}
	&\|u(t)-v(t)\|_{L^{p,\infty}_k} 
		\leq \|e^{t \Delta}(\varphi-\psi)\|_{L^{p,\infty}_k} \\
	& \quad + 2 C_0 M^{\alpha-1} 
		\int_0^t 
		(t-\tau)^{-\frac{d (\alpha-1)}{2 p}-\frac{k(\alpha-1)-\gamma}{2}} 
		\tau^{- \frac{d (\alpha-1)}2(\frac1{q_c(\gamma)} - \frac{k}{d} - \frac1{p} )} 
		\| u(\tau)- v(\tau)\|_{L^{p,\infty}_k} \, d\tau. 
\end{align*}
Let $\delta>0$ be such that 
$\frac{d}2(\frac1{q_c(\gamma)} - \frac{k}{d} - \frac1{p} ) \alpha+\delta<1.$ 
For arbitrary $T>0,$ we have 
\begin{align*}
	&\int_0^t 
		(t-\tau)^{-\frac{d (\alpha-1)}{2 p}-\frac{k(\alpha-1)-\gamma}{2}} 
		\tau^{- \frac{d (\alpha-1)}2(\frac1{q_c(\gamma)} - \frac{k}{d} - \frac1{p} )} 
		\| u(\tau)- v(\tau)\|_{L^{p,\infty}_k} \, d\tau \\
	&\leq t^{-\frac{d}2(\frac1{q_c(\gamma)} - \frac{k}{d} - \frac1{p} )-\delta} 
		\int_0^1 (1-\tau)^{-\frac{d(\alpha-1)}{2 p} -\frac{k(\alpha-1)-\gamma}{2}}
		\tau^{-\frac{d\alpha}2(\frac1{q_c(\gamma)} - \frac{k}{d} - \frac1{p} )-\delta} \,d\tau\\
	& \qquad \times \sup_{0<t\leq T} 
			t^{\frac{d}2(\frac1{q_c(\gamma)} - \frac{k}{d} - \frac1{p} )+\delta} 
			\| u(t)- v(t)\|_{L^{p,\infty}_k}. 
\end{align*}
Now the above estimate and \eqref{l:exist.crt:strng.dcy.c} yield 
\begin{align*}
	&t^{ \frac{d}2(\frac1{q_c(\gamma)} - \frac{k}{d} - \frac1{p} )+\delta} 
		\|u(t)-v(t)\|_{L^{p,\infty}_k} 
	\leq t^{\frac{d}2(\frac1{q_c(\gamma)} - \frac{k}{d} - \frac1{p} )+\delta} 
		\|e^{t \Delta}(\varphi-\psi)\|_{L^{p,\infty}_k} \\
	& \quad + 2 C_0 M^{\alpha-1} 
		\int_0^1 (1-\tau)^{-\frac{d(\alpha-1)}{2 p} -\frac{k(\alpha-1)-\gamma}{2}}
		\tau^{-\frac{d\alpha}2(\frac1{q_c(\gamma)} - \frac{k}{d} - \frac1{p} )-\delta} \,d\tau\\
	& \qquad \times \sup_{0<t\leq T} 
			t^{\frac{d}2(\frac1{q_c(\gamma)} - \frac{k}{d} - \frac1{p} )+\delta} 
			\| u(t)- v(t)\|_{L^{p,\infty}_k}
\end{align*}
for all $t \in (0,T]$. Note that the constant $C_0$ does not depend on $T$ and finite by the hypothesis on $\delta$, 
and that the above argument works for arbitrary finite time $T>0$. 
The estimate \eqref{l:exist.crt:strng.dcy.est} for $(\tilde k, \tilde p) = (k,p)$ thus follows by taking $M$ smaller if necessary. 

We now prove \eqref{l:exist.crt:strng.dcy.est} for all $(\tilde k, \tilde p)$ satisfying 
\eqref{t:asym.behv:c.kp}. Let $T>0$ be an arbitrary real number. We write
\begin{equation}	\label{l:exist.crt:strng.dcy:pr1}
\begin{aligned}
	u(t)-v(t) 
	&= e^{\frac{t}{2}\Delta}\left(u \left(\frac{t}{2}\right)-v\left(\frac{t}{2}\right) \right) \\
	&\quad 
		+ a \int_\frac{t}{2}^t e^{(t-\tau)\Delta}
			\big[|\cdot|^{\gamma} \left(|u(\tau)|^{\alpha-1} u(\tau) 
				- |v(\tau)|^{\alpha-1} v(\tau)\right)\big] \,d\tau.
\end{aligned}
\end{equation}
Lemma \ref{l:linear-main} yields
%with $(s_1, q_1, r_1) \equiv (k, p,\infty)$ and $(s_2, q_2,r_2) \equiv (\tilde k, \tilde p,\infty)$
\begin{equation}\nonumber
	t^{-\frac{d}2({\tilde k\over d}+ {1\over \tilde p})}
	\left\|e^{\frac{t}{2}\Delta}
			\left(u\left(\frac{t}{2}\right)-v\left(\frac{t}{2}\right) \right)\right\|_{L^{\tilde p,\infty}_{\tilde k}} 
	\le C t^{-\frac{d}2({k\over d}+{1\over p})}
	\left\| u\left(\frac{t}{2}\right)-v\left(\frac{t}{2}\right)\right\|_{L^{p,\infty}_{k}} 
\end{equation}
%By using Lemma \ref{l:linear-main} with 
%$(s_1, q_1, r_1) \equiv (\alpha \tilde k-\gamma, \frac{\tilde p}{\alpha},\infty)$ and 
%$(s_2, q_2,r_2) \equiv (\tilde k, \tilde p,\infty)$ to \eqref{l:exist.crt:strng.dcy:pr1}, we deduce
and 
\begin{align*}
	&t^{\frac{d}2(\frac1{q_c(\gamma)}-{\tilde k\over d} - {1\over \tilde p})+\delta}
		\|u(t)-v(t)\|_{L^{\tilde p,\infty}_{\tilde k}} 
	\leq C t^{\frac{d}{2} (\frac{1}{q_c(\gamma)} - \frac{k}{d} - \frac{1}{p})+\delta} 
		\left\| u\left(\frac{t}{2}\right)-v\left(\frac{t}{2}\right) \right\|_{L^{p,\infty}_k}\\ 
	&\qquad + Ct^{\frac{d}2(\frac1{q_c(\gamma)}-{\tilde k\over d}-{1\over\tilde p})+\delta} \\
	&\qquad\times
		\int_\frac{t}{2}^t (t-\tau)^{-\frac{d}2\frac{\alpha-1}{\tilde p}
							-\frac{(\alpha-1) \tilde k-\gamma}{2}}
		\big( \|u(\tau)\|^{\alpha-1}_{L^{\tilde p,\infty}_{\tilde k}}
			+\|v(\tau)\|^{\alpha-1}_{L^{\tilde p,\infty}_{\tilde k}}\big)
		\|u(\tau)-v(\tau)\|_{L^{\tilde p,\infty}_{\tilde k}} \, d\tau.
\end{align*}
We have also 
\begin{equation}\nonumber
	\|u(t)\|_{L^{\tilde p,\infty}_{\tilde k}} 
	\leq C(M) t^{-\frac{d}{2} (\frac{1}{q_c(\gamma)} - \frac{\tilde k}{d} - \frac{1}{\tilde p})} 
			\quad\text{and}\quad 
	\|v(t)\|_{L^{\tilde p,\infty}_{\tilde k}}
	\leq C(M) t^{-\frac{d}{2} (\frac{1}{q_c(\gamma)} - \frac{\tilde k}{d} - \frac{1}{\tilde p})} 
\end{equation}
thanks to the additional regularity $(\mathrm{iv})$, where $C(M)$ is a positive constant 
satisfying $C(M)\to 0$ as $M\to0$. 
Using \eqref{l:exist.crt:strng.dcy.est} with $(\tilde k, \tilde p) = (k,p)$ 
and the above fact to estimate the first term and the last term, respectively, we deduce
\begin{align*}
	&t^{\frac{d}2(\frac1{q_c(\gamma)}-{\tilde k\over d} - {1\over \tilde p})+\delta}
		\|u(t)-v(t)\|_{L^{\tilde p,\infty}_{\tilde k}} 
	\leq C \sup_{t\in (0,T]} 
		t^{\frac{d}2(\frac1{q_c(\gamma)} - \frac{k}{d} - \frac1{p} )+\delta} 
		\|e^{t\Delta}(\varphi-\psi)\|_{L^{p,\infty}_{k}} \\
	&\quad
	+2C(M)^{\alpha-1} 
		\int_\frac{1}{2}^1(1-\tau)^{-\frac{d}2\frac{\alpha-1}{\tilde p}-\frac{(\alpha-1)\tilde k -\gamma}{2}} 
		\tau^{-\frac{d\alpha}2(\frac1{q_c(\gamma)}-{\tilde k\over d}-{1\over\tilde p})-\delta} \,d\tau\\
	&\hspace{+2cm} 
		\times \sup_{t\in (0,T]}
		t^{\frac{d}2(\frac1{q_c(\gamma)}-{\tilde k\over d}-{1\over \tilde p})+\delta}
		\|u(t)-v(t)\|_{L^{\tilde p,\infty}_{\tilde k}}.
\end{align*}
This completes the proof of Lemma \ref{l:exist.crt}. 
\end{proof}

Now, we may give the proof of Theorem \ref{t:HH.LWP}.

%%% Proof of critical case (ii) %%% 

\begin{proof}[Proof of Theorem \ref{t:HH.LWP}]

%%% Proof of the critical case (i) %%% 
{\it Proof of $(\rm{i})$.} 
Let $\varphi \in L^{q,r}_{l}(\R^d)$ ($\varphi \in \mathcal{L}^{q,\infty}_{l}(\R^d)$ if $r=\infty$). 
Then $\varphi$ satisfies the assumptions of Lemma \ref{l:exist.crt} with some $T,$ thanks to Lemma \ref{l:linear-main}. 
Combining \eqref{l:lin.est.pk} and \eqref{l:exist.crt.c.pk}, we obtain \eqref{t:HH.LWP.c.pk}. 
Thus, under \eqref{d:critical}, \eqref{t:HH.LWP.c.paramet}, \eqref{t:HH.LWP.c.ql} and \eqref{t:HH.LWP.c.pk}, 
we may construct an $L^{q,r}_{l}(\R^d)$-mild solution. 
Indeed, since $C_0^\infty(\R^d)$ is dense in $L^{q,r}_{l}(\R^d)$, there exists some positive number $T$ that is 
small enough so that $\|e^{t\Delta} \varphi\|_{\mathcal{K}^{p}_{k}(T)}\le \rho.$ 
Now for this $T$, Lemma \ref{l:exist.crt} asserts that 
\begin{equation}\nonumber
	\|u\|_{L^\infty(0,T ; L^{q,r}_{l})} 
	\le \|e^{t\Delta} \varphi\|_{L^\infty(0,T; L^{q,r}_{l})} 
		+ C_2 \|u \|_{\mathcal{K}^{p}_{k}(T)}^{\alpha}
	\le \|\varphi\|_{L^{q,r}_{l}} + C_2 M^{\alpha},
\end{equation}
where $M$ is as in \eqref{l:exist.crt:est}. 
The time-continuity at $t=0$ follows from a classical argument. %(see \cites{OkaTsu2016, Tsu2011} for example). 
Thus, $u$ is an $L^{q,r}_{l}(\R^d)$-mild solution to \eqref{HHNLH} on $[0,T]$ such that $\|u\|_{\mathcal{K}^{p}_{k}(T)}\le M.$ 
To deduce the estimate \eqref{t:HH.LWP.est}, it suffices to take 
$\rho=\|e^{t\Delta} \varphi\|_{\mathcal{K}^{p}_{k}(T)}$ and $M$ appropriately. 

\smallbreak
{\it Proof of $(\rm{ii})$.} 
For a fixed time $T>0$, the uniqueness in $\mathcal{K}^{p}_{k}(T)$ 
can be proved in the same method as in \cite{CIT2021} (See also \cite{Tay2020}). We omit the technical details here. 

Given $\varphi \in L^{q,r}_{l}(\R^d),$ let the maximal existence time $T_m = T_m (\varphi)$ be defined by \eqref{d:Tm}. 
By a standard argument, uniqueness ensures that the solution can be extended to the maximal interval $[0,T_m).$ 

\smallbreak
%%% Proof of critical case (iii) %%% 
{\it Proof of $(\rm{iii})$.}
Given two initial data $\varphi, \psi \in L^{q,r}_{l}(\R^d),$ 
let $u$ and $v$ be two solutions associated with the initial data $\varphi$ and $\psi,$ 
respectively, constructed in (i) with the estimate 
$\|u\|_{\mathcal{K}^{p}_{k}(T)}\le 2\|e^{t\Delta} \varphi\|_{\mathcal{K}^{p}_{k}(T)}.$ 
We may show the Lipschitz continuity of the flow map 
by a similar calculation leading to \eqref{t:HH.LWP.pr.Lip}. 
We deduce that there exists some positive constant $C$ such that 
\begin{align*}
	\|u-v\|_{L^\infty(0,T; L^{q,r}_{l})\cap \mathcal{K}^{p}_{k}(T)} 
	\le \|\varphi-\psi\|_{L^{q,r}_{l}} 
		+ 2 C M^{\alpha-1} \|u-v\|_{\mathcal{K}^{p}_{k}(T)}, 
\end{align*}
where $M$ is the same as in the proof of Lemma \ref{l:exist.crt}. 
By taking $M$ smaller if necessary, we deduce the Lipschitz continuity on the short time-interval $[0,T].$ 
Extension to the whole interval is carried out by a classical argument. 

\smallbreak
%%% Proof of critical case (iv) %%% 
{\it Proof of $(\rm{iv})$.}
Once the uniqueness in $\mathcal{K}^{p}_{k}(T)$ is established, we may 
prove the blow-up criterion by a contradiction argument as in \cite{CIT2021}. 
As the argument is similar, we omit the detail. 

%%% Proof of critical case (v) %%% 
{\it Proof of $(\rm{v})$.}
We prove the additional regularity. By Lemma \ref{l:exist.crt} (v), 
we have $\displaystyle \|u\|_{\mathcal{K}^{p_1}_{k_1}(T)} < \infty$ 
for all $(k_1, p_1) \in \R \times [1,\infty]$ satisfying \eqref{t:asym.behv:c.kp}: 
\begin{equation}	\nonumber
	k_1 \le k, \quad
	p \le p_1 
		\quad\text{and}\quad
	0\le \frac{k_1}{d} + \frac1{p_1}.
\end{equation}
Fix pairs $(k_1, p_1)$ and $(l,q)$ satisfying \eqref{t:asym.behv:c.kp} and \eqref{d:critical}, respectively. 
By interpolation, we have 
\begin{align*}
	\|u(t)\|_{L^{p_{2}, 1}_{k_{2}}}
%	&= \| |\cdot|^{k_{2}} u(t)\|_{L^{p_{2}, 1} }
%	= \| ( |\cdot|^{k_{2}} u(t) )^{\theta}
%		( |\cdot|^{k_{2}} u(t) )^{1-\theta} \|_{L^{p_{2}, 1} } \\
	&\le C \|u(t)\|_{L^{p_1, 1}_{k_1} }^{\theta}
		\|u(t)\|_{L^{q, r}_{l} }^{1-\theta},
\end{align*}
where 
\begin{equation}\nonumber
	\theta\in (0,1), \quad 
	\theta k_1 + (1-\theta) l = k_2
		\quad\text{and}\quad
	\frac{\theta}{p_1} + \frac{1-\theta}{q} = \frac1{p_2}. 
\end{equation}
This implies
\begin{equation}\nonumber
	\sup_{0<t <T} 
	t^{\frac{d}{2} (\frac1{q_c(\gamma)} - \frac{k_{2}}{d} - \frac1{p_{2}} )} 
	\|u(t)\|_{L^{p_{2},1}_{k_{2}}} < \infty
\end{equation}
for all $k_2 \le l$ and $q\le p_2 \le\infty$ such that $0\le \frac{k_2}{d} + \frac1{p_2}.$ 
\end{proof}

%%%%%%%%%%%%
%%% Section 5 %%% Global existence and self-similar solutions
%%%%%%%%%%%%
\section{Global existence, self-similar solutions and asymptotic stability}

In this section we prove Theorems \ref{t:globalsol}, \ref{t:HH.self.sim} and \ref{t:stab.asym.behv}. 
We first prove the global existence of solutions.

%%% Proof of Theorem 1.4 (Global existence) %%% 
\begin{proof}[Proof of Theorem \ref{t:globalsol}]

Assertion (i) of Theorem \ref{t:globalsol} follows from Lemma \ref{l:exist.crt} and the smallness of 
the quantity $\|e^{t\Delta}\varphi\|_{\mathcal{K}^p_k}$. 
The dissipation also follows from the standard argument (See \cite{CIT2022} for example). 
If $\varphi\in L^{q,r}_l(\R^d)$ and $r<\infty$, then the continuity at $t=0$ follows from the classical density argument. 
\end{proof}

%%% Proof of Theorem 1.5 (Construction of self-similar solutions)%%% 
\begin{proof}[Proof of Theorem \ref{t:HH.self.sim}]

Let $\Phi(x) = \omega(x)  |x|^{-\frac{2+\gamma}{\alpha-1}}$ 
be as in the assumption of Theorem \ref{t:HH.self.sim}. 
Then we note that $\Phi$ is homogeneous of degree $-\frac{2+\gamma}{\alpha-1}.$ 
We show that the global solution $u$ to \eqref{HHNLH} with the initial data $\Phi$, 
which is obtained by (v) in Theorem \ref{t:HH.LWP}, is also self-similar. 
To this end, for $\lambda>0$, let $\Phi_{\lambda}$ be defined by 
$\Phi_{\lambda} (x) := \lambda^{\frac{2+\gamma}{\alpha-1}} \Phi(\lambda x).$  
Since the identity $\|\Phi_{\lambda}\|_{\mathcal{K}^{p}_{k}} =\|\Phi \|_{\mathcal{K}^{p}_{k}}$ 
holds for all $\lambda>0,$ it follows that $\Phi_{\lambda}$ also 
satisfies the assumptions of (v) in Theorem \ref{t:HH.LWP}. 
As $u_\lambda$ given by \eqref{scale} is a solution of \eqref{HHNLH} with initial data 
$\Phi_{\lambda},$ and $\|u_{\lambda}\|_{\mathcal{K}^{p}_{k}} = \|u \|_{\mathcal{K}^{p}_{k}}$ 
for all $\lambda>0,$ we deduce that $u$ must be self-similar since 
$\Phi_{\lambda}=\Phi$. We denote the global self-similar solution $u$ 
by $u_{\mathcal{S}}$. The fact $u_{\mathcal{S}}(t)\rightarrow\Phi$ in 
$\mathcal{S}'(\R^d)$ as $t\rightarrow +0$ follows from (iii) in 
Lemma \ref{l:exist.crt}. This completes the proof of Theorem \ref{t:HH.self.sim}. 
\end{proof}

%%% Proof of Theorem 1.6 (Stability of profiles)%%% 
\begin{proof}[Proof of Theorem \ref{t:stab.asym.behv}]
Let $(l,q)$ and $(k,p)$ satisfy \eqref{t:HH.LWP.c.ql} and \eqref{t:HH.LWP.c.pk}, 
respectively. Suppose that $u$ and $v$ are two global $L^{q,r}_{l}(\R^d)$-mild solutions to \eqref{intHHNLH} 
belonging to $C([0,\infty); L^{q,r}_{l} (\R^d)) \cap \mathcal{K}^{p}_{k}$
with initial data $\varphi$ and $\psi\in L^{q,r}_{l}(\R^d)$, respectively, 
such that $\max\{\|u\|_{\mathcal{K}^{p}_{k}}, \|v\|_{\mathcal{K}^{p}_{k}} \} \le M$ for some $M>0$. 
Then \eqref{t:HH.LWP.pr.Lip} gives 
\begin{align*}
	\|u-v\|_{\mathcal{K}^{p}_{k}}
	&\le \|e^{t\Delta} (\varphi-\psi) \|_{\mathcal{K}^{p}_{k}} 
		+ 2 C_0 M^{\alpha-1} \|u-v\|_{\mathcal{K}^{p}_{k}} \\
	&\le C \|e^{\frac{t}2\Delta} (\varphi-\psi) \|_{L^{q,r}_{l}} 
		+ 2 C_0 M^{\alpha-1} \|u-v\|_{\mathcal{K}^{p}_{k}}. 
\end{align*}
The above implies that, by taking $M$ smaller if necessary, 
there exists some constant $\tilde C_0$ such that  
\begin{equation}	\label{t:stab.asym.behv:pr1}
	\|u-v\|_{\mathcal{K}^{p}_{k}}
	=\sup_{t>0} t^{\frac{d}2(\frac1{q_c(\gamma)} - \frac{k}2 -\frac{1}{p})}
		\|u(t)-v(t)\|_{L^{p, \infty}_{k}}
		\le \tilde C_0 \|e^{\frac{t}2\Delta} (\varphi -\psi)\|_{L^{q,r}_l}.
\end{equation}
In particular, we have 
\begin{equation}\nonumber
	 t^{\frac{d}2(\frac1{q_c(\gamma)} - \frac{k}2 -\frac{1}{p})}
		\|u(t)-v(t)\|_{L^{p, \infty}_{k}} \to 0 \quad\text{as}\quad t\to\infty
\end{equation}
if $\displaystyle \lim_{t\to\infty} \|e^{t\Delta} (\varphi -\psi)\|_{L^{q,r}_l} = 0$. 
Moreover \eqref{t:stab.asym.behv:pr1} along with \eqref{l:stb.est:a1} leads to 
\begin{align*}	\nonumber
	\|u(t)-v(t)\|_{L^{q,r}_{l}} 
		&\le \|e^{t\Delta} (\varphi -\psi)\|_{L^{q,r}_l} 
			+ C_1 \left(\|u\|_{\mathcal{K}^{p}_{k}}^{\alpha-1} + \|v\|_{\mathcal{K}^{p}_{k}}^{\alpha-1}\right)
				\|u-v\|_{\mathcal{K}^{p}_{k}} \\
		&\le \|e^{t\Delta} (\varphi -\psi)\|_{L^{q,r}_l} 
			+ 2 C_1M^{\alpha-1} \|u-v\|_{\mathcal{K}^{p}_{k}} \\
		&\le \|e^{t\Delta} (\varphi -\psi)\|_{L^{q,r}_l} 
		+ 2 \tilde C_0 C_1M^{\alpha-1} \|e^{\frac{t}2\Delta} (\varphi-\psi) \|_{L^{q,r}_{l}}. 
\end{align*}
Therefore 
$\displaystyle \lim_{t\to\infty} \|e^{t\Delta} (\varphi -\psi)\|_{L^{q,r}_l} = 0$ implies 
\begin{equation}\nonumber
	\lim_{t\to\infty} \|u(t) -v(t)\|_{L^{q,r}_l} = 0, 
\end{equation}
provided that $M$ is small enough. 

To prove the converse implication, we resort to Lemma \ref{l:crt.est2}.
Let $k_1\in\R$ and $p_1\in[1,\infty]$ satisfy 
\begin{equation}	\label{t:stab.asym.behv:c1}
	\frac1{p_1} < \frac1{\alpha-1} \left(1-\frac1{q}\right) 
		\quad\text{and}\quad
	\frac{k_1}{d} + \frac1{p_1}
		< \frac{1}{\alpha-1} \left(1+\frac{\gamma}{d}-\frac1{q_c(\gamma)}\right).
\end{equation}
Note that by Lemma \ref{l:exist.crt} (iv), 
the solution belongs to $\mathcal{K}^{p_1}_{k_1}$. Writing 
\begin{equation}\nonumber
	e^{t\Delta} (\varphi-\psi) = u(t) -v(t) - \left\{ N(u)(t) - N(v)(t) \right\}
\end{equation}
and then using \eqref{l:crt.est2:a1}, 
%\begin{equation}	\nonumber
%\begin{aligned}
%	\limsup_{t\to\infty}&\|N(u)(t)-N(v)(t)\|_{L^{q,r}_{l}} \\
%	&\le C_2 \left(\|u\|_{\mathcal{K}^{p}_{k}(T)}^{\alpha-1}
%		+ \|u\|_{\mathcal{K}^{p}_{k}(T)}^{\alpha-1}\right)
%		\limsup_{t\to\infty}\|u(t)-v(t)\|_{L^{q,r}_{l}}
%\end{aligned}
%\end{equation}
we obtain
\begin{equation}\nonumber
\begin{aligned}
	\limsup_{t\to\infty}&\|e^{t\Delta} (\varphi -\psi)\|_{L^{q,r}_l} 
	\le \limsup_{t\to\infty} \|u(t) - v(t)\|_{L^{q,r}_l} \\
	&+ C_2 \left(\|u\|_{\mathcal{K}^{p_1}_{k_1}}^{\alpha-1}
		+ \|v\|_{\mathcal{K}^{p_1}_{k_1}}^{\alpha-1}\right)
		\limsup_{t\to\infty}\|u(t)-v(t)\|_{L^{q,r}_{l}},
%	&\le \limsup_{t\to\infty} \|u(t) - v(t)\|_{L^{q,r}_l} 
%	+ 2 C_2 M^{\alpha-1} \limsup_{t\to\infty}\|u(t)-v(t)\|_{L^{q,r}_{l}}, 
\end{aligned}
\end{equation}
thanks to \eqref{t:stab.asym.behv:c1}. Thus, the assumption 
$\displaystyle \lim_{t\to\infty}\|u(t)-v(t)\|_{L^{q,r}_{l}}=0$ gives the converse relation. 
Note that for the above implication, we do not need the smallness of $\|u\|_{\mathcal{K}^{p_1}_{k_1}}$ and 
$\|v\|_{\mathcal{K}^{p_1}_{k_1}}$. 
The proof of Theorem \ref{t:stab.asym.behv} is complete. 
\end{proof}

%%% Proof of the asymptotic behaviors%%% 
\section{Asymptotic behaviors: Real-valued case}
In this section we give the proof of Theorem \ref{t:asym.behv}. 
Assertion (i) is a direct consequence of Lemma \ref{l:exist.crt}. 
Indeed, from the latter part of Lemma \ref{l:exist.crt} and, in particular, formula \eqref{l:exist.crt:strng.dcy.est}, we have
\begin{align*}
	\sup_{t>0} t^{\frac{d}{2} (\frac{1}{q_c(\gamma)} 
				-\frac{\tilde k}{d} - \frac{1}{\tilde p})+\delta} 
		\|u(t)-u_{\mathcal S}(t)\|_{L^{\tilde p,\infty}_{\tilde k}}
	&\leq C \sup_{t>0}
		t^{\frac{d}{2} (\frac{1}{q_c(\gamma)} - \frac{k}{d} - \frac{1}{p})+\delta} 
		\|e^{t\Delta}(\Phi_\sigma-\varphi)\|_{L^{p,\infty}_k}
\end{align*}
with $\sigma = \frac{2+\gamma}{\alpha-1}$ for $(\tilde k,\tilde p)$ satisfying \eqref{t:asym.behv:c.kp} and for $\delta>0$ sufficiently small. 
This gives \eqref{t:asym.behv.NL:est1} directly. 
Estimate \eqref{t:asym.behv.NL:est2} follows from \eqref{t:asym.behv.NL:est1} by a simple dilation argument. 

In what follows, we focus on the linear case, that is, assertion (ii) of Theorem \ref{t:asym.behv}. 

\subsection{Proof of Theorem \ref{t:asym.behv} (ii). }

Here, we state a more general result than Theorem \ref{t:asym.behv} (ii). 
Substituting $k_0 = k$ and $p_0=p$ below, we obtain Theorem \ref{t:asym.behv} (ii). 

\begin{thm}[Linear behavior]
	\label{t:asym.behv.lin}
Let $d\in\N$, $\gamma\in\R$ and $\alpha\in\R$ satisfy \eqref{t:HH.LWP.c.paramet}. 
Let $\Phi_\sigma(x)=\omega(x)|x|^{-\sigma}$ and let 
\begin{equation}\nonumber
	e^{t\Delta} \Phi_\sigma
	= t^{-\frac{\sigma}{2}} (e^{\Delta} \Phi_\sigma) \left(\frac{x}{\sqrt{t}} \right) 
\end{equation}
be the self-similar solution of the linear heat equation with initial data $\Phi_\sigma$, 
where $\sigma$ satisfies 
\begin{equation}	\label{t:asym.behv.lin_c.sigma}
	\frac{d}{q_c(\gamma)} < \sigma < d. 
\end{equation}
Let $\varphi$ be nontrivial and satisfy the assumptions of Theorem \ref{t:globalsol} and 
\eqref{l:exist.crt:c.sigma}. 
Let $u \in \mathcal K^{p}_{k} \cap \tilde{\mathcal{K}}^{p_0}_{k_0}$ 
be the global solution of \eqref{intHHNLH} with initial data 
$\varphi$ constructed by Lemma \ref{l:exist.crt} with $T=\infty$, 
where $(k,p)$ satisfies \eqref{t:HH.LWP.c.pk}, and 
$(k_0, p_0)$ satisfies \eqref{l:exist.crt.c.p0k0}. 
Then there exists a constant $C>0$ such that
\begin{equation}	%\label{t:asym.behv.lin:c1}
	\|u(t)-e^{t\Delta} \Phi_\sigma \|_{L^{\tilde p, \infty}_{\tilde k}}
	\leq C \, t^{-\frac{d}2(\frac{\sigma}{d} - \frac{ \tilde k}{d}-\frac{1}{\tilde p}) -\delta}
\end{equation}
and 
\begin{equation}	%\label{t:asym.behv.lin:c2}
	\|t^{\frac{\sigma}{2}} u(t, \sqrt t \cdot)-e^{\Delta} \Phi_\sigma\|_{L^{\tilde p, \infty}_{\tilde k}}
	\leq C t^{-\delta}
\end{equation}
for all $t>0$ and $(\tilde k, \tilde p) \in \R \times [1,\infty]$ satisfying \eqref{l:exist.crt':c.tild.kp}:
	\begin{equation}	\nonumber
		\tilde k \le k_0, \quad
		p_0 \le \tilde p
			\quad\text{and}\quad
		0\le \frac{\tilde k}{d} + \frac1{\tilde p} \,.
	\end{equation}
In particular, there exists a constant $C>0$ such that
\begin{equation*}
	C^{-1} t^{-\frac{d}{2} (\frac{\sigma}{d}-\frac{\tilde k}{d}-\frac1{\tilde p}) } 
	\leq \|u(t)\|_{L^{\tilde p,\infty}_{\tilde k}} 
	\leq C t^{-\frac{d}{2} (\frac{\sigma}{d}-\frac{\tilde k}{d}-\frac1{\tilde p})} 
\end{equation*}
for large time and for all $(\tilde k, \tilde p) \in \R \times [1,\infty]$ satisfying \eqref{l:exist.crt':c.tild.kp}. 
\end{thm}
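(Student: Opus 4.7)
The plan is to introduce the auxiliary nonlinear solution $v$ of \eqref{intHHNLH} with initial data $\Phi_\sigma$ and exploit the decomposition
\[
u(t) - e^{t\Delta}\Phi_\sigma \;=\; \bigl(u(t) - v(t)\bigr) \;+\; a\,N(v)(t),
\]
which splits the error into a piece controlled by stability of the nonlinear flow and a purely nonlinear remainder. First I would construct $v$ via Lemma \ref{l:exist.crt'} with $(l_0,q_0) = (\sigma,\infty)$, so that $l_0/d + 1/q_0 = \sigma/d \in (1/q_c(\gamma),1)$ as required by \eqref{t:asym.behv.lin_c.sigma} and \eqref{l:Kato.est2.c.l0.q0}. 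The smallness of $\|e^{t\Delta}\Phi_\sigma\|$ in both the critical norm $\mathcal{K}^p_k$ and the weighted norm needed for \eqref{l:exist.crt':c.small} follows from the self-similar identity $e^{t\Delta}\Phi_\sigma = t^{-\sigma/2}(e^\Delta\Phi_\sigma)(\cdot/\sqrt t)$ together with the smallness of $\|\omega\|_{L^\infty}$. This yields $\|v(t)\|_{L^{p_0,\infty}_{k_0}} \lesssim t^{-(d/2)(\sigma/d - k_0/d - 1/p_0)}$.

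The crucial decay gain comes from estimating $N(v)$ directly. Lemma \ref{l:linear-main} applied with $(s_1,q_1,r_1)=(\alpha k-\gamma,p/\alpha,\infty)$ and $(s_2,q_2,r_2)=(k_0,p_0,\infty)$, H\"older's inequality, and the above decay of $v$ produce, after scaling out $t$ in the resulting beta-type integral,
\[
\|N(v)(t)\|_{L^{p_0,\infty}_{k_0}} \;\lesssim\; t^{-(d/2)((\alpha\sigma - \gamma - 2)/d - k_0/d - 1/p_0)}.
\]
Since $(\alpha-1)\sigma - (2+\gamma) > 0$ is equivalent to the hypothesis $\sigma > d/q_c(\gamma)$, this is precisely the improvement $\|N(v)(t)\|_{L^{p_0,\infty}_{k_0}} \lesssim t^{-(d/2)(\sigma/d - k_0/d - 1/p_0) - \delta_0}$ with $\delta_0 := ((\alpha-1)\sigma - (2+\gamma))/2 > 0$.

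For the difference $u - v$ I would invoke the continuous dependence estimate \eqref{l:exist.crt':cont.depnd} after rechoosing $(l_0,q_0)$ so that $l_0/d + 1/q_0 = \sigma/d + \delta$, with $\delta$ the exponent supplied by \eqref{t:asym.behv:c1.phi}, shrunk if necessary to preserve \eqref{l:Kato.est2.c.l0.q0} and \eqref{l:Kato.est2.c1}. The required bound $\|e^{t\Delta}(\varphi-\Phi_\sigma)\|_{L^{p_0,\infty}_{k_0}} \lesssim t^{-(d/2)(\sigma/d + \delta - k_0/d - 1/p_0)}$ follows by combining the hypothesis \eqref{t:asym.behv:c1.phi} with one further application of Lemma \ref{l:linear-main} to pass from $(k,p)$ to $(k_0,p_0)$. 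The output is $\|u(t)-v(t)\|_{L^{p_0,\infty}_{k_0}} \lesssim t^{-(d/2)(\sigma/d + \delta - k_0/d - 1/p_0)}$. Together with the $N(v)$ bound and the triangle inequality, this produces \eqref{t:asym.behv.lin:c1} at the pair $(k_0,p_0)$, with effective exponent $\min(\delta,\delta_0)$. The full admissible range of $(\tilde k, \tilde p)$ satisfying \eqref{l:exist.crt':c.tild.kp} is then reached by an iterative bootstrap of the type used in Lemma \ref{l:b.strap}, with the supercritical weight $\sigma/d$ in place of $1/q_c(\gamma)$. The rescaled statement \eqref{t:asym.behv.lin:c2} follows by the change of variable $x = \sqrt t \, y$, and the two-sided bound follows from the triangle inequality together with the exact identity $\|e^{t\Delta}\Phi_\sigma\|_{L^{\tilde p,\infty}_{\tilde k}} = t^{-(d/2)(\sigma/d - \tilde k/d - 1/\tilde p)}\|e^\Delta\Phi_\sigma\|_{L^{\tilde p,\infty}_{\tilde k}}$, whose prefactor is strictly positive by nontriviality of $\omega$.

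The hard part will be threading all the index constraints so that Lemmas \ref{l:exist.crt'}, \ref{l:Kato.est2}, and \ref{l:linear-main} apply simultaneously at the base weight $\sigma/d$ (to control $v$ and $N(v)$) and at the enhanced weight $\sigma/d + \delta$ (to control $u - v$), while keeping $(l_0,q_0,k_0,p_0)$ compatible with \eqref{t:asym.behv:c1.phi} which is only formulated in the original $(k,p)$ norm. The bootstrap upgrade from $(k_0,p_0)$ to arbitrary $(\tilde k, \tilde p)$ and the identification of the optimal exponent $\min(\delta,\delta_0)$ should be routine but tedious bookkeeping.
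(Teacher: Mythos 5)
Your decomposition $u(t)-e^{t\Delta}\Phi_\sigma=(u(t)-v(t))+aN(v)(t)$, with $v$ the nonlinear solution emanating from $\Phi_\sigma$, is a genuinely different route from the paper (which writes $u(t)-e^{t\Delta}\Phi_\sigma=e^{t\Delta}(\varphi-\Phi_\sigma)+aN(u)(t)$ and estimates $N(u)$ directly), but the step you dismiss as bookkeeping is where the argument breaks. Your claimed bound $\|N(v)(t)\|_{L^{p_0,\infty}_{k_0}}\lesssim t^{-\frac d2(\frac\sigma d-\frac{k_0}d-\frac1{p_0})-\delta_0}$ with the full gain $\delta_0=\frac{(\alpha-1)\sigma-(2+\gamma)}2$ is obtained by inserting the supercritical decay of $v$ over the whole Duhamel interval; the resulting beta-type integral carries the singularity $\tau^{-\frac{d\alpha}2(\frac\sigma d-\frac{k_0}d-\frac1{p_0})}$ at $\tau=0$ (or the $(k,p)$-analogue, and note your H\"older pairing $(s_1,q_1)=(\alpha k-\gamma,p/\alpha)$ actually requires supercritical decay of $v$ in the $L^{p,\infty}_k$-norm, which Lemma \ref{l:exist.crt'} only supplies for pairs dominated by $(k_0,p_0)$). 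Under \eqref{l:exist.crt.c.pk} and \eqref{l:exist.crt'.c.pk0} one only gets $\frac{d\alpha}2(\frac\sigma d-\frac{k_0}d-\frac1{p_0})<\alpha$, not $<1$, so the integral can diverge and the estimate is false in general. If you repair it by using the critical decay of $v$ near $\tau=0$ (split at $t/2$), that portion contributes only the critical rate $t^{-\frac d2(\frac1{q_c(\gamma)}-\frac{k_0}d-\frac1{p_0})}$, which is slower than the target rate since $\frac1{q_c(\gamma)}<\frac\sigma d$. This is exactly why the paper interpolates between the two decay regimes with a parameter $\theta$ in \eqref{t:asym.behv.lin:pr.key.est} and proves in Lemma \ref{l:asym.behv.lin} that an admissible $\theta$ exists; the attainable gain is $\delta=\frac d2(\frac\sigma d-\frac1{q_c(\gamma)})(\alpha-1-\theta\alpha)$ as in \eqref{t:asym.behv.lin:pr.key.est.c0}, which is in general strictly smaller than your $\delta_0$. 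Any fix of your $N(v)$ step would have to reproduce this interpolation, so the missing piece is the core of the proof, not routine.

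The treatment of $u-v$ has a second gap. You invoke the continuous-dependence estimate \eqref{l:exist.crt':cont.depnd} after rechoosing $(l_0,q_0)$ with $\frac{l_0}d+\frac1{q_0}=\frac\sigma d+\frac{2\delta}d$, but that estimate requires \emph{both} data to satisfy the smallness \eqref{l:exist.crt':c.small} at that enhanced weight, and $e^{t\Delta}\Phi_\sigma$ is exactly self-similar of rate $\sigma$, so $\sup_{t>0}t^{\frac d2(\frac\sigma d+\frac{2\delta}d-\frac{k_0}d-\frac1{p_0})}\|e^{t\Delta}\Phi_\sigma\|_{L^{p_0,\infty}_{k_0}}=\infty$: the lemma simply does not apply. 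What you need is a difference estimate that assumes decay only of $e^{t\Delta}(\varphi-\Phi_\sigma)$, in the spirit of \eqref{l:exist.crt:strng.dcy.est} or Lemma \ref{l:Kato.est2} rerun at the weight $\frac\sigma d+\frac{2\delta}d$ (possible for $\delta$ small, but it must be derived and its index conditions checked), together with a transfer of the hypothesis \eqref{t:asym.behv:c1.phi} from the $(k,p)$-norm to the $(k_0,p_0)$-norm, which via Lemma \ref{l:linear-main} requires orderings of $(k,p)$ and $(k_0,p_0)$ that \eqref{l:exist.crt'.c.pk0} does not provide. The bootstrap to general $(\tilde k,\tilde p)$, the rescaling identity, and the two-sided bound are fine in outline, but as written the proof does not go through.
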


\begin{proof} 
Note that by assumption, $\varphi$ generates a global solution. 
We are now in a position to prove \eqref{t:asym.behv.lin:c1} and \eqref{t:asym.behv.lin:c2}. 
We first prove \eqref{t:asym.behv.lin:c1} for $(\tilde k, \tilde p) = (k_0, p_0)$. 

Starting from \eqref{intHHNLH}, we may write  
\begin{equation}	\label{t:asym.behv.lin:pr.integral.eq}
	u(t) - e^{t\Delta} \Phi_\sigma = e^{t\Delta} (\varphi-\Phi_\sigma) + a N(u)(t), 
\end{equation}
where $N(u)(t)$ is defined by \eqref{mapN}, so it suffices to estimate $N(u)$ in $L^{p_0,\infty}_{k_0}(\R^d)$. Let 
\begin{equation}\nonumber
	\frac{1}{p_\theta} = \frac{\theta}{p} + \frac{1-\theta}{p_0}
		\quad\text{and}\quad
	k_\theta = \theta k + (1-\theta) k_0,
		\quad\text{where}\quad 
	0 < \theta <1. 
\end{equation}
We choose $\theta$ appropriately later. By Lemma \ref{l:linear-main}, there exists a constant $C>0$ such that 
\begin{equation}	\label{t:asym.behv.lin:pr.key.est}
\begin{aligned}
	\|&N(u)(t) \|_{L^{p_0,\infty}_{k_0}} 
	\leq C \int_0^t 
			(t-\tau)^{ -\frac{d}{2} (\frac{\alpha}{p_\theta}- \frac{1}{p_0}) - \frac{\alpha k_\theta - \gamma - k_0} {2} } 
			\| u(\tau) \|_{L^{p_\theta, \infty}_{k_\theta}}^{\alpha} 
		\, d\tau \\
%	&\leq C \int_0^t 
%			(t-\tau)^{ -\frac{d}{2} (\frac{\alpha}{p_\theta}- \frac{1}{p_0}) - \frac{\alpha k_\theta - \gamma - k_0} {2} } 
%			\| u(\tau) \|_{L^{p, \infty}_{k}}^{\theta \alpha}  \| u(\tau) \|_{L^{p_0, \infty}_{k_0}}^{(1-\theta)\alpha}
%		\, d\tau \\
%	&\leq Ct^{1 -\frac{d}{2} (\frac{\alpha}{p_\theta}- \frac{1}{p_0}) - \frac{\alpha k_\theta - \gamma - k_0} {2} 
%			- \theta \alpha \frac{d}{2} (\frac{1}{q_c(\gamma)}-\frac{k}{d}-\frac1{p} )
%			- (1-\theta)\alpha \frac{d}{2} (\frac{\sigma}{d}-\frac{k_0}{d}-\frac1{p_0} )} \\
%	&\quad \times \int_0^1 
%			(1-\tau)^{ -\frac{d}{2} (\frac{\alpha}{p_\theta}- \frac{1}{p_0}) - \frac{\alpha k_\theta - \gamma - k_0} {2} } 
%			\tau^{-\theta \alpha \frac{d}{2} (\frac{1}{q_c(\gamma)}-\frac{k}{d}-\frac1{p} ) 
%			-(1-\theta)\alpha \frac{d}{2} (\frac{\sigma}{d}-\frac{k_0}{d}-\frac1{p_0} )} \, d\tau \\
%	&\qquad\times
%			\|u\|_{\mathcal{K}^p_k}^{\theta \alpha}
%			\times 
%		\left( \sup_{t>0} 
%				\tau^{\frac{d}{2} (\frac{\sigma}{d}-\frac{k_0}{d}-\frac1{p_0} )} 
%				\| u(\tau) \|_{L^{p_0, \infty}_{k_0}}
%			\right)^{(1-\theta)\alpha} \\
	&\leq C t^{-\frac{d}{2} \left( \frac{\sigma}{d} - \frac{k_0}{d} - \frac{1}{p_0} \right) - \delta} 
			\|u\|_{\mathcal{K}^p_k}^{\theta \alpha}
			\left( \sup_{t>0} 
				\tau^{\frac{d}{2} (\frac{\sigma}{d}-\frac{k_0}{d}-\frac1{p_0} )} 
				\| u(\tau) \|_{L^{p_0, \infty}_{k_0}}
			\right)^{(1-\theta)\alpha} \\
	&\hspace{-5mm} \times   \int_0^1 
			(1-\tau)^{ -\frac{d}{2} (\frac{\alpha}{p_\theta}- \frac{1}{p_0}) - \frac{\alpha k_\theta - \gamma - k_0} {2} } 
			\tau^{-\theta \alpha \frac{d}{2} (\frac{1}{q_c(\gamma)}-\frac{k}{d}-\frac1{p} ) 
			-(1-\theta)\alpha \frac{d}{2} (\frac{\sigma}{d}-\frac{k_0}{d}-\frac1{p_0} )} \, d\tau, 
\end{aligned}
\end{equation}
where 
\begin{equation}	\label{t:asym.behv.lin:pr.key.est.c0}
	\delta=\frac{d}{2} \left( \frac{\sigma}{d} - \frac{1}{q_c(\gamma)} \right) ( \alpha-1 - \theta\alpha)>0
	\quad\text{with}\quad 0\le \theta < 1-\frac1{\alpha}, 
\end{equation}
provided that 
%\begin{equation}\nonumber
%	1\le p_0 \le \infty, \quad 
%	1< \frac{p_\theta}{\alpha} \le \infty, \quad
%	k_0 \le \alpha k_\theta - \gamma, \quad
%	0 \le \frac{k_0}{d} + \frac{1}{p_0} \le \frac{\alpha k_\theta - \gamma}{d} + \frac{\alpha}{p_\theta} < 1, \quad
%\end{equation}
%i.e., 
%\begin{equation}\nonumber
%\begin{aligned}
%	&0\le \frac1{p_0} \le 1, \quad 
%		0\le \frac{\theta}{p} + \frac{1-\theta}{p_0} < \frac1{\alpha}, \quad
%		k_0 \le \alpha ( \theta k + (1-\theta) k_0) - \gamma, \\
%	&0 \le \frac{k_0}{d} + \frac{1}{p_0} \le \frac{\alpha ( \theta k + (1-\theta) k_0) - \gamma}{d} 
%		+ \alpha\left(\frac{\theta}{p} + \frac{1-\theta}{p_0}\right) < 1, 
%\end{aligned}
%\end{equation}
%i.e., 
\begin{equation}	\label{t:asym.behv.lin:pr.key.est.c1}
\begin{aligned}
	&\gamma - (\alpha-1)k_0 \le \theta \alpha ( k - k_0 ), \quad
		\theta \left( \frac{1}{p} -\frac{1}{p_0} \right) < \frac1{\alpha} -  \frac{1}{p_0}, \quad
		0 \le \frac{k_0}{d} + \frac{1}{p_0} \quad\text{and}\\
	& \frac{\gamma}{d} - (\alpha-1) \left(\frac{k_0}{d} + \frac1{p_0}\right)
		\le \theta \alpha \left(\frac{k}{d} + \frac1{p} - \frac{k_0}{d} - \frac1{p_0}\right) 
		 < 1 + \frac{\gamma}{d} - \alpha \left(\frac{k_0}{d} + \frac1{p_0}\right). 
\end{aligned}
\end{equation}
The last time-integral in \eqref{t:asym.behv.lin:pr.key.est} is convergent if and only if 
%\begin{equation}\nonumber
%\begin{aligned}
%	&1-\frac{d}{2} \left(\frac{\alpha}{p_\theta}- \frac{1}{p_0} \right) - \frac{\alpha k_\theta - \gamma - k_0} {2} >0 \\
%		\text{and}\quad
%	&1-\theta \alpha \frac{d}{2} \left(\frac{1}{q_c(\gamma)}-\frac{k}{d}-\frac1{p} \right) 
%			-(1-\theta)\alpha \frac{d}{2} \left(\frac{\sigma}{d}-\frac{k_0}{d}-\frac1{p_0} \right)>0,
%\end{aligned}
%\end{equation}
%i.e., 
\begin{equation}	\label{t:asym.behv.lin:pr.key.est.c2}
\begin{aligned}
	&	(\alpha-1) \left( \frac1{q_c(\gamma)} - \frac{k_0}{d} - \frac1{p_0} \right)
	- \theta \alpha \left( \frac{k}{d} + \frac1{p} - \frac{k_0}{d} - \frac1{p_0}\right)>0 \\
		\text{and}\quad
	&\frac{2}{d\alpha} - \frac{\sigma}{d}+ \frac{k_0}{d} +\frac1{p_0}  
		+ \theta \left( \frac{\sigma}{d}-\frac{k_0}{d}-\frac1{p_0} - \left(\frac{1}{q_c(\gamma)}-\frac{k}{d}-\frac1{p} \right) \right)
		>0. 
\end{aligned}
\end{equation}
By Lemma \ref{l:asym.behv.lin}, we may choose $\theta$ satisfying \eqref{t:asym.behv:c1.phi} and \eqref{t:asym.behv.lin:pr.key.est.c0}. 
Thus, coming back to the estimate \eqref{t:asym.behv.lin:pr.key.est}, we have 
\begin{equation*}
	\sup_{t>0} t^{\frac{d}2(\frac{\sigma}{d}- \frac{k_0}{d} - \frac1{p_0} )+\delta}
		\|u(t) - e^{t\Delta} \Phi_\sigma \|_{L^{p_0,\infty}_{k_0}}  
%	&\leq \left(\sup_{t>0}  t^{\frac{d}2(\frac{l_0}{d}+\frac1{q_0}- \frac{k_0}{d} - \frac1{p_0} )+\delta}
%		\| e^{t\Delta} \varphi_1 \|_{L^{p_0,\infty}_{k_0}} \right)
%		+ \left(\sup_{t>0}  t^{\frac{d}2(\frac{l_0}{d}+\frac1{q_0}- \frac{k_0}{d} - \frac1{p_0} )+\delta}
%			\|a N(u)(t) \|_{L^{p_0,\infty}_{k_0}} \right) \\
	\leq C_0 + \tilde C M^{\alpha}. 
\end{equation*}
Since the constant 
\begin{align*}
	&\tilde C
	:= C   \int_0^1 
			(1-\tau)^{ -\frac{d}{2} (\frac{\alpha}{p_\theta}- \frac{1}{p_0}) - \frac{\alpha k_\theta - \gamma - k_0} {2} } 
			\tau^{-\theta \alpha \frac{d}{2} (\frac{1}{q_c(\gamma)}-\frac{k}{d}-\frac1{p} ) 
			-(1-\theta)\alpha \frac{d}{2} (\frac{\sigma}{d}-\frac{k_0}{d}-\frac1{p_0} )} \, d\tau 
%	&= C_0 B\Bigg( 
%			\frac{d(\alpha-1)}{2} 
%				\left( \frac1{q_c(\gamma)} -\frac{k}{d} -\frac1{p}\right), \,
%			\frac{d}2\left( (\alpha-1)\left(\frac{k}{d} + \frac1{p} \right) + \frac{k_0}{d}+\frac1{p_0}
%					- \frac{\gamma}{d} - \frac{l_0}{d}-\frac1{q_0} \right) -\delta
%			\Bigg)
\end{align*}
does not depend on $t$ and finite by the hypothesis on $\delta$, we deduce \eqref{t:asym.behv.lin:c1} for $(\tilde k, \tilde p) = (k_0,p_0)$.

We now prove \eqref{t:asym.behv.lin:c1}  for all $(\tilde k, \tilde p)$ satisfying \eqref{l:exist.crt':c.tild.kp}. 
Let $t>0$ be an arbitrary positive real number. Starting from \eqref{t:asym.behv.lin:pr.integral.eq}, we have 
\begin{equation}	\nonumber
\begin{aligned}
	u(t) - e^{t\Delta} \Phi_\sigma
		= e^{t\Delta} \varphi + a N(u)(t) - e^{t\Delta} \Phi_\sigma 
		= e^{\frac{t}2\Delta} \left(u\left(\frac{t}{2}\right) - e^{\frac{t}{2}\Delta} \Phi_\sigma \right) + a \tilde N(u)(t),
\end{aligned}
\end{equation}
where $\tilde N(u)(t)$ is defined in \eqref{l:b.strap:int.rep}. 
Note that Lemma \ref{l:linear-main} 
%with $(s_1, q_1, r_1) \equiv (k_0, p_0,\infty)$ and $(s_2, q_2,r_2) \equiv (\tilde k, \tilde p,\infty)$ 
gives 
\begin{equation}\nonumber
	t^{-\frac{d}2({\tilde k\over d}+ {1\over \tilde p})}
	\left\|e^{\frac{t}{2}\Delta}
		\left(u \left(\frac{t}2\right)-e^{\frac{t}{2}\Delta} \Phi_\sigma \right)\right\|_{L^{\tilde p,\infty}_{\tilde k}} 
	\le t^{-\frac{d}2({k_0 \over d}+{1\over p_0})}
	\left\| u\left(\frac{t}2\right)-e^{\frac{t}{2}\Delta} \Phi_\sigma \right\|_{L^{p_0,\infty}_{k_0}}. 
\end{equation}
By using Lemma \ref{l:linear-main} 
%with $(s_1, q_1, r_1) \equiv (\alpha \tilde k-\gamma,\frac{\tilde p}{\alpha},\infty)$ and $(s_2, q_2,r_2) \equiv (\tilde k, \tilde p,\infty)$ 
to \eqref{l:exist.crt:strng.dcy:pr1}, we deduce
\begin{align*}
	&t^{\frac{d}2(\frac{\sigma}{d} -{\tilde k\over d}  - {1\over \tilde p})+\delta}
		\|\tilde N(u)(t) \|_{L^{\tilde p,\infty}_{\tilde k}} 
	\leq Ct^{\frac{d}2(\frac{\sigma}{d}-{\tilde k\over d}-{1\over\tilde p})+\delta} 
		\int_\frac{t}{2}^t (t-\tau)^{-\frac{d}2\frac{\alpha-1}{\tilde p}
							-\frac{(\alpha-1) \tilde k-\gamma}{2}}
		\|u(\tau)\|_{L^{\tilde p,\infty}_{\tilde k}}^{\alpha} \, d\tau,
\end{align*}
where $\delta$ is to be chosen. By Lemma \ref{l:exist.crt}, we have also 
\begin{equation}\nonumber
	\|u(t)\|_{L^{\tilde p,\infty}_{\tilde k}} 
	\leq Ct^{-\frac{d}{2} (\frac{\sigma}{d} - \frac{\tilde k}{d} - \frac{1}{\tilde p})} 
		\quad\text{and}\quad
	\|u(t)\|_{L^{\tilde p,\infty}_{\tilde k}} 
	\leq Ct^{-\frac{d}{2} (\frac{1}{q_c(\gamma)} - \frac{\tilde k}{d} - \frac{1}{\tilde p})} 
\end{equation}
for $(\tilde k, \tilde p)$ such that \eqref{l:exist.crt':c.tild.kp}, 
thanks to the additional regularity (Lemma \ref{l:exist.crt} $(\mathrm{v})$). 
Thus, by interpolation, we have 
\begin{equation}\nonumber
\begin{aligned}
	\|u(t)\|_{L^{\tilde p,\infty}_{\tilde k}} 
	&= \|u(t)\|_{L^{\tilde p,\infty}_{\tilde k}}^{\theta} \|u(t)\|_{L^{\tilde p,\infty}_{\tilde k}}^{1-\theta}
	\leq C t^{-\theta\frac{d}{2} (\frac{\sigma}{d} - \frac{\tilde k}{d} - \frac{1}{\tilde p})
			-(1-\theta)\frac{d}{2} (\frac{1}{q_c(\gamma)} - \frac{\tilde k}{d} - \frac{1}{\tilde p})} \\
	&= C t^{-\frac{d}{2} (\theta \frac{\sigma}{d} + (1-\theta)\frac{1}{q_c(\gamma)} - \frac{\tilde k}{d} - \frac{1}{\tilde p})}
	= C t^{-\frac{d}{2} (\frac{\sigma}{d}  - \frac{\tilde k}{d} - \frac{1}{\tilde p})-\delta},
\end{aligned}
\end{equation}
where 
\begin{equation}\nonumber
	\delta:=(1-\theta) \frac{d}{2}\left(\frac{\sigma}{d} - \frac{1}{q_c(\gamma)} \right). 
\end{equation}
Using the above fact, we deduce
\begin{align*}
	&t^{\frac{d}2(\frac{\sigma}{d}-{\tilde k\over d} - {1\over \tilde p})+\delta}
		\|u(t) - e^{t\Delta} \Phi_\sigma \|_{L^{\tilde p,\infty}_{\tilde k}} 
	\leq C \sup_{t>0} 
		t^{\frac{d}2(\frac{\sigma}{d} - \frac{k_0}{d} - \frac1{p_0} )+\delta} 
		\left\|u\left(\frac{t}2\right)-e^{\frac{t}{2}\Delta} \Phi_\sigma \right\|_{L^{p_0,\infty}_{k_0}} \\
	&\quad
	+2C^{\alpha} 
		\int_\frac{1}{2}^1(1-\tau)^{-\frac{d}2\frac{\alpha-1}{\tilde p}-\frac{(\alpha-1)\tilde k -\gamma}{2}} 
		\tau^{-\frac{d\alpha}2(\frac{\sigma}{d}-{\tilde k\over d}-{1\over\tilde p})-\delta} \,d\tau. 
\end{align*}
That is, there exists a finite positive constant $C$ such that 
\begin{equation}	\nonumber
	\sup_{t>0}
		t^{\frac{d}2(\frac{\sigma}{d} - \frac{\tilde k}{d} - \frac1{\tilde p} ) +\delta}
		\|u(t)-e^{t\Delta} \Phi_\sigma\|_{L^{\tilde p,\infty}_{\tilde k}}
	\leq C
\end{equation}
for $(\tilde k,\tilde p)$ satisfying \eqref{l:exist.crt':c.tild.kp} and for $\delta>0$ sufficiently small.  
Finally, \eqref{t:asym.behv.lin:c2} follows from \eqref{t:asym.behv.lin:c1} by a simple dilation argument. 
\end{proof}

%%%%%%%%%%%%%%%%%%%%%%%%
%%%%%%%%%%%%%%%%%%%%%%%%
\section{Asymptotic behavior : Complex-valued case}\label{sec:complex-valued}

In this section we prove Theorem \ref{t:asym.behv-complex}, 
which is more general than Theorem \ref{t:asym.behv-complex0}. 
To this end, we investigate the target systems first. 

\begin{prop}
	\label{p:HH.sys.self.sim}
Let $d\in\mathbb{N},$ $\gamma\in\R$ and $\alpha\in\R$ satisfy \eqref{t:HH.LWP.c.paramet}. 
Let $l\in\R$ and $q\in [1,\infty]$ satisfy \eqref{d:critical} and \eqref{t:HH.LWP.c.ql}. 
For $j=1, 2$, let $\sigma_j$ be such that ${2+\gamma\over \alpha-1} < \sigma_j<d.$ 
Let $\omega_1 \in L^\infty(\R^d)$ be homogeneous of degree $0$ and $\|\omega_j\|_{L^\infty}$ is sufficiently small. 
Then the following assertions hold:

\begin{itemize}

\item[(i)] There exists a self-similar solution $(z_1, z_2)$ 
	where $z_1 \in \mathcal K^{p}_{k}$ and $z_2\in \tilde{\mathcal{K}}^{p_0}_{k_0}$. 
	We have $z_1=u_{1\mathcal S}$, where $u_{1\mathcal S}$ is 
	the global self-similar solution of \eqref{HHNLH} with initial data 
	$\omega_1(x) |x|^{-\frac{2+\gamma}{{\alpha-1}}}$ 
	constructed by Theorem \ref{t:HH.self.sim}. 

\item[(ii)] There exists a self-similar solution $(w_1, w_2)$ 
	where $w_2 \in \mathcal K^{p}_{k}$ and $w_1\in \tilde{\mathcal{K}}^{p_0}_{k_0}$. 
	We have $w_2=u_{2\mathcal S}$, where $u_{2\mathcal S}$ is 
	the global self-similar solution of \eqref{HHNLH} with initial data 
	$\omega_2(x) |x|^{-\frac{2+\gamma}{{\alpha-1}}}$ 
	constructed by Theorem \ref{t:HH.self.sim}. 

\end{itemize}
\end{prop}

\begin{proof}
We only prove assertion (i), since a similar argument holds for $(w_1,w_2).$ 
The proof of the existence of $z_1$ is due to fixed-point argument, 
similarly to the real case. By uniqueness, $z_1$ coincides with $u_{1\mathcal S}$, constructed by Theorem \ref{t:HH.self.sim}. 
Once we have $z_1$, the proof of the existence of $z_2$ follows by fixed point argument as well. Indeed, it is clear by Lemma \ref{l:Kato.est2}, the map 
\begin{equation}	\nonumber
\begin{aligned}
	\mathscr{F} [z_2](t) = e^{t\Delta} (\omega_2 |\cdot|^{-\sigma_2} ) 
	+ a \int_0^t e^{(t-\tau)\Delta} 
	\left( |\cdot|^{\gamma} [u_{1\mathcal{S}}(\tau)]^{\alpha-1} z_2(\tau)\right)\, d\tau
\end{aligned}
\end{equation}
satisfies 
\begin{equation}	\nonumber
\begin{aligned}
	\|\mathscr{F} [\zeta_1] -\mathscr{F} [\zeta_2] \|_{\tilde{\mathcal{K}}^{p_0}_{k_0}}
	%=\sup_{0<t< T} &t^{\frac{d}{2} (\frac{\sigma}{d}-\frac{k_0}{d}-\frac1{p_0}) } 
	%	\|\mathscr{F} [\zeta_1](t) -\mathscr{F} [\zeta_2](t) \|_{L^{p_0,\infty}_{k_0}} \\
	&\le C_3 \|u_{1\mathcal{S}}\|_{\mathcal{K}^{p}_{k}(T)}^{\alpha-1} 
		\|\zeta_1 -\zeta_2\|_{\tilde{\mathcal{K}}^{p_0}_{k_0}}, 
	%=\sup_{0<t< T} t^{\frac{d}{2} (\frac{\sigma}{d}-\frac{k_0}{d}-\frac1{p_0}) } 
	%	\|\zeta_1(t) -\zeta_2(t)\|_{L^{p_0,\infty}_{k_0}}
\end{aligned}
\end{equation}
under $\frac{d}{q_c(\gamma)} < \sigma <d$, \eqref{l:exist.crt.c.pk} and \eqref{l:exist.crt.c.p0k0}. 
By standard fixed point argument, we have a solution $z_2 \in \tilde{\mathcal{K}}^{p_0}_{k_0}$, provided that 
$u_{1\mathcal{S}}$ is small. 
\end{proof}

\begin{thm}
	\label{t:asym.behv-complex}
Let $d\in\N$, $\alpha$ and $\gamma$ satisfy \eqref{t:HH.LWP.c.paramet}. 
Let $l\in\R$, $q\in [1,\infty]$ and $r \in [1,\infty]$ satisfy \eqref{d:critical} and \eqref{t:HH.LWP.c.ql}. 
Let $\sigma_j\in\R$, $j=1,2,$ satisfy \eqref{d:sigmaj}. 
Let $\varphi = \varphi_1 + i \varphi_2 \in \mathcal{S}'(\R^d)$ be 
nontrivial and satisfy the assumptions of Theorem \ref{t:globalsol}. 
Let $u\in \mathcal{K}^{p}_{k}$ be the global solution of \eqref{HHNLH} with initial data $\varphi$ 
constructed in Theorem \ref{t:globalsol}, where $k\in\R$ and $p\in[1,\infty]$ satisfy \eqref{t:HH.LWP.c.pk}. 
%Let $u_{j\mathcal S}$ be the global self-similar solution of \eqref{HHNLH} 
%with initial data $\omega_j(x) |x|^{-\frac{2+\gamma}{{\alpha-1}}}$. 
Let $(u_{1\mathcal S},z_2)$ be the self-similar solution of \eqref{HHNLH-syscase3}, and 
$(w_1,u_{2\mathcal S})$ be the self-similar solution of \eqref{HHNLH-syscase4}. 
Then we have the following assertions.

\begin{itemize}

	\item[(i)] $($Nonlinear-``Modified linear" behavior$)$
	Suppose that there exist positive constants $C_1$, $C_2$, $\delta_{01}$ and $\delta_{02}$ such that 
	\begin{equation}	\label{t:asym.behv-complex:c1}
	\begin{aligned}
		& t^{\frac{d}2(\frac{\sigma_1}{d} - \frac{k}{d} - \frac1{p})+\delta_{01}} 
		\|e^{t\Delta} (\varphi_1-\Phi_{\sigma_1} )\|_{L^{p,\infty}_{k}}
		\le C_1 \\
		\text{and}\quad 
		&t^{\frac{d}2(\frac{\sigma_2}{d} - \frac{k_0}{d} - \frac1{p_0})} 
		 	\|e^{t\Delta} \varphi_2\|_{L^{p_0,\infty}_{k_0}}
		+ t^{\frac{d}2(\frac{\sigma_2}{d} - \frac{k_0}{d} - \frac1{p_0})+\delta_{02}}
			\|e^{t\Delta} (\varphi_2-\Phi_{\sigma_2})\|_{L^{p_0,\infty}_{k_0}}
		\le C_2
	\end{aligned}
	\end{equation}
	for all $t>0$, where ${2+\gamma\over \alpha-1}=\sigma_1<\sigma_2$. 
	Then there exist $\delta_j>0$ (possibly smaller than $\delta_{0j}$), $j=1, 2$, such that
	\begin{equation}
	\label{t:asym.behv.lin:c1complexi11}
		\|u_1(t)-u_{1\mathcal S}(t)\|_{L^{\tilde p,\infty}_{\tilde k}}
		\leq C t^{-\frac{d}{2} 
				({1 \over q_c(\gamma)}-{\tilde k\over d} - {1\over \tilde p} )-\delta_1},
	\end{equation}				
	and
	\begin{equation}
	\label{t:asym.behv.lin:c1complexi12}
		\left\| 
			u_2(t)-z_2(t)		\right\|_{L^{\tilde p_0,\infty}_{\tilde k_0}}
		\leq C t^{-\frac{d}{2} (\frac{\sigma_2}{d}-\frac{\tilde k_0}{d}-\frac1{\tilde p_0}) -\delta_2}
	\end{equation}	
	for sufficiently large $t>0$, where $C$ is a positive constant. 

	\item[(ii)] $($``Modified linear"-Nonlinear behavior$)$
	Suppose that there exist positive constants $C_1$, $C_2$, $\delta_{01}$ and $\delta_{02}$ such that 
	\begin{equation}	\label{t:asym.behv-complex:c2}
	\begin{aligned}
		&t^{\frac{d}2(\frac{\sigma_1}{d} - \frac{k_0}{d} - \frac1{p_0})}
			\|e^{t\Delta} \varphi_1\|_{L^{p_0,\infty}_{k_0}}
		+t^{\frac{d}2(\frac{\sigma_1}{d} - \frac{k_0}{d} - \frac1{p_0})+\delta_{01}} 
			\|e^{t\Delta} (\varphi_1-\Phi_{\sigma_1})\|_{L^{p_0,\infty}_{k_0}}
		\le C_1  \\
		\text{and}\quad 
		&t^{\frac{d}2(\frac{\sigma_2}{d} - \frac{k}{d} - \frac1{p})\delta_{02}} 
			\|e^{t\Delta} (\varphi_2-\Phi_{\sigma_2} )\|_{L^{p,\infty}_{k}}
		\le C_2 
	\end{aligned}
	\end{equation}
	for all $t>0$, where ${2+\gamma\over \alpha-1}=\sigma_2<\sigma_1$. 
	Then there exist $\delta_j>0$  (possibly smaller than $\delta_{0j}$), $j=1, 2$, such that 
	\[
		\left\| 
			u_1(t)-w_1(t)
		\right\|_{L^{\tilde p_0,\infty}_{\tilde k_0}}
		\leq C t^{-\frac{d}{2} (\frac{\sigma_1}{d}-\frac{\tilde k_0}{d}-\frac1{\tilde p_0}) -\delta_1}
	\]
	and
	\[
		\|u_2(t)-u_{2\mathcal S}(t)\|_{L^{\tilde p,\infty}_{\tilde k}}
		\leq C t^{-\frac{d}{2} 
				({1 \over q_c(\gamma)}-{\tilde k\over d} - {1\over \tilde p} )-\delta_2}
	\]
	for sufficiently large $t>0$, where $C$ is a positive constant. 

\end{itemize}
	In the above, $(\tilde k, \tilde p) \in \R \times [1,\infty]$ satisfy \eqref{t:asym.behv:c.kp},  
	$(k_0,p_0)\in \R \times[1,\infty]$ satisfy 
	\begin{equation}	\label{t:asym.behv-complex:c.k0p0}
		(\alpha-2) \left({k \over d} + {1\over p }\right) +  \left({k_0 \over d} + {1\over p_0 }\right) < \frac{2+\gamma}{d} 
		\quad\text{if}\quad
		\alpha>3 \text{ and }
		\frac{k}{d} + \frac1{p}<\frac{k_0}{d} + \frac1{p_0} 
	\end{equation}
	in addition to \eqref{l:exist.crt.c.p0k0}, 
	and $(\tilde k_0,\tilde p_0)\in \R \times [1,\infty]$ satisfy \eqref{l:exist.crt':c.tild.kp} with $\sigma$ replaced by $\sigma_j$. 
\end{thm}

\begin{proof} 
We only give the proof of (i), since the proof of (ii) is identical to it. 
%The proof of (i) uses the same ideas of that for the proof of Theorem \ref{t:asym.behv}. 
Throughout the proof, we will use the following inequality: For $x\in \R,\; x'\in \R,\; b>0,$ 
\begin{equation}	\label{KeyInequality}
	\left| |x+x'|^b-|x|^b \right|
	\leq C\left\{\begin{aligned} 
		&|x'|^b &&\mbox{if}\quad 0<b\leq 1, \\
		&|x|^{b-1}|x'|+|x'|^b &&\mbox{if}\quad b\geq 1.
	\end{aligned}\right.
\end{equation}
We write the integral formulation of the system \eqref{HHNLH-sys}, that is, 
\begin{equation}	\label{intHHNLH-sys}
\left\{\begin{aligned}
	u_1(t) = e^{t\Delta} \varphi_1 + a \int_0^t e^{(t-\tau)\Delta} 
	\left( |\cdot|^{\gamma} [u_1(\tau)^2+u_2(\tau)^2]^{{\alpha-1\over 2}} u_1(\tau)\right) \, d\tau,\\
	u_2(t) = e^{t\Delta} \varphi_2+ a \int_0^t e^{(t-\tau)\Delta} 
	\left( |\cdot|^{\gamma} [u_1(\tau)^2+u_2(\tau)^2]^{{\alpha-1\over 2}} u_2(\tau)\right)\, d\tau.\\
\end{aligned}\right.
\end{equation}
By standard argument, we may see that, for $M>0$ sufficiently small, we have 
\begin{equation}	\label{t:asym.behv-complex:decay}
\begin{aligned}
	&\sup_{t>0} t^{\frac{d}{2} (\frac{1}{q_c(\gamma)} - \frac{k}{d} - \frac{1}{p})}\|u_1(t)\|_{L^{p,\infty}_{ k}} 
		\le M, \quad
	\sup_{t>0} t^{\frac{d}{2} (\frac{1}{q_c(\gamma)} - \frac{k}{d} - \frac{1}{p})} \|u_2(t)\|_{L^{ p,\infty}_{k}} \le M \\
		\quad\text{and}\quad
	&\sup_{t>0} t^{\frac{d}{2} (\frac{\sigma_2}{d} - \frac{k_0}{d} - \frac{1}{p_0})} \|u_2(t)\|_{L^{ p_0,\infty}_{k_0}} <\infty, 
\end{aligned}
\end{equation}
where $k_0,$ $p_0$ are as in Theorem \ref{t:asym.behv} but with $\sigma$ replaced by $\sigma_2.$ 
Indeed, given $\varphi = \varphi_1 + i \varphi_2$ as prepared in Theorem \ref{t:asym.behv-complex}, 
it suffices to set the contraction mapping $\mathfrak{F}$ and the metric space $(X_M,d)$ as follows: 
Let $\rho>0$ and $M>0$ satisfy 
\begin{equation}\nonumber
	\rho +2C_3  M^{\alpha} \le M
\end{equation}
and 
\begin{equation}\nonumber
	\sup_{t>0}t^{\frac{d}{2} (\frac{\sigma_2}{d} - \frac{k_0}{d} - \frac{1}{p_0})}\|e^{t\Delta} \varphi_2\|_{L^{ p_0,\infty}_{k_0}}
	\le \rho
\end{equation}
in addition to \eqref{l:exist.crt.c0} with $T=\infty$ and \eqref{l:exist.crt:c.small}. Let 
\begin{equation}\nonumber
	\mathfrak{F}[u]
	= \operatorname{Re} \mathfrak{F}[u] + i  \operatorname{Im}\mathfrak{F}[u]
	:= e^{t\Delta} \varphi 
	+ a \int_0^t e^{(t-\tau)\Delta} \left( |\cdot|^{\gamma} |u(\tau)|^{\alpha-1} u(\tau)\right) \, d\tau
\end{equation}
and
\begin{equation}\nonumber
	X_M := \{ u=u_1 + i u_2\in %C((0,\infty) ; L^{q,r}_{l}(\R^d)) \cap 
			\mathcal{K}^p_k \,;\, 
	 		\|u\|_{\mathcal{K}^p_k}\le M, \ \|u_2\|_{\tilde{\mathcal{K}}^{p_0}_{k_0}}<\infty\}, 
\end{equation}
where 
\begin{equation}\nonumber
	\|u_2\|_{\tilde{\mathcal{K}}^{p_0}_{k_0}}
	:=\sup_{t>0}t^{\frac{d}{2} (\frac{\sigma_2}{d} - \frac{k_0}{d} - \frac{1}{p_0})}\|u_2(t)\|_{L^{ p_0,\infty}_{k_0}}
\end{equation}
with the metric 
\begin{equation}\nonumber
	d(u,v) := \|u-v\|_{\mathcal{K}^{p}_k}. 
\end{equation}
Then the mapping $\mathfrak{F}$ is a contraction mapping into $X_M$. 
For the fact that $X_M$ is stable under the mapping $\mathfrak{F}$, it suffices to see   
\begin{equation}\nonumber
\begin{aligned}
	\|\operatorname{Im}\mathfrak{F}[u]\|_{\tilde{\mathcal{K}}^{p_0}_{k_0}}
	&\leq \sup_{t>0}t^{\frac{d}{2} (\frac{\sigma_2}{d} - \frac{k_0}{d} - \frac{1}{p_0})}\|e^{t\Delta} \varphi_2\|_{L^{ p_0,\infty}_{k_0}}
		+2C_3   M^{\alpha-1} \|u_2\|_{\tilde{\mathcal{K}}^{p_0}_{k_0}} < \infty
%	&\leq \rho +2C_3  M^{\alpha} \le M,
\end{aligned}
\end{equation}
by a similar calculation to the proof of Lemma \ref{l:Kato.est2}, 
where $C_3$ is the constant in Lemma \ref{l:Kato.est2} and $(k_0, p_0)$ satisfies \eqref{l:exist.crt.c.p0k0}. 
%$\frac1{q_c(\gamma)} < \frac{\sigma_2}{d} <1,$
%\begin{equation}	\nonumber
%\begin{aligned}
%	0\le \frac1{p_0}< 1-\frac{\alpha-1}{p}
%		\quad\text{and}\quad
%	\frac{\sigma_2}{d} 
%	<(\alpha-1) \left(\frac{k}{d} + \frac1{p} \right) + \frac{k_0}{d} + \frac{1}{p_0} - \frac{\gamma}{d}
%		 < 1. 
%\end{aligned}
%\end{equation}
%Consequently, if $M$ is sufficiently small, we have 
%\[
%	\|\mathfrak{F_2}[u]\|_{\tilde{\mathcal{K}}^{p_0}_{k_0}}
%	\leq C\sup_{t>0}t^{\frac{d}{2} (\frac{\sigma_2}{d} - \frac{k_0}{d} - \frac{1}{p_0})}\|e^{t\Delta} \varphi_2\|_{L^{ p_0,\infty}_{k_0}}. 
%\]
The rest of the argument is straightforward so we omit the details. By the uniqueness in $\mathcal{K}^{p}_{k}$, we 
deduce that the solution obtained by the above process is the same as the one obtained in Theorem \ref{t:globalsol}. 

We now prove \eqref{t:asym.behv.lin:c1complexi11} for $(\tilde k_0, \tilde p_0)=(k_0, p_0)$ and $(\tilde k,\tilde p) = (k,p)$. 
To simplify the presentation, we write here and in some lines below, $u_j(\tau)$ by $u_j$ for $j=1, 2$ or $1\mathcal S$. 
For $u_1$, 
%since   
%\[
%	u_{1\mathcal S}(t) = e^{t\Delta}\omega_1 |\cdot|^{-\frac{2+\gamma}{{\alpha-1}}}+ a \int_0^t e^{(t-\tau)\Delta} 
%	\left[ |\cdot|^{\gamma} |u_{1\mathcal S}(\tau)|^{\alpha-1} u_{1\mathcal S}(\tau)\right] \, d\tau, 
%\]
we have  
\begin{equation}	\label{t:asym.behv-complex:pr.u1}
\begin{aligned}
	&t^{\frac{d}{2}(\frac1{q_c(\gamma)} - \frac{k}{d} - \frac1{ p} )
		+\delta_1}\|u_1(t)-u_{1\mathcal S}(t)\|_{L^{ p,\infty}_{k}} \\ 
	& \leq t^{\frac{d}{2}(\frac1{q_c(\gamma)} - \frac{k}{d} - \frac1{ p} ) +\delta_1} 
		\|e^{t\Delta} \tilde{\varphi}_1\|_{L^{ p,\infty}_{k}} \\ 
	&\quad + t^{\frac{d}{2}(\frac1{q_c(\gamma)} - \frac{k}{d} - \frac1{ p} ) +\delta_1}
		\int_0^t \big\|e^{(t-\tau)\Delta}
		\big[ |\cdot|^{\gamma} \big( [u_1^2+u_2^2]^{{\alpha-1\over 2}} u_1-|u_{1}|^{\alpha-1} u_{1} \big) \big]\|_{L^{ p,\infty}_{k}}d\tau\\ 
	&\quad +  t^{\frac{d}{2}(\frac1{q_c(\gamma)} - \frac{k}{d} - \frac1{ p} ) +\delta_1} 
		\int_0^t \|e^{(t-\tau)\Delta}
		\big[|\cdot|^{\gamma}\big( |u_{1}|^{\alpha-1} u_{1} -|u_{1\mathcal S}|^{\alpha-1} u_{1\mathcal S} \big)
		\big]\|_{L^{ p,\infty}_{k}}\, d\tau \\
	&=: \mathbf{L}_1(t)
		+ {\mathbf{I}}_{1}(t) +  {\mathbf{II}}_{1}(t), 
\end{aligned}
\end{equation}
where $\tilde{\varphi}_1:= (\varphi_1 - \omega_1 |\cdot|^{-\frac{2+\gamma}{{\alpha-1}}})1_{\{|\cdot|\leq A_1\}} $. 
For $u_2$, we have  
\begin{equation}	\label{t:asym.behv-complex:pr.u2}
\begin{aligned}
	&t^{\frac{d}{2}(\frac{\sigma_2}{d} - \frac{k_0}{d} - \frac1{ p_0} ) +\delta_2}
		\|u_2(t)-z_2(t)\|_{L^{ p_0,\infty}_{k_0}} \\ 
	&\leq t^{\frac{d}{2}(\frac{\sigma_2}{d} - \frac{k_0}{d} - \frac1{ p_0} ) +\delta_2} 
		\|e^{t\Delta} \tilde{\varphi}_2\|_{L^{ p_0,\infty}_{k_0}}\\ 
	&\quad + t^{\frac{d}{2}(\frac{\sigma_2}{d} - \frac{k_0}{d} - \frac1{ p_0} ) +\delta_2} 
		\int_0^t \big\|e^{(t-\tau)\Delta}\big[ 
				|\cdot|^{\gamma} \big([u_1^2+u_2^2]^{{\alpha-1\over 2}} u_2-|u_1|^{\alpha-1}u_2\big)\big]\|_{L^{ p_0,\infty}_{k_0}} \,d\tau\\ 
	&\quad + t^{\frac{d}{2}(\frac{\sigma_2}{d} - \frac{k_0}{d} - \frac1{ p_0} ) +\delta_2} 
		\int_0^t \|e^{(t-\tau)\Delta}\big[ 
		|\cdot|^{\gamma} \big(|u_1|^{\alpha-1}u_2-|z_1|^{\alpha-1}z_2\big)\big] \|_{L^{ p_0,\infty}_{k_0}} \,d\tau\\ 
	&=: \mathbf{L}_2(t)
		+ {\mathbf{I}}_{2}(t)+ {\mathbf{II}}_{2}(t), 
\end{aligned}
\end{equation}
where $\tilde{\varphi}_2:= (\varphi_2 - \omega_2 |\cdot|^{-\sigma_2})1_{\{|\cdot|\leq A_2\}}$. 
In the above, $\delta_j$ is to be determined later. 
Our aim is to show that, for a suitable choice of $\delta_j$, the terms $\mathbf{L}_j(t)$, ${\mathbf{I}}_{j}(t)$ and ${\mathbf{II}}_{j}(t)$, $j=1,2$,
all tend to zero as $t\to\infty$.

\subsection{Estimate of $\mathbf{L}_1(t)$}

%\begin{center}
%{\bf Estimate of $\mathbf{L}_1(t)$.}
%\end{center}

Since $|\tilde{\varphi}_1|\leq \left(\|\omega\|_{L^{\infty}}+c\right)|\cdot|^{-\frac{2+\gamma}{{\alpha-1}}} 1_{\{|\cdot|\leq A_1\}},$ 
we have $\tilde\varphi_1 \in L^{q_1,\infty}_{l_1}(\R^d)$ for any $(l_1, q_1)$ such that $\frac1{q_c(\gamma)} < \frac{l_1}{d} + \frac1{q_1}.$ 
Lemma \ref{l:linear-main} now implies $e^{t\Delta}\tilde\varphi_1\in L^{p,\infty}_k(\R^d)$ and 
\[
	\|e^{t\Delta}\tilde\varphi_1\|_{L^{p,\infty}_k}
	\le C t ^{-\frac{d}{2} (\frac{1}{q_1} - \frac{1}{p}) - \frac{l_1 - k}{2}}
		\|\tilde\varphi_1\|_{L^{q_1,\infty}_{l_1}}
	=  C t ^{-\frac{d}{2} (\frac{1}{q_c(\gamma)} - \frac{k}{d} - \frac{1}{p}) - \delta_{10}}
		\|\tilde\varphi_1\|_{L^{q_1,\infty}_{l_1}}, 
\]
where $p, q_1 \in [1,\infty)$, $k \le l_1$, 
$0<\frac{k}{d} + \frac1{p} < \frac1{q_c(\gamma)} <  \frac{l_1}{d} + \frac1{q_1}\le 1$ and 
\[
	\delta_{10}
	:= \frac{d}2 \left(\frac{l_1}{d} + \frac{1}{q_1} - \frac{1}{q_c(\gamma)} \right). 
\]
Thus, 
\begin{equation}	\label{t:asym.behv-complex:pr.L1}
	\sup_{t>0}t^{\frac{d}{2} (\frac{1}{q_c(\gamma)} - \frac{k}{d} - \frac{1}{p})+\delta_1}
	\|e^{t\Delta}\tilde\varphi\|_{L^{p,\infty}_k}\le C t^{-(\delta_{10}-\delta_1)},
\end{equation}
where
\begin{equation}	\label{t:asym.behv-complex:del1_L1}
	\delta_1<\delta_{10}. 
\end{equation}

\subsection{Estimate of ${\mathbf{I}}_1(t)$}

%\begin{center}
%{\bf Estimate of ${\mathbf{I}}_1(t)$.}
%\end{center}

We write, using \eqref{KeyInequality},
\begin{align*}
	\big|[u_1^2+u_2^2]^{{\alpha-1\over 2}} u_1-|u_{1}|^{\alpha-1} u_{1}\big|
	&=\big|[u_1^2+u_2^2]^{{\alpha-1\over 2}} -[u_{1}^2]^{{\alpha-1\over 2}} \big| |u_{1}|\\ 
	&\hspace{-5cm} \leq C
	\left\{\begin{aligned}
		&|u_1|^{\alpha-2}|u_2|^{2}+|u_1| |u_2|^{\alpha-1} &&\mbox{if}\quad  \alpha>3,\\ 
		&|u_1| |u_2|^{\alpha-1} &&\mbox{if}\quad  1<\alpha\leq 3.
	\end{aligned}\right.
\end{align*}
Let us put 
\[
	\mathbf{I}_{11}(t)
	:=t^{\frac{d}2(\frac1{q_c(\gamma)} - \frac{k}{d} - \frac1{ p} ) +\delta_1}
		\int_0^t \big\|e^{(t-\tau)\Delta}\big[ 
			 |\cdot|^{\gamma}|u_1(\tau)|^{\alpha-2}|u_2(\tau)|^{2}\big] \big\|_{L^{ p,\infty}_{k}} \, d\tau 
\]
and 
\[
	\mathbf{I}_{12}(t)
	:=t^{\frac{d}2(\frac1{q_c(\gamma)} - \frac{k}{d} - \frac1{ p} ) +\delta_1}
		\int_0^t \big\|e^{(t-\tau)\Delta}\big[ 
			 |\cdot|^{\gamma}|u_1(\tau)||u_2(\tau)|^{\alpha-1}\big]\big\|_{L^{ p,\infty}_{k}}d\tau. 
\]
Then we have two terms $\mathbf{I}_{11}(t)$ and $\mathbf{I}_{12}(t)$ to estimate %in $L^{ p,\infty}_{k}(\R^d)$ 
if $\alpha> 3,$ and $\mathbf{I}_{12}(t)$ if $1<\alpha\le 3.$

\smallbreak
\noindent
\underline{\sf Estimate of  $\mathbf{I}_{11}(t)$ for $\alpha>3$} \ 
Setting 
\begin{equation}	\nonumber
\begin{aligned}
	&{1\over p_{{11}}}={\alpha-2\over p}+{2\over p_{{\theta_{11}}}}, \quad 
	k_{{11}}=(\alpha-2)k+2k_{{\theta_{11}}}, \\
	&{1\over p_{{\theta_{11}}}}={{\theta_{11}}\over p}+{1-{\theta_{11}}\over p_0} 
			\quad\text{and}\quad 
	k_{{\theta_{11}}}={\theta_{11}}k+(1-{\theta_{11}})k_{0}, 
\end{aligned}
\end{equation}
we have 
\begin{equation}	\label{t:asym.behv.lin:pr.key.estcomplex3}
\begin{aligned}
	\mathbf{I}_{11}(t) 
	\leq C t^{\frac{d}2(\frac1{q_c(\gamma)} - \frac{k}{d} - \frac1{ p} ) +\delta_1} \int_0^t 
			(t-\tau)^{ -\frac{d}{2} (\frac{1}{p_{{11}}}- \frac{1}{p}) - \frac{ k_{{11}} - \gamma - k} {2} } 
			\||u_1(\tau)|^{\alpha-2}|u_2(\tau)|^{2} \|_{L^{p_{{11}}, \infty}_{ k_{{11}}}}
		\, d\tau
\end{aligned}
\end{equation}
and 
\begin{align*}
	\||u_1(\tau)|^{\alpha-2}&|u_2(\tau)|^{2} \|_{L^{p_{{11}}, \infty}_{ k_{{11}}}}
%	\lesssim \||u_1(\tau)|^{\alpha-2}\|_{L^{p/(\alpha-2), \infty}_{ (\alpha-2)k}}
%		\||u_2(\tau)|^{2} \|_{L^{p_{{\theta_{11}}}/2, \infty}_{2k_{{\theta_{11}}}}}\\ 
	\lesssim \|u_1(\tau)\|^{\alpha-2}_{L^{p, \infty}_{ k}}
		\|u_2(\tau)\|^{2}_{L^{p_{{\theta_{11}}}, \infty}_{k_{{\theta_{11}}}}} \\
	&\lesssim \|u_1(\tau)\|^{\alpha-2}_{L^{p, \infty}_{ k}}
			\|u_2(\tau)\|_{L^{p, \infty}_{ k}}^{2\theta_{11}} \|u_2(\tau)\|_{L^{p_0, \infty}_{k_0}}^{2(1-\theta_{11})} \\
%	&\lesssim M^\alpha \tau^{-(\alpha-2) \frac{d}{2}(\frac{1}{q_c(\gamma)} - \frac{k}{d} - \frac{1}{p})}
%		\tau^{-2{\theta_{11}} \frac{d}{2} (\frac{1}{q_c(\gamma)} - \frac{k}{d} - \frac{1}{p})}
%		\tau^{-2(1-{\theta_{11}}) \frac{d}{2}(\frac{\sigma_2}{d} - \frac{k_0}{d} - \frac{1}{p_0})}\\
	&\lesssim M^\alpha \tau^{-(\alpha-2+2{\theta_{11}} ) \frac{d}{2} (\frac{1}{q_c(\gamma)} - \frac{k}{d} - \frac{1}{p})
			-(1-{\theta_{11}}) d(\frac{\sigma_2}{d} - \frac{k_0}{d} - \frac{1}{p_0})}. 
\end{align*}
Thus, we obtain 
\begin{equation}	\label{t:asym.behv-complex:pr.I_{11}}
\begin{aligned}
	&\mathbf{I}_{11}(t) 
%	&\leq CM^\alpha t^{\frac{d}2(\frac1{q_c(\gamma)} - \frac{k}{d} - \frac1{ p} ) +\delta_1} 
%		\int_0^t (t-\tau)^{ -\frac{d}{2} ({\alpha-3\over p}+2 ({{\theta_{11}}\over p}+{1-{\theta_{11}}\over p_0})) 
%						- \frac{ (\alpha-3)k+ 2({\theta_{11}}k+(1-{\theta_{11}})k_{0}) - \gamma} {2} } \\
%	&\hspace{6cm} \times
%		\tau^{-(\alpha-2+2{\theta_{11}} ) \frac{d}{2} (\frac{1}{q_c(\gamma)} - \frac{k}{d} - \frac{1}{p})
%			-(1-{\theta_{11}}) d(\frac{\sigma_2}{d} - \frac{k_0}{d} - \frac{1}{p_0})} \, d\tau \\
	\leq CM^\alpha t^{-(\delta_{11}-\delta_1)}
		\int_0^1 (1-\tau)^{ -\frac{d}{2} ({\alpha-3\over p}+2 ({{\theta_{11}}\over p}+{1-{\theta_{11}}\over p_0})) 
						- \frac{ (\alpha-3)k+ 2({\theta_{11}}k+(1-{\theta_{11}})k_{0}) - \gamma} {2} } \\
	&\hspace{5cm} \times
		\tau^{-(\alpha-2+2{\theta_{11}} ) \frac{d}{2} (\frac{1}{q_c(\gamma)} - \frac{k}{d} - \frac{1}{p})
			-(1-{\theta_{11}}) d(\frac{\sigma_2}{d} - \frac{k_0}{d} - \frac{1}{p_0})} \, d\tau, 
\end{aligned}
\end{equation}
where 
\begin{equation}	\label{t:asym.behv-complex:del1_I11}
	\delta_1<\delta_{11}:= (1-{\theta_{11}}) d\left({\sigma_2\over d}-{1\over q_c(\gamma)}\right)
	\quad\text{with}\quad 
	0\le {\theta_{11}} < 1, 
\end{equation}
provided that the following conditions are satisfied: 
\begin{equation}	\label{t:asym.behv-complex:I_{11}c1}
\begin{aligned}
	& \gamma \leq (\alpha-3)k +2 k_{0}  +2{\theta_{11}} (k-k_0),\quad 
		{\alpha-2\over p}+{2\over p_0} + 2 \theta_{11} \left({1\over p} - {1\over p_0}\right)<1,  \\
	& %0\leq 
	\frac{k}{d}+\frac{1}{p}
	\le (\alpha-2) \left({k \over d} + {1\over p }\right) + 2 \left({k_0 \over d} + {1\over p_0 }\right) - \frac{\gamma}{d} 
	%\\&\hspace{6cm}		
	+2 {\theta_{11}}\left({k \over d} + {1\over p }- \left({k_0 \over d} + {1\over p_0 }\right)\right) < 1, 
\end{aligned}
\end{equation}
thanks to Lemma \ref{l:linear-main}. The last time-integral is convergent if and only if 
%\begin{equation}\nonumber
%\begin{aligned}
%	& \frac{d}{2} \left({\alpha-3\over p}+2 \left({{\theta_{11}}\over p}+{1-{\theta_{11}}\over p_0}\right)\right) 
%						+ \frac{ (\alpha-3)k+ 2({\theta_{11}}k+(1-{\theta_{11}})k_{0}) - \gamma} {2}<1, \\
%	&(\alpha-2+2{\theta_{11}} ) \frac{d}{2} \left(\frac{1}{q_c(\gamma)} - \frac{k}{d} - \frac{1}{p}\right)
%			+(1-{\theta_{11}}) d\left(\frac{\sigma_2}{d} - \frac{k_0}{d} - \frac{1}{p_0} \right)<1, 
%\end{aligned}
%\end{equation}
%i.e., 
%\begin{equation}\nonumber
%\begin{aligned}
%	&  {\alpha-3\over p}+2 \left({{\theta_{11}}\over p}+{1-{\theta_{11}}\over p_0}\right) 
%						+ \frac{ (\alpha-3)k+ 2({\theta_{11}}k+(1-{\theta_{11}})k_{0}) } {d}<\frac{2+\gamma}{d}, \\
%	&(\alpha-2+2{\theta_{11}} ) \left(\frac{1}{q_c(\gamma)} - \frac{k}{d} - \frac{1}{p}\right)
%			+2(1-{\theta_{11}}) \left(\frac{\sigma_2}{d} - \frac{k_0}{d} - \frac{1}{p_0} \right)<\frac2{d} , 
%\end{aligned}
%\end{equation}
%i.e., 
\begin{equation}	\label{t:asym.behv-complex:I_{11}c2}
\begin{aligned}
	&  (\alpha-3)  \left({k \over d} + {1\over p }\right) + 2 \left({k_0 \over d} + {1\over p_0 }\right)
		+ 2 \theta_{11} \left({k \over d} + {1\over p } - \left({k_0 \over d} + {1\over p_0 }\right)\right) <\frac{2+\gamma}{d}, \\
	&(\alpha-2) \left(\frac{1}{q_c(\gamma)} - \frac{k}{d} - \frac{1}{p}\right)
			+2 \left(\frac{\sigma_2}{d} - \frac{k_0}{d} - \frac{1}{p_0} \right)\\
	&\qquad 
		+ 2{\theta_{11}} \left(\frac{1}{q_c(\gamma)} - \frac{k}{d} - \frac{1}{p} - \left(\frac{\sigma_2}{d} - \frac{k_0}{d} - \frac{1}{p_0} \right)\right) <\frac2{d}.  
\end{aligned}
\end{equation}
By Lemma \ref{l:theta11}, there exists $\theta_{11}$ satisfying \eqref{t:asym.behv-complex:del1_I11}, 
\eqref{t:asym.behv-complex:I_{11}c1} and \eqref{t:asym.behv-complex:I_{11}c2}.

\smallbreak
\noindent
\underline{\sf Estimate of  $\mathbf{I}_{12}(t)$ for $\alpha>1$} \ 
Setting 
\begin{equation}	\nonumber
\begin{aligned}
	&{1\over p_{{12}}}={1\over p}+{\alpha-1\over p_{{\theta_{12}}}}, \quad 
		k_{{12}}=k+(\alpha-1)k_{{\theta_{12}}}, \\
	&{1\over p_{{\theta_{12}}}}={{\theta_{12}}\over p}+{1-{\theta_{12}}\over p_0}
		\quad\text{and}\quad 
	k_{{\theta_{12}}}={\theta_{12}}k+(1-{\theta_{12}})k_{0}, 
\end{aligned}
\end{equation}
we have 
\begin{equation}	\label{t:asym.behv.lin:pr.key.est.complex}
\begin{aligned}
	\mathbf{I}_{12}(t)
	\leq 
		C t^{\frac{d}2(\frac1{q_c(\gamma)} - \frac{k}{d} - \frac1{ p} ) +\delta_1} \int_0^t 
			(t-\tau)^{ -\frac{d}{2} (\frac{1}{p_{12}}- \frac{1}{p}) - \frac{k_{12} - \gamma - k} {2} } 
			\||u_1(\tau)||u_2(\tau)|^{\alpha-1} \|_{L^{p_{12}, \infty}_{k_{12}} }
		\, d\tau.
\end{aligned}
\end{equation}
and
\begin{align*}
	\||u_1(\tau)||u_2(\tau)|^{\alpha-1} \|_{L^{p_{{12}}, \infty}_{k_{{12}}}}
%	&\lesssim \|u_1(\tau)\|_{L^{p, \infty}_{ k}}\||u_2(\tau)|^{\alpha-1} \|_{L^{p_{{\theta_{12}}}/(\alpha-1), \infty}_{(\alpha-1)k_{{\theta_{12}}}}}\\ 
	&\lesssim \|u_1(\tau)\|_{L^{p, \infty}_{ k}}\|u_2(\tau)\|^{\alpha-1}_{L^{p_{{\theta_{12}}}, \infty}_{k_{{\theta_{12}}}}} \\
	&\lesssim \|u_1(\tau)\|_{L^{p, \infty}_{ k}} 
		\|u_2(\tau)\|_{L^{{p}, \infty}_{k}}^{(\alpha-1)\theta_{12}}
		\|u_2(\tau)\|_{L^{{p_0}, \infty}_{k_0}}^{(\alpha-1)(1-\theta_{12})} \\
%	& \hspace{-4cm} 
%	\lesssim M^\alpha\tau^{-\frac{d}{2} (\frac{1}{q_c(\gamma)} - \frac{k}{d} - \frac{1}{p})}
%		\tau^{-{\theta_{12}}\frac{d(\alpha-1)}{2} (\frac{1}{q_c(\gamma)} - \frac{k}{d} - \frac{1}{p})}
%		\tau^{-(1-{\theta_{12}}) \frac{d(\alpha-1)}{2} (\frac{\sigma_2}{d} - \frac{k_0}{d} - \frac{1}{p_0})}\\
%	&\hspace{-4cm} \leq M^\alpha \tau^{-\frac{d}{2}[1+(\alpha-1){\theta_{12}}] (\frac{1}{q_c(\gamma)} - \frac{k}{d} - \frac{1}{p})}
%		\tau^{-\frac{d}{2}(1-{\theta_{12}})(\alpha-1) (\frac{\sigma_2}{d} - \frac{k_0}{d} - \frac{1}{p_0})}\\
	& \hspace{-3cm} 
	\lesssim M^\alpha \tau^{- (1+{\theta_{12}} (\alpha-1)) \frac{d}{2} (\frac{1}{q_c(\gamma)} - \frac{k}{d} - \frac{1}{p}) 
					-(1-{\theta_{12}})\frac{d(\alpha-1)}{2} (\frac{\sigma_2}{d} - \frac{k_0}{d} - \frac{1}{p_0})}. 
\end{align*}
Thus, we obtain 
\begin{equation}		\label{t:asym.behv-complex:pr.I_{12}}
\begin{aligned}
	&\mathbf{I}_{12}(t)
	\leq CM^\alpha t^{-(\delta_{12}-\delta_1)}
		\int_0^1 (1-\tau)^{ -\frac{d(\alpha-1)}{2} ( {{\theta_{12}}\over p}+{1-{\theta_{12}}\over p_0} ) 
						- \frac{(\alpha-1)({\theta_{12}}k+(1-{\theta_{12}})k_{0})- \gamma} {2} }  \\
	&\hspace{4.5cm}\times
		\tau^{- (1+{\theta_{12}}(\alpha-1) )\frac{d}{2} (\frac{1}{q_c(\gamma)} - \frac{k}{d} - \frac{1}{p})
	 		-(1-{\theta_{12}}) \frac{d(\alpha-1)}{2}(\frac{\sigma_2}{d} - \frac{k_0}{d} - \frac{1}{p_0})}d\tau, 
\end{aligned}
\end{equation}
where 
\begin{equation}	\label{t:asym.behv-complex:del1_I12}
	\delta_1<\delta_{12}
	:= {d(\alpha-1)\over 2}(1-{\theta_{12}})\left({\sigma_2\over d}-{1\over q_c(\gamma)}\right)
	\quad\text{with}\quad 
	0\le {\theta_{12}} < 1, 
\end{equation}
provided that the following conditions are satisfied: 
%\begin{equation}\nonumber
%\begin{aligned}
%	& 1< p_{12} \le \infty, \quad 1\le p \le \infty, \\
%	& k \le k_{12}-\gamma, \quad 
%		0 \le \frac{k}{d} + \frac{1}{p} \le \frac{k_{12}-\gamma}{d} + \frac{1}{p_{12}} < 1,
%\end{aligned}
%\end{equation}
%i.e.,
%\begin{equation}\nonumber
%\begin{aligned}
%	& {1\over p}+{\alpha-1\over p_{{\theta_{12}}}} <1, \quad k\leq k+(\alpha-1)k_{{\theta_{12}}} -\gamma, \\
%	& 0\leq \frac{k}{d}+\frac{1}{p}\le {k+(\alpha-1)k_{{\theta_{12}}} - \gamma\over d}+{1\over p}+{\alpha-1\over p_{{\theta_{12}}}}<1,
%\end{aligned}
%\end{equation}
%i.e., 
%\begin{equation}\nonumber
%\begin{aligned}
%	& {1\over p}+(\alpha-1)\left({{\theta_{12}}\over p}+{1-{\theta_{12}}\over p_0}\right) <1, \quad 
%		k\leq k+(\alpha-1)({\theta_{12}}k+(1-{\theta_{12}})k_{0}) -\gamma, \\
%	& 0\leq \frac{k}{d}+\frac{1}{p}\le {k+(\alpha-1) ({\theta_{12}}k+(1-{\theta_{12}})k_{0}) - \gamma\over d}+{1\over p}
%		+(\alpha-1)\left({{\theta_{12}}\over p}+{1-{\theta_{12}}\over p_0}\right)<1,
%\end{aligned}
%\end{equation}
%i.e., 
%\begin{equation}\nonumber
%\begin{aligned}
%	& {{\theta_{12}}\over p}+{1-{\theta_{12}}\over p_0} < \frac1{\alpha-1} \left(1-{1\over p} \right), \quad 
%		{\gamma \over \alpha-1}\leq {\theta_{12}}k+(1-{\theta_{12}})k_{0}, \\
%	& 0\le {(\alpha-1) ({\theta_{12}}k+(1-{\theta_{12}})k_{0}) - \gamma\over d}
%		+(\alpha-1)\left({{\theta_{12}}\over p}+{1-{\theta_{12}}\over p_0}\right)<1-\frac{k}{d}-\frac{1}{p},
%\end{aligned}
%\end{equation}
%i.e., 
\begin{equation}	\label{t:asym.behv-complex:I_{12}c1}
\begin{aligned}
	&{\gamma \over \alpha-1}-k_0 \leq {\theta_{12}}(k-k_{0}), \quad 
		 \theta_{12} \left({1\over p} - {1\over p_0} \right) < \frac1{\alpha-1} \left(1-{1\over p} \right)-{1\over p_0}, \\
	& 0\le \theta_{12} (\alpha-1) \left(\frac{k}{d} + \frac{1}{p} - \frac{k_0}{d} - \frac{1}{p_0} \right)
		+(\alpha-1)\left(\frac{k_0}{d} + \frac{1}{p_0} \right) - {\gamma\over d}
		<1-\frac{k}{d}-\frac{1}{p},
\end{aligned}
\end{equation}
thanks to Lemma \ref{l:linear-main}. The last time-integral is convergent if and only if 
\begin{equation}	\label{t:asym.behv-complex:I_{12}c2}
\begin{aligned}
	&\theta_{12} \left( {k \over d} + {1\over p} -{k_0\over d} - {1\over p_0} \right) 
		<\frac1{q_c(\gamma)} - \left( {k_0\over d} + {1\over p_0} \right), \\
	&\frac{1}{q_c(\gamma)} - \frac{k}{d} - \frac{1}{p} + (\alpha-1) \left(\frac{\sigma_2}{d} - \frac{k_0}{d} - \frac{1}{p_0} \right)  \\
	&\qquad
		+\theta_{12} (\alpha-1) 
			\left(\frac{1}{q_c(\gamma)} - \frac{k}{d} - \frac{1}{p} - \left(\frac{\sigma_2}{d} - \frac{k_0}{d} - \frac{1}{p_0} \right) \right) 
		<\frac2{d}. 
\end{aligned}
\end{equation}
By Lemma \ref{l:theta12}, there exists $\theta_{12}$ satisfying \eqref{t:asym.behv-complex:del1_I12}, 
\eqref{t:asym.behv-complex:I_{12}c1} and \eqref{t:asym.behv-complex:I_{12}c2}.

\subsection{Estimate of ${\mathbf{II}}_1(t)$}
%\begin{center}
%{\bf Estimate of ${\mathbf{II}}_1(t)$.}
%\end{center} 

For ${\mathbf{II}}_1(t)$, we recall the inequality 
\[
	\left||u_{1}|^{\alpha-1} u_{1} -|u_{1\mathcal S}|^{\alpha-1} u_{1\mathcal S}\right|
	\leq C (|u_{1}|^{\alpha-1} +|u_{1\mathcal S}|^{\alpha-1})| u_{1}- u_{1\mathcal S}|. 
\]
Then the estimate of ${\mathbf{II}}_1(t)$ can be carried out similarly to the proof of Lemma \ref{l:crt.est2}. 
Fix $t' < t$. Given sufficiently large finite $T>0$, we have 
\begin{align*}
	&{\mathbf{II}}_1(t) 
	\lesssim \int_0^1
		(1-\tau)^{-\frac{d}{2} ((\alpha-1) (\frac{k}{d} + \frac{1}{p}) -\frac{\gamma}{d} ) } 
		\tau^{-\frac{d(\alpha-1)}{2} (\frac1{q_c(\gamma)} - \frac{k}{d} - \frac1{p} ) } 
		\|u_1(t \tau)-u_{1\mathcal S}(t \tau)\|_{L^{p,\infty}_{k}} d\tau \\
	&\qquad 
	\times \left(\|u_1\|_{\mathcal{K}^{p}_{k}}^{\alpha-1}
				+ \|u_{1\mathcal S}\|_{\mathcal{K}^{p}_{k}}^{\alpha-1}\right) \\
	&\lesssim M^{\alpha-1} \int_0^1
		(1-\tau)^{-\frac{d}{2} ((\alpha-1) (\frac{k}{d} + \frac{1}{p}) -\frac{\gamma}{d} ) } 
		\tau^{-\frac{d(\alpha-1)}{2} (\frac1{q_c(\gamma)} - \frac{k}{d} - \frac1{p} ) } 
		(t\tau)^{-\frac{d}2(\frac1{q_c(\gamma)} - \frac{k}{d} - \frac1{p} )-\delta_1}  \\
	&\hspace{2cm} 
	\times \sup_{T\tau\ge t\tau\ge t'\tau} \left((t\tau)^{\frac{d}2(\frac1{q_c(\gamma)} - \frac{k}{d} - \frac1{p} )+\delta_1} 
		\|u_1(t \tau)-u_{1\mathcal S}(t \tau)\|_{L^{p,\infty}_{k}} \right) d\tau
\end{align*}
thanks to Lemma \ref{l:linear-main}. 
%with $(s_1, q_1)\equiv (\alpha k -\gamma, p)$ and $(s_2, q_2)\equiv (k, p)$. 
Since 
\begin{equation}\nonumber
\begin{aligned}
	\sup_{T\tau\ge t\tau\ge t'\tau} 
			&\left((t\tau)^{\frac{d}2(\frac1{q_c(\gamma)} - \frac{k}{d} - \frac1{p} )+\delta_1} 
				\| u_1(t\tau)- u_{1\mathcal S}(t\tau)\|_{L^{p,\infty}_k}\right) \\
	&\le T^{\delta_1} ( \|u_1\|_{\mathcal{K}^{p}_{k}} +  \|u_{1\mathcal S}\|_{\mathcal{K}^{p}_{k}} )
	\quad\text{for all}\quad \tau \in [0,1] 
\end{aligned}
\end{equation}
and 
\begin{equation}\nonumber
\begin{aligned}
	\limsup_{t\to T}&\left((t\tau)^{\frac{d}2(\frac1{q_c(\gamma)} - \frac{k}{d} - \frac1{p} )+\delta_1} 
				\| u_1(t\tau)- u_{1\mathcal S}(t\tau)\|_{L^{p,\infty}_k}\right)  \\
	&= \limsup_{t\to T} t^{\frac{d}2(\frac1{q_c(\gamma)} - \frac{k}{d} - \frac1{p} )+\delta_1} 
						\| u_1(t)- u_{1\mathcal S}(t)\|_{L^{p,\infty}_k}\quad\text{for a.e.}\quad \tau \in [0,1], 
\end{aligned}
\end{equation}
Lebesgue's dominated convergence theorem yields 
\begin{equation}	\label{t:asym.behv-complex:pr.II_{1}}
\begin{aligned}
	\limsup_{t\to T}{\mathbf{II}}_1(t) 
	&\leq 2 C M^{\alpha-1} 
		\int_0^1(1-\tau)^{-\frac{d}{2} ((\alpha-1) (\frac{k}{d} + \frac{1}{p}) -\frac{\gamma}{d} ) } 
		\tau^{-\frac{d\alpha}2(\frac1{q_c(\gamma)} - \frac{k}{d} - \frac1{p} )-\delta_1} \,d\tau \\
	&\qquad  \times\limsup_{t\to T}
			\left(t^{\frac{d}2(\frac1{q_c(\gamma)} - \frac{k}{d} - \frac1{p} )+\delta_1} 
				\| u_1(t)- u_{1\mathcal S}(t)\|_{L^{p,\infty}_k}\right), 
\end{aligned}
\end{equation}
where the last integral is convergent if  
\begin{equation}	\label{t:asym.behv-complex:del1_II1}
	\delta_1<1-{d\alpha\over 2}\left({1\over q_c(\gamma)}-{k\over d}-{1\over p}\right). 
\end{equation}
Later in the proof, we will take $M$ smaller (if necessary) in order to obtain a closed estimate, and then let $T\to\infty$.

\subsection{Estimate of $\mathbf{L}_2(t)$}
%\begin{center}
%{\bf Estimate of $\mathbf{L}_2(t)$.}
%\end{center}

Since %$\tilde{\varphi}_2:= (\varphi_2 - \omega_2 |\cdot|^{-\sigma_2})1_{\{|\cdot|\leq A_2\}}$, 
$|\tilde{\varphi}_2|\leq \left(\|\omega\|_{L^{\infty}}+c\right)|\cdot|^{-{\sigma_2}}1_{\{|\cdot|\leq A_2\}}$, 
$\varphi_2 \in L^{q_1}_{l_1}(\R^d)$ for any $(l_1, q_1)$ such that 
\begin{equation} 	%\label{t:asym.behv.lin:pr.sigma}
	\sigma_2 = l_0 + \frac{d}{q_0} < l_1 + \frac{d}{q_1}. 
\end{equation}
Lemma \ref{l:linear-main} now implies $e^{t\Delta}\tilde\varphi_2\in L^{p_0,\infty}_{k_0}(\R^d)$ and 
\[
	\|e^{t\Delta}\tilde\varphi_2\|_{L^{p_0,\infty}_{k_0}}
	\le C t ^{-\frac{d}{2} (\frac{1}{q_1} - \frac{1}{p_0}) - \frac{l_1 - k_0}{2}}
		\|\tilde\varphi_2\|_{L^{q_1,\infty}_{l_1}}
	=  C t ^{-\frac{d}{2} (\frac{\sigma_2}{d} - \frac{k_0}{d} - \frac{1}{p_0}) - \delta_{20}}
		\|\tilde\varphi_2\|_{L^{q_1,\infty}_{l_1}}, 
\]
where $p_0, q_1 \in [1,\infty)$, $k_0 \le l_1$, 
$0<\frac{k_0}{d} + \frac1{p_0}<  \frac{\sigma_2}{d} < \frac{l_1}{d} + \frac1{q_1}\le 1$ and 
\[
	\delta_{20} := \frac{d}{2} \left(\frac{l_1}{d}+\frac1{q_1}-\frac{\sigma_2}{d}\right). 
\]
Thus, there exists a constant $C>0$ such that 
\begin{equation}	\label{t:asym.behv-complex:pr.L2}
	\sup_{t>0} t^{\frac{d}{2} (\frac{\sigma_2}{d}-\frac{k_0}{d}-\frac1{p_0}) + \delta_2} 
	\|e^{t\Delta}\varphi_2 \|_{L^{p_0,\infty}_{k_0}} \le C t^{-(\delta_{20}-\delta_2)}
\end{equation}
where 
\begin{equation}	\label{t:asym.behv-complex:del2_L2}
	\delta_2 < \delta_{20}. 
\end{equation}

\subsection{Estimate of ${\mathbf{I}}_2(t)$}
%\begin{center}
%{\bf Estimate of ${\mathbf{I}}_2(t)$.}
%\end{center} 

We next show that ${\mathbf{I}}_2(t)$ is finite. Using \eqref{KeyInequality}, we have
\begin{align*}
	|[u_1^2+u_2^2]^{{\alpha-1\over 2}} u_2-|u_1|^{\alpha-1}u_2 |
	&=\big|[u_1^2+u_2^2]^{{\alpha-1\over 2}} -[u_{1}^2]^{{\alpha-1\over 2}} \big||u_2 | \\ 
	&\hspace{-5cm} \leq C
	\begin{cases}
		|u_1|^{\alpha-3}|u_2|^{3}+|u_2|^{\alpha}, \quad \mbox{if}\quad \alpha> 3,\\ 
		|u_2|^{\alpha}, \quad \mbox{if}\quad 1<\alpha\leq 3.
	\end{cases}
\end{align*}
We have two terms to estimate in $L^{ p_0,\infty}_{k_0}(\R^d)$:
\[
	\mathbf{I}_{21}(t):=t^{\frac{d}{2}(\frac{\sigma_2}{d} - \frac{k_0}{d} - \frac1{ p_0} ) +\delta_2}
	\int_0^t \big\|e^{(t-\tau)\Delta}\big[ 
		 |\cdot|^{\gamma} |u_1(\tau)|^{\alpha-3}|u_2(\tau)|^3\big]\|_{L^{ p_0,\infty}_{k_0}}d\tau 
\]
if $\alpha>3$, and 
\[
	\mathbf{I}_{22}(t):=t^{\frac{d}{2}(\frac{\sigma_2}{d} - \frac{k_0}{d} - \frac1{ p_0} ) +\delta_2}
	\int_0^t \big\|e^{(t-\tau)\Delta}\big[ 
			|\cdot|^{\gamma} |u_2(\tau)|^{\alpha} 
		\big]\|_{L^{ p_0,\infty}_{k_0}}d\tau
\]
if $1<\alpha\le 3$.

\smallbreak
\noindent
\underline{\sf Estimate of $\mathbf{I}_{21}(t)$ for $\alpha>3$}. 
Setting 
\begin{align*}
	&{1\over p_{21} }={\alpha-3\over p}+{3\over p_{{\theta_{21}}}}, \quad 
		k_{{21}}=(\alpha-3)k+3k_{{\theta_{21}}}, \\
	&{1\over p_{{\theta_{21}}}}={{\theta_{21}}\over p}+{1-{\theta_{21}}\over p_0}, \quad 
		k_{{\theta_{21}}}={\theta_{21}}k+(1-{\theta_{21}})k_{0},
\end{align*}
we have 
\begin{equation}\nonumber
\begin{aligned}
	\mathbf{I}_{21}(t) 
	&\leq C t^{\frac{d}2(\frac{\sigma_2}{d} - \frac{k_0}{d} - \frac1{ p_0} ) +\delta_2} \\
	&\quad\times		\int_0^t 
				(t-\tau)^{ -\frac{d}{2} (\frac{1}{p_{{21}} }- \frac{1}{p_0}) - \frac{ k_{{21}} - \gamma - k_0} {2} } 
				\||u_1(\tau)|^{\alpha-3}|u_2(\tau)|^{3} \|_{L^{p_{{21}}, \infty}_{ k_{{21}}}} 
			\,d\tau. 
\end{aligned}
\end{equation}
and
\begin{align*}
	\||u_1(\tau)|^{\alpha-3}|u_2(\tau)|^{3} &\|_{L^{p_{{21}}, \infty}_{ k_{21}}  }
	\lesssim \||u_1(\tau)|^{\alpha-3}\|_{L^{p/(\alpha-3), \infty}_{ (\alpha-3)k}}\||u_2(\tau)|^{3} \|_{L^{p_{{\theta_{21}}}/3, \infty}_{3k_{{\theta_{21}}}}}\\ 
	&\lesssim \|u_1(\tau)\|^{\alpha-3}_{L^{p, \infty}_{ k}}\|u_2(\tau)\|^{3}_{L^{p_{{\theta_{21}}}, \infty}_{k_{{\theta_{21}}}}} \\
%	&\lesssim M^\alpha\tau^{-\frac{d}{2}(\alpha-3) (\frac{1}{q_c(\gamma)} - \frac{k}{d} - \frac{1}{p})}
%		\tau^{-\frac{d}{2}3{\theta_{21}} (\frac{1}{q_c(\gamma)} - \frac{k}{d} - \frac{1}{p})}
%		\tau^{-\frac{d}{2}(1-{\theta_{21}}) 3(\frac{\sigma_2}{d} - \frac{k_0}{d} - \frac{1}{p_0})}\\
	&\lesssim M^\alpha \tau^{-\frac{d}{2}(\alpha-3+3{\theta_{21}}) (\frac{1}{q_c(\gamma)} - \frac{k}{d} - \frac{1}{p})}
		\tau^{-\frac{d}{2}(1-{\theta_{21}})3 (\frac{\sigma_2}{d} - \frac{k_0}{d} - \frac{1}{p_0})}. 
\end{align*}
Thus we obtain 
\begin{equation}	\label{t:asym.behv-complex:pr.I_21}
\begin{aligned}
	&t^{\frac{d}2(\frac{\sigma_2}{d} - \frac{k_0}{d} - \frac1{ p_0} ) +\delta_2}
	\Big\| \int_0^t e^{(t-\tau)\Delta}
			\big[|\cdot|^{\gamma}|u_1(\tau)|^{\alpha-3}|u_2(\tau)|^{3}\big]\,d\tau
	\Big\|_{L^{ p_0,\infty}_{k_0}} \\
%	&\leq CM^\alpha t^{\frac{d}2(\frac{\sigma_2}{d} - \frac{k_0}{d} - \frac1{ p_0} ) +\delta_2} \int_0^t 
%			(t-\tau)^{ -\frac{d}{2} (\frac{1}{\tilde{p}_{{21}}}- \frac{1}{p_0}) - \frac{ \tilde{k}_{{21}} - \gamma - k_0} {2} } \\
%	&\hspace{1cm}
%		\times \tau^{-\frac{d}{2} (\alpha-3+3{\theta_{21}}) (\frac{1}{q_c(\gamma)} - \frac{k}{d} - \frac{1}{p}) 
%					-\frac{d}{2}(1-{\theta_{21}})3 (\frac{\sigma_2}{d} - \frac{k_0}{d} - \frac{1}{p_0})} \,d\tau \\
	&\leq CM^\alpha t^{-(\delta_{21}-\delta_2)} \\
	&\quad\times		\int_0^1
			(1-\tau)^{ -\frac{d}{2} (\frac{1}{p_{{21}}}- \frac{1}{p_0}) - \frac{ k_{{21}} - \gamma - k_0} {2} } 
			\tau^{-\frac{d}{2} (\alpha-3+3{\theta_{21}}) (\frac{1}{q_c(\gamma)} - \frac{k}{d} - \frac{1}{p}) 
					-\frac{d}{2}(1-{\theta_{21}})3 (\frac{\sigma_2}{d} - \frac{k_0}{d} - \frac{1}{p_0})} \,d\tau,
\end{aligned}
\end{equation}
where
\begin{equation}	\label{t:asym.behv-complex:del2_I21}
	0<\delta_2< \delta_{21}:= {d\over 2}(2-3{\theta_{21}})\left({\sigma_2\over d}-{1\over q_c(\gamma)}\right)
	\quad\text{with}\quad
	0\leq \theta_{21}<{2\over 3}, 
\end{equation}
provided that the following conditions are satisfied: 
%\begin{equation}\nonumber
%\begin{aligned}
%	& 1< p_{21} \le \infty, \quad 1\le p_0 \le \infty, \\
%	& k_0 \le k_{21}-\gamma, \quad 
%		0 \le \frac{k_0}{d} + \frac{1}{p_0} \le \frac{k_{21}-\gamma}{d} + \frac{1}{p_{21}} < 1,
%\end{aligned}
%\end{equation}
%i.e.,
%\begin{equation}\nonumber
%\begin{aligned}
%	&{\alpha-3\over p}+{3\over p_{{\theta_{21}}}} <1, \quad k\leq (\alpha-3)k+3k_{{\theta_{21}}}-\gamma, \\
%	& 0\leq \frac{k}{d}+\frac{1}{p}\le {(\alpha-3)k+3k_{{\theta_{21}}} - \gamma\over d}+{\alpha-3\over p}+{3\over p_{{\theta_{11}}}} <1,
%\end{aligned}
%\end{equation}
%i.e.,
%\begin{equation}\nonumber
%\begin{aligned}
%	&{\alpha-3\over p}+{3\theta_{21} \over p}+{3(1-{\theta_{21})}\over p_0} <1, \quad k\leq (\alpha-3)k+3{\theta_{21}}k+3(1-{\theta_{21}})k_{0}-\gamma, \\
%	& 0\leq \frac{k}{d}+\frac{1}{p}\le {(\alpha-3)k+3{\theta_{21}}k+3(1-{\theta_{21}})k_{0} - \gamma\over d}+{\alpha-3\over p}+{3\theta_{21} \over p}+{3(1-{\theta_{21})}\over p_0} <1,
%\end{aligned}
%\end{equation}
%i.e.,
\begin{equation}	\label{t:asym.behv-complex:I_{21}c1}
\begin{aligned}
	&\gamma\leq (\alpha-3)k+2k_0 + 3{\theta_{21}} (k-k_0), \quad 
		{\alpha-3\over p} + {3\over p_0}+3\theta_{21}\left( {1 \over p}-{1\over p_0} \right) <1, \\
	& \frac{k_0}{d}+\frac{1}{p_0}\le (\alpha-3) \left( {k\over d} + {1\over p} \right)+ 3\left( {k_0\over d} + {1\over p_0} \right) - {\gamma\over d} \\
	&\hspace{5cm}
		+3{\theta_{21}}  \left(  {k\over d} + {1\over p} - \left( {k_0\over d} + {1\over p_0} \right)\right) <1,
\end{aligned}
\end{equation}
thanks to Lemma \ref{l:linear-main}. The last time-integral is convergent if and only if 
\begin{equation}	\label{t:asym.behv-complex:I_{21}c2}
\begin{aligned}
	&(\alpha-3)\left( {k\over d} + {1\over p}\right) + 2\left( {k_0\over d} + {1\over p_0}\right)
		+3{\theta_{21}} \left( {k\over d} + {1\over p}-\left( {k_0\over d} + {1\over p_0}\right) \right) 
		- \frac{\gamma } {d} <\frac{2}{d}, \\
	&(\alpha-3) \left(\frac{1}{q_c(\gamma)} - \frac{k}{d} - \frac{1}{p}\right) 
			+ 3\left(\frac{\sigma_2}{d} - \frac{k_0}{d} - \frac{1}{p_0}\right) \\
	&\hspace{3cm}
	+3{\theta_{21}} \left(\frac{1}{q_c(\gamma)} - \frac{k}{d} - \frac{1}{p} -\left(\frac{\sigma_2}{d} - \frac{k_0}{d} - \frac{1}{p_0}\right) \right)<\frac2{d}.
\end{aligned}
\end{equation}
The existence of $\theta_{21}$ satisfying \eqref{t:asym.behv-complex:del2_I21}, 
\eqref{t:asym.behv-complex:I_{21}c1} and \eqref{t:asym.behv-complex:I_{21}c2} is ensured by Lemma \ref{l:theta21}.

\smallbreak
\noindent
\underline{\sf Estimate of $\mathbf{I}_{22}(t)$ for $\alpha>1$} \ 
We estimate $\mathbf{I}_{22}(t)$ similarly to the proof of Theorem \ref{t:asym.behv.lin}. By the same calculations leading 
to \eqref{t:asym.behv.lin:pr.key.est}, we have
\begin{equation}	\label{t:asym.behv-complex:pr.I_22}
\begin{aligned}
	\mathbf{I}_{22}(t) 
	&\leq C M^{\alpha} t^{-(\delta_{22}-\delta_2)} \int_0^1
			(1-\tau)^{ -\frac{d\alpha \theta_{22}}{2} ( \frac{1}{p} - \frac{1}{p_0}) - \frac{\alpha \theta_{22} (k -k_0)- \gamma} {2} } \\
	&\hspace{2cm}
		\times \tau^{-\theta_{22} \alpha \frac{d}{2} (\frac{1}{q_c(\gamma)}-\frac{k}{d}-\frac1{p} ) 
			-(1-\theta_{22})\alpha \frac{d}{2} (\frac{\sigma}{d}-\frac{k_0}{d}-\frac1{p_0} )} \, d\tau, 
\end{aligned}
\end{equation}
with
\begin{equation}	\label{t:asym.behv-complex:del2_I22}
	0<\delta_2<\delta_{22}:= \frac{d}{2} \left( \frac{\sigma_2}{d} - \frac{1}{q_c(\gamma)} \right) ( \alpha-1 - \theta_{22}\alpha)
	\quad\text{with}\quad 
	0\le \theta_{22} < 1-\frac1{\alpha}, 
\end{equation}
provided that 
\begin{equation}	\label{t:asym.behv-complex:I_{22}c1}
\begin{aligned}
	&\gamma - (\alpha-1)k_0 \le \theta_{22} \alpha ( k - k_0 ), \quad
		\theta_{22} \left( \frac{1}{p} -\frac{1}{p_0} \right) < \frac1{\alpha} -  \frac{1}{p_0}, \quad
		0 \le \frac{k_0}{d} + \frac{1}{p_0} \quad\text{and}\\
	& \frac{\gamma}{d} - (\alpha-1) \left(\frac{k_0}{d} + \frac1{p_0}\right)
		\le \theta_{22} \alpha \left(\frac{k}{d} + \frac1{p} - \frac{k_0}{d} - \frac1{p_0}\right) 
		 < 1 + \frac{\gamma}{d} - \alpha \left(\frac{k_0}{d} + \frac1{p_0}\right). 
\end{aligned}
\end{equation}
The time-integral is convergent if and only if 
\begin{equation}	\label{t:asym.behv-complex:I_{22}c2}
\begin{aligned}
	&	(\alpha-1) \left( \frac1{q_c(\gamma)} - \frac{k_0}{d} - \frac1{p_0} \right)
	- \theta_{22} \alpha \left( \frac{k}{d} + \frac1{p} - \frac{k_0}{d} - \frac1{p_0}\right)>0 \\
		\text{and}\quad
	&\frac{2}{d\alpha} - \frac{\sigma_2}{d}+ \frac{k_0}{d} +\frac1{p_0}  
		+ \theta_{22} \left( \frac{\sigma_2}{d}-\frac{k_0}{d}-\frac1{p_0} - \left(\frac{1}{q_c(\gamma)}-\frac{k}{d}-\frac1{p} \right) \right)
		>0. 
\end{aligned}
\end{equation}
Such a $\theta_{22}$ satisfying \eqref{t:asym.behv-complex:I_{22}c1} and \eqref{t:asym.behv-complex:I_{22}c2} exists, 
as done in the proof of Theorem \ref{t:asym.behv.lin} (Take $\theta_{22}=\theta$ in Theorem \ref{t:asym.behv.lin} 
with $\sigma$ replaced by $\sigma_2$).

\subsection{Estimate of ${\mathbf{II}}_2(t)$}
%\begin{center}
%{\bf Estimate of ${\mathbf{II}}_2(t)$.}
%\end{center}

Using \eqref{KeyInequality}, we have 
\begin{align*}
	\Big||u_1|&^{\alpha-1}u_2-|u_{1\mathcal S}|^{\alpha-1}z_2\Big|
	=\Big||u_1|^{\alpha-1}u_2-|u_{1\mathcal S}|^{\alpha-1}z_2+|u_{1\mathcal S}|^{\alpha-1}u_2-|u_{1\mathcal S}|^{\alpha-1}u_2\Big| \\
%	&=\Big||u_{1\mathcal S}|^{\alpha-1}u_2-|u_{1\mathcal S}|^{\alpha-1}z_2+|u_1|^{\alpha-1}u_2-|u_{1\mathcal S}|^{\alpha-1}u_2\Big| \\ \hspace{-5cm} 
	&\leq C
	\left\{\begin{aligned}
		&|u_{1\mathcal S}|^{\alpha-1}|u_2-z_2|+|u_2||u_1-u_{1\mathcal S}|(|u_1|^{\alpha-2}+|u_{1\mathcal S}|^{\alpha-2}), &&\alpha\geq 2, \\ 
		&|u_{1\mathcal S}|^{\alpha-1}|u_2-z_2|+|u_2||u_1-u_{1\mathcal S}|^{\alpha-1}, &&1<\alpha< 2.
	\end{aligned}\right.\\
	&\leq C 
	|u_{1\mathcal S}|^{\alpha-1}|u_2-z_2|+C
	\left\{\begin{aligned}
		&|u_2||u_1-u_{1\mathcal S}|(|u_1|^{\alpha-2}+|u_{1\mathcal S}|^{\alpha-2}), &&\alpha\ge2 ,\\ 
		&|u_2||u_1-u_{1\mathcal S}|^{\alpha-1}, &&1<\alpha< 2.
	\end{aligned}\right.
\end{align*}
We have three terms to estimate:
\[
	\hspace{-0.8cm}
	\mathbf{II}_{21}(t)
	:=t^{\frac{d}2(\frac{\sigma_2}{d} - \frac{k_0}{d} - \frac1{p_0} ) +\delta_2}\int_0^t \big\|e^{(t-\tau)\Delta}\big[ 
	|\cdot|^{\gamma}|u_2-z_2||u_{1\mathcal S}|^{\alpha-1}\big]\big\|_{L^{ p_0,\infty}_{k_0}}d\tau
\]
if $\alpha>1$; 
\[
	\hspace{-0.8cm}
	\mathbf{II}_{22}(t)
	:=t^{\frac{d}2(\frac{\sigma_2}{d} - \frac{k_0}{d} - \frac1{p_0} ) +\delta_2}\int_0^t \big\|e^{(t-\tau)\Delta}\big[ 
	 |\cdot|^{\gamma}|u_1-u_{1\mathcal S}||u_j|^{\alpha-2}|u_2|\big]\big\|_{L^{ p_0,\infty}_{k_0}}d\tau
\]
for $j=1, 1\mathcal{S}$ if $\alpha\geq 2$; and 
\[
	\hspace{-0.8cm}
	\mathbf{II}_{23}(t)
	:=t^{\frac{d}2(\frac{\sigma_2}{d} - \frac{k_0}{d} - \frac1{p_0} ) +\delta_2}\int_0^t \big\|e^{(t-\tau)\Delta}\big[ 
	 |\cdot|^{\gamma}|u_1-u_{1\mathcal S}|^{\alpha-1}|u_2|\big]d\tau\big\|_{L^{ p_0,\infty}_{k_0}}d\tau
\]
if $1<\alpha< 2.$

\smallbreak
\noindent
\underline{\sf Estimate of $\mathbf{II}_{21}(t)$ for $\alpha>1$}. 
For $\mathbf{II}_{21}(t)$, we carry out the estimate as in the proof of Lemma \ref{l:Kato.est2}. %Here we use $\theta_{21}$ for the interpolation.
Given arbitrary $T>0$, we have 
\begin{equation}	\label{t:asym.behv-complex:pr.II_21}
\begin{aligned}
	&\mathbf{II}_{21}(t)=t^{\frac{d}2(\frac{\sigma_2}{d} - \frac{k_0}{d} - \frac1{ p0} ) +\delta_2}
		\int_0^t \big\|e^{(t-\tau)\Delta}
				\big[ |\cdot|^{\gamma}|u_2(\tau)-z_2(\tau)||u_{1\mathcal S}(\tau)|^{\alpha-1}\big]
			\big\|_{L^{ p_0,\infty}_{k_0}}d\tau \\
%	&\leq C  t^{\frac{d}2(\frac{\sigma_2}{d} - \frac{k_0}{d} - \frac1{ p_0} ) +\delta_2} \\
%	&\qquad\times
%		\int_0^t 
%			(t-\tau)^{ -\frac{d}{2} (\frac{1}{\tilde p}- \frac{1}{p_0}) - \frac{\tilde k - \gamma - k_0} {2} } 
%			\||u_2(\tau)-z_2(\tau)||u_{1\mathcal S}(\tau)|^{\alpha-1} \|_{L^{\tilde p , \infty}_{\tilde k -\gamma } }
%		\, d\tau \\
	&\leq C \| u_{1\mathcal S} \|^{\alpha-1} _{\mathcal{K}^{p}_{k}}
			\int_0^1 
			(1-\tau)^{ -\frac{d(\alpha-1)}{2p} - \frac{(\alpha-1) k- \gamma } {2} } 
			\tau^{-\frac{d (\alpha-1) }{2} (\frac{1}{q_c(\gamma)}-\frac{k}{d}-\frac1{p} ) 
			- \frac{d}{2} (\frac{\sigma_2}{d}-\frac{k_0}{d}-\frac1{p_0} )-\delta_2}  \\
	&\hspace{+3cm} \times 
			 \sup_{T\tau\ge t\tau\ge t'\tau} 
			 	\left( 
					(t\tau)^{\frac{d}2(\frac{\sigma_2}{d} - \frac{k_0}{d} - \frac1{ p_0} ) +\delta_2}  
					\|u_2(t\tau)-z_2(t\tau)\|_{L^{p_0, \infty}_{k_0}}
				\right) \, d\tau. 
\end{aligned}
\end{equation}
Thus, 
\begin{equation}	\nonumber
\begin{aligned}
	&\sup_{T\ge t\ge t'} \mathbf{II}_{21}(t) \\
	&\leq C \| u_{1\mathcal S} \|^{\alpha-1} _{\mathcal{K}^{p}_{k}}
			\int_0^1 
			(1-\tau)^{ -\frac{d(\alpha-1)}{2p} - \frac{(\alpha-1) k- \gamma } {2} } 
			\tau^{-\frac{d (\alpha-1) }{2} (\frac{1}{q_c(\gamma)}-\frac{k}{d}-\frac1{p} ) 
			- \frac{d}{2} (\frac{\sigma_2}{d}-\frac{k_0}{d}-\frac1{p_0} )-\delta_2}  \\
	&\hspace{+3cm} \times 
			 \sup_{T\tau\ge t\tau\ge t'\tau} 
			 	\left( 
					(t\tau)^{\frac{d}2(\frac{\sigma_2}{d} - \frac{k_0}{d} - \frac1{ p_0} ) +\delta_2}  
					\|u_2(t\tau)-z_2(t\tau)\|_{L^{p_0, \infty}_{k_0}}
				\right) \, d\tau. 
\end{aligned}
\end{equation}
Since 
\begin{equation}\nonumber
\begin{aligned}
	\sup_{T\tau\ge t\tau\ge t'\tau} &
			 	\left( 
					(t\tau)^{\frac{d}2(\frac{\sigma_2}{d} - \frac{k_0}{d} - \frac1{ p_0} ) +\delta_2}  
					\|u_2(t\tau)-z_2(t\tau)\|_{L^{p_0, \infty}_{k_0}}
				\right) \\
	&\le T^{\delta_2} ( \|u_2\|_{\tilde{\mathcal{K}}^{p_0}_{k_0}} + \|z_2\|_{\tilde{\mathcal{K}}^{p_0}_{k_0}} )
	\quad\text{for all}\quad \tau \in [0,1] 
\end{aligned}
\end{equation}
and 
\begin{equation}\nonumber
\begin{aligned}
	\limsup_{t\to T}& (t\tau)^{\frac{d}2(\frac{\sigma_2}{d} - \frac{k_0}{d} - \frac1{ p_0} ) +\delta_2}  
					\|u_2(t\tau)-z_2(t\tau)\|_{L^{p_0, \infty}_{k_0}} \\
	&= \limsup_{t\to T} t^{\frac{d}2(\frac{\sigma_2}{d} - \frac{k_0}{d} - \frac1{ p_0} ) +\delta_2}  
					\|u_2(t)-z_2(t)\|_{L^{p_0, \infty}_{k_0}} \quad\text{for a.e.}\quad \tau \in [0,1], 
\end{aligned}
\end{equation}
Lebesgue's dominated convergence theorem yields 
\begin{equation}	\nonumber
\begin{aligned}
	& \limsup_{t\to T} \mathbf{II}_{21}(t) \\
	&\leq C \| u_{1\mathcal S} \|^{\alpha-1} _{\mathcal{K}^{p}_{k}}
			\int_0^1 
			(1-\tau)^{ -\frac{d(\alpha-1)}{2p} - \frac{(\alpha-1) k- \gamma } {2} } 
			\tau^{-\frac{d (\alpha-1) }{2} (\frac{1}{q_c(\gamma)}-\frac{k}{d}-\frac1{p} ) 
			- \frac{d}{2} (\frac{\sigma_2}{d}-\frac{k_0}{d}-\frac1{p_0} )-\delta_2}  \,d\tau \\
	&\hspace{+3cm} \times 
			 \limsup_{t\to T}
			 	\left( 
					t^{\frac{d}2(\frac{\sigma_2}{d} - \frac{k_0}{d} - \frac1{ p_0} ) +\delta_2}  
					\|u_2(t)-z_2(t)\|_{L^{p_0, \infty}_{k_0}}
				\right)\\
	&\leq C M^{\alpha-1} 
			\limsup_{t\to T}
			 	\left( 
					t^{\frac{d}2(\frac{\sigma_2}{d} - \frac{k_0}{d} - \frac1{ p_0} ) +\delta_2}  
					\|u_2(t)-z_2(t)\|_{L^{p_0, \infty}_{k_0}}
				\right), 
\end{aligned}
\end{equation}
where
%\[
%	{1\over \tilde p } ={\alpha-1\over p}+{1\over p_0}, \quad 
%	\tilde k =(\alpha-1) k+k_{0},
%\]
\begin{equation}	\label{t:asym.behv-complex:del2_II21}
	0<\delta_2<\delta_{21}' :=1-{d\over 2}\left({\sigma_2\over d}-{k_0\over d}-{1\over p_0}\right)
		-{d(\alpha-1)\over 2}\left({1\over q_c(\gamma)}-{k\over d}-{1\over p}\right),
\end{equation}
provided that the following conditions are satisfied:
%\begin{equation}\nonumber
%\begin{aligned}
%	& 1<  \tilde p \le \infty, \quad 1\le p_0 \le \infty, \\
%	& k_0 \le \tilde k-\gamma, \quad 
%		0 \le \frac{k_0}{d} + \frac{1}{p_0} \le \frac{\tilde k-\gamma}{d} + \frac{1}{\tilde p} < 1,
%\end{aligned}
%\end{equation}
%i.e.,
%\begin{equation}\nonumber
%\begin{aligned}
%	&{\alpha-1\over p}+{1\over p_0} <1, \quad k_0\leq (\alpha-1) k+k_{0}-\gamma, \\
%	& 0\leq \frac{k_0}{d}+\frac{1}{p_0}\le {(\alpha-1) k+k_{0} - \gamma\over d}+{\alpha-1\over p}+{1\over p_0} - \frac{\gamma}{d}<1,
%\end{aligned}
%\end{equation}
%i.e.,
\begin{equation}	\label{t:asym.behv-complex:II_{21}c1}
\begin{aligned}
	{\alpha-1\over p}+{1\over p_0} <1, \quad 
	\frac{\gamma}{\alpha-1} \leq k, \quad
	(\alpha-1) \left( {k \over d}+{1\over p}\right) + {k_0 \over d}+{1\over p_0} < 1. 
\end{aligned}
\end{equation}
The last time-integral is convergent if and only if 
%\begin{equation}\nonumber
%\begin{aligned}
%	&\frac{d(\alpha-1)}{2p} + \frac{(\alpha-1) k- \gamma } {2} <1, \\
%	&\frac{d(\alpha-1) }{2} \left(\frac{1}{q_c(\gamma)}-\frac{k}{d}-\frac1{p} \right) 
%			+ \frac{d}{2} \left(\frac{\sigma_2}{d}-\frac{k_0}{d}-\frac1{p_0} \right)+\delta_2<1, 
%\end{aligned}
%\end{equation}
%i.e.,
\begin{equation}	\label{t:asym.behv-complex:II_{21}c2}
\begin{aligned}
	&(\alpha-1) \left( {k \over d}+{1\over p}\right)  <\frac{2+\gamma}{d}, \\
	&\frac{d}{2} \left( \frac{\sigma_2}{d}+\frac{2+\gamma}{d} -(\alpha-1) \left(\frac{k}{d}+\frac1{p} \right) 
			- \left(\frac{k_0}{d}+\frac1{p_0} \right) \right)+\delta_2<1. 
\end{aligned}
\end{equation}
The above conditions along with the existence of $\delta_{12}'$ are ensured by \eqref{l:exist.crt.c.pk} and \eqref{l:exist.crt.c.p0k0}.

\smallbreak
\noindent
\underline{\sf Estimate of $\mathbf{II}_{22}(t)$ for $\alpha>2.$}
By a similar calculation to that of $\mathbf{II}_{21}(t)$, we have for arbitrary $T>0$
\begin{equation}	\label{t:asym.behv-complex:pr.II_22}
\begin{aligned}
	& \limsup_{t\to T} \mathbf{II}_{22}(t)  \\ 
%	&\leq C  t^{\frac{d}2(\frac{\sigma_2}{d} - \frac{k_0}{d} - \frac1{ p_0} ) +\delta_2} \\
%	&\qquad\times
%		\int_0^t 
%			(t-\tau)^{ -\frac{d}{2} ({1\over \tilde p}- \frac{1}{p_0}) - \frac{\tilde{k} - \gamma - k_0} {2} } 
%			\||u_1(\tau)-u_{1\mathcal S}(\tau)||u_1(\tau)|^{\alpha-2}|u_2(\tau)| \|_{L^{\tilde p, \infty}_{\tilde{k}}}
%		\, d\tau \\
%	&\leq C \| u_1 \|^{\alpha-2} _{\mathcal{K}^{p}_{k}}\| u_2 \| _{\tilde{\mathcal{K}}^{p_0}_{k_0}}
%			\sup_{T\tau\ge t\tau\ge t'\tau} 
%			\left( 
%				 (t\tau)^{\frac{d}2(\frac{1}{q_c(\gamma)} - \frac{k}{d} - \frac1{ p} ) +\delta_1}  
%				\|u_1 (t\tau)-u_{1\mathcal S} (t\tau)\|_{L^{p, \infty}_{k}}
%			\right) \\
%	&\qquad\times 
%		t^{-(\delta_1 -\delta_2)} 
%			\int_0^1 (1-\tau)^{ -\frac{d(\alpha-1)}{2p} - \frac{(\alpha-1)k - \gamma} {2} } 
%				\tau^{- (\alpha-1) \frac{d}{2} (\frac{1}{q_c(\gamma)}-\frac{k}{d}-\frac1{p} ) 
%					-\delta_1- \frac{d}{2} (\frac{\sigma_2}{d}-\frac{k_0}{d}-\frac1{p_0} )} 
%		\,d\tau\\
	&\leq C M^{\alpha-1} t^{-(\delta_1 -\delta_2)} 
			\limsup_{t\to T} 
			\left( 
				 t^{\frac{d}2(\frac{1}{q_c(\gamma)} - \frac{k}{d} - \frac1{ p} ) +\delta_1}  
				\|u_1 (t)-u_{1\mathcal S} (t)\|_{L^{p, \infty}_{k}}
			\right), 
\end{aligned}
\end{equation}
under \eqref{t:asym.behv-complex:II_{21}c1} and \eqref{t:asym.behv-complex:II_{21}c2}, where 
\begin{equation}	\label{t:asym.behv-complex:del1_II22}
	0<\delta_2<\delta_1<\delta_{21}'. 
	%	=1-{d\over 2}\left({\sigma_2\over d}-{k_0\over d}-{1\over p_0}\right)-{d(\alpha-1)\over 2}\left({1\over q_c(\gamma)}-{k\over d}-{1\over p}\right). 
\end{equation}
%where $M$ is smaller if necessary and 
%\[
%	{1\over \tilde p}={\alpha-1\over p}+{1\over p_0} 
%	\quad\text{and}\quad
%	\tilde{k}=(\alpha-1)k+k_0.
%\]
The estimate of $\mathbf{II}_{22}(t)$ for $j=1\mathcal{S}$ is similar to the above, and gives the same conditions.

\smallbreak
\noindent
\underline{\sf Estimate of ${\mathbf{II}_{23}}(t)$ for $1<\alpha\leq 2.$}
By a similar calculation to that of $\mathbf{II}_{21}(t)$, we have 
\begin{equation}	\label{t:asym.behv-complex:pr.II_23}
\begin{aligned}
	& \limsup_{t\to T} \mathbf{II}_{23}(t) \\ 
	&\leq C M^{\alpha-1} t^{-(\delta_1-\delta_2) }
		 \limsup_{t\to T} 
		\left( 
			t^{\frac{d}2(\frac{1}{q_c(\gamma)} - \frac{k}{d} - \frac1{p} ) +\delta_1} 
			\|u_1(t)-u_{1\mathcal S}(t)\|_{L^{ p,\infty}_{k}} 
		\right) 
%	&\hspace{2cm} \times 
%		\int_0^1 (1-\tau)^{-\frac{d(\alpha-1)}{2p}- \frac{(\alpha-1)  k - \gamma} {2} } 
%				\tau^{-\frac{d(\alpha-1)}{2} (\frac{1}{q_c(\gamma)} - \frac{k}{d} - \frac{1}{p})
%					-\frac{d}{2} (\frac{\sigma_2}{d} - \frac{k_0}{d} - \frac1{ p_0} )
%					%-\frac{d}{2} (\frac{1}{q_c(\gamma)} - \frac{k}{d} - \frac{1}{p})
%					-\delta_1
%					} \, d\tau, 
\end{aligned}
\end{equation}
under \eqref{t:asym.behv-complex:II_{21}c1} and \eqref{t:asym.behv-complex:II_{21}c2}, where
$\delta_1$ and $\delta_2$ again satisfy \eqref{t:asym.behv-complex:del1_II22}. 

\subsection{Completion of the proof of Theorem \ref{t:asym.behv-complex}}

Having obtained $\theta_{11}$ and $\theta_{12}$, we may choose $\delta_1$ sufficiently small 
so that \eqref{t:asym.behv-complex:del1_L1}, \eqref{t:asym.behv-complex:del1_I11}, \eqref{t:asym.behv-complex:del1_I12}, 
\eqref{t:asym.behv-complex:del1_II1} and \eqref{t:asym.behv-complex:del1_II22} are satisfied. 
Similarly, having obtained $\theta_{21}$ and $\theta_{22}$, we may choose $\delta_2$ sufficiently small 
so that \eqref{t:asym.behv-complex:del2_L2}, \eqref{t:asym.behv-complex:del2_I21}, 
\eqref{t:asym.behv-complex:del2_I22} and \eqref{t:asym.behv-complex:del2_II21} are satisfied. 

For \underline{$1<\alpha\le2$}, \eqref{t:asym.behv-complex:pr.u1}, \eqref{t:asym.behv-complex:pr.L1}, 
\eqref{t:asym.behv-complex:pr.I_{12}}, \eqref{t:asym.behv-complex:pr.II_{1}}, 
\eqref{t:asym.behv-complex:pr.u2}, \eqref{t:asym.behv-complex:pr.L2}, 
\eqref{t:asym.behv-complex:pr.I_22}, \eqref{t:asym.behv-complex:pr.II_21} and \eqref{t:asym.behv-complex:pr.II_23} imply that 
given a finite $T>0$, we have the estimate  
\begin{equation}		\nonumber
\begin{aligned}
	&t^{\frac{d}{2}(\frac1{q_c(\gamma)} - \frac{k}{d} - \frac1{ p} )+\delta_1}
		\|u_1(t)-u_{1\mathcal S}(t)\|_{L^{ p,\infty}_{k}}
	+t^{\frac{d}{2}(\frac{\sigma_2}{d} - \frac{k_0}{d} - \frac1{ p_0} ) +\delta_2}
		\|u_2(t)-z_2(t)\|_{L^{ p_0,\infty}_{k_0}} \\
	&\leq \mathbf{L}_1(t)
		+ \mathbf{I}_{12}(t)
		+ {\mathbf{II}}_{1}(t)
		+ \mathbf{L}_2(t) 
		+ {\mathbf{I}}_{22}(t)
		+ {\mathbf{II}}_{21}(t)
		+ {\mathbf{II}}_{23}(t)\\
%	&\leq C t^{-(\delta_{10}-\delta_1)} 
%		+ CM^\alpha t^{-(\delta_{12}-\delta_1)} \\
%	&\quad
%		+ C M^{\alpha-1} \limsup_{t \to T} \left(t^{\frac{d}2(\frac1{q_c(\gamma)} - \frac{k}{d} - \frac1{p} )+\delta_1} 
%				\| u(t)- u_{1\mathcal S}(t)\|_{L^{p,\infty}_k}\right) \\
%	&\quad
%		+ C t^{-(\delta_{20}-\delta_2)} + C M^{\alpha} t^{-(\delta_{22}-\delta_2)} \\
%	&\quad 
%		+ C M^{\alpha-1} t^{-(\delta_{12}' -\delta_2)}
%			\left( \limsup_{t\to T} 
%				 t^{\frac{d}2(\frac{\sigma_2}{d} - \frac{k_0}{d} - \frac1{ p_0} ) +\delta_2}  
%				\|u_2(t)-z_2(t)\|_{L^{p_0, \infty}_{k_0}}
%			\right)\\
%	&\quad 
%		+ C M^{\alpha-1} t^{\frac{d}2(\frac{\sigma_2}{d} - \frac{k_0}{d} - \frac1{ p_0} ) +\delta_2} 
	&\leq C t^{-(\delta_{10}-\delta_1)} 
		+ C t^{-(\delta_{20}-\delta_2)} 
		+ CM^\alpha t^{-(\delta_{12}-\delta_1)} 
		+ C M^{\alpha} t^{-(\delta_{22}-\delta_2)} \\
	&\quad
		+ C M^{\alpha-1} 
			\limsup_{t\to T}\left(
				t^{\frac{d}2(\frac1{q_c(\gamma)} - \frac{k}{d} - \frac1{p} )+\delta_1} 
				\| u(t)- u_{1\mathcal S}(t)\|_{L^{p,\infty}_k}
			\right) \\
	&\quad 
		+ C M^{\alpha-1} t^{-(\delta_{12}' -\delta_2)}
			\limsup_{t\to T} \left( 
				 t^{\frac{d}2(\frac{\sigma_2}{d} - \frac{k_0}{d} - \frac1{ p_0} ) +\delta_2}  
				\|u_2 (t)-z_2 (t)\|_{L^{p_0, \infty}_{k_0}}
			\right) \\
	&\quad 
		+ C M^{\alpha-1}  t^{ - (\delta_1-\delta_2) } 
		 	\limsup_{t\to T}\left(  t^{\frac{d}2(\frac{1}{q_c(\gamma)} - \frac{k}{d} - \frac1{p} ) +\delta_1} 
		 				\|u_1 (t)-u_{1\mathcal S} (t)\|_{L^{ p,\infty}_{k}} \right). 
\end{aligned}
\end{equation}
Taking supremum over $T \ge t\ge t'$ on both sides of the above and letting $t'\to T$, we deduce 
\begin{equation}		\nonumber
\begin{aligned}
	&\limsup_{t\to T} \left(t^{\frac{d}{2}(\frac1{q_c(\gamma)} - \frac{k}{d} - \frac1{ p} )+\delta_1}
		\|u_1(t)-u_{1\mathcal S}(t)\|_{L^{ p,\infty}_{k}} \right) \\
	&\quad
	+\limsup_{t\to T} \left(t^{\frac{d}{2}(\frac{\sigma_2}{d} - \frac{k_0}{d} - \frac1{ p_0} ) +\delta_2}
		\|u_2(t)-z_2(t)\|_{L^{ p_0,\infty}_{k_0}} \right) \\
	&\leq C T^{-(\delta_{10}-\delta_1)} 
		+ C T^{-(\delta_{20}-\delta_2)} 
		+ C M^\alpha T^{-(\delta_{12}-\delta_1)} 
		+ C M^{\alpha} T^{-(\delta_{22}-\delta_2)} \\
	&\quad
		+ C M^{\alpha-1} \limsup_{t \to T} \left(t^{\frac{d}2(\frac1{q_c(\gamma)} - \frac{k}{d} - \frac1{p} )+\delta_1} 
				\| u(t)- u_{1\mathcal S}(t)\|_{L^{p,\infty}_k}\right) \\
	&\quad 
		+ C M^{\alpha-1} T^{-(\delta_{12}' -\delta_2)}
			\limsup_{t\to T} \left( 
				 t^{\frac{d}2(\frac{\sigma_2}{d} - \frac{k_0}{d} - \frac1{ p_0} ) +\delta_2}  
				\|u_2(t)-z_2(t)\|_{L^{p_0, \infty}_{k_0}}
			\right) \\
	&\quad 
		+ C M^{\alpha-1}  T^{ - (\frac{d(\alpha-1)}2 \delta_1-\delta_2) } 
		 	\limsup_{t\to T}\left( t^{\frac{d}2(\frac{1}{q_c(\gamma)} - \frac{k}{d} - \frac1{p} ) +\delta_1} 
		 				\|u_1(t)-u_{1\mathcal S}(t)\|_{L^{ p,\infty}_{k}} \right). 
\end{aligned}
\end{equation}
Note that we may assume that $T$ is sufficiently large, say $T>1$. 
Now by taking $M$ smaller if necessary, we have the closed estimate 
\begin{equation}		\nonumber
\begin{aligned}
	&\limsup_{t\to T} \left(t^{\frac{d}{2}(\frac1{q_c(\gamma)} - \frac{k}{d} - \frac1{ p} )+\delta_1}
		\|u_1(t)-u_{1\mathcal S}(t)\|_{L^{ p,\infty}_{k}} \right) \\
	&\quad
	+\limsup_{t\to T} \left(t^{\frac{d}{2}(\frac{\sigma_2}{d} - \frac{k_0}{d} - \frac1{ p_0} ) +\delta_2}
		\|u_2(t)-z_2(t)\|_{L^{ p_0,\infty}_{k_0}} \right) \\
	&\leq C T^{-(\delta_{10}-\delta_1)} 
		+ C T^{-(\delta_{20}-\delta_2)} 
		+ C M^\alpha T^{-(\delta_{12}-\delta_1)} 
		+ C M^{\alpha} T^{-(\delta_{22}-\delta_2)}. 
\end{aligned}
\end{equation}
As the constant $C$ does not depend on $T$, finally letting $T\to\infty,$ we conclude 
\begin{equation}	\label{t:asym.behv-complex:pr.conv}
\begin{aligned}
	&\limsup_{t\to \infty} \left(t^{\frac{d}{2}(\frac1{q_c(\gamma)} - \frac{k}{d} - \frac1{ p} )+\delta_1}
		\|u_1(t)-u_{1\mathcal S}(t)\|_{L^{ p,\infty}_{k}} \right) \\
	&\quad
	+\limsup_{t\to \infty} \left(t^{\frac{d}{2}(\frac{\sigma_2}{d} - \frac{k_0}{d} - \frac1{ p_0} ) +\delta_2}
		\|u_2(t)-z_2(t)\|_{L^{ p_0,\infty}_{k_0}} \right) =0 
\end{aligned}
\end{equation}
for $1<\alpha\le 2$. 

For \underline{$2<\alpha\le 3$}, \eqref{t:asym.behv-complex:pr.u1}, \eqref{t:asym.behv-complex:pr.L1}, 
\eqref{t:asym.behv-complex:pr.I_{11}}, \eqref{t:asym.behv-complex:pr.I_{12}},  
\eqref{t:asym.behv-complex:pr.II_{1}}, 
imply that given a finite $T>0$, we have the estimate 
\begin{equation}		\nonumber
\begin{aligned}
	&t^{\frac{d}{2}(\frac1{q_c(\gamma)} - \frac{k}{d} - \frac1{ p} )
		+\delta_1}\|u_1(t)-u_{1\mathcal S}(t)\|_{L^{ p,\infty}_{k}} \\
	&\leq \mathbf{L}_1(t)
		+ \mathbf{I}_{12}(t)
		+ {\mathbf{II}}_{1}(t) 
		+ \mathbf{L}_2(t) 
		+ {\mathbf{I}}_{22}(t)
		+ {\mathbf{II}}_{21}(t)
		+ {\mathbf{II}}_{22}(t)\\
	&\leq C t^{-(\delta_{10}-\delta_1)} 
		+ C t^{-(\delta_{20}-\delta_2)} 
		+ CM^\alpha t^{-(\delta_{12}-\delta_1)} 
		+ C M^{\alpha} t^{-(\delta_{22}-\delta_2)} \\
	&\quad
		+ C M^{\alpha-1} \limsup_{t\to T} \left(t^{\frac{d}2(\frac1{q_c(\gamma)} - \frac{k}{d} - \frac1{p} )+\delta_1} 
				\| u(t)- u_{1\mathcal S}(t)\|_{L^{p,\infty}_k}\right) \\
	&\quad 
		+ C M^{\alpha-1} t^{-(\delta_{12}' -\delta_2)}
			\limsup_{t\to T}\left( 
				 t^{\frac{d}2(\frac{\sigma_2}{d} - \frac{k_0}{d} - \frac1{ p_0} ) +\delta_2}  
				\|u_2(t)-z_2(t)\|_{L^{p_0, \infty}_{k_0}}
			\right) \\
	&\quad 
		+C M^{\alpha-1} t^{-(\delta_{21}' -\delta_2)}
			\limsup_{t\to T}\left(
				 t^{\frac{d}2(\frac{1}{q_c(\gamma)} - \frac{k}{d} - \frac1{ p} ) +\delta_1}  
				\|u_1(t)-u_{1\mathcal S}(t)\|_{L^{p, \infty}_{k}}
			\right) 
\end{aligned}
\end{equation}
for $t\in [0,T]$. By the same argument as above, we deduce \eqref{t:asym.behv-complex:pr.conv} for $2<\alpha\le 3$. 

For \underline{$\alpha> 3$}, \eqref{t:asym.behv-complex:pr.u1}, \eqref{t:asym.behv-complex:pr.L1}, 
\eqref{t:asym.behv-complex:pr.I_{11}}, \eqref{t:asym.behv-complex:pr.I_{12}},  
\eqref{t:asym.behv-complex:pr.II_{1}}, 
imply that given a finite $T>0$, we have the estimate 
\begin{equation}		\nonumber
\begin{aligned}
	&t^{\frac{d}{2}(\frac1{q_c(\gamma)} - \frac{k}{d} - \frac1{ p} )
		+\delta_1}\|u_1(t)-u_{1\mathcal S}(t)\|_{L^{ p,\infty}_{k}} \\
	&\leq \mathbf{L}_1(t)
		+\mathbf{I}_{11}(t) 
		+ \mathbf{I}_{12}(t)
		+ {\mathbf{II}}_{1}(t) 
		+ \mathbf{L}_2(t) 
		+ {\mathbf{I}}_{22}(t)
		+ {\mathbf{II}}_{21}(t)
		+ {\mathbf{II}}_{22}(t)\\
	&\leq C t^{-(\delta_{10}-\delta_1)} 
		+ C t^{-(\delta_{20}-\delta_2)} 
		+ CM^\alpha t^{-(\delta_{11}-\delta_1)}
		+ CM^\alpha t^{-(\delta_{12}-\delta_1)} 
		+ C M^{\alpha} t^{-(\delta_{22}-\delta_2)} \\
	&\quad
		+ C M^{\alpha-1} \limsup_{t\to T}  \left(t^{\frac{d}2(\frac1{q_c(\gamma)} - \frac{k}{d} - \frac1{p} )+\delta_1} 
				\| u(t)- u_{1\mathcal S}(t)\|_{L^{p,\infty}_k}\right) \\
	&\quad 
		+ C M^{\alpha-1} t^{-(\delta_{12}' -\delta_2)}
			\limsup_{t\to T} \left( 
				 t^{\frac{d}2(\frac{\sigma_2}{d} - \frac{k_0}{d} - \frac1{ p_0} ) +\delta_2}  
				\|u_2(t)-z_2(t)\|_{L^{p_0, \infty}_{k_0}}
			\right) \\
	&\quad 
		+C M^{\alpha-1} t^{-(\delta_{21}' -\delta_2)}
			\limsup_{t\to T}\left(
				 t^{\frac{d}2(\frac{1}{q_c(\gamma)} - \frac{k}{d} - \frac1{ p} ) +\delta_1}  
				\|u_1(t)-u_{1\mathcal S}(t)\|_{L^{p, \infty}_{k}}
			\right) 
\end{aligned}
\end{equation}
for $t\in [0,T]$. By the same argument as the case $1<\alpha\le 2$, we deduce \eqref{t:asym.behv-complex:pr.conv} for $\alpha> 3$. 

Gathering all the above estimates, we conclude
\begin{align*}
	\lim_{t\to\infty} t^{\frac{d}{2} (\frac{1}{q_c(\gamma)} 
				-\frac{ k}{d} - \frac{1}{ p})+\delta_1} 
		\|u_1(t)-u_{1\mathcal S}(t)\|_{L^{ p,\infty}_{ k}} =0 
\end{align*}
and
\begin{equation}
	\lim_{t\to\infty}t^{\frac{d}{2}(\frac{\sigma_2}{d} - \frac{k_0}{d} - \frac1{ p_0} ) +\delta_2}
		\|u_2(t)-z_2(t)\|_{L^{ p_0,\infty}_{k_0}} =0, 
\end{equation}
as desired. 

Lastly, we may prove \eqref{t:asym.behv.lin:c1complexi11} for all $(\tilde k_0, \tilde p_0)$ 
satisfying \eqref{l:exist.crt':c.tild.kp} and for all $(\tilde k,\tilde p)$ satisfying \eqref{t:asym.behv:c.kp} 
by the argument similar to the proof of \eqref{l:exist.crt:strng.dcy.est}. We omit the details. 
This completes the proof of Theorem \ref{t:asym.behv-complex}. 

\end{proof}

\section{Appendix}
\par

\subsection{Auxiliary lemmas}

\begin{lem}
	\label{l:asym.behv.lin}
There exists $\theta$ satisfying \eqref{t:asym.behv.lin:pr.key.est.c0}, \eqref{t:asym.behv.lin:pr.key.est.c1} and \eqref{t:asym.behv.lin:pr.key.est.c2}
under conditions \eqref{l:exist.crt.c.pk} and \eqref{l:exist.crt.c.p0k0}. 
\end{lem}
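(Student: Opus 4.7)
The plan is to recast each of the seven inequalities composing \eqref{t:asym.behv.lin:pr.key.est.c0}, \eqref{t:asym.behv.lin:pr.key.est.c1} and \eqref{t:asym.behv.lin:pr.key.est.c2} as an explicit linear bound on $\theta$, and to show that the resulting intersection of half-lines is nonempty. Introducing the shorthand $a := \frac{k}{d} + \frac{1}{p}$, $a_0 := \frac{k_0}{d} + \frac{1}{p_0}$, $s := \frac{\sigma}{d}$, and $q_c := \frac{1}{q_c(\gamma)}$, and noting that $s > q_c$ (since $\sigma > (2+\gamma)/(\alpha-1)$ in the linear regime of Theorem \ref{t:asym.behv}(ii)), I would organize the constraints into three groups: (A) the ``band'' $L_\ast \le \theta\alpha(a-a_0) < U_\ast$ coming from \eqref{t:asym.behv.lin:pr.key.est.c1}(c)--(d) and the first inequality in \eqref{t:asym.behv.lin:pr.key.est.c2}; (B) the second inequality of \eqref{t:asym.behv.lin:pr.key.est.c2}, which is linear in $\theta$ with slope $(s - a_0) - (q_c - a)$; and (C) the remaining direct constraints $0 \le \theta < 1 - 1/\alpha$ from \eqref{t:asym.behv.lin:pr.key.est.c0}, and the two conditions of \eqref{t:asym.behv.lin:pr.key.est.c1}(a)--(b).

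For group (A), I would first verify that $L_\ast < U_\ast$ using the subcritical bound $s < (\alpha-1)a + a_0 - \gamma/d$ from \eqref{l:exist.crt'.c.pk0} together with the companion upper bound $(\alpha-1)a + a_0 - \gamma/d < 1$ and the bound $a < (d+\gamma)/(d\alpha)$ from \eqref{l:exist.crt.c.pk}. When $a \neq a_0$, this translates into a nonempty open interval of admissible $\theta$'s; when $a = a_0$, the band is satisfied automatically as $L_\ast \le 0 < U_\ast$. For group (B), I would split into two subcases according to the sign of the slope $(s - a_0) - (q_c - a)$, observing that in each case the resulting half-line has nonempty intersection with the band of (A) thanks to the same subcritical bound. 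For group (C), the inequality \eqref{t:asym.behv.lin:pr.key.est.c1}(b) is compatible with $\theta$ near $0$ as long as $p > \alpha$, which is ensured by \eqref{l:exist.crt.c.pk}; \eqref{t:asym.behv.lin:pr.key.est.c1}(a) is handled by the pointwise inequality $\gamma/(\alpha-1) \le k_0$, a consequence of \eqref{l:exist.crt'.c.pk0} combined with $a_0 \ge 0$; and the upper bound from \eqref{t:asym.behv.lin:pr.key.est.c0} provides the single global cap $\theta < 1 - 1/\alpha$.

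The main obstacle will be showing that the lower bounds from (A) and (B) can be simultaneously reconciled with all of the upper bounds from (A), (B) and (C). The essential quantitative input is the two-sided chain $q_c < s < (\alpha-1)a + a_0 - \gamma/d < 1$, together with $\frac{2+\gamma\alpha}{d\alpha(\alpha-1)} < a < q_c$ from \eqref{l:exist.crt.c.pk}; once these are invoked, a short calculation shows that the difference between the relevant upper and lower endpoints is strictly positive. Having established consistency, I would pick the midpoint $\theta_\ast$ of the intersection interval (plus a small slack $\varepsilon > 0$ to ensure every strict inequality remains strict) and verify the seven inequalities one by one. The verification is otherwise routine linear algebra, with the compatibility computations tracking exactly the parameters of Lemmas \ref{l:Kato.est} and \ref{l:Kato.est2} on which the core proof of Theorem \ref{t:asym.behv} relies.
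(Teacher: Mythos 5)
Your overall strategy is the same as the paper's: rewrite each of the seven conditions in \eqref{t:asym.behv.lin:pr.key.est.c0}--\eqref{t:asym.behv.lin:pr.key.est.c2} as a linear bound on $\theta$, split into cases according to the sign of $\frac{k}{d}+\frac1p-\frac{k_0}{d}-\frac1{p_0}$ and of $\frac{\sigma}{d}-\frac{k_0}{d}-\frac1{p_0}-\bigl(\frac1{q_c(\gamma)}-\frac kd-\frac1p\bigr)$, and check that the resulting interval is nonempty using \eqref{l:exist.crt.c.pk} and \eqref{l:exist.crt'.c.pk0}. However, your treatment of ``group (C)'' contains two incorrect claims that would leave real constraints untreated. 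First, the inequality $\frac{\gamma}{\alpha-1}\le k_0$ is \emph{not} a consequence of \eqref{l:exist.crt'.c.pk0} together with $\frac{k_0}{d}+\frac1{p_0}\ge 0$: for instance $d=3$, $\gamma=1$, $\alpha=4$, $k_0=0$, $p_0=3$ with suitable $(k,p)$ satisfies all hypotheses while $\frac{\gamma}{\alpha-1}=\frac13>0=k_0$. Moreover, even if it held, the first condition of \eqref{t:asym.behv.lin:pr.key.est.c1}, $\gamma-(\alpha-1)k_0\le\theta\alpha(k-k_0)$, is not thereby automatic: when $k>k_0$ it is a genuine \emph{positive lower bound} on $\theta$ (in the example above it forces $\theta\ge 0.625$), and when $k<k_0$ it is an upper bound whose compatibility with $\theta<1-\frac1\alpha$ is exactly the inequality $\gamma\le(\alpha-1)k$, i.e.\ the hypothesis $\frac{\gamma}{\alpha-1}\le k$ in \eqref{l:exist.crt.c.pk} — not anything about $k_0$. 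Second, the claim that $\theta\bigl(\frac1p-\frac1{p_0}\bigr)<\frac1\alpha-\frac1{p_0}$ ``is compatible with $\theta$ near $0$ as long as $p>\alpha$'' is wrong: at $\theta=0$ this condition reads $\frac1{p_0}<\frac1\alpha$, which is not guaranteed by the hypotheses. When $\frac1p<\frac1{p_0}$ it imposes the positive lower bound $\theta>\frac{\frac1{p_0}-\frac1\alpha}{\frac1{p_0}-\frac1p}$, while when $\frac1p>\frac1{p_0}$ the upper bound it imposes exceeds $1-\frac1\alpha$ precisely because $\frac{\alpha-1}{p}+\frac1{p_0}<1$ from \eqref{l:exist.crt'.c.pk0}.

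So these two conditions cannot be split off as harmless side constraints; they must be fed back into the intersection analysis as sign-dependent lower (or upper) bounds, exactly as the paper does at the start of its proof, where it shows that in the ``bad'' sign cases the induced upper bounds are weaker than $\theta<1-\frac1\alpha$ (via $\frac{\gamma}{\alpha-1}\le k$ and $\frac{\alpha-1}{p}+\frac1{p_0}<1$) and otherwise retains them as additional lower bounds in the max defining the admissible interval. Once you correct the dispatching of these two constraints and include the corresponding lower bounds in your interval, the rest of your plan — verifying nonemptiness from the chain $\frac1{q_c(\gamma)}<\frac{\sigma}{d}<(\alpha-1)\bigl(\frac kd+\frac1p\bigr)+\frac{k_0}{d}+\frac1{p_0}-\frac\gamma d<1$ and $\frac{2+\gamma\alpha}{d\alpha(\alpha-1)}<\frac kd+\frac1p<\frac1{q_c(\gamma)}$, then choosing a point of the nonempty open interval — matches the paper's argument.
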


\begin{proof}
First note that we have $\gamma - (\alpha-1)k_0 \le \theta \alpha ( k - k_0 )$ if and only if 
\begin{equation}	\nonumber
\left\{\begin{aligned}
	&\frac{\gamma - (\alpha-1)k_0}{\alpha(k-k_0)} \le \theta && \text{if}\quad k_0 < k, \\
	&\theta \le \frac{(\alpha-1)k_0-\gamma}{\alpha(k_0-k)}  && \text{if}\quad k_0 > k. 
\end{aligned}\right.
\end{equation}
As 
\[
	1 - \frac1{\alpha} \le \frac{(\alpha-1)k_0-\gamma}{\alpha(k_0-k)}  \quad\text{if}\quad  k_0 > k,
\]
we only impose 
\begin{equation}\nonumber
	\frac{\gamma - (\alpha-1)k_0}{\alpha(k-k_0)} \le \theta  \quad\text{if}\quad  k_0 < k
\end{equation}
in the following. Similarly, we have $\theta ( \frac{1}{p} -\frac{1}{p_0} ) < \frac1{\alpha} -  \frac{1}{p_0}$ if and only if 
\begin{equation}	\nonumber
\left\{\begin{aligned}
	& \frac{ \frac{1}{p_0} - \frac1{\alpha}}{\frac{1}{p_0} -\frac{1}{p}}  < \theta &&\text{if}\quad \frac{1}{p} <\frac{1}{p_0}, \\
	&\theta < \frac{\frac1{\alpha} -  \frac{1}{p_0}}{\frac{1}{p} -\frac{1}{p_0}}  &&\text{if}\quad \frac{1}{p} >\frac{1}{p_0}. 
\end{aligned}\right.
\end{equation}
As 
\[
	1 - \frac1{\alpha} < \frac{\frac1{\alpha} -  \frac{1}{p_0}}{\frac{1}{p} -\frac{1}{p_0}} \quad\text{if}\quad \frac{1}{p} >\frac{1}{p_0}, 
\]
we only impose 
\begin{equation}\nonumber
	\frac{ \frac{1}{p_0} - \frac1{\alpha}}{\frac{1}{p_0} -\frac{1}{p}} < \theta \quad\text{if}\quad  \frac{1}{p} <\frac{1}{p_0}
\end{equation}
in the following. 

The case $\frac{k_0}{d} + \frac1{p_0} = \frac{k}{d} + \frac1{p}$ is easy, 
as most of \eqref{t:asym.behv.lin:pr.key.est.c1} and \eqref{t:asym.behv.lin:pr.key.est.c2} become trivial thanks to \eqref{l:exist.crt.c.pk} 
and \eqref{l:exist.crt.c.p0k0}. It suffices to take $\theta$ such that
\begin{equation}\nonumber
	 \frac{\frac{\sigma}{d}-\frac{2}{d\alpha} - \frac{k_0}{d} -\frac1{p_0} }
		{\frac{\sigma}{d}-\frac{1}{q_c(\gamma)} } < \theta <1-\frac1{\alpha}. 
\end{equation}

Therefore, we are left with the cases $\frac{k_0}{d} + \frac1{p_0} < \frac{k}{d} + \frac1{p}$ and 
$\frac{k_0}{d} + \frac1{p_0} > \frac{k}{d} + \frac1{p}$. Now, if 
\begin{equation}	\nonumber%\label{t:asym.behv.lin:assmpt1}
	\frac{k_0}{d} + \frac1{p_0} < \frac{k}{d} + \frac1{p}, 
\end{equation}
then we may choose $\theta$ as follows:
\begin{equation}	\nonumber%\label{t:asym.behv.lin:pr.theta1}
\begin{aligned}
	& \max\left\{0, \frac{\frac{\gamma}{d} - (\alpha-1) \left(\frac{k_0}{d} + \frac1{p_0}\right) }
			{ \alpha \left(\frac{k}{d} + \frac1{p} - \frac{k_0}{d} - \frac1{p_0}\right) } \right\} \le \theta \\
	\text{and}\quad
	& \frac{\frac{\sigma}{d}-\frac{2}{d\alpha} - \frac{k_0}{d} -\frac1{p_0} }
		{\frac{\sigma}{d}-\frac{k_0}{d}-\frac1{p_0} - \left(\frac{1}{q_c(\gamma)}-\frac{k}{d}-\frac1{p} \right)} < \theta <1-\frac1{\alpha}. 
\end{aligned}
\end{equation}
	\smallbreak
If 
\begin{equation}	\label{t:asym.behv.lin:assmpt2}
	\frac{k_0}{d} + \frac1{p_0} > \frac{k}{d} + \frac1{p} 
\end{equation}
and $\frac{\sigma}{d}-\frac{k_0}{d}-\frac1{p_0} - \left(\frac{1}{q_c(\gamma)}-\frac{k}{d}-\frac1{p} \right) >0$, 
then \eqref{t:asym.behv.lin:pr.key.est.c0}, \eqref{t:asym.behv.lin:pr.key.est.c1} and \eqref{t:asym.behv.lin:pr.key.est.c2} amount to 
\begin{equation}	\nonumber%\label{t:asym.behv.lin:pr.key.est.c1}
\begin{aligned}
		&0\le \theta 
		\quad\text{and}\quad
		\max\Bigg\{ 
			\frac{\alpha \left(\frac{k_0}{d} + \frac1{p_0}\right) - 1 - \frac{\gamma}{d} }
				{\alpha \left(\frac{k_0}{d} + \frac1{p_0} - \frac{k}{d} - \frac1{p} \right) }, 
			\frac{(\alpha-1) \left( \frac{k_0}{d} + \frac1{p_0} - \frac1{q_c(\gamma)} \right)}
				{ \alpha \left( \frac{k_0}{d} + \frac1{p_0} - \frac{k}{d} - \frac1{p} \right)}, \\
		&\hspace{+4cm}
			\frac{\frac{\sigma}{d} - \frac{2}{d\alpha}  -  \frac{k_0}{d} - \frac1{p_0} }
				{\frac{\sigma}{d}-\frac{k_0}{d}-\frac1{p_0} - \left(\frac{1}{q_c(\gamma)}-\frac{k}{d}-\frac1{p} \right)}
		\Bigg\}
		< \theta < 1 - \frac1{\alpha}. 
\end{aligned}
\end{equation}
On the other hand, if \eqref{t:asym.behv.lin:assmpt2} and 
$\frac{\sigma}{d}-\frac{k_0}{d}-\frac1{p_0} - \left(\frac{1}{q_c(\gamma)}-\frac{k}{d}-\frac1{p} \right) < 0$ hold, 
then \eqref{t:asym.behv.lin:pr.key.est.c0}, \eqref{t:asym.behv.lin:pr.key.est.c1} and \eqref{t:asym.behv.lin:pr.key.est.c2} amount to 
\begin{equation}	\nonumber%\label{t:asym.behv.lin:pr.key.est.c1}
\begin{aligned}
		&0\le \theta
			\ \text{and}\ 
		\max\Bigg\{ 
			\frac{\alpha \left(\frac{k_0}{d} + \frac1{p_0}\right) - 1 - \frac{\gamma}{d} }
				{\alpha \left(\frac{k_0}{d} + \frac1{p_0} - \frac{k}{d} - \frac1{p} \right) }, 
			\frac{(\alpha-1) \left( \frac{k_0}{d} + \frac1{p_0} - \frac1{q_c(\gamma)} \right)}
				{ \alpha \left( \frac{k_0}{d} + \frac1{p_0} - \frac{k}{d} - \frac1{p} \right)} \Bigg\} 
			< \theta 
			< 1 - \frac1{\alpha}. 
%			=\min\left\{1 - \frac1{\alpha},\, 
%				\frac{\frac{2}{d\alpha} - \frac{\sigma}{d} +  \frac{k_0}{d} +  \frac1{p_0} }
%					{ \frac{1}{q_c(\gamma)}-\frac{k}{d}-\frac1{p} - \frac{\sigma}{d}+\frac{k_0}{d}+\frac1{p_0}} \right\}.
\end{aligned}
\end{equation}
In any case, the existence of $\theta$ satisfying 
\eqref{t:asym.behv.lin:pr.key.est.c0}, \eqref{t:asym.behv.lin:pr.key.est.c1} and \eqref{t:asym.behv.lin:pr.key.est.c2} 
is ensured by \eqref{l:exist.crt.c.pk} and \eqref{l:exist.crt.c.p0k0}. 
\end{proof}

\begin{lem}
	\label{l:theta11}
There exists $\theta_{11}$ satisfying \eqref{t:asym.behv-complex:del1_I11}, 
\eqref{t:asym.behv-complex:I_{11}c1} and \eqref{t:asym.behv-complex:I_{11}c2} under conditions 
\eqref{l:exist.crt.c.pk}, \eqref{l:exist.crt.c.p0k0} and \eqref{t:asym.behv-complex:c.k0p0}.
\end{lem}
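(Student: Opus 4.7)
The plan is to express each of the constraints \eqref{t:asym.behv-complex:del1_I11}, \eqref{t:asym.behv-complex:I_{11}c1} and \eqref{t:asym.behv-complex:I_{11}c2} as an affine inequality in $\theta_{11}$ and then verify that the intersection of the resulting intervals inside $[0,1)$ is non-empty. The argument parallels the proof of Lemma \ref{l:asym.behv.lin}. To lighten the notation, I introduce the abbreviations $\xi := \frac{k}{d}+\frac{1}{p}$, $\eta := \frac{k_0}{d}+\frac{1}{p_0}$, $\mu := \frac{1}{q_c(\gamma)}$ and $\nu := \frac{\sigma_2}{d}$, and I set $A := \xi-\eta$ and $B := (\mu-\xi)-(\nu-\eta)$. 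With these abbreviations, the first condition in \eqref{t:asym.behv-complex:I_{11}c1} yields a lower bound linear in $\theta_{11}$, the second an upper bound, the third gives one lower and one upper bound, and \eqref{t:asym.behv-complex:I_{11}c2} yields two further upper bounds with slopes $2A$ and $2B$ respectively.

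The first step is to dispose of the degenerate case $A=0$, i.e.\ $\xi=\eta$. Then the $\theta_{11}$-dependence drops out of \eqref{t:asym.behv-complex:I_{11}c1} and the first condition in \eqref{t:asym.behv-complex:I_{11}c2}, and these reduce to inequalities that are direct consequences of \eqref{l:exist.crt.c.pk} and \eqref{l:exist.crt'.c.pk0}; the second condition in \eqref{t:asym.behv-complex:I_{11}c2} then becomes a single linear constraint in $\theta_{11}$, whose solvability on $[0,1-\tfrac{1}{\alpha})$ follows exactly as in Lemma \ref{l:asym.behv.lin}. The second step is to treat $A\neq 0$ by a straightforward case split on $\mathrm{sgn}\,A$ and $\mathrm{sgn}\,B$: each inequality is then rearranged to an explicit lower or upper bound for $\theta_{11}$, and one forms $\theta_- := \max(\text{lower bounds},0)$ and $\theta_+ := \min(\text{upper bounds},1)$.

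The main step is to verify $\theta_- < \theta_+$. The upper bound from the first inequality in \eqref{t:asym.behv-complex:I_{11}c2} reads $(\alpha-3)\xi + 2\eta + 2\theta_{11}A < \frac{2+\gamma}{d}$, and when $\alpha>3$ with $A>0$ this is precisely the case in which the hypothesis \eqref{t:asym.behv-complex:c.k0p0} is invoked: rewritten in the $(\xi,\eta)$-notation it states $(\alpha-2)\xi+\eta<\frac{2+\gamma}{d}$, i.e.\ the constraint holds at $\theta_{11}=\tfrac12$, thereby producing a non-empty admissible interval. When $\alpha\leq 3$ the term $(\alpha-3)\xi$ is non-positive, and the inequality reduces to a consequence of \eqref{l:exist.crt.c.pk}, \eqref{l:exist.crt'.c.pk0}, so \eqref{t:asym.behv-complex:c.k0p0} is not needed in that sub-case. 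The second inequality in \eqref{t:asym.behv-complex:I_{11}c2}, namely $(\alpha-2)(\mu-\xi)+2(\nu-\eta)+2\theta_{11}B<\frac{2}{d}$, is handled by noting that $\nu>\mu$ (from \eqref{t:asym.behv.lin_c.sigma} applied to $\sigma_2$) together with the strict inequalities in \eqref{l:exist.crt.c.pk} and \eqref{l:exist.crt'.c.pk0} prevent degeneracy regardless of $\mathrm{sgn}\,B$.

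The hard part will be the book-keeping: the six inequalities, combined with the two sign splits of $A$ and $B$ and the further split on whether $\alpha>3$ or not, produce a number of sub-cases, and one must verify in each that the combination of the bounds obtained from \eqref{l:exist.crt.c.pk}, \eqref{l:exist.crt'.c.pk0}, \eqref{t:asym.behv-complex:c.k0p0} and $\nu>\mu$ is compatible. The cleanest route, which I will follow, is to evaluate every strict inequality at the test value $\theta_{11}=\tfrac12$: at this value the two constraints carrying the $2\theta_{11}A$ slope in \eqref{t:asym.behv-complex:I_{11}c1} and \eqref{t:asym.behv-complex:I_{11}c2} both become $(\alpha-2)\xi+\eta$ on the left-hand side, matching exactly the expression that appears in \eqref{t:asym.behv-complex:c.k0p0}. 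Once this pointwise check is done, continuity of the affine maps in $\theta_{11}$ and openness of the strict inequalities yield a non-empty open neighbourhood of feasible $\theta_{11}$, completing the proof.
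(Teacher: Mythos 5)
Your plan to turn every condition into an affine inequality in $\theta_{11}$ and intersect the resulting intervals is sound in principle and is essentially what the paper does, but your chosen shortcut — checking all constraints at the single test value $\theta_{11}=\tfrac12$ — does not work, and this is where the proof actually lives. The first condition of \eqref{t:asym.behv-complex:I_{11}c1} is the pure weight condition $\gamma\leq(\alpha-3)k+2k_0+2\theta_{11}(k-k_0)$, which at $\theta_{11}=\tfrac12$ reads $\gamma\leq(\alpha-2)k+k_0$. The hypotheses only give $\frac{\gamma}{\alpha-1}\leq k$, i.e.\ $\gamma\leq(\alpha-1)k$, and nothing bounds $k_0$ from below; when $k_0<k$ and $k$ is at or near $\frac{\gamma}{\alpha-1}$ the condition forces $\theta_{11}\geq\frac{\gamma-(\alpha-3)k-2k_0}{2(k-k_0)}$, which is close to (or equal to) $1$, so the feasible set — when non-empty — need not contain $\tfrac12$ at all. (Concretely: $d=4$, $\gamma=1$, $\alpha=4$, $k=\tfrac13$, $\tfrac1p=\tfrac1{10}$, $k_0=0$, $\tfrac1{p_0}=\tfrac1{10}$, $\sigma_2=1.2$ satisfies \eqref{l:exist.crt.c.pk}, \eqref{l:exist.crt'.c.pk0} and makes \eqref{t:asym.behv-complex:c.k0p0} vacuous, yet at $\theta_{11}=\tfrac12$ the weight condition fails.) Your "main step" never addresses this constraint; it only discusses the two conditions carrying the slope $2A$ in the $(\xi,\eta)$ variables. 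This is why the paper does not argue via a fixed test point: it rewrites each condition as an explicit lower bound on $\theta_{11}$, splits into the cases $\frac kd+\frac1p$ greater than, equal to, or less than $\frac{k_0}d+\frac1{p_0}$ (with a further split on the sign of $\frac{\sigma_2}{d}-\frac{k_0}{d}-\frac1{p_0}-(\frac1{q_c(\gamma)}-\frac kd-\frac1p)$), and verifies that the maximum of the lower bounds is below the upper limit $1$, not below $\tfrac12$.

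Two further points of the same kind. First, you have the role of \eqref{t:asym.behv-complex:c.k0p0} reversed: that hypothesis is stated (and used in the paper) for $\alpha>3$ and $\frac kd+\frac1p<\frac{k_0}d+\frac1{p_0}$, i.e.\ $A<0$ in your notation, because then the first inequality of \eqref{t:asym.behv-complex:I_{11}c2} produces a \emph{lower} bound on $\theta_{11}$ whose admissibility (being $<1$) is exactly $(\alpha-2)(\frac kd+\frac1p)+\frac{k_0}d+\frac1{p_0}<\frac{2+\gamma}{d}$; in your case $A>0$ this inequality is automatic from $\frac{k_0}d+\frac1{p_0}<\frac kd+\frac1p<\frac1{q_c(\gamma)}$ and no extra assumption is needed. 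Second, the algebraic identification in your main step is off: at $\theta_{11}=\tfrac12$ the slope-$2A$ condition inside \eqref{t:asym.behv-complex:I_{11}c1} becomes $(\alpha-1)(\frac kd+\frac1p)+\frac{k_0}d+\frac1{p_0}-\frac\gamma d$, not $(\alpha-2)(\frac kd+\frac1p)+\frac{k_0}d+\frac1{p_0}$; only the condition from \eqref{t:asym.behv-complex:I_{11}c2} reproduces the expression in \eqref{t:asym.behv-complex:c.k0p0}. To repair the argument you must abandon the fixed test point, keep $\theta_{11}$ free up to $1$, and carry out the case-by-case comparison of the explicit lower bounds with $1$, as the paper does.
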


\begin{proof}
We first note that 
$\gamma \leq (\alpha-3)k +2 k_{0}  +2{\theta_{11}} (k-k_0)$ is equivalent to 
\begin{equation}	\nonumber
\left\{\begin{aligned}
	&\frac{\gamma -(\alpha-3)k -2 k_{0}  }{2(k-k_0)} \le \theta_{11} && \text{if}\quad k_0 < k, \\
	&\theta_{11} \le \frac{ (\alpha-3)k +2 k_{0}-\gamma}{2(k_0-k)}  && \text{if}\quad k_0 > k. 
\end{aligned}\right.
\end{equation}
As 
\[
	1 \le \frac{ (\alpha-3)k +2 k_{0}-\gamma}{2(k_0-k)} \quad\text{if}\quad k_0 > k,
\]
we only impose 
\begin{equation}\nonumber
	\frac{\gamma -(\alpha-3)k -2 k_{0}  }{2(k-k_0)} \le \theta_{11} \quad\text{if}\quad k_0 < k
\end{equation}
in the following. Moreover, ${\alpha-2\over p}+{2\over p_0} + 2 \theta_{11} \left({1\over p} - {1\over p_0}\right)<1$ is equivalent to 
\begin{equation}	\nonumber
\left\{\begin{aligned}
	& \frac{ {\alpha-2\over p}+{2\over p_0} - 1}{2(\frac{1}{p_0} -\frac{1}{p})}  < \theta_{11} &&\text{if}\quad \frac{1}{p} <\frac{1}{p_0}, \\
	&\theta_{11} < \frac{1- {\alpha-2\over p}-{2\over p_0}}{2(\frac{1}{p} -\frac{1}{p_0})}  &&\text{if}\quad \frac{1}{p} >\frac{1}{p_0}. 
\end{aligned}\right.
\end{equation}
As 
\[
	1<  \frac{1- {\alpha-2\over p}-{2\over p_0}}{2(\frac{1}{p} -\frac{1}{p_0})} \quad\text{if}\quad\frac{1}{p} >\frac{1}{p_0}, 
\]
we only impose 
\begin{equation}\nonumber
	 \frac{ {\alpha-2\over p}+{2\over p_0} - 1}{2(\frac{1}{p_0} -\frac{1}{p})}  < \theta_{11}
	\quad\text{if}\quad \frac{1}{p} <\frac{1}{p_0}
\end{equation}
in the following. 

When $\frac{k_0}{d} + \frac1{p_0} = \frac{k}{d} + \frac1{p}$, 
the last condition of \eqref{t:asym.behv-complex:I_{11}c2} amounts to 
\begin{equation}\nonumber
	\frac1{q_c(\gamma)} - \frac2{d} + (\alpha-1) \frac{\sigma_2}{d} - \alpha \left(\frac{k}{d} + \frac{1}{p} \right)
	<\theta_{12} (\alpha-1) \left( \frac{\sigma_2}{d} - \frac{1}{q_c(\gamma)}\right),
\end{equation}
while all other conditions in \eqref{t:asym.behv-complex:I_{12}c1} and \eqref{t:asym.behv-complex:I_{12}c2} 
are trivial by \eqref{l:exist.crt.c.pk} and \eqref{l:exist.crt.c.p0k0}. Thus, it suffices to take 
\begin{equation}\nonumber
	0\le  \theta_{11} 
	\quad\text{and}\quad
	\frac{\frac1{q_c(\gamma)} - \frac2{d} + (\alpha-1) \frac{\sigma_2}{d} - \alpha \left(\frac{k}{d} + \frac{1}{p} \right)}
		{(\alpha-1) \left( \frac{\sigma_2}{d} - \frac{1}{q_c(\gamma)}\right)}
	< \theta_{11} < 1. 
\end{equation}

When $\frac{k}{d} + \frac1{p}>\frac{k_0}{d} + \frac1{p_0}$, it suffices to take $\theta_{11}$ such that 
\begin{equation}	\nonumber
\begin{aligned}
	&\max\left\{0,  \frac{ {\gamma\over d} -  (\alpha-3) \left({k \over d} + {1\over p }\right) - 2 \left({k_0 \over d} + {1\over p_0 }\right)  } 
		{2 \left(\frac{k}{d} + \frac{1}{p} - \frac{k_0}{d} - \frac{1}{p_0} \right)} \right\}
	\le \theta_{11} \\
	\text{and}\quad&
	\frac{(\alpha-2) \left(\frac{1}{q_c(\gamma)} - \frac{k}{d} - \frac{1}{p}\right)
			+2 \left(\frac{\sigma_2}{d} - \frac{k_0}{d} - \frac{1}{p_0} \right) - \frac2{d}}
		{2 \left(\frac{\sigma_2}{d} - \frac{k_0}{d} - \frac{1}{p_0}  - \left(\frac{1}{q_c(\gamma)} - \frac{k}{d} - \frac{1}{p}\right) \right)}  
	< \theta_{11} 
	<1. 
\end{aligned}
\end{equation}
When 
\begin{equation}\nonumber
	\frac{k}{d} + \frac1{p}<\frac{k_0}{d} + \frac1{p_0}
		\quad\text{and}\quad
	\frac{1}{q_c(\gamma)} + \frac{k}{d} + \frac{1}{p} > \frac{\sigma_2}{d} - \frac{k_0}{d} - \frac{1}{p_0}, 
\end{equation}
it suffices to take $\theta_{11}$ such that 
\begin{equation}	\nonumber
\begin{aligned}
	&0\le  \theta_{11} \quad\text{and} \quad 
	\max\left\{
	\frac{ (\alpha-2)  \left({k \over d} + {1\over p }\right) + 2 \left({k_0 \over d} + {1\over p_0 }\right)- {\gamma\over d} - 1}
		{ 2 \left( \frac{k_0}{d} + \frac{1}{p_0} - \frac{k}{d} - \frac{1}{p}\right)},  \right. \\
	&\hspace{5cm}
	\left. \frac{ (\alpha-3)  \left({k \over d} + {1\over p }\right) + 2 \left({k_0 \over d} + {1\over p_0 }\right) - \frac{2+\gamma}{d} }
		{ {k_0\over d} + {1\over p_0} -  {k \over d} - {1\over p} } 
	\right\}
		<\theta_{11}<1. 
\end{aligned}
\end{equation}
under the extra assumption \eqref{t:asym.behv-complex:c.k0p0}: 
\begin{equation}\nonumber
	(\alpha-2) \left({k \over d} + {1\over p }\right) +  \left({k_0 \over d} + {1\over p_0 }\right) < \frac{2+\gamma}{d} 
	\quad\text{if}\quad
	\frac{k}{d} + \frac1{p}<\frac{k_0}{d} + \frac1{p_0}. 
\end{equation}
Lastly, when 
\begin{equation}\nonumber
	\frac{k}{d} + \frac1{p}<\frac{k_0}{d} + \frac1{p_0}
		\quad\text{and}\quad
	\frac{1}{q_c(\gamma)} + \frac{k}{d} + \frac{1}{p} < \frac{\sigma_2}{d} - \frac{k_0}{d} - \frac{1}{p_0}, 
\end{equation}
it suffices to take $\theta_{11}$ such that 
\begin{equation}	\nonumber
\begin{aligned}
	&0\le  \theta_{11} 
	\quad\text{and} \quad \\
	&\max\left\{
	\frac{ (\alpha-2)  ({k \over d} + {1\over p }) + 2 ({k_0 \over d} + {1\over p_0 })- {\gamma\over d} - 1}
		{ 2 ( \frac{k_0}{d} + \frac{1}{p_0} - \frac{k}{d} - \frac{1}{p})}, \, 
	\frac{ (\alpha-3)  \left({k \over d} + {1\over p }\right) + 2 ({k_0 \over d} + {1\over p_0 }) - \frac{2+\gamma}{d} }
		{ {k_0\over d} + {1\over p_0} -  {k \over d} - {1\over p} } 	\right.  \\
	&\left. 
	\hspace{4cm}
	\frac{(\alpha-2) (\frac{1}{q_c(\gamma)} - \frac{k}{d} - \frac{1}{p})
			+2 (\frac{\sigma_2}{d} - \frac{k_0}{d} - \frac{1}{p_0} ) - \frac2{d}}
		{\frac{\sigma_2}{d} - \frac{k_0}{d} - \frac{1}{p_0}  - ( \frac{1}{q_c(\gamma)} - \frac{k}{d} - \frac{1}{p}) } 
	\right\} 
	<\theta_{11}<1. 
\end{aligned}
\end{equation}
with \eqref{t:asym.behv-complex:c.k0p0}. 
In all cases listed above, the existence of $\theta_{11}$ is ensured by \eqref{l:exist.crt.c.pk}, \eqref{l:exist.crt.c.p0k0} 
and \eqref{t:asym.behv-complex:c.k0p0}.
\end{proof}

\begin{lem}
	\label{l:theta12}
There exists $\theta_{12}$ satisfying \eqref{t:asym.behv-complex:del1_I12}, 
\eqref {t:asym.behv-complex:I_{12}c1} and \eqref {t:asym.behv-complex:I_{12}c2} under conditions 
\eqref{l:exist.crt.c.pk} and \eqref{l:exist.crt.c.p0k0}.
\end{lem}

\begin{proof}
We first note that 
%the conditions $\theta_{12} ({1\over p} - {1\over p_0} ) < \frac1{\alpha-1} (1-{1\over p} )-{1\over p_0}$ and 
%${\gamma \over \alpha-1}-k_0 \leq {\theta_{12}}(k-k_{0})$ in \eqref{t:asym.behv-complex:I_{12}c1} amount to 
${\gamma \over \alpha-1}-k_0 \leq {\theta_{12}}(k-k_{0})$ is equivalent to 
\begin{equation}	\nonumber
\left\{\begin{aligned}
	&\frac{\gamma - (\alpha-1)k_0}{(\alpha-1)(k-k_0)} \le \theta_{12} && \text{if}\quad k_0 < k, \\
	&\theta_{12} \le \frac{(\alpha-1)k_0-\gamma}{(\alpha-1)(k_0-k)}  && \text{if}\quad k_0 > k. 
\end{aligned}\right.
\end{equation}
As 
\[
	1 \le  \frac{(\alpha-1)k_0-\gamma}{(\alpha-1)(k_0-k)} \quad\text{if}\quad k_0 > k,
\]
we only impose 
\begin{equation}\nonumber
	\frac{\gamma - (\alpha-1)k_0}{(\alpha-1)(k-k_0)}  \le \theta_{12} \quad\text{if}\quad k_0 < k
\end{equation}
in the following. Moreover,  $\theta_{12} ({1\over p} - {1\over p_0} ) < \frac1{\alpha-1} (1-{1\over p} )-{1\over p_0}$ is equivalent to 
\begin{equation}	\nonumber
\left\{\begin{aligned}
	& \frac{ \frac{1}{p_0} - \frac1{\alpha-1} (1-{1\over p} )}{\frac{1}{p_0} -\frac{1}{p}}  < \theta_{12} &&\text{if}\quad \frac{1}{p} <\frac{1}{p_0}, \\
	&\theta_{12} < \frac{\frac1{\alpha-1} (1-{1\over p} ) -  \frac{1}{p_0}}{\frac{1}{p} -\frac{1}{p_0}}  &&\text{if}\quad \frac{1}{p} >\frac{1}{p_0}. 
\end{aligned}\right.
\end{equation}
As 
\[
	1< \frac{\frac1{\alpha-1} (1-{1\over p} ) -  \frac{1}{p_0}}{\frac{1}{p} -\frac{1}{p_0}} \quad\text{if}\quad\frac{1}{p} >\frac{1}{p_0}, 
\]
we only impose 
\begin{equation}\nonumber
	\frac{ \frac{1}{p_0} - \frac1{\alpha-1} (1-{1\over p} )}{\frac{1}{p_0} -\frac{1}{p}} <  \theta_{12} 
	\quad\text{if}\quad \frac{1}{p} <\frac{1}{p_0}
\end{equation}
in the following. 

When $\frac{k_0}{d} + \frac1{p_0} = \frac{k}{d} + \frac1{p}$, 
the last condition of \eqref{t:asym.behv-complex:I_{12}c2} amounts to 
\begin{equation}\nonumber
	\frac1{q_c(\gamma)} - \frac2{d} + (\alpha-1) \frac{\sigma_2}{d} - \alpha \left(\frac{k}{d} + \frac{1}{p} \right)
	<\theta_{12} (\alpha-1) \left( \frac{\sigma_2}{d} - \frac{1}{q_c(\gamma)}\right),
\end{equation}
while all other conditions in \eqref{t:asym.behv-complex:I_{12}c1} and \eqref{t:asym.behv-complex:I_{12}c2} 
are trivial by \eqref{l:exist.crt.c.pk} and \eqref{l:exist.crt.c.p0k0}. Thus, it suffices to take 
\begin{equation}\nonumber
	0\le  \theta_{12} 
	\quad\text{and}\quad
	\frac{\frac1{q_c(\gamma)} - \frac2{d} + (\alpha-1) \frac{\sigma_2}{d} - \alpha \left(\frac{k}{d} + \frac{1}{p} \right)}
		{(\alpha-1) \left( \frac{\sigma_2}{d} - \frac{1}{q_c(\gamma)}\right)}
	< \theta_{12} < 1. 
\end{equation}
%since  
%\begin{equation}\nonumber
%	\frac{\frac1{q_c(\gamma)} - \frac2{d} + (\alpha-1) \frac{\sigma_2}{d} - \alpha \left(\frac{k}{d} + \frac{1}{p} \right)}
%		{(\alpha-1) \left( \frac{\sigma_2}{d} - \frac{1}{q_c(\gamma)}\right)}
%	< 1
%\end{equation}
%is equivalent to 
%\begin{equation}\nonumber
%	\frac{2+\gamma\alpha}{d\alpha(\alpha-1)}
%	=\frac{1}{q_c(\gamma)}  - \frac2{d\alpha}
%	< \frac{k}{d} + \frac{1}{p}. 
%\end{equation}
When $\frac{k}{d} + \frac1{p}>\frac{k_0}{d} + \frac1{p_0}$, it suffices to take $\theta_{12}$ such that 
\begin{equation}	\nonumber
\begin{aligned}
	&\max\left\{0,  \frac{ {\gamma\over d} - (\alpha-1)\left(\frac{k_0}{d} + \frac{1}{p_0} \right) } 
		{(\alpha-1) \left(\frac{k}{d} + \frac{1}{p} - \frac{k_0}{d} - \frac{1}{p_0} \right)} \right\}
	\le \theta_{12} \\
	\text{and}\quad&
	\frac{\frac{1}{q_c(\gamma)}- \frac2{d}- \frac{k}{d} - \frac{1}{p} + (\alpha-1) \left(\frac{\sigma_2}{d} - \frac{k_0}{d} - \frac{1}{p_0} \right)}
		{(\alpha-1) \left(\frac{\sigma_2}{d} - \frac{k_0}{d} - \frac{1}{p_0}  - \left(\frac{1}{q_c(\gamma)} - \frac{k}{d} - \frac{1}{p}\right) \right)}  
	< \theta_{12} 
	<1. 
\end{aligned}
\end{equation}
%since 
%\[
%	(\alpha-1) \left(\frac{k}{d} + \frac{1}{p} - \frac{k_0}{d} - \frac{1}{p_0} \right)
%	<1+  {\gamma\over d} -\frac{k}{d}-\frac{1}{p} - (\alpha-1)\left(\frac{k_0}{d} + \frac{1}{p_0} \right) 
%		\quad\text{if and only if}\quad
%	\frac{k}{d} + \frac{1}{p} < \frac1{q_c(\gamma)}
%\]
%and 
%\[
%	{k \over d} + {1\over p} -{k_0\over d} - {1\over p_0}
%	< \frac1{q_c(\gamma)} - \left( {k_0\over d} + {1\over p_0} \right)
%	\quad\text{if and only if}\quad
%	{k \over d} + {1\over p}<\frac{d+\gamma}{d\alpha}. 
%\]
When 
\begin{equation}\nonumber
	\frac{k}{d} + \frac1{p}<\frac{k_0}{d} + \frac1{p_0}
		\quad\text{and}\quad
	\frac{1}{q_c(\gamma)} + \frac{k}{d} + \frac{1}{p} > \frac{\sigma_2}{d} - \frac{k_0}{d} - \frac{1}{p_0}, 
\end{equation}
it suffices to take $\theta_{12}$ such that 
\begin{equation}	\nonumber
\begin{aligned}
	&0\le  \theta_{12} \\
	\text{and} \quad
	&\max\left\{
	\frac{\frac{k}{d}+\frac{1}{p} +(\alpha-1)\left(\frac{k_0}{d} + \frac{1}{p_0} \right) - {\gamma\over d} - 1}
		{ (\alpha-1) \left( \frac{k_0}{d} + \frac{1}{p_0} - \frac{k}{d} - \frac{1}{p}\right)}, 
	 \frac{{k_0\over d} + {1\over p_0} - \frac1{q_c(\gamma)} }
		{ {k_0\over d} + {1\over p_0} -  {k \over d} - {1\over p} } 
	\right\}
		<\theta_{12}<1. 
\end{aligned}
\end{equation}
%\[
%	\frac{2+\gamma\alpha}{d\alpha(\alpha-1)} <  \frac{k}{d} + \frac{1}{p}
%		\quad\text{if and only if}\quad
%	1<\frac{\frac2{d} -\frac{1}{q_c(\gamma)} + \frac{k}{d} + \frac{1}{p} - (\alpha-1) \left(\frac{\sigma_2}{d} - \frac{k_0}{d} - \frac{1}{p_0} \right)}
%			{ (\alpha-1) \left(\frac{1}{q_c(\gamma)} - \frac{k}{d} - \frac{1}{p} - \left(\frac{\sigma_2}{d} - \frac{k_0}{d} - \frac{1}{p_0} \right) \right) }. 
%\]
Lastly, when 
\begin{equation}\nonumber
	\frac{k}{d} + \frac1{p}<\frac{k_0}{d} + \frac1{p_0}
		\quad\text{and}\quad
	\frac{1}{q_c(\gamma)} + \frac{k}{d} + \frac{1}{p} < \frac{\sigma_2}{d} - \frac{k_0}{d} - \frac{1}{p_0}, 
\end{equation}
it suffices to take $\theta_{12}$ such that 
\begin{equation}	\nonumber
\begin{aligned}
	&0\le  \theta_{12} 
	\quad\text{and} \quad
	\max\left\{
	\frac{\frac{k}{d}+\frac{1}{p} +(\alpha-1)\left(\frac{k_0}{d} + \frac{1}{p_0} \right) - {\gamma\over d} - 1}
		{ (\alpha-1) \left( \frac{k_0}{d} + \frac{1}{p_0} - \frac{k}{d} - \frac{1}{p}\right)}, \,
	\frac{{k_0\over d} + {1\over p_0} - \frac1{q_c(\gamma)} }
		{ {k_0\over d} + {1\over p_0} -  {k \over d} - {1\over p} }, \, \right. \\
	&\hspace{4cm}
	\left. \frac{ \frac{1}{q_c(\gamma)} - \frac{k}{d} - \frac{1}{p} + (\alpha-1) \left(\frac{\sigma_2}{d} - \frac{k_0}{d} - \frac{1}{p_0} \right) -\frac2{d} }
		{(\alpha-1) \left(\frac{\sigma_2}{d} - \frac{k_0}{d} - \frac{1}{p_0}- \left(\frac{1}{q_c(\gamma)} - \frac{k}{d} - \frac{1}{p} \right) \right) } 
	\right\}
		<\theta_{12}<1. 
\end{aligned}
\end{equation}
In all cases listed above, the existence of $\theta_{12}$ is ensured by \eqref{l:exist.crt.c.pk} and \eqref{l:exist.crt.c.p0k0}.
\end{proof}

\begin{lem}
	\label{l:theta21}
There exists $\theta_{21}$ satisfying \eqref{t:asym.behv-complex:del2_I21}, 
\eqref{t:asym.behv-complex:I_{21}c1} and \eqref{t:asym.behv-complex:I_{21}c2} under conditions
\eqref{l:exist.crt.c.pk} and \eqref{l:exist.crt.c.p0k0}.
\end{lem}

\begin{proof}
We first note that 
$\gamma\leq (\alpha-3)k+2k_0 + 3{\theta_{21}} (k-k_0)$ is equivalent to 
\begin{equation}	\nonumber
\left\{\begin{aligned}
	&\frac{\gamma -(\alpha-3)k -2 k_{0}  }{3(k-k_0)} \le \theta_{21} && \text{if}\quad k_0 < k, \\
	&\theta_{21} \le \frac{ (\alpha-3)k +2 k_{0}-\gamma}{3(k_0-k)}  && \text{if}\quad k_0 > k. 
\end{aligned}\right.
\end{equation}
As 
\[
	{2\over 3} \le \frac{ (\alpha-3)k +2 k_{0}-\gamma}{3(k_0-k)} \quad\text{if}\quad k_0 > k,
\]
we only impose 
\begin{equation}\nonumber
	\frac{\gamma -(\alpha-3)k -2 k_{0}  }{3(k-k_0)} \le \theta_{21} \quad\text{if}\quad k_0 < k
\end{equation}
in the following. Moreover, ${\alpha-3\over p} + {3\over p_0}+3\theta_{21}\left( {1 \over p}-{1\over p_0} \right) <1$ is equivalent to 
\begin{equation}	\nonumber
\left\{\begin{aligned}
	& \frac{ {\alpha-3\over p}+{3\over p_0} - 1}{3(\frac{1}{p_0} -\frac{1}{p})}  < \theta_{21} &&\text{if}\quad \frac{1}{p} <\frac{1}{p_0}, \\
	&\theta_{21} < \frac{1- {\alpha-3\over p}-{3\over p_0}}{3(\frac{1}{p} -\frac{1}{p_0})}  &&\text{if}\quad \frac{1}{p} >\frac{1}{p_0}. 
\end{aligned}\right.
\end{equation}
As 
\[
	{2\over 3}< \frac{1- {\alpha-3\over p}-{3\over p_0}}{3(\frac{1}{p} -\frac{1}{p_0})}  \quad\text{if}\quad\frac{1}{p} >\frac{1}{p_0}, 
\]
we only impose 
\begin{equation}\nonumber
	\frac{ {\alpha-3\over p}+{3\over p_0} - 1}{3(\frac{1}{p_0} -\frac{1}{p})}   < \theta_{21}
	\quad\text{if}\quad \frac{1}{p} <\frac{1}{p_0}
\end{equation}
in the following. 

When $\frac{k_0}{d} + \frac1{p_0} = \frac{k}{d} + \frac1{p}$, 
the last condition of \eqref{t:asym.behv-complex:I_{21}c2} amounts to 
\begin{equation}\nonumber
	\frac{\alpha-3}{q_c(\gamma)} -\frac2{d}
			+ \frac{3\sigma_2}{d} - \alpha \left(\frac{k}{d} + \frac{1}{p} \right)
	<3{\theta_{21}} \left(\frac{\sigma_2}{d} -\frac{1}{q_c(\gamma)} \right).
\end{equation}
while all other conditions in \eqref{t:asym.behv-complex:I_{21}c1} and \eqref{t:asym.behv-complex:I_{21}c2} 
are trivial by \eqref{l:exist.crt.c.pk} and \eqref{l:exist.crt.c.p0k0}. Thus, it suffices to take 
\begin{equation}\nonumber
	0\le  \theta_{21} 
	\quad\text{and}\quad
	\frac{\frac{\alpha-3}{q_c(\gamma)} -\frac2{d}+ \frac{3\sigma_2}{d} - \alpha \left(\frac{k}{d} + \frac{1}{p} \right)}
		{3 \left( \frac{\sigma_2}{d} - \frac{1}{q_c(\gamma)}\right)}
	< \theta_{21} < {2\over 3}. 
\end{equation}

When $\frac{k}{d} + \frac1{p}>\frac{k_0}{d} + \frac1{p_0}$, it suffices to take $\theta_{21}$ such that 
\begin{equation}	\nonumber
\begin{aligned}
	&\max\left\{0,  \frac{ {\gamma\over d} -(\alpha-3) \left( {k\over d} + {1\over p} \right)- 2\left( {k_0\over d} + {1\over p_0} \right) } 
		{3 \left(\frac{k}{d} + \frac{1}{p} - \frac{k_0}{d} - \frac{1}{p_0} \right)} \right\}
	\le \theta_{12} \\
	\text{and}\quad&
	\frac{(\alpha-3) \left(\frac{1}{q_c(\gamma)} - \frac{k}{d} - \frac{1}{p}\right) 
			+ 3\left(\frac{\sigma_2}{d} - \frac{k_0}{d} - \frac{1}{p_0}\right)- \frac2{d}}
		{3\left(\frac{\sigma_2}{d} - \frac{k_0}{d} - \frac{1}{p_0}  - \left(\frac{1}{q_c(\gamma)} - \frac{k}{d} - \frac{1}{p}\right) \right)}  
	< \theta_{12} 
	< {2\over 3}. 
\end{aligned}
\end{equation}

When 
\begin{equation}\nonumber
	\frac{k}{d} + \frac1{p}<\frac{k_0}{d} + \frac1{p_0}
		\quad\text{and}\quad
	\frac{1}{q_c(\gamma)} + \frac{k}{d} + \frac{1}{p} > \frac{\sigma_2}{d} - \frac{k_0}{d} - \frac{1}{p_0}, 
\end{equation}
it suffices to take $\theta_{21}$ such that 
\begin{equation}	\nonumber
\begin{aligned}
	&0\le  \theta_{21} \quad\text{and} \quad\\
	&\max\left\{
	\frac{(\alpha-3) ( {k\over d} + {1\over p} )+ 3( {k_0\over d} + {1\over p_0} ) - {d+\gamma\over d} }
		{ 3 ( \frac{k_0}{d} + \frac{1}{p_0} - \frac{k}{d} - \frac{1}{p})}, 
	 \frac{(\alpha-3)( {k\over d} + {1\over p}) + 2( {k_0\over d} + {1\over p_0})-\frac{2+\gamma}{d}}
		{ 3 ( \frac{k_0}{d} + \frac{1}{p_0} - \frac{k}{d} - \frac{1}{p}) } 
	\right\} \\
	&\hspace{3cm}	<\theta_{21}<{2\over 3}. 
\end{aligned}
\end{equation}

Lastly, when 
\begin{equation}\nonumber
	\frac{k}{d} + \frac1{p}<\frac{k_0}{d} + \frac1{p_0}
		\quad\text{and}\quad
	\frac{1}{q_c(\gamma)} + \frac{k}{d} + \frac{1}{p} < \frac{\sigma_2}{d} - \frac{k_0}{d} - \frac{1}{p_0}, 
\end{equation}
it suffices to take $\theta_{21}$ such that 
\begin{equation}	\nonumber
\begin{aligned}
	&0\le  \theta_{21} 
	\quad\text{and} \quad\\
	&\max\left\{
\frac{(\alpha-3) ( {k\over d} + {1\over p} )+ 3( {k_0\over d} + {1\over p_0} ) - {d+\gamma\over d} }
		{ 3 ( \frac{k_0}{d} + \frac{1}{p_0} - \frac{k}{d} - \frac{1}{p})}, \,
	 \frac{(\alpha-3)( {k\over d} + {1\over p}) + 2( {k_0\over d} + {1\over p_0})-\frac{2+\gamma}{d}}
		{ 3 ( \frac{k_0}{d} + \frac{1}{p_0} - \frac{k}{d} - \frac{1}{p}) }  \, \right. \\
	&\hspace{4cm}
	\left. \frac{ (\alpha-3) \left(\frac{1}{q_c(\gamma)} - \frac{k}{d} - \frac{1}{p}\right) 
			+ 3\left(\frac{\sigma_2}{d} - \frac{k_0}{d} - \frac{1}{p_0}\right)-\frac2{d} }
		{3 \left(\frac{\sigma_2}{d} - \frac{k_0}{d} - \frac{1}{p_0}- \left(\frac{1}{q_c(\gamma)} - \frac{k}{d} - \frac{1}{p} \right) \right) } 
	\right\}
		<\theta_{21}<{2\over 3}.  
\end{aligned}
\end{equation}
In all cases listed above, the existence of $\theta_{21}$ is ensured by \eqref{l:exist.crt.c.pk} and \eqref{l:exist.crt.c.p0k0}.
\end{proof}

\subsection{Subcritical local theory}

The following is dedicated to the local well-posedness of \eqref{HHNLH} 
in the subcritical space $L^{q,r}_{l}(\R^d)$ for $(l,q)$ satisfying \eqref{d:subcritical}. 

\begin{thm}[Local well-posedness in the subcritical space]
	\label{t:HH.LWP.sub}
Let $d\in\mathbb{N},$ $\gamma\in\R$ and $\alpha\in\R$ satisfy 
\begin{equation}	\label{t:HH.LWP.sub.c.paramet}
	\gamma> -\min(2,d)
		\quad\text{and}\quad
	\alpha> \max\left\{1, \, 1+\frac{\gamma}{d} \right\}. 
\end{equation}
Let $r \in [1,\infty]$ and let $l\in\R$ and $q\in [1,\infty]$ satisfy 
%\eqref{d:subcritical} and \eqref{t:HH.LWP.c.ql}.
\begin{equation}	\label{t:HH.LWP.sub.c.ql}
	\frac{\gamma}{\alpha-1} \le l
	\quad\text{and}\quad
	%0\le \frac{l}{d} + \frac{1}{q}, \quad
	%\frac{\gamma}{d(\alpha-1)} < 
	0\le \frac{l}{d} + \frac{1}{q} 
		< \min\left\{1, \frac1{q_c(\gamma)} \right\}. 
\end{equation}
Then the Cauchy problem \eqref{HHNLH} is locally well-posed in $L^{q,r}_{l}(\R^d)$ for arbitrary data. 
More precisely, the following assertions hold. 
\begin{itemize}
\item[$(\rm{i})$] {\rm (}Existence{\rm )} 
	For any $\varphi \in L^{q,r}_{l}(\R^d),$ there exist a positive number 
	$T$ depending only on $\|\varphi\|_{L^{q,r}_{l}}$ and an 
	$L^{q,r}_{l}(\R^d)$-mild solution $u$ to \eqref{intHHNLH} satisfying 
	\begin{equation}\nonumber	%\label{t:HH.LWP.sub.est} 
		\|u\|_{\mathcal{K}^{p}_{k}(T)} 
			\le 2 \|e^{t\Delta} \varphi \|_{\mathcal{K}^{p}_{k}(T)},
	\end{equation}
	where $k\in\R$ and $p\in[1,\infty]$ satisfy 
	\begin{equation}	\label{t:HH.LWP.sub.c.kp}
	\begin{aligned}
		&\alpha < p \le \infty, \quad
			\frac{l+\gamma}{\alpha} \le k \le l, \\
		&0 \le \frac{k}{d} + \frac{1}{p} 
			\quad\text{and}\quad
		\frac1{\alpha}\left( \frac{l}{d} + \frac1{q} + \frac{\gamma}{d} \right)
			<  \frac{k}{d} +\frac1{p}
			< \min\left\{ \frac{l}{d} + \frac{1}{q}, \, \frac{d+\gamma}{d\alpha} \right\}. 
	\end{aligned}
	\end{equation}
	Moreover, the solution can be extended to the maximal interval 
	$[0,T_m),$ where $T_m$ is defined by \eqref{d:Tm}. 
\item[$(\rm{ii})$] {\rm (}Uniqueness in ${\mathcal{K}}^{p}_{k}(T)${\rm )} 
	Let $T>0.$ If $u, v \in {\mathcal{K}}^{p}_{k}(T)$ satisfy 
	\eqref{HHNLH} with $u(0) = v(0)=\varphi,$ then $u=v$ on $[0,T].$ 
\item[$(\rm{iii})$] {\rm (}Continuous dependence on initial data{\rm )}  
	For any initial data $\varphi$ and $\psi$ in $L^{q,r}_{l}(\R^d),$ 
	let $T(\varphi)$ and $T(\psi)$ be the corresponding existence time given by $(\rm{i})$. 
	Then there exists a constant $C$ depending on $\varphi$ and $\psi$ such that 
	the corresponding solutions $u$ and $v$ satisfy 
	\begin{equation}\nonumber	%\label{t:HH.LWP.sub:cont.flow}
		\|u-v\|_{L^\infty(0,T;L^{q,r}_{l}) \cap {\mathcal{K}}^{p}_{k}(T)} 
		\le C T^{\frac{d(\alpha-1)}{2}(\frac1{q_c(\gamma)} 
						- \frac{l}{d}-\frac{1}{q})} 
						\|\varphi-\psi\|_{L^{q,r}_{l}}
	\end{equation}
	for $T< \min\{T(\varphi), T(\psi)\}.$ 
\item[$(\rm{iv})$] {\rm (}Blow-up criterion{\rm )} If $T_m<\infty,$ 
	then $\displaystyle \lim_{t\rightarrow T_m-0}\|u(t)\|_{L^{q,r}_{l}}=\infty.$ 
	Moreover, the following lower bound of blow-up rate holds: 
	there exists a positive constant $C$ independent of $t$ such that 
	\begin{equation}\label{t:HH.LWP:Tm}
		\|u(t)\|_{L^{q,r}_{l}} 
		\ge \frac{C}{(T_m - t)^{\frac{d(\alpha-1)}{2}(\frac1{q_c(\gamma)} 
						- \frac{l}{d}-\frac{1}{q})} }
	\end{equation}
	for $t\in (0,T_m)$.
\end{itemize}
\end{thm}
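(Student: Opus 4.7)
\smallbreak
\noindent
\textbf{Proposal for the proof of Theorem \ref{t:HH.LWP.sub}.}

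The plan is to run a Banach fixed-point argument for the map $\mathscr{F}_\varphi(u) = e^{t\Delta}\varphi + aN(u)$ on the complete metric space
\[
X_M := \{u \in \mathcal{K}^{p}_{k}(T) : \|u\|_{\mathcal{K}^{p}_{k}(T)} \le M\}, \qquad d(u,v) := \|u-v\|_{\mathcal{K}^{p}_{k}(T)},
\]
where $(k,p)$ satisfies \eqref{t:HH.LWP.sub.c.kp}. The decisive observation, compared with the critical case of Lemma \ref{l:exist.crt}, is that Lemma \ref{l:Kato.est} now delivers the strictly positive power
\[
\tau \;:=\; \frac{d(\alpha-1)}{2}\!\left(\frac1{q_c(\gamma)} - \frac{l}{d} - \frac1{q}\right) > 0
\]
of $T$ in front of the nonlinear term. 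Consequently, smallness can be produced by shrinking $T$, \emph{without} imposing smallness on the data, which is the whole point of the subcritical theory.

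First, I verify via Lemma \ref{l:linear-main} that $\|e^{t\Delta}\varphi\|_{\mathcal{K}^{p}_{k}(T)} \le C \|\varphi\|_{L^{q,r}_l}$ under \eqref{t:HH.LWP.sub.c.ql} and \eqref{t:HH.LWP.sub.c.kp} (the hypotheses of that linear estimate are straightforward to check from these conditions). Setting $M = 2C\|\varphi\|_{L^{q,r}_l}$ and choosing $T$ small enough that $2 C_0 T^{\tau} M^{\alpha-1} \le \tfrac12$, Lemma \ref{l:Kato.est} gives both $\mathscr{F}_\varphi(X_M) \subset X_M$ and the contraction property, so Banach's theorem yields a unique fixed point $u$ in $X_M$ satisfying $\|u\|_{\mathcal{K}^{p}_{k}(T)} \le 2\|e^{t\Delta}\varphi\|_{\mathcal{K}^{p}_{k}(T)}$. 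To upgrade $u$ to an $L^{q,r}_{l}$-mild solution, I apply Lemma \ref{l:stb.est} to control $\|u - e^{t\Delta}\varphi\|_{L^\infty(0,T; L^{q,r}_l)} \lesssim T^{\tau} \|u\|^{\alpha}_{\mathcal{K}^{p}_{k}(T)}$ and combine with the strong $L^{q,r}_{l}$-continuity of the heat semigroup at $t=0$ (density of $C^\infty_c$ when $r<\infty$; handled by the $\mathcal{L}^{q,\infty}_l$ convention when $r=\infty$).

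Uniqueness in $\mathcal{K}^{p}_{k}(T)$ (assertion (ii)) follows by subtracting the integral equations and applying Lemma \ref{l:Kato.est} on a short subinterval $[0,T']$ chosen so that $2C_0 {T'}^{\tau}(\|u\|^{\alpha-1}+\|v\|^{\alpha-1}) < 1$; a bootstrap on $[T', 2T']$, etc., extends uniqueness to $[0,T]$. The continuous dependence (iii) is obtained by the same subtraction combined with Lemma \ref{l:stb.est}, yielding the stated $T^{\tau}$-factor. Extension to a maximal interval $[0,T_m)$ is standard given uniqueness. Finally, the blow-up criterion (iv) follows from the existence lifespan bound: starting at time $t_0 < T_m$ from data $u(t_0)$, the preceding construction produces a solution on $[t_0, t_0 + cT^*(t_0)]$ with $T^*(t_0) \sim \|u(t_0)\|_{L^{q,r}_l}^{-(\alpha-1)/\tau}$; since this interval cannot overshoot $T_m$, we obtain $T_m - t_0 \gtrsim \|u(t_0)\|_{L^{q,r}_l}^{-(\alpha-1)/\tau}$, which rearranges into the lower bound \eqref{t:HH.LWP:Tm} and also forces $\|u(t)\|_{L^{q,r}_l} \to \infty$ when $T_m < \infty$.

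The only subtle points are already handled by the cited lemmas: the main one I anticipate is checking that the admissible parameter set \eqref{t:HH.LWP.sub.c.kp} is nonempty and simultaneously meets the hypotheses of both Lemma \ref{l:Kato.est} (condition \eqref{l:Kato.est.c1}) and Lemma \ref{l:stb.est} (condition \eqref{l:stb.est.c1}), which is essentially a bookkeeping exercise on the Fujita exponent inequality $\alpha > \alpha_F(\gamma)$ combined with $\frac{l}{d}+\frac{1}{q} < \frac{1}{q_c(\gamma)}$. Once those interval conditions are verified, the contraction argument, uniqueness, continuous dependence, and blow-up lower bound flow uniformly from the single positive-power-of-$T$ estimate, with the routine $L^{q,r}_l$-continuity at $t=0$ handled by density.
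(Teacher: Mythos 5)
Your route --- contraction in $\mathcal{K}^{p}_{k}(T)$ exploiting the strictly positive power $T^{\tau}$, $\tau=\frac{d(\alpha-1)}{2}\bigl(\frac{1}{q_c(\gamma)}-\frac{l}{d}-\frac{1}{q}\bigr)$, from Lemma \ref{l:Kato.est}, together with Lemma \ref{l:stb.est} for the $L^{q,r}_{l}$ bound, the short-interval bootstrap for uniqueness, and the lifespan argument for the blow-up criterion --- is exactly the proof the paper intends: the authors give no details here, stating only that the proof is carried out as for Theorem \ref{t:HH.LWP} and deferring to \cite{BenTayWei2017} and \cite{CIT2022}. The parameter bookkeeping you defer does close: from \eqref{t:HH.LWP.sub.c.ql} one has $\frac{\gamma}{\alpha-1}\le\frac{l+\gamma}{\alpha}\le k$, and the strict subcriticality $\frac{l}{d}+\frac{1}{q}<\frac{1}{q_c(\gamma)}$ shows that \eqref{t:HH.LWP.sub.c.kp} implies both \eqref{l:Kato.est.c1} and \eqref{l:stb.est.c1}, so that step is indeed routine.

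The one genuine problem is the last step of (iv). Your own computation gives $T^{*}(t_0)\sim\|u(t_0)\|_{L^{q,r}_{l}}^{-(\alpha-1)/\tau}$, hence $T_m-t_0\gtrsim\|u(t_0)\|_{L^{q,r}_{l}}^{-(\alpha-1)/\tau}$, which rearranges to $\|u(t_0)\|_{L^{q,r}_{l}}\gtrsim (T_m-t_0)^{-\tau/(\alpha-1)}=(T_m-t_0)^{-\frac{d}{2}(\frac{1}{q_c(\gamma)}-\frac{l}{d}-\frac{1}{q})}$; the exponent is $\tau/(\alpha-1)$, not the exponent $\tau$ appearing in \eqref{t:HH.LWP:Tm}. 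So the assertion that your bound ``rearranges into'' \eqref{t:HH.LWP:Tm} is false unless $\alpha=2$, and for $\alpha>2$ what you prove is strictly weaker than the stated inequality. (The exponent $\frac{d}{2}(\frac{1}{q_c(\gamma)}-\frac{l}{d}-\frac{1}{q})$ is the scaling-consistent one and agrees with Weissler's classical subcritical rate and with \cite{CIT2022}, so the extra factor $(\alpha-1)$ in \eqref{t:HH.LWP:Tm} is presumably a misprint; still, you must either state the rate you actually obtain or explicitly flag the discrepancy, not identify the two.) A milder instance of the same looseness occurs in (iii): the subtraction gives $\|u-v\|_{L^\infty(0,T;L^{q,r}_{l})}\le C\|\varphi-\psi\|_{L^{q,r}_{l}}+C T^{\tau}M^{\alpha-1}\|u-v\|_{\mathcal{K}^{p}_{k}(T)}$, so the linear contribution carries no factor $T^{\tau}$, and a bound of the form $CT^{\tau}\|\varphi-\psi\|_{L^{q,r}_{l}}$ for the full norm cannot hold as $T\to0$; say precisely what your estimate yields rather than claiming the stated $T^{\tau}$-factor.
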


The above theorem complements the corresponding result in \cite{CIT2022}. 
The proof is carried out in a similar way as that of Theorem \ref{t:HH.LWP}. 
For the details of the proof, we refer to \cites{BenTayWei2017, CIT2022} and the references therein.

%%%%%%%%%%%%
%%% Nonexistence %%%
%%%%%%%%%%%%
\subsection{Nonexistence of weak positive solutions}
For $q\in [1,\infty]$ and $l\in \R$, we introduce the weighted local Lebesgue spaces $L^q_{l,loc}(\R^d)$ given by
\[
	L^q_{l,loc}(\R^d)
	:=\left\{ f \in L^0(\R^d) \,;\, f|_K\in L^{q}_l(\R^d) \text{ for all compact set } K\subset \R^d \right\}, 
\] 
where $L^0(\mathbb R^d)$ is the set of all Lebesgue measurable functions on $\mathbb R^d$. 
We give a precise definition of weak solutions to the Cauchy problem (\ref{HHNLH}).

\begin{defi}[Weak solution]
	\label{d:w.sol}
Let $T\in (0,\infty]$, $d\in \N$, $\gamma\in \R$, $a=1$ and $\alpha\ge 1$. 
We call a function $u:[0,T)\times \R^d\rightarrow \C$ a weak solution to \eqref{HHNLH} 
if $u$ belongs to $L^{\alpha}_t(0,T;L^{\alpha}_{\frac{\gamma}{\alpha},loc}(\R^d))$ 
%and is positive almost everywhere on $(0,T)\times \R^d$ 
and if it satisfies the equation in the distributional sense, i.e., 
\begin{align}\label{weak}
\notag&\int_{\R^d} u(T',x) \eta (T',x) \, dx-\int_{\R^d} u_0(x) \eta (0,x) \, dx\\
	&= \int_{[0,T']\times\R^d} u(t ,x)(\partial_t\eta+\Delta \eta) (t ,x) 
	+ |x|^{\gamma} |u(t, x)|^{\alpha-1} u(t,x) \,\eta(t,x)  \, dx\,dt
\end{align}
for all $T'\in [0,T)$ and for all $\eta \in C^{1,2}([0,T)\times \R^d)$ such that %$\eta\ge 0$ and 
$\operatorname{supp} \eta(t, \cdot)$ with $t\in [0,T)$ is compact in $\R^d$. 
\end{defi}

The following theorem asserts the nonexistence of local positive weak solutions to \eqref{HHNLH} 
in the supercritical case \eqref{d:supercritical} for some data in $L^{q,1}_{l}(\R^d)$.
This theorem is an extension of \cite[Theorem 1.16]{CITT2022}, 
since it treats initial data belonging to $L^{q,1}_l(\R^d)$, 
while \cite[Theorem 1.16]{CITT2022} treats initial data belonging to a larger space $L^{q}_l(\R^d)$.

\begin{thm}[Nonexistence of local positive weak solutions]
\label{t:nonex}
Let $d\in \mathbb N$, $\gamma \in \mathbb R$, $a=1$ and $\alpha>1$ satisfy $\alpha>\alpha_F(\gamma)$. 
Let $q\in [1,\infty]$ and $l\in\R$ satisfy \eqref{d:supercritical}. Then there exists an initial data $u_0 \in {L^{q,1}_l} (\R^d)$ such that the 
problem \eqref{HHNLH} with $u(0)=u_0$ has no local positive weak solution. 
\end{thm}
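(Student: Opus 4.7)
The plan is to construct explicit initial data with singularity at the origin sufficient to exceed the scaling-critical growth rate $|x|^{-\frac{2+\gamma}{\alpha-1}}$ while still lying in the smaller Lorentz-weighted space $L^{q,1}_{l}(\R^d)$, and then contradict the existence of any positive weak solution by iterating the natural Duhamel lower bound. The overall structure is parallel to \cite{CITT2022}, but the choice of witness $u_0$ has to be refined to accommodate the finer Lorentz index $r=1$.

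First, by the supercriticality assumption \eqref{d:supercritical} we have $l+\tfrac{d}{q}>\tfrac{d}{q_c(\gamma)}=\tfrac{2+\gamma}{\alpha-1}$, so I may fix $\sigma\in\R$ with
\begin{equation*}
\frac{2+\gamma}{\alpha-1}<\sigma<l+\frac{d}{q},
\end{equation*}
and set $u_0(x):=c\,|x|^{-\sigma}\chi_{\{|x|\le 1\}}(x)$ for some small $c>0$. The membership $u_0\in L^{q,1}_{l}(\R^d)$ follows from the elementary fact that $|\cdot|^{-a}\chi_{\{|\cdot|\le 1\}}\in L^{q,1}(\R^d)$ whenever $a<d/q$, applied with $a=\sigma-l$; indeed the decreasing rearrangement of $|\cdot|^{l-\sigma}\chi_{\{|\cdot|\le 1\}}$ is a pure power $(c_d/t)^{(\sigma-l)/d}$ on a bounded range of $t$, and the Lorentz integral $\int_0^{c_d} t^{1/q-1-(\sigma-l)/d}\,dt$ is finite exactly under the strict inequality $\sigma<l+d/q$. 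The gain from $L^{q}_l$ to $L^{q,1}_l$ is therefore encoded in this strict inequality, which is available precisely because \eqref{d:supercritical} is a strict condition.

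For the nonexistence part, I assume a positive weak solution $u$ to \eqref{HHNLH} with $u(0)=u_0$ exists on $[0,T)\times\R^d$ in the sense of Definition \ref{d:w.sol}. A standard approximation of the weak formulation \eqref{weak}, combined with the nonnegativity of the source term $|x|^{\gamma}u^{\alpha}$, yields the Duhamel lower bound
\begin{equation*}
u(t,x)\ge e^{t\Delta}u_0(x)+\int_0^t e^{(t-\tau)\Delta}\bigl[|\cdot|^{\gamma}u(\tau)^{\alpha}\bigr](x)\,d\tau
\end{equation*}
for a.e.\ $(t,x)$. The Gaussian lower bound on $G_t$ applied to the singular datum $u_0$ gives $e^{t\Delta}u_0(x)\ge c'\,t^{-\sigma/2}$ on the parabolic region $\{|x|\le\sqrt{t}\}$ for small $t$. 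Iterating this estimate inside the Duhamel inequality produces a sequence of lower bounds whose singular exponents in $t$ and in $|x|$ follow a recurrence driven by the map $\sigma\mapsto \alpha\sigma-\gamma$. Because $\sigma>(2+\gamma)/(\alpha-1)$, equivalently $\alpha\sigma-\gamma>\sigma+2$, the iteration strictly amplifies the singularity in time, and after finitely many steps the integrand $|\cdot|^{\gamma}u^{\alpha}$ ceases to belong to $L^{1}_{\mathrm{loc}}((0,T)\times \R^{d})$, forcing $u(t,x)=+\infty$ on a set of positive measure for every $t>0$, which contradicts the regularity built into Definition \ref{d:w.sol}.

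The main obstacle, and the step most in need of care, is rigorously deriving the Duhamel lower bound from the purely distributional formulation \eqref{weak} under the minimal assumption $u\in L^{\alpha}_{t}(L^{\alpha}_{\gamma/\alpha,\mathrm{loc}})$. My plan is to truncate the data to $u_0^{(k)}:=\min(u_0,k)\,\chi_{\{|x|\ge 1/k\}}\in L^\infty_{\mathrm{comp}}(\R^d)$, use Theorem \ref{t:HH.LWP.sub} to obtain a bounded minimal mild solution $u^{(k)}$ on a short interval, establish $u^{(k)}\le u$ almost everywhere on a common interval via a Kato-type inequality applied to $(u^{(k)}-u)_{+}$ against a nonnegative truncation of $\eta$ in \eqref{weak}, and finally pass $k\to\infty$ monotonically. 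Once this comparison step is in place the iterative divergence runs as in \cite{CITT2022}, with the only genuinely new input being the strict bound $\sigma<l+d/q$ that promotes the witness $u_0$ from $L^{q}_{l}(\R^d)$ to $L^{q,1}_{l}(\R^d)$.
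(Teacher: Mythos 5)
Your witness datum is fine, and it is worth noting that it is simpler than the paper's: since \eqref{d:supercritical} is strict and $\alpha>\alpha_F(\gamma)$ forces $\frac{2+\gamma}{\alpha-1}<d$, you may pick $\frac{2+\gamma}{\alpha-1}<\sigma<\min\{d,\,l+\frac{d}{q}\}$ (you should add the constraint $\sigma<d$, otherwise $u_0\notin L^1_{\mathrm{loc}}$ and your heat-kernel lower bound fails), and then $u_0=c|x|^{-\sigma}\chi_{\{|x|\le1\}}\in L^{q,1}_l(\R^d)$ exactly by the rearrangement computation you give; the paper instead takes the borderline power $|x|^{-l-\frac dq}$ softened by a logarithmic factor. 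Where you genuinely depart from the paper is the nonexistence mechanism: the paper never derives a Duhamel inequality nor iterates pointwise lower bounds. It tests \eqref{weak} directly with the rescaled cutoffs $\eta^b(t/T)\phi^b(x/\sqrt T)$, absorbs the nonlinearity by H\"older and Young, and obtains $\int_{|x|<\sqrt\tau}u_0\,\phi^b(x/\sqrt\tau)\,dx\le C\tau^{\frac d2-\frac{2+\gamma}{2(\alpha-1)}}$, which is then contradicted as $\tau\to0$ by the lower bound coming from the datum. That computation uses nothing beyond Definition \ref{d:w.sol}.

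The gap in your route is the step you yourself flag: upgrading the distributional identity \eqref{weak}, which is only asserted against $C^{1,2}$ compactly supported test functions, to the pointwise integral inequality $u(t)\ge e^{t\Delta}u_0+\int_0^te^{(t-\tau)\Delta}[|\cdot|^\gamma u(\tau)^\alpha]\,d\tau$. The plan you propose does not work as stated: $(u^{(k)}-u)_+$ and its truncations are not admissible test functions in \eqref{weak}, and with $u$ merely in $L^\alpha_t(0,T;L^\alpha_{\gamma/\alpha,\mathrm{loc}})$ — no time continuity, no local boundedness — the Kato-type comparison between the mild solution $u^{(k)}$ of Theorem \ref{t:HH.LWP.sub} and such a weak solution cannot be justified; the relevant potential $|x|^\gamma\bigl((u^{(k)})^\alpha-u^\alpha\bigr)/(u^{(k)}-u)$ lies in no space where a duality or Gronwall argument closes, and comparison/uniqueness in this weak class is precisely the delicate issue (cf.\ the non-uniqueness phenomena in \cite{CITT2022}). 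If you insist on the iteration route, the standard repair is to derive an averaged integral inequality by testing \eqref{weak} with cutoff backward heat kernels $\eta(t,x)=\theta_\epsilon(t)\,(e^{(s-t)\Delta}\zeta)(x)$, as in the necessary-condition literature (\cite{HisIsh2018}, \cite{HisTak2021}), and then run the recurrence $a_{j+1}=\alpha a_j-\tfrac\gamma2-1$ (with annuli in place of balls when $\gamma>0$); but this rebuilds substantial machinery. The shortest fix is to keep your datum and run the paper's test-function argument verbatim: the left side is then $\gtrsim\tau^{\frac{d-\sigma}2}$, which contradicts the upper bound precisely because $\sigma>\frac{2+\gamma}{\alpha-1}$.
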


\begin{proof}
The proof is based on method of \cite[Proposition 2.4, Theorem 2.5]{II-15}. 
Let $T\in (0,1)$. On the contrary we assume that the conclusion of Theorem \ref{t:nonex} does not hold. 
Then there exists a positive weak solution $u$ to \eqref{HHNLH} (See Definition \ref{d:w.sol}). 
We introduce $\eta \in C^\infty_0([0,\infty))$ and $\phi\in C^\infty_0(\mathbb R^d),$ satisfying 
$0\leq \eta, \, \phi,$ 
\[
	\eta (t)
	:= 
	\begin{cases}
		1,\quad 0\le t \le \frac12,\\
		0,\quad t\ge1,
	\end{cases}
		\quad\text{and}\quad 
	\phi(x)
	:= 
	\begin{cases}
		1,\quad |x| \le \frac12,\\
		0,\quad |x|\ge1.
	\end{cases}
\]
Let $b\in\mathbb N$ satisfy $b>\frac{2\alpha}{\alpha-1}$. We also introduce $\psi_T\in C_0^{\infty}([0,T)\times\R^d)$ given by
\[
	\psi_T(t,x) 
	:= \eta^b\left(\frac{t}{T}\right) \phi^b\left(\frac{x}{\sqrt{T}}\right)
	=\left\{\eta\left(\frac{t}{T}\right)\right\}^b\left\{\phi\left(\frac{x}{\sqrt{T}}\right)\right\}^b.
\]
By a direct computation, there exists $C>0$ independent of $T$ such that for any $(t,x)\in [0,T)\times\R^d$ the estimate
\[
	|\partial_t \psi_T (t,x)|+|\Delta\psi_T(t,x)|
		\le CT^{-1} \psi_T(t,x)^\frac{1}{\alpha}
\] holds. We define a non-negative function $I:[0,T)\rightarrow \R_{\ge 0}$ given by
\[
	I(\tau):=\int_{[0,\tau)\times \{|x|<\sqrt{\tau}\}}|x|^{\gamma} u(t,x)^{\alpha} \, \psi_{\tau}(t,x) \, dtdx,
	\ \ \ \text{for}\ \tau\in (0,T).
\]
We note that $I(T)<\infty$, since $u\in L_t^{\alpha}(0,T;L^{\alpha}_{\frac{\gamma}{\alpha},loc}(\R^d))$.
By using the weak form \eqref{weak}, the above estimate, H\"older's inequality and Young's inequality, the estimates hold:
\[
\begin{aligned}
	I(\tau) + \int_{|x|<\sqrt{\tau}} u_0(x)  \phi^b\left(\frac{x}{\sqrt{\tau}}\right)\, dx
	&= \left|\int_{[0,\tau)\times \{|x|<\sqrt{\tau}\}}u(\partial_t \psi_{\tau} + \Delta \psi_{\tau})\,dt\,dx \right|\\
	& \le C\tau^{-1}\int_{[0,\tau)\times \{|x|<\sqrt{\tau}\}}|u| \psi_{\tau}^{\frac{1}{\alpha}} \,dtdx\\
	&\le CI(\tau)^{\frac{1}{\alpha}}K(\tau)^{\frac{1}{\alpha'}}
	\le \frac{1}{2}I(\tau)+CK(\tau),
\end{aligned}
\]
where $\alpha'$ is the H\"older conjugate of $\alpha$, i.e., $1= \frac{1}{\alpha} + \frac{1}{\alpha'}$ and 
$K:[0,T)\rightarrow \R_{>0}$ is defined by
\begin{align*}
	K(\tau):&=\tau^{-\alpha'}\int_{[0,\tau)\times\{|x|<\sqrt{\tau}\}}|x|^{-\frac{\gamma\alpha'}{\alpha}}dxdt\\
	&=|S_{d-1}|\tau^{1-\alpha'} \int_0^{\sqrt{\tau}}r^{-\frac{\gamma\alpha'}{\alpha}+d-1} dr
	=C \tau^{1-\alpha'-\frac{\gamma}{2(\alpha-1)}+\frac{d}{2}},
\end{align*}
where to obtain the last identity we have used the assumption $\alpha>1+\frac{\gamma}{d}$ and $C$ is a positive constant.
Summarizing the estimates obtained now, we have
\begin{equation}\label{ineq1}
\begin{aligned}
	\int_{|x|<\sqrt{\tau}} u_0(x) \phi^b\left(\frac{x}{\sqrt{\tau}}\right)\, dx
	\le I(\tau) + 2\int_{|x|<\sqrt{\tau}} u_0(x) \phi^b \left(\frac{x}{\sqrt{\tau}}\right)\, dx
	\le C \tau^{- \frac{2+\gamma}{2(\alpha-1)} + \frac{d}{2}}.
\end{aligned}
\end{equation}
Here we choose a initial data $u_0$ as
\[
u_0(x) := 
\begin{cases}
	|x|^{-l-\frac{d}{q}}(\log(1+|x|))^{\kappa} \quad & |x|\le 1,\\
	0 & \text{otherwise},
\end{cases}
\]
where $\kappa$ is taken as
\[
  \max\left\{0,l+\frac{d}{q}-d\right\}<\kappa<d\left(\frac{l}{d}+\frac{1}{q}-\frac{1}{q_c}\right).
\]
The existence of such $\kappa$ follows from the assumptions $\frac{l}{d}+\frac{1}{q}>\frac{1}{q_c}$ and $\alpha>\alpha_{F}(d,\gamma)$. Since $\kappa>0$, we can see that $u_0 \in L^{q,1}_l (\R^d)$. Since $0<\tau<T<1$, $u_0\ge 0$, $\log(1+r)\ge (\log2)r$ on $(0,1)$ and $\kappa>l+\frac{d}{q}-d$, we have
\begin{equation}\label{ineq2}
\begin{aligned}
	\int_{|x|<\sqrt{\tau}} u_0(x) \phi^b \left(\frac{x}{\sqrt{\tau}}\right)\, dx
	&\ge \int_{|x|<\frac{1}{2}\sqrt{\tau}}u_0(x)dx\\
	&=|S_{d-1}|\int_0^{\frac{1}{2}\sqrt{\tau}}r^{-l-\frac{d}{q}+d-1}(\log(1+r))^{\kappa} dr\\
	 &\ge C \tau^{\frac{d}{2}-\frac{1}{2}\left(l+\frac{d}{q}\right)+\frac{\kappa}{2}}.
\end{aligned}
\end{equation}
By combining \eqref{ineq1} and \eqref{ineq2} and $\kappa<d(\frac{l}{d}+\frac{1}{q}-\frac{1}{q_c})$, we obtain
\begin{equation}\label{contradiction}
	0< C \le \tau^{\frac{d}{2}\left(\frac{l}{d}+\frac{1}{q}-\frac{1}{q_c}\right)-\frac{\kappa}{2}} \to 0 
	\quad \text{as }\tau\to 0,
\end{equation}
which leads to a contradiction. The proof is complete.
\end{proof}

%%% Acknowledgement %%%
\section*{Acknowledgement}
\par
The first author is supported by Grant-in-Aid for Young Scientists (No. 21K13821), Japan Society for the Promotion of Science. 
The second author is supported by JST CREST (No. JPMJCR1913), Japan 
and the Grant-in-Aid for Scientific Research (B) (No.18H01132) and Young Scientists Research (No.19K14581), JSPS. 
The third author is supported by Grant for Basic Science Research Projects from The Sumitomo Foundation (No.210788). 
The fourth author is supported by Tamkeen under the NYU Abu Dhabi Research Institute grant of the center SITE CG002.

% References
\begin{bibdiv}
 \begin{biblist}[\normalsize]

\bib{BenTayWei2017}{article}{
   author={Ben Slimene, B.},
   author={Tayachi, S.},
   author={Weissler, F. B.},
   title={Well-posedness, global existence and large time behavior for
   Hardy-H\'enon parabolic equations},
   journal={Nonlinear Anal.},
   volume={152},
   date={2017},
   pages={116--148},
%   issn={0362-546X},
%   review={\MR{3606306}},
%   doi={10.1016/j.na.2016.12.008},
}

\bib{BenSha1988}{book}{
   author={Bennett, C.},
   author={Sharpley, R.},
   title={Interpolation of operators},
   series={Pure and Applied Mathematics},
   volume={129},
   publisher={Academic Press, Inc., Boston, MA},
   date={1988},
   pages={xiv+469},
   isbn={0-12-088730-4},
   review={\MR{0928802}},
}

\bib{Ben2019}{article}{
   author={Ben Slimene, B.},
   title={Asymptotically self-similar global solutions for Hardy-H\'{e}non
   parabolic systems},
   journal={Differ. Equ. Appl.},
   volume={11},
   date={2019},
   number={4},
   pages={439--462},
%   issn={1847-120X},
%   review={\MR{4053002}},
%   doi={10.7153/dea-2019-11-21},
}

\bib{BreCaz1996}{article}{
   author={Brezis, H.},
   author={Cazenave, T.},
   title={A nonlinear heat equation with singular initial data},
   journal={J. Anal. Math.},
   volume={68},
   date={1996},
   pages={277--304},
%   issn={0021-7670},
%   review={\MR{1403259}},
%   doi={10.1007/BF02790212},
}

\bib{CazWei1998}{article}{
   author={Cazenave, T.},
   author={Weissler, F. B.},
   title={Asymptotically self-similar global solutions of the nonlinear
   Schr\"{o}dinger and heat equations},
   journal={Math. Z.},
   volume={228},
   date={1998},
   number={1},
   pages={83--120},
%   issn={0025-5874},
%   review={\MR{1617975}},
%   doi={10.1007/PL00004606},
}

%\bib{CazWei1990}{article}{
%   author={Cazenave, T.},
%   author={Weissler, F. B.},
%   title={The Cauchy problem for the critical nonlinear Schr\"odinger equation in $H^s$},
%   journal={ Nonlinear Anal.},
%   volume={14},
%   date={1990},
%   pages={807--836},
%}

%\bib{CheWanWan2023}{article}{
%   author={Chen, Jie},
%   author={Wang, Baoxiang},
%   author={Wang, Zimeng},
%   title={Complex valued semi-linear heat equations in super-critical spaces
%   $E^s_\sigma$},
%   journal={Math. Ann.},
%   volume={386},
%   date={2023},
%   number={3-4},
%   pages={1351--1389},
%%   issn={0025-5831},
%%   review={\MR{4612396}},
%%   doi={10.1007/s00208-022-02425-5},
%}

\bib{Chi2019}{article}{
   author={Chikami, N.},
   title={Composition estimates and well-posedness for Hardy-H\'{e}non parabolic
   equations in Besov spaces},
   journal={J. Elliptic Parabol. Equ.},
   volume={5},
   date={2019},
   number={2},
   pages={215--250},
%   issn={2296-9020},
%   review={\MR{4031955}},
%   doi={10.1007/s41808-019-00039-8},
}

\bib{CIT2021}{article}{
   author={Chikami, N.},
   author={Ikeda, M.},
   author={Taniguchi, K.},
   title={Well-posedness and global dynamics for the critical Hardy-Sobolev parabolic equation},
   journal={Nonlinearity},
   volume={34},
   date={2021},
   number={11},
   pages={8094--8142},
%   issn={0273-0979},
%   review={\MR{1824891}},
%   doi={10.1090/S0273-0979-01-00903-X},
}

\bib{CIT2022}{article}{
   author={Chikami, N.},
   author={Ikeda, M.},
   author={Taniguchi, K.},
   title={Optimal well-posedness and forward self-similar solution for the
   Hardy-H\'{e}non parabolic equation in critical weighted Lebesgue spaces},
   journal={Nonlinear Anal.},
   volume={222},
   date={2022},
   pages={Paper No. 112931, 28},
%   issn={0362-546X},
%   review={\MR{4419014}},
%   doi={10.1016/j.na.2022.112931},
}

\bib{CITT2022}{article}{
   author={Chikami, N.},
   author={Ikeda, M.},
   author={Taniguchi, K.},
   author={Tayachi, S.},
   title={Unconditional uniqueness and non-uniqueness for Hardy-H\'enon
   parabolic equations},
   journal={Math. Ann.},
   volume={390},
   date={2024},
   number={3},
   pages={3765--3825},
%   issn={0025-5831},
%   review={\MR{4803464}},
%   doi={10.1007/s00208-024-02828-6},
}

\bib{ChoOtsTay2015}{article}{
   author={Chouichi, A.},
   author={Otsmane, S.},
   author={Tayachi, S.},
   title={Large time behavior of solutions for a complex-valued quadratic
   heat equation},
   journal={NoDEA Nonlinear Differential Equations Appl.},
   volume={22},
   date={2015},
   number={5},
   pages={1005--1045},
%   issn={1021-9722},
%   review={\MR{3399168}},
%   doi={10.1007/s00030-015-0312-7},
}

\bib{ChoMajTay2018}{article}{
   author={Chouichi, A.},
   author={Majdoub, M.},
   author={Tayachi, S.},
   title={Global existence and asymptotic behavior of solutions for the
   complex-valued nonlinear heat equation},
   journal={Ann. Polon. Math.},
   volume={121},
   date={2018},
   number={2},
   pages={99--131},
%   issn={0066-2216},
%   review={\MR{3858392}},
%   doi={10.4064/ap170706-9-5},
}

\bib{FerVil2006}{article}{
   author={Ferreira, L. C. F.},
   author={Villamizar-Roa, E. J.},
   title={Self-similar solutions, uniqueness and long-time asymptotic
   behavior for semilinear heat equations},
   journal={Differential Integral Equations},
   volume={19},
   date={2006},
   number={12},
   pages={1349--1370},
%   issn={0893-4983},
%   review={\MR{2279332}},
}

\bib{Fuj1966}{article}{
   author={Fujita, H.},
   title={On the blowing up of solutions of the Cauchy problem for
   $u\sb{t}=\Delta u+u\sp{1+\alpha }$},
   journal={J. Fac. Sci. Univ. Tokyo Sect. I},
   volume={13},
   date={1966},
   pages={109--124 (1966)},
%   issn={0368-2269},
   review={\MR{0214914}},
}

%\bib{GW-2005}{article}{
%   author={Gazzola, F.},
%   author={Weth, T.},
%   title={Finite time blow-up and global solutions for semilinear parabolic
%   equations with initial data at high energy level},
%   journal={Differential Integral Equations},
%   volume={18},
%   date={2005},
%   number={9},
%   pages={961--990},
%%   issn={0893-4983},
%%   review={\MR{2162421}},
%}

\bib{GhoMor2013}{book}{
   author={Ghoussoub, N.},
   author={Moradifam, A.},
   title={Functional inequalities: new perspectives and new applications},
   series={Mathematical Surveys and Monographs},
   volume={187},
   publisher={American Mathematical Society, Providence, RI},
   date={2013},
   pages={xxiv+299},
%   isbn={978-0-8218-9152-0},
%   review={\MR{3052352}},
%   doi={10.1090/surv/187},
}

\bib{Gr2008}{book}{
   author={Grafakos, L.},
   title={Classical Fourier analysis},
   series={Graduate Texts in Mathematics},
   volume={249},
   edition={2},
   publisher={Springer, New York},
   date={2008},
   pages={xvi+489},
%   isbn={978-0-387-09431-1},
%   review={\MR{2445437 (2011c:42001)}},
}

%\bib{Gr2009}{book}{
%   author={L., Grafakos\text{,}},
%   title={Modern Fourier analysis},
%   series={Graduate Texts in Mathematics},
%   volume={250},
%   edition={2},
%   publisher={Springer, New York},
%   date={2009},
%   pages={xvi+504},
%%   isbn={978-0-387-09433-5},
%%   review={\MR{2463316 (2011d:42001)}},
%%   doi={10.1007/978-0-387-09434-2},
%}

%\bib{Gr2014C}{book}{
%   author={Grafakos, L.},
%   title={Classical Fourier analysis},
%   series={Graduate Texts in Mathematics},
%   volume={249},
%   edition={3},
%   publisher={Springer, New York},
%   date={2014},
%   pages={xviii+638},
%   isbn={978-1-4939-1193-6},
%   isbn={978-1-4939-1194-3},
%   review={\MR{3243734}},
%   doi={10.1007/978-1-4939-1194-3},
%}

%\bib{Gr2014M}{book}{
%   author={Grafakos, L.},
%   title={Modern Fourier analysis},
%   series={Graduate Texts in Mathematics},
%   volume={250},
%   edition={3},
%   publisher={Springer, New York},
%   date={2014},
%   pages={xvi+624},
%%   isbn={978-1-4939-1229-2},
%%   isbn={978-1-4939-1230-8},
%%   review={\MR{3243741}},
%%   doi={10.1007/978-1-4939-1230-8},
%}

\bib{GuiNiWan1992}{article}{
   author={Gui, C.},
   author={Ni, W.},
   author={Wang, X.},
   title={On the stability and instability of positive steady states of a
   semilinear heat equation in ${\bf R}^n$},
   journal={Comm. Pure Appl. Math.},
   volume={45},
   date={1992},
   number={9},
   pages={1153--1181},
%   issn={0010-3640},
%   review={\MR{1177480}},
%   doi={10.1002/cpa.3160450906},
}

\bib{GuiNiWan2001}{article}{
   author={Gui, C.},
   author={Ni, W.},
   author={Wang, X.},
   title={Further study on a nonlinear heat equation},
   note={Special issue in celebration of Jack K. Hale's 70th birthday, Part
   4 (Atlanta, GA/Lisbon, 1998)},
   journal={J. Differential Equations},
   volume={169},
   date={2001},
   number={2},
   pages={588--613},
%   issn={0022-0396},
%   review={\MR{1808476}},
%   doi={10.1006/jdeq.2000.3909},
}

%\bib{Gig86}{article}{
%   author={Giga, M.},
%   title={Solutions for semilinear parabolic equations in $L^p$ and
%   regularity of weak solutions of the Navier-Stokes system},
%   journal={J. Differential Equations},
%   volume={62},
%   date={1986},
%   number={2},
%   pages={186--212},
%%   issn={0022-0396},
%%   review={\MR{833416}},
%%   doi={10.1016/0022-0396(86)90096-3},
%}

\bib{HarWei1982}{article}{
   author={Haraux, A.},
   author={Weissler, F. B.},
   title={Non-uniqueness for a semilinear initial value problem},
   journal={Indiana Univ. Math. J.},
   volume={31},
   date={1982},
%   number={1},
   pages={167--189},
%   issn={0022-0396},
%   review={\MR{964617}},
%   doi={10.1016/0022-0396(88)90068-X},
}

\bib{Hay1973}{article}{
   author={Hayakawa, K.},
   title={On nonexistence of global solutions of some semilinear parabolic
   differential equations},
   journal={Proc. Japan Acad.},
   volume={49},
   date={1973},
   pages={503--505},
   issn={0021-4280},
   review={\MR{0338569}},
}

\bib{H-1973}{article}{
   author={H\'enon, M.},
   title={Numerical experiments on the stability of spherical stellar systems},
   journal={Astron. Astrophys},
   volume={24},
   date={1973},
%   number={9},
   pages={229--238},
%   issn={0022-0396},
%   review={\MR{3765769}},
%   doi={10.1016/j.jde.2018.01.023},
}

\bib{Hir2008}{article}{
   author={Hirose, M.},
   title={Existence of global solutions for a semilinear parabolic Cauchy
   problem},
   journal={Differential Integral Equations},
   volume={21},
   date={2008},
   number={7-8},
   pages={623--652},
%   issn={0893-4983},
%   review={\MR{2479684}},
}

\bib{HisIsh2018}{article}{
   author={Hisa, K.},
   author={Ishige, K.},
   title={Existence of solutions for a fractional semilinear parabolic
   equation with singular initial data},
   journal={Nonlinear Anal.},
   volume={175},
   date={2018},
   pages={108--132},
%   issn={0362-546X},
%   review={\MR{3830724}},
%   doi={10.1016/j.na.2018.05.011},
}

\bib{HisSie2024}{article}{
   author={Hisa, K.},
   author={Sier\.z\polhk ega, M.},
   title={Existence and nonexistence of solutions to the Hardy parabolic
   equation},
   journal={Funkcial. Ekvac.},
   volume={67},
   date={2024},
   number={2},
   pages={149--174},
%   issn={0532-8721},
%   review={\MR{4787124}},
}

\bib{HisTak2021}{article}{
   author={Hisa, K.},
   author={Takahashi, J.},
   title={Optimal singularities of initial data for solvability of the Hardy
   parabolic equation},
   journal={J. Differential Equations},
   volume={296},
   date={2021},
   pages={822--848},
%   issn={0022-0396},
%   review={\MR{4278116}},
%   doi={10.1016/j.jde.2021.06.011},
}

\bib{II-15}{article}{
   author={Ikeda, M.},
   author={Inui, T.},
   title={Some non-existence results for the semilinear Schr\"odinger equation without gauge invariance},
   journal={J. Math. Anal. Appl.},
   volume={425},
   date={2015},
%   number={9},
   pages={758--773},
%   issn={0022-0396},
%   review={\MR{3765769}},
%   doi={10.1016/j.jde.2018.01.023},
}

%\bib{IT-arxiv}{article}{
%   author={Ikeda, M.},
%   author={Taniguchi, K.},
%   title={Global well-posedness, dissipation and blow up for semilinear heat equations in energy spaces associated with self-adjoint operators},
%   journal={arXiv:1902.01016v3},
%%   volume={264},
%   date={2019},
%%   number={9},
%%   pages={5894--5927},
%%   issn={0022-0396},
%%   review={\MR{3765769}},
%%   doi={10.1016/j.jde.2018.01.023},
%}
%
%\bib{Ish2008}{article}{
%   author={Ishiwata, M.},
%   title={Asymptotic behavior of strong solutions for nonlinear parabolic
%   equations with critical Sobolev exponent},
%   journal={Adv. Differential Equations},
%   volume={13},
%   date={2008},
%   number={3-4},
%   pages={349--366},
%%   issn={1079-9389},
%%   review={\MR{2482421}},
%}

%\bib{KobKub2013}{article}{
%   author={Kobayashi, T.},
%   author={Kubo, T.},
%   title={Weighted $L^p-L^q$ estimates of the Stokes semigroup in some
%   unbounded domains},
%   journal={Tsukuba J. Math.},
%   volume={37},
%   date={2013},
%   number={2},
%   pages={179--205},
%%   issn={0387-4982},
%%   review={\MR{3161575}},
%}

%\bib{Maj2021}{article}{
%   author={Majdoub, M.},
%   title={Well-posedness and blow-up for an inhomogeneous semilinear parabolic equation},
%   journal={Differ. Equ. Appl.},
%   volume={13},
%   date={2021},
%   number={1},
%   pages={85--100},
%%   issn={0022-0396},
%%   review={\MR{3765769}},
%%   doi={10.1016/j.jde.2018.01.023},
%}

\bib{MiaYua2007}{article}{
   author={Miao, C.},
   author={Yuan, B.},
   title={Solutions to some nonlinear parabolic equations in pseudomeasure spaces},
   journal={Math. Nachr.},
   volume={280},
   date={2007},
   number={1-2},
   pages={171--186},
%   issn={0025-584X},
%   review={\MR{2290390}},
%   doi={10.1002/mana.200410472},
}

%\bib{NakTomYab2004}{article}{
%   author={Nakai, E.},
%   author={Tomita, N.},
%   author={Yabuta, K.},
%   title={Density of the set of all infinitely differentiable functions with
%   compact support in weighted Sobolev spaces},
%   journal={Sci. Math. Jpn.},
%   volume={60},
%   date={2004},
%   number={1},
%%   pages={121--127},
%%   issn={1346-0862},
%%   review={\MR{2072105}},
%}

\bib{ONe1963}{article}{
   author={O'Neil, R.},
   title={Convolution operators and $L(p,\,q)$ spaces},
   journal={Duke Math. J.},
   volume={30},
   date={1963},
   pages={129--142},
%   issn={0012-7094},
%   review={\MR{146673}},
}

%\bib{OkaTsu2016}{article}{
%   author={Okabe, T.},
%   author={Tsutsui, Y.},
%   title={Navier-Stokes flow in the weighted Hardy space with applications
%   to time decay problem},
%   journal={J. Differential Equations},
%   volume={261},
%   date={2016},
%   number={3},
%   pages={1712--1755},
%%   issn={0022-0396},
%%   review={\MR{3501831}},
%%   doi={10.1016/j.jde.2016.04.014},
%}

\bib{Ots2021}{article}{
   author={Otsmane, S.},
   title={Asymptotically self-similar global solutions for a complex-valued
   quadratic heat equation with a generalized kernel},
   journal={Bol. Soc. Mat. Mex. (3)},
   volume={27},
   date={2021},
   number={2},
   pages={Paper No. 46, 53},
%   issn={1405-213X},
%   review={\MR{4261745}},
%   doi={10.1007/s40590-021-00354-y},
}

%\bib{Ouh2005}{book}{
%   author={Ouhabaz, E. M.},
%   title={Analysis of heat equations on domains},
%   series={London Mathematical Society Monographs Series},
%   volume={31},
%   publisher={Princeton University Press, Princeton, NJ},
%   date={2005},
%%   pages={xiv+284},
%%   isbn={0-691-12016-1},
%%   review={\MR{2124040}},
%}
%
%\bib{Pin1997}{article}{
%   author={Pinsky, R.~G.~},
%   title={Existence and nonexistence of global solutions for $u_t=\Delta
%   u+a(x)u^p$ in ${\bf R}^d$},
%   journal={J. Differential Equations},
%   volume={133},
%   date={1997},
%   number={1},
%   pages={152--177},
%   issn={0022-0396},
%%   review={\MR{1426761}},
%%   doi={10.1006/jdeq.1996.3196},
%}

\bib{QuiSou2007}{book}{
   author={Quittner, P.},
   author={Souplet, P.},
   title={Superlinear parabolic problems},
   series={Birkh\"{a}user Advanced Texts: Basler Lehrb\"{u}cher. [Birkh\"{a}user
   Advanced Texts: Basel Textbooks]},
   note={Blow-up, global existence and steady states},
   publisher={Birkh\"{a}user Verlag, Basel},
   date={2007},
   pages={xii+584},
%   isbn={978-3-7643-8441-8},
%   review={\MR{2346798}},
}

\bib{Qi1998}{article}{
   author={Qi, Y.},
   title={The critical exponents of parabolic equations and blow-up in ${\bf
   R}^n$},
   journal={Proc. Roy. Soc. Edinburgh Sect. A},
   volume={128},
   date={1998},
   number={1},
   pages={123--136},
%   issn={0308-2105},
%   review={\MR{1606357}},
%   doi={10.1017/S0308210500027190},
}

\bib{SnoTayWei2001}{article}{
   author={Snoussi, S.},
   author={Tayachi, S.},
   author={Weissler, F. B.},
   title={Asymptotically self-similar global solutions of a general
   semilinear heat equation},
   journal={Math. Ann.},
   volume={321},
   date={2001},
   number={1},
   pages={131--155},
%   issn={0025-5831},
%   review={\MR{1857372}},
%   doi={10.1007/PL00004498},
}

%\bib{Tsu2011}{article}{
%   author={Tsutsui, Y.},
%   title={The Navier-Stokes equations and weak Herz spaces},
%   journal={Adv. Differential Equations},
%   volume={16},
%   date={2011},
%   number={11-12},
%   pages={1049--1085},
%%   issn={1079-9389},
%%   review={\MR{2858524}},
%}

%\bib{Tsu2014}{article}{
%   author={Tsutsui, Y.},
%   title={An application of weighted Hardy spaces to the Navier-Stokes
%   equations},
%   journal={J. Funct. Anal.},
%   volume={266},
%   date={2014},
%   number={3},
%   pages={1395--1420},
%%   issn={0022-1236},
%%   review={\MR{3146821}},
%%   doi={10.1016/j.jfa.2013.12.002},
%}

\bib{Sug1975}{article}{
   author={Sugitani, S.},
   title={On nonexistence of global solutions for some nonlinear integral
   equations},
   journal={Osaka Math. J.},
   volume={12},
   date={1975},
   pages={45--51},
%   issn={0388-0699},
%   review={\MR{0470493}},
}

\bib{Tay2020}{article}{
   author={Tayachi, S.},
   title={Uniqueness and non-uniqueness of solutions for critical
   Hardy-H\'enon parabolic equations},
   journal={J. Math. Anal. Appl.},
   volume={488},
   date={2020},
   number={1},
   pages={123976, 51},
%   issn={0022-247X},
%   review={\MR{4080035}},
%   doi={10.1016/j.jmaa.2020.123976},
}

\bib{TayWei2022}{article}{
   author={Tayachi, S.},
   author={Weissler, F. B.},
   title={New life-span results for the nonlinear heat equation},
   journal={J. Differential Equations},
   volume={373},
   date={2023},
   pages={564--625},
%   issn={0022-0396},
%   review={\MR{4622523}},
%   doi={10.1016/j.jde.2023.07.011},
}

\bib{Ter2002}{article}{
   author={Terraneo, E.},
   title={Non-uniqueness for a critical non-linear heat equation},
   journal={Comm. Partial Differential Equations},
   volume={27},
   date={2002},
   number={1-2},
   pages={185--218},
%   issn={0360-5302},
%   review={\MR{1886959}},
%   doi={10.1081/PDE-120002786},
}

\bib{Wan1993}{article}{
   author={Wang, X.},
   title={On the Cauchy problem for reaction-diffusion equations},
   journal={Trans. Amer. Math. Soc.},
   volume={337},
   date={1993},
   number={2},
   pages={549--590},
%   issn={0002-9947},
%   review={\MR{1153016}},
%   doi={10.2307/2154232},
}

\bib{Wei1979}{article}{
   author={Weissler, F. B.},
   title={Semilinear evolution equations in Banach spaces},
   journal={J. Functional Analysis},
   volume={32},
   date={1979},
   number={3},
   pages={277--296},
%   issn={0022-1236},
%   review={\MR{538855}},
%   doi={10.1016/0022-1236(79)90040-5},
}

\bib{Wei1980}{article}{
   author={Weissler, F. B.},
   title={Local existence and nonexistence for semilinear parabolic
   equations in $L^{p}$},
   journal={Indiana Univ. Math. J.},
   volume={29},
   date={1980},
   number={1},
   pages={79--102},
%   issn={0022-2518},
%   review={\MR{554819}},
%   doi={10.1512/iumj.1980.29.29007},
}

\bib{Wei1981}{article}{
   author={Weissler, F. B.},
   title={Existence and Nonexistence of Global Solutions for a Semilinear Heat Equation},
   journal={ Israel J. Math.},
   volume={38},
   date={1981},
%   number={3},
   pages={29--40},
%   issn={0022-1236},
%   review={\MR{538855}},
%   doi={10.1016/0022-1236(79)90040-5},
}

 \end{biblist}
\end{bibdiv} 

\end{document}